\theoremstyle{plain}
\newtheorem{theorem}{Theorem}[section] 
\newtheorem{proposition}[theorem]{Proposition}
\newtheorem{corollary}[theorem]{Corollary}
\newtheorem{definition}[theorem]{Definition}
\newtheorem{lemma}[theorem]{Lemma}
\newtheorem{example}[theorem]{Example}
\newtheorem{remark}[theorem]{Remark}
\newtheorem{conjecture}[theorem]{Conjecture}
\newtheorem{notation}[theorem]{Notation}
\newtheorem{definition/proposition}[theorem]{Definition/Proposition}
\theoremstyle{definition}\newtheorem{thm*}{Theorem}
\date{}
\theoremstyle{definition}
\title{Decompositions of principal  series representations of Iwahori-Hecke algebras for Kac-Moody groups over local fields}
\author{Auguste \textsc{Hébert} \\École normale supérieure de Lyon\\ UMR 5669 CNRS, \\ auguste.hebert@ens-lyon.fr}
\makeatletter \@addtoreset{figure}{section}\makeatother
\newcommand{\R}{\mathbb{R}}
\newcommand{\A}{\mathbb{A}}
\newcommand{\N}{\mathbb{N}}
\newcommand{\Z}{\mathbb{Z}}
\newcommand{\Q}{\mathbb{Q}}
\newcommand{\C}{\mathbb{C}}
\newcommand{\Ne}{\mathbb{N}^*}
\newcommand{\I}{\mathcal{I}}
\newcommand{\T}{\mathcal{T}}
\newcommand{\Id}{\mathrm{Id}}
\newcommand{\supp}{\mathrm{supp}}
\newcommand{\AC}{{^{\mathrm{BL}}\mathcal{H}}}
\newcommand{\ATF}{\AC(T_\FC)}
\newcommand{\AF}{\AC_\FC}
\newcommand{\ATC}{\AC(T_\C)}
\newcommand{\vect}{\mathrm{vect}}
\newcommand{\Hom}{\mathrm{Hom}}
\newcommand{\ms}{\mathrm{ms}}
\newcommand{\DC}{\mathcal{D}}
\newcommand{\FC}{\mathcal{F}}
\newcommand{\GC}{\mathcal{G}}
\newcommand{\HC}{\mathcal{H}}
\newcommand{\KC}{\mathcal{K}}
\newcommand{\MC}{\mathcal{M}}
\newcommand{\PC}{\mathcal{P}}
\newcommand{\RC}{\mathcal{R}}
\newcommand{\UC}{\mathcal{U}}
\newcommand{\HCW}{\mathcal{H}_{W^v,\C}}
\newcommand{\HFW}{{\mathcal{\HC}_{W^v,\FC}}}
\newcommand{\MCC}{\mathscr{M}}
\newcommand{\RCC}{\mathscr{R}}
\newcommand{\SCC}{\mathscr{S}}
\newcommand{\TCC}{\mathscr{T}}
\newcommand{\End}{\mathrm{End}}
\newcommand{\Wt}{\mathrm{Wt}}
\newcommand{\Wta}{W_{(\tau)}}
\newcommand{\vb}{\mathbf{v}}
\newcommand{\ev}{\mathrm{ev}}
\newcommand{\irr}{\mathrm{irr}}
\begin{document}

\maketitle

\begin{abstract}
Recently, Iwahori-Hecke algebras were associated to Kac-Moody groups over non-Archimedean local fields. In a previous paper, we introduced principal series representations for these algebras and partially generalized Kato's irreducibility criterion. In this paper, we study how some of these representations decompose  when they are reducible and deduce information on the irreducible  representations of these algebras.
\end{abstract}

\section{Introduction}

\subsection*{The reductive case}

Let $G$ be a split reductive group over a non-Archimedean local field $\KC$. To each open compact subgroup $K$ of $G$ is associated a Hecke algebra $\HC(K)$.  Let $I$ be the Iwahori subgroup of $G$. Then the Hecke algebra $\HC_\C$ associated with $I$ is called the Iwahori-Hecke algebra of $G$ and plays an important role in the representation theory of $G$. Its representations have been extensively studied.

 Let $Y$ be the cocharacter lattice of $G$ and let $T_\C$ be the set of nonzero algebra morphisms from $\C[Y]$ to $\C$. By the Bernstein-Lusztig relations, $\HC_\C$ contains the group algebra $\C[Y]$ of $Y$. Thus if $\tau\in T_\C$, one can define the induced representation $I_\tau$ of $\HC_\C$. Let $W^v$ be the vectorial (i.e finite) Weyl group of $G$. Then $I_\tau$ admits a basis indexed by $W^v$ and has dimension $|W^v|$. Representations of the form $I_\tau$, for $\tau\in T_\C$ were introduced by Matsumoto in \cite{matsumoto77Analyse} and are called \textbf{principal series representations}. By \cite[(4.2.4) Th\'eor\`eme]{matsumoto77Analyse}, every irreducible representation of $\HC_\C$ is a quotient of $I_\tau$ and embeds in $I_{\tau'}$, for some $\tau,\tau'\in T_\C$ and thus studying principal series representations enables to get information on the irreducible representations of $\HC_\C$.

\subsection*{Iwahori-Hecke algebras in the Kac-Moody case}

Kac-Moody groups are infinite dimensional (if not reductive) generalizations of reductive groups. Let now $G$ be a split Kac-Moody group (for Tits definition) over a non-Archimedean local field $\KC$.  In \cite{braverman2011spherical} and \cite{braverman2016iwahori}, Braverman, Kazhdan and Patnaik defined the spherical Hecke algebra and the Iwahori-Hecke $\HC_\C$ of $G$ when $G$ is affine. Bardy-Panse, Gaussent and Rousseau generalized these constructions to the case where $G$ is a general Kac-Moody group. Very few is known on the representation theory of $\HC_\C$ (see \cite{gaussent2014spherical} and \cite{bardy2016iwahori}).

Let $Y$ be the cocharacter lattice of $G$ and $W^v$ be the Weyl group of $G$. The algebra $\HC_\C$ can be embedded in the Bernstein-Lusztig algebra $\AC_\C$. As a vector space $\AC_\C$ is $\HCW\otimes \C[Y]$, where $\HCW$ is the Hecke algebra of the Coxeter group $W^v$ and  $\C[Y]$ is the group algebra of $Y$. It is equipped with a product $*$ defined by some relations called the Bernstein-Lusztig relations. The algebra $\HC_\C$ is then the subalgebra $\HCW\otimes \C[Y^+]$, where $Y^+=Y\cap\T$, where $\T$  is some convex cone (the Tits cone) of $Y\otimes \R$ (in the reductive case, $Y^+=Y$).

\subsection*{Weighted representations of $\AC_\C$ and $\HC_\C$}  Let $M$ be a representation of $\AC_\C$ (resp. $\HC_\C$) and $\tau\in T_\C$ (resp. $T_\C^+=\Hom_{\mathrm{alg}}(\C[Y^+],\C)\setminus\{0\}$). We say that $\tau$ is a \textbf{weight} of $M$ if there exists $m\in M\setminus\{0\}$ such that $\theta.m=\tau(\theta).m$, for every $m\in M$ (resp. $\theta\in \C[Y^+]$). 

We call a representation $M$ (resp. $M^+$) of $\AC_\C$ (resp. $\HC_\C$) \textbf{weighted} if for every $m\in M$ (resp. $m\in M^+$), $\C[Y].m$ (resp. $\C[Y^+].m$) is finite dimensional. In the reductive case, every irreducible representation of $\AC_\C=\HC_\C$ is finite dimensional and is therefore weighted. In the Kac-Moody (non reductive) case however, there always exist infinite dimensional irreducible representations of $\AC_\C$ or $\HC_\C$ (see \cite[Remark 5.11]{hebert2018principal}). However, we do not know if there exist  non weighted irreducible representations of $\AC_\C$ or $\HC_\C$. In this paper, we are mainly interested in the weighted representations.

  As we shall see (see Proposition~\ref{propWeighted_representations_AC_HC})  if $M$ is a weighted representation of $\AC_\C$, then the $\HC_\C$-submodules of $M$ are exactly the restrictions to $\HC_\C$ of the $\AC_\C$-submodules of $M$. In particular, $M$ is $\AC_\C$-irreducible if and only if it is $\HC_\C$-irreducible. We give a characterization of the weighted representations of $\HC_\C$ that can be extended to a representation of $\AC_\C$ (see Proposition~\ref{propCharacterization_restriction_weighted_representations}). Depending on $G$, it may happen that every weighted representation of $\HC_\C$ extends to a representation of $\AC_\C$ (for example when $G$ is affine or associated to a size $2$ Kac-Moody matrix). In this case it is equivalent to study the weighted representations of $\AC_\C$ and the weighted representations of $\HC_\C$.   Note that we constructed in \cite[4.2.1]{hebert2018principal} examples of weighted representations of $\HC_\C$ which cannot be extended to representations of $\AC_\C$.
  
   We then restrict our study to the weighted representations of $\AC_\C$ and more specifically to the principal series representations of $\AC_\C$.

\subsection*{Principal series representations of $\AC_\C$}

In \cite{hebert2018principal}, we associated to each $\tau\in T_\C$ a representation $I_\tau$ called a \textbf{principal series representation}.  A motivation to study these representations if that every weighted irreducible representation of $\AC_\C$ is the quotient of  $I_\tau$, for some $\tau\in T_\C$ (see \cite[Proposition 3.8]{hebert2018principal}). In this paper, we study, under some assumptions on  $\tau\in T_\C$, the submodules of $I_\tau$ and the irreducible (weighted) representations admitting $\tau$ as a weight.

 The action of $W^v$ on $Y$ induces an action of $W^v$ on $T_\C$. Let $\tau\in T_\C$ and let $W_\tau$ be the fixator of $\tau$ in $W^v$. As we shall see (Lemma~\ref{lemDecomposition_Wtau_Rgroup}), $W_\tau$ decomposes as $W_\tau=\Wta\rtimes R_\tau$, where $\Wta$ is some reflection subgroup of $W_\tau$ and $R_\tau$ is a generalization of the ``$R$-group'' introduced by Knapp and Stein in \cite{knapp1972irreducibility}. Let $q$ be the residue cardinal of $\KC$ and $\Phi^\vee$ be the coroot system of $G$. Let $\UC_\C=\{\tau\in T_\C|\tau(\alpha^\vee)\neq q,\forall \alpha^\vee\in \Phi^\vee\}$. Then:

\begin{thm*}(see \cite[Introduction, Theorem 3, 4]{hebert2018principal})
Let $\tau\in T_\C$. Suppose that $I_\tau$ is irreducible. Then: \begin{enumerate}
\item\label{itCriterionR} $R_\tau=\{1\}$ (or equivalently $W_\tau=\Wta$)

\item\label{itCriterion_value_alphavee} $\tau\in \UC_\C$. 
\end{enumerate}

Moreover, if $G$ is associated with a size $2$ Kac-Moody matrix, then $I_\tau$ is irreducible if and only if $\tau$ satisfies (\ref{itCriterionR}) and (\ref{itCriterion_value_alphavee}).
\end{thm*}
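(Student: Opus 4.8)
The plan is to control the submodule structure of $I_\tau$ through intertwining operators together with a weight-space analysis, the size~$2$ hypothesis entering only in the final sufficiency argument. Recall that for each simple reflection $s=s_\alpha$ there is an intertwining operator $F_s\colon I_\tau\to I_{s\tau}$, and by composition along reduced words an operator $F_w\colon I_\tau\to I_{w\tau}$ for every $w\in W^v$; the composite $F_sF_s$ acts on $I_\tau$ by an explicit scalar whose vanishing is governed exactly by the equation $\tau(\alpha^\vee)=q$. Consequently $F_s$ is an isomorphism whenever $\tau(\alpha^\vee)\neq q$, while it degenerates (acquiring a proper kernel and image) precisely when $\tau(\alpha^\vee)=q$. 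When $w$ fixes $\tau$ the map $F_w$ is an endomorphism of $I_\tau$, and one obtains an isomorphism $\End_{\AC_\C}(I_\tau)\cong\C[R_\tau]$ of the endomorphism algebra with the (possibly twisted) group algebra of the $R$-group: the reflections generating $\Wta$ contribute only degenerate, scalar directions, whereas the elements of $R_\tau$ give genuine non-scalar automorphisms.

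For the two necessary conditions, items (\ref{itCriterionR}) and (\ref{itCriterion_value_alphavee}), which are \cite[Theorems~3 and 4]{hebert2018principal}, I would simply invoke the two observations above. If $\tau(\alpha^\vee)=q$ for some $\alpha^\vee\in\Phi^\vee$, the degenerate operator $F_{s_\alpha}$ has a nonzero proper image, which is an $\AC_\C$-submodule, so $I_\tau$ is reducible; hence irreducibility forces $\tau\in\UC_\C$. If $R_\tau\neq\{1\}$ then $\End_{\AC_\C}(I_\tau)\cong\C[R_\tau]\neq\C$, so $I_\tau$ is reducible by Schur's lemma; hence irreducibility forces $R_\tau=\{1\}$, equivalently $W_\tau=\Wta$.

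The substance of the statement is the converse when $G$ comes from a size~$2$ Kac-Moody matrix. Here $W^v=\langle s_1,s_2\rangle$ is dihedral, finite when $a_{12}a_{21}<4$ and infinite otherwise, and the real coroots form the explicit $W^v$-orbit of $\{\alpha_1^\vee,\alpha_2^\vee\}$. Assume $R_\tau=\{1\}$ and $\tau\in\UC_\C$; I would show that every nonzero $\AC_\C$-submodule $M$ of $I_\tau$ is all of $I_\tau$. Since $I_\tau$ is weighted, $M$ decomposes as the sum of its intersections with the weight spaces $(I_\tau)_{w\tau}$, $w\in W^v$. The Bernstein--Lusztig relations show that acting by $H_{s_i}$ on a $\mu$-weight vector produces a vector in $(I_\tau)_\mu\oplus(I_\tau)_{s_i\mu}$ whose $s_i\mu$-component has coefficient a nonzero multiple of the relevant $F_{s_i}$-scalar; because $\tau\in\UC_\C$ this coefficient never vanishes along the orbit, so from any nonzero weight vector in $M$ one reaches every weight space occurring in $I_\tau$. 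Together with $\End_{\AC_\C}(I_\tau)=\C$ (from $R_\tau=\{1\}$), which pins down the multiplicities inside the stabilized weight spaces, this forces $M$ to contain each full weight space, i.e.\ $M=I_\tau$.

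The hard part will be the infinite-dimensional case, where $\End_{\AC_\C}(I_\tau)=\C$ does not by itself imply irreducibility and one must genuinely rule out infinite-dimensional proper submodules. This is exactly where the size~$2$ restriction is indispensable: the real coroot system then consists of two monotone sequences accumulating at the single imaginary direction, so the degeneracy locus $\{\alpha^\vee:\tau(\alpha^\vee)=q\}$ meets the orbit in a controlled, finite way, and the dihedral weight graph is a single bi-infinite path. This lets the generation of weight spaces be propagated explicitly, step by step along the path, and close the argument; in higher rank the analogous propagation is obstructed by the far more intricate geometry of the imaginary cone, which is why the converse is claimed only in rank~$2$.
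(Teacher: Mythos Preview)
The statement is quoted from \cite{hebert2018principal} and is not re-proved in this paper; however, Section~\ref{secTau_in_UC} develops the machinery that underlies (and generalizes) the size~$2$ converse, so one can compare your sketch against that.

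Your necessity arguments are in the right spirit but imprecise. For~(\ref{itCriterionR}) you invoke $\End(I_\tau)\cong\C[R_\tau]$, but that isomorphism is established (Proposition~\ref{propDescription_endomorphism_algebra}) only under the extra hypothesis $I_\tau(\tau,\Wta)=\C\vb_\tau$, which is Conjecture~\ref{conjItau} in general. What one actually needs, and has unconditionally, is that for $1\neq w_R\in R_\tau$ the endomorphism $\psi_{w_R}$ exists (Lemma~\ref{lemFwr_well_defined_at_tau}) and is non-scalar because its leading term is $H_{w_R}$; then Schur applies. For~(\ref{itCriterion_value_alphavee}) your ``$F_{s_\alpha}$'' is not defined when $\alpha$ is not simple; one first moves by $W^v$ to make the offending coroot simple.

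The genuine gap is in the sufficiency direction. You frame the difficulty as propagating from one weight space to all the others along the dihedral graph, relying on $\tau\in\UC_\C$ to make all the edge-scalars nonzero. But this addresses the wrong obstacle. When $W_\tau\neq\{1\}$ the weight space $I_\tau(\tau)$ itself can a priori have dimension $>1$ (the generalized weight space has dimension $|W_\tau|$), so reaching the $\tau$-weight space does not mean reaching $\vb_\tau$; your remark that ``$\End(I_\tau)=\C$ pins down the multiplicities'' is exactly the point that needs proof, not an input. The actual work in size~$2$ (Lemma~\ref{lemConjItau_size2_case} together with Proposition~\ref{propDescription_generalized_weight_spaces}) is to show directly that $I_\tau(\tau)=\C\vb_\tau$: one builds a basis of $I_\tau(\tau,\mathrm{gen})$ from products of the operators $K_r=F_r-\zeta_{\alpha_r^\vee}$ along $\Wta$ and checks, via an explicit commutation computation, that no nontrivial combination is an honest weight vector. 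That computation uses the size~$2$ classification of $\Wta$ (trivial, $\Z/2\Z$, or infinite dihedral). Once $\dim I_\tau(\tau)=1$, irreducibility is immediate: any nonzero $M\subset I_\tau$ has some weight in $W^v.\tau$, hence weight $\tau$ (since $\tau\in\UC_\C$ forces $I_{w.\tau}\simeq I_\tau$ for all $w$, cf.\ Lemma~\ref{lemStructure_Weights_module}), hence $M(\tau)=\C\vb_\tau\ni\vb_\tau$, hence $M=I_\tau$. Your final paragraph about the ``degeneracy locus $\{\alpha^\vee:\tau(\alpha^\vee)=q\}$'' is also off: under $\tau\in\UC_\C$ that locus is empty; what must be controlled is the set $\{\alpha^\vee:\tau(\alpha^\vee)=1\}$ governing $\Wta$, which can be infinite even in size~$2$.
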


When $G$ is reductive,  $I_\tau$ is irreducible if and only if $\tau$ satisfies (\ref{itCriterionR}) and (\ref{itCriterion_value_alphavee}) by \cite[Th{\'e}or{\`e}me 4.3.5]{matsumoto77Analyse} and \cite[Theorem 2.4]{kato1982irreducibility}.

\medskip

One says that $\tau$ is regular when $W_\tau=1$. We mainly focus on the following cases:\begin{itemize}
\item $\tau$ is regular,

\item $\tau\in \UC_\C$ and the Kac-Moody matrix defining $G$ has size $2$.
\end{itemize}

\subsection*{The case where $\tau$ is regular}

Let $\tau\in T_\C$ be regular. There exists a set $\SCC\subset W^v$ such that $(W^v,\SCC)$ is a Coxeter system.  Let $\GC$ be the non-oriented graph defined as follows. Its vertices are the $I_{w.\tau}$, for $w\in W^v$ and for $v,w\in W^v$ there is an edge between $v$ and $w$ if $w=sv$ for some $s\in \SCC$.  If $w\in W^v$ and $s\in \SCC$, then $\dim \Hom_{\AC_\C-\mathrm{mod}}(I_{w.\tau},I_{sw.\tau})=1$. We choose a nonzero intertwining map $A_{w,sw,\tau}:I_{w.\tau}\rightarrow I_{sw.\tau}$. 

A \textbf{path} $\Gamma$ in $\GC$ is a finite sequence $\Gamma=\big(\Gamma(1),\ldots,\Gamma(k)\big)=(I_{w_1.\tau},I_{w_2.\tau}, \ldots, I_{w_k.\tau})$ such that for all $i\in \llbracket 1,n-1\rrbracket$, $w_{i+1}w_i^{-1}\in \SCC$. Then we define an intertwining map $A_\Gamma:I_{w_1.\tau}\rightarrow I_{w_k.\tau}$ by  $A_\Gamma= A_{w_{k-1},w_k,\tau}\circ \ldots\circ A_{w_1,w_2,\tau}$. The path $\Gamma$ is said to be \textbf{reduced} if $k=\ell(w_kw_1^{-1})$.  Let $v,w\in W^v$ and $\Gamma$ be any reduced path between $I_{v.\tau}$ and $I_{w.\tau}$. Then $A_\Gamma\neq 0$ and  $\mathrm{Hom}_{\AC_\C-\mathrm{mod}}(I_{v.\tau},I_{w.\tau})=\C A_\Gamma$. 

Let $e=(I_{w.\tau},I_{sw.\tau})$, with $w\in W^v$ and $s\in \SCC$. Then $A_e:I_{w.\tau}\rightarrow I_{sw.\tau}$ is an isomorphism if and only if $w.\tau(\alpha_s^\vee)\in \C\setminus\{q,q^{-1}\}$.  Let $\widetilde{\GC}$ be the diagram obtained from $\GC$ by deleting the edges $e$ for which $A_e$ is not an isomorphism. We call a submodule $M$ of $I_\tau$ \textbf{strongly indecomposable} if for all family $(M_j)_{j\in J}$ of submodules such that $\sum_{j\in J} M_j=M$, there exists $j\in J$ such that $M_j=M$. Then we prove the following theorem  (see Proposition~\ref{propWeights_irreducible_components} and  Theorem~\ref{thmKrull_Schmidt_theorem}):

\begin{thm*}\label{thm*_decomposition_regular_tau}
\begin{enumerate}
\item Let $w\in W^v$. Then there exists (up to isomorphism) a unique irreducible representation $M_{w.\tau}^{\mathrm{irr}}$  of $\AC_\C$ admitting $w.\tau$ as a weight. Let $\tilde{C}(w)$ be the connected component of $\widetilde{\GC}$ containing $I_{w.\tau}$. Then $\dim M_{w.\tau}^{\mathrm{irr}}=|\tilde{C}(w)|=|\{v\in W^v|I_{v.\tau}\simeq I_{w.\tau}\}|$ (this cardinal can be infinite) and the set of weights of $M_{w.\tau}^{\mathrm{irr}}$ is $\{v.\tau\in W^v.\tau|\ I_{v.\tau}\in \tilde{C}(w)\}$. In particular, for all $v,w\in W^v$, $M_{v.\tau}^{\mathrm{irr}}\simeq M_{w.\tau}^{\mathrm{irr}}$ if and only if $\tilde{C}(v)=\tilde{C}(w)$.

\item For each connected component $\tilde{C}$ of $\tilde{\GC}$, choose a vertex $I_{w_{\tilde{C}.\tau}}$ of  $\tilde{C}$ and choose a reduced path $\Gamma_{\tilde{C}}$ from $I_{w_{\tilde{C}.\tau}}$ to $I_\tau$. Then the map $\tilde{C}\mapsto A_{\Gamma_{\tilde{C}}}(I_{w_{\tilde{C}}.\tau})$ is a bijection from the set of connected components of $\widetilde{\GC}$ to the set of strongly indecomposable submodules of $I_\tau$. 

\item Let $M$ be a submodule of $I_\tau$. Let $\mathrm{SI}(M)$ (resp. $\mathrm{MSI}(M)$) be the set of strongly indecomposable submodules (resp. maximal strongly indecomposable submodules) of $M$. Then $M=\sum_{N\in \mathrm{MSI}(M)} N$ and if $\mathcal{M}\subset \mathrm{SI}(M)$ is such that $M=\sum_{N\in \mathcal{M}} N$, then $\mathrm{MSI}(M)\subset \mathcal{M}$.

\end{enumerate}
\end{thm*}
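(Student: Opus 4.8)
The plan is to translate everything into the combinatorics of the weight grading. Since $\tau$ is regular, each weight $w.\tau$ occurs in $I_\tau$ with multiplicity one, so $I_\tau=\bigoplus_{w\in W^v}(I_\tau)_{w.\tau}$ with $\dim(I_\tau)_{w.\tau}=1$; because $\C[Y]\subset\AC_\C$ acts semisimply with pairwise distinct weights, every submodule $M$ is graded, $M=\bigoplus_{w.\tau\in\Wt(M)}(I_\tau)_{w.\tau}$, and $M\mapsto\Wt(M)$ identifies the submodules of $I_\tau$ with certain subsets of $W^v$. First I would pin down which subsets occur. Writing $f_w$ for a nonzero vector of weight $w.\tau$, a generator $T_s$ sends $f_w$ into $\C f_w\oplus\C f_{sw}$, and I would extract from the explicit Bernstein--Lusztig action exactly when the coefficient of $f_{sw}$ is nonzero: this gives an orientation $\vec{\GC}$ of $\GC$ in which $w\to sw$ precisely when $f_{sw}\in\AC_\C.f_w$. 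The recalled criterion ``$A_e$ is an isomorphism iff $w.\tau(\alpha_s^\vee)\notin\{q,q^{-1}\}$'' then says that an edge $e$ is bidirectional in $\vec{\GC}$ exactly when it survives in $\widetilde{\GC}$; since inter-component arrows are necessarily one-directional and compatible with the length function $\ell$, $\vec{\GC}$ has no directed cycle meeting two distinct $\widetilde{\GC}$-components, so the connected components of $\widetilde{\GC}$ are exactly the classes of mutual forward-reachability in $\vec{\GC}$ and their condensation is a partial order. A subset $S\subseteq W^v$ is then the weight set of a submodule iff it is closed under the arrows of $\vec{\GC}$ (forward-closed), and any $\widetilde{\GC}$-component is contained in, or disjoint from, each such $S$.

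For part (1), the cyclic module $\AC_\C.f_w$ has weight set the forward-closure of $\{w\}$, whose unique maximal (source) component is $\tilde{C}(w)$; its proper submodules are the forward-closed subsets avoiding $\tilde{C}(w)$, and their sum omits precisely $\tilde{C}(w)$. Hence $\AC_\C.f_w$ is local with simple top of weight set exactly $\tilde{C}(w)$; call it $M^{\irr}_{w.\tau}$. For uniqueness, any weighted irreducible $L$ admitting $w.\tau$ as a weight is a quotient of $I_{w.\tau}$ by \cite[Proposition 3.8]{hebert2018principal} and is generated by a weight-$w.\tau$ vector, hence is a quotient of a cyclic module generated by such a vector; by the computation just made, every such cyclic module has $M^{\irr}_{w.\tau}$ as its only simple quotient, so $L\simeq M^{\irr}_{w.\tau}$. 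Multiplicity one gives $\dim M^{\irr}_{w.\tau}=|\tilde{C}(w)|$. The equality $|\tilde{C}(w)|=|\{v\mid I_{v.\tau}\simeq I_{w.\tau}\}|$ follows because $I_{v.\tau}\simeq I_{w.\tau}$ iff $v,w$ lie in the same component of $\widetilde{\GC}$ (a path of isomorphism-edges composes to an isomorphism, and conversely the reduced-path intertwiner is an isomorphism only along $\widetilde{\GC}$); and $M^{\irr}_{v.\tau}\simeq M^{\irr}_{w.\tau}$ iff $\tilde{C}(v)=\tilde{C}(w)$, since the weight set of $M^{\irr}_{w.\tau}$ is $\tilde{C}(w)$ and determines it by uniqueness.

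For part (2), a nonzero submodule is strongly indecomposable iff it has a unique maximal submodule, i.e.\ is local; by the previous analysis a forward-closed $S$ is local iff it has a single source component, i.e.\ iff $S$ is the forward-closure of one component $\tilde{C}$. Thus $\tilde{C}\mapsto M_{\tilde{C}}$, the submodule whose weight set is the forward-closure of $\tilde{C}$, is a bijection from the components of $\widetilde{\GC}$ onto the strongly indecomposable submodules of $I_\tau$, its inverse sending a local submodule to its top component. It remains to identify $M_{\tilde{C}}$ with $A_{\Gamma_{\tilde{C}}}(I_{w_{\tilde{C}}.\tau})$: the latter is a submodule of $I_\tau$, and since all reduced paths with fixed endpoints span the same line $\C A_{\Gamma}=\Hom_{\AC_\C-\mathrm{mod}}(I_{w_{\tilde{C}}.\tau},I_\tau)$, its image is independent of $\Gamma_{\tilde{C}}$, while replacing $w_{\tilde{C}}$ by another vertex of $\tilde{C}$ changes $A_{\Gamma_{\tilde{C}}}$ by an isomorphism and so leaves the image unchanged; computing the weights on which $A_{\Gamma_{\tilde{C}}}$ is nonzero identifies this image with $M_{\tilde{C}}$.

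For part (3), given a submodule $M$ with weight set $S$, the strongly indecomposable submodules it contains are the $M_{\tilde{C}}$ with $\tilde{C}\subseteq S$ (a component meeting the forward-closed $S$ lies in $S$), and $M_{\tilde{C}}\subseteq M_{\tilde{C}'}$ iff $\tilde{C}'$ reaches $\tilde{C}$; thus $\mathrm{MSI}(M)$ consists of the $M_{\tilde{C}}$ with $\tilde{C}$ a source component of $S$. Since the inter-component arrows respect $\ell$ and a Coxeter group has finitely many elements of bounded length, the condensation of $S$ is well-founded, so every component of $S$ is reached from a source component of $S$, giving $M=\sum_{N\in\mathrm{MSI}(M)}N$. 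For minimality, if $\mathcal{M}\subseteq\mathrm{SI}(M)$ satisfies $\sum_{N\in\mathcal{M}}N=M$ and $M_{\tilde{C}}\in\mathrm{MSI}(M)$, then a weight vector $f_v$ with $v\in\tilde{C}$ lies in some $M_{\tilde{D}}\in\mathcal{M}$, so $\tilde{D}$ reaches $\tilde{C}$ with $\tilde{D}\subseteq S$; as $\tilde{C}$ is a source this forces $\tilde{D}=\tilde{C}$, whence $M_{\tilde{C}}\in\mathcal{M}$ and $\mathrm{MSI}(M)\subseteq\mathcal{M}$. I expect the main obstacle to be the very first step: reading off from the Bernstein--Lusztig relations the exact off-diagonal vanishing pattern of the $T_s$-action on weight vectors, matching it with the stated isomorphism criterion, and checking that the resulting orientation is compatible with $\ell$ (hence acyclic across components and well-founded). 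Once this structural input is secured, the remainder is the combinatorics of forward-closed subsets and their source components.
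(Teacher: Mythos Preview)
Your approach is essentially correct and very close in spirit to the paper's argument: what you call the forward closure of $w$ is exactly the paper's $M_{w,\tau}=A_{w,1,\tau}(I_{w.\tau})$, and both rest on the fact that submodules of $I_\tau$ are determined by their weight sets (Lemma~\ref{lemDecomposition_weight_subspaces_regular_case}). Your orientation claim is also correct and can be justified cleanly without a full Bernstein--Lusztig computation: if $\ell(sw)>\ell(w)$ then $f_w$ has leading term $H_w$ in the basis $(H_v\vb_\tau)_v$ (Lemma~\ref{lemReeder 4.3}~(\ref{itLeading_coefficient_Fw})), so $H_s f_w$ has a nonzero $H_{sw}$-component and $f_w$ cannot be an $H_s$-eigenvector, forcing the $f_{sw}$-coefficient to be nonzero; vanishing in the opposite direction on a non-iso edge then follows from $F_s^2=\zeta_s\,{}^s\zeta_s$. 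The paper obtains the same orientation via the factorisation $A_{sw,1,\tau}\in\C^*A_{w,1,\tau}\circ A_{sw,w,\tau}$ along reduced paths together with Proposition~\ref{propWeights_intertwiners}.

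There is, however, a genuine gap in your well-foundedness step for part~(3). Compatibility of \emph{single} inter-component arrows with $\ell$ gives acyclicity of the condensation (so $\widetilde{\GC}$-components coincide with the strongly connected components of $\vec{\GC}$), but it does not by itself yield the ascending chain condition on the $M_{\tilde C}$ needed to produce maximal strongly indecomposables. The problem is that iso edges inside a component can lower $\ell$, so after crossing into a new component at some high-$\ell$ vertex one may descend inside that component; hence nothing you have written forces $\tilde C\mapsto\min_{w\in\tilde C}\ell(w)$ (or any purely $\ell$-based quantity) to be strictly monotone along chains $M_{\tilde C}\subsetneq M_{\tilde C'}$. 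The paper closes this gap with a different invariant: the semi-distance $d(I_{w.\tau},I_\tau)$, defined as the number of non-iso edges on any intertwining path from $I_{w.\tau}$ to $I_\tau$ and shown to be well-defined via the word property (Proposition~\ref{propDefinition_tau_distance}). Lemma~\ref{lemChasles_distance_weights_Mwtau} then gives $M_{w,\tau}\subsetneq M_{w',\tau}\Rightarrow d(I_{w.\tau},I_\tau)>d(I_{w'.\tau},I_\tau)$, and ACC follows immediately (Lemma~\ref{lemNoetherianity_indecomposable_submodules}). In your language, $d(\tilde C)$ is the minimal number of inter-component crossings from $\tilde C$ to $\tilde C(1)$; it is this count---not $\ell$---that is strictly monotone and makes the condensation well-founded.
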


\subsection*{The case where $\tau\in \UC_\C$}

We now assume that $G$ is associated with a size $2$ Kac-Moody matrix and we fix $\tau\in \UC_\C$. Then for all $w\in W^v$, $I_\tau$ is isomorphic to $I_{w.\tau}$. For $J\subset \End_{\AC_\C-\mathrm{mod}}(I_\tau)$ a right ideal and $M\subset I_\tau$ a submodule, we set: \[J(I_\tau)=\sum_{\phi\in J}\phi(I_\tau)\text{ and }J_M=\{\phi\in \End_{\AC_\C-\mathrm{mod}}(I_\tau)|\phi(I_\tau)\subset M\}.\] Then (see  Proposition~\ref{propDescription_endomorphism_algebra}, Theorem~\ref{thmBijection_modules_ideals}, Theorem~\ref{thmIrreducible_representations_tau_UC} and Lemma~\ref{lemList_possibilities_Wtau_Rtau_Wta}):

 \begin{thm*}\label{thm*_UC}\begin{enumerate}
\item The map $M\mapsto J_M$ is a bijection from the set of submodules of $I_\tau$ to the set of right ideals of $\End_{\AC_\C-\mathrm{mod}}(I_\tau)$. Its inverse is $J\mapsto J(I_\tau)$.

\item $\End_{\AC_\C}(I_\tau)$ is isomorphic to the group algebra $\C[R_\tau]$.

\item The set of possible $R_\tau$  is exactly $\{1\}$, $\Z/2\Z$, $\Z$, the infinite dihedral group $D_\infty$.

\item The map $M\mapsto I_\tau/M$ is a surjection from the set of maximal submodules of $I_\tau$ to the set of irreducible representations of $\AC_\C$ admitting the weight $\tau$. It is a bijection if and only if every maximal right ideal of $\End(I_\tau)$ is two-sided (which is the case when $R_\tau$ is commutative). In this case these representations have dimension $|\Wta||W^v/W_\tau|$ (it can be infinite).
\end{enumerate}

\end{thm*}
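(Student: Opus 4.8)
The plan is to make the endomorphism algebra $E:=\End_{\AC_\C}(I_\tau)$ the pivot of the whole argument, establishing the four items in the order (2), (1), (4), (3). For (2) the starting observation is that $\tau\in\UC_\C$ forces every elementary intertwiner to be an isomorphism: for each coroot $\beta^\vee$ one has $\tau(\beta^\vee)\notin\{q,q^{-1}\}$, the value $q^{-1}$ being excluded because $-\beta^\vee$ is again a coroot, so by the criterion recalled before the statement each edge map $A_e$ is invertible. Composing along a reduced path then gives isomorphisms $A_w:I_\tau\xrightarrow{\sim}I_{w.\tau}$ for all $w\in W^v$ (this is the already-stated fact that $I_\tau\simeq I_{w.\tau}$), and for $w\in W_\tau$ these are automorphisms, hence elements of $E$. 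I would then show that for $w\in\Wta$ the operator $A_w$ is a scalar — the distinctive feature of the locus $\tau(\alpha^\vee)=1$ cutting out $\Wta$ — while the classes of $R_\tau=W_\tau/\Wta$ yield linearly independent automorphisms $\{A_r\}_{r\in R_\tau}$ spanning $E$. Their products satisfy $A_rA_{r'}=\mu(r,r')A_{rr'}$ for a $2$-cocycle $\mu:R_\tau\times R_\tau\to\C^\times$, and the content of (2) is that $\mu$ is a coboundary, so that after renormalization $E\cong\C[R_\tau]$ as algebras.

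For (1) I would invoke the standard module-theoretic correspondence: $N\mapsto J_N$ sends submodules to right ideals of $E$, $J\mapsto J(I_\tau)$ goes back, and one always has $J(I_\tau)\subseteq N$ for $J=J_N$ and $J_{J(I_\tau)}\supseteq J$. These are mutually inverse exactly when $I_\tau$ is a self-generator, that is, when every submodule $N$ equals the sum of the images of the endomorphisms whose image lies in $N$. I would prove self-generation from the explicit structure: since all $I_{w.\tau}$ are isomorphic to $I_\tau$, a submodule is controlled by its weight spaces and each cyclic piece generated by a weight vector is reached by an endomorphism built from the $A_w$, whence $J_N(I_\tau)=N$; the reverse saturation $J_{J(I_\tau)}=J$ follows from the description of $E\cong\C[R_\tau]$ obtained in (2). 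This gives the inverse bijections of (1).

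For (4), combining (1) with the dictionary ``maximal submodule $\leftrightarrow$ maximal right ideal of $E$'', the quotient $I_\tau/M$ is simple; and because $I_\tau$ is generated by its $\tau$-weight space, a proper submodule cannot contain it, so $\tau$ remains a weight of $I_\tau/M$. Surjectivity onto the irreducible representations admitting $\tau$ as a weight comes from the universal property of $I_\tau$: such a representation is generated by a weight-$\tau$ vector, hence is a quotient of $I_\tau$ (cf. \cite[Proposition 3.8]{hebert2018principal}). Through the correspondence of (1), $I_\tau/M$ corresponds to the simple right $E$-module $E/J_M$, so the map is injective iff distinct maximal right ideals give non-isomorphic simples, i.e. iff every maximal right ideal of $E$ is two-sided; this holds whenever $E\cong\C[R_\tau]$ is commutative, hence whenever $R_\tau$ is commutative (all cases but $D_\infty$). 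For the dimension I would compute the $\tau$-weight space: it has dimension $|W_\tau|=|\Wta|\,|R_\tau|$ and is free of rank $|\Wta|$ over $E\cong\C[R_\tau]$, so in an irreducible quotient with $E/J_M=\C$ it contributes $|\Wta|$; as the $|W^v/W_\tau|$ weights in the orbit each contribute the same dimension, one obtains $\dim I_\tau/M=|\Wta|\,|W^v/W_\tau|$.

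For (3) I would argue by explicit rank-$2$ combinatorics: for a size $2$ Kac-Moody matrix $W^v$ is dihedral, either finite ($D_m$ with $m\in\{2,3,4,6\}$) or the infinite dihedral group $D_\infty$. Running through the subgroups $W_\tau\le W^v$, the reflection subgroup $\Wta$ generated by the $s_\alpha\in W_\tau$ (those with $\tau(\alpha^\vee)=1$), and the quotient $R_\tau=W_\tau/\Wta$, one reads off exactly the list $\{1,\Z/2\Z,\Z,D_\infty\}$, the infinite groups occurring only in the indefinite/affine case $W^v=D_\infty$. I expect the main obstacle to be item (2): trivializing the cocycle $\mu$ — either by a direct renormalization of the intertwiners, or a posteriori by noting that the Schur multiplier $H^2(R_\tau,\C^\times)$ vanishes for each group in the list of (3) — and, upstream of this, proving that the intertwiners attached to $\Wta$ act as scalars. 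This is a delicate local analysis of the rank-one intertwiners precisely at the degenerate value $\tau(\alpha^\vee)=1$, and it is what underlies both the isomorphism $E\cong\C[R_\tau]$ and, through it, the self-generation used in (1) and the dimension count in (4).
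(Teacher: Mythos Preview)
Your overall architecture is reasonable, but there is a genuine gap in your treatment of (2), and it propagates to (1) and (4).

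You propose to build $A_w\in E$ for every $w\in W_\tau$ by composing elementary edge intertwiners along reduced words, and then argue that $A_w$ is a scalar for $w\in\Wta$. This does not work as stated: for $s\in\SCC$ with $\tau(\alpha_s^\vee)=1$ (that is, $s\in\RCC_{(\tau)}$), the standard intertwiner $F_s=B_s+\zeta_s$ has a \emph{pole} at $\tau$ since $\zeta_s^{\mathrm{den}}(\tau)=0$, so there is no edge map to compose. The paper's resolution is the opposite of what you suggest: the $\Wta$-part does not produce scalars in $E$; it does not contribute to $I_\tau(\tau)$ at all. One replaces $F_r$ by $K_r:=F_r-\zeta_{\alpha_r^\vee}$ for $r\in\SCC_\tau$, which \emph{is} regular at $\tau$ but whose evaluation lands in the \emph{generalized} weight space $I_\tau(\tau,\mathrm{gen})$ rather than $I_\tau(\tau)$. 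The key structural input you are missing is the decomposition $I_\tau(\tau,\mathrm{gen})=\bigoplus_{w_R\in R_\tau}\psi_{w_R}\big(\KC_\tau(\tau).\vb_\tau\big)$ together with the hypothesis $I_\tau(\tau,\Wta)=\C\vb_\tau$ (proved here only for $|I|=2$, conjectured in general), which yields $I_\tau(\tau)=\bigoplus_{w_R\in R_\tau}\C\,F_{w_R}(\tau)\vb_\tau$. That $F_{w_R}$ is regular at $\tau$ for $w_R\in R_\tau$ uses the definition of $R_\tau$ as the subgroup of $W_\tau$ preserving $\Phi^\vee_{(\tau),+}$ (a complement, not the quotient $W_\tau/\Wta$ as you write): no coroot in $N_{\Phi^\vee}(w_R)$ lies in $\Phi^\vee_{(\tau)}$, so no pole appears. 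This structure is also what drives (1): both self-generation $M=J_M(I_\tau)$ and the saturation $J_{J(I_\tau)}=J$ are proved via the $K_{\underline{w}}$-description of $M(\tau,\mathrm{gen})$, not by the bare module-theoretic dictionary. In (4) you conflate $I_\tau(\tau)$ (dimension $|R_\tau|$) with $I_\tau(\tau,\mathrm{gen})$ (dimension $|W_\tau|$); the correct count is $\dim(I_\tau/M)(\tau)=1$, hence $\dim(I_\tau/M)(\tau,\mathrm{gen})=|\Wta|$, summed over $|W^v/W_\tau|$ orbit representatives.

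Your cocycle idea for (2) is a genuinely different route and would work once the $\psi_{w_R}$ are properly constructed: one has $\psi_{v_R}\psi_{w_R}\in\C^*\psi_{v_Rw_R}$, and $H^2(R_\tau,\C^\times)=0$ for each group in the list. The paper instead normalizes directly, setting $F'_s=F_s\zeta_s^{-1}$ so that $(F'_s)^2=1$ and, under $\sigma_s=\sigma_s'=\sigma$, the braid relations hold in $\AC(T_\C)$, giving a group map $W^v\to\AC(T_\C)^\times$ before any evaluation. Note that your cohomological argument needs the list (3) as input, so it forces the order (3) before (2), contrary to your stated plan.
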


We conjecture that for the assumption on the size of the Kac-Moody matrix is useless for the points (1), (2) and (4). 

\paragraph{Frameworks}
Actually, following \cite{bardy2016iwahori} we study Iwahori-Hecke algebras associated to abstract masures. In particular our results also apply when $G$ is an almost-split Kac-Moody group over a non-Archimedean local field. In this case, most of the results of this introduction are true but the formulas are more complicated (they are given in the paper). Point (2) of Theorem~\ref{thm*_UC} can fail.  In sections~\ref{secWeighted_representations} and \ref{secRegular_representations} we work over a field $\FC$ which can be different from $\C$.

The paper is organized as follows. 

In section~\ref{secIH algebras}, we recall the definition of the Iwahori-Hecke algebras and of the principal series representations.

In section~\ref{secWeighted_representations}, we introduce the weighted representations and study the links between the weighted representations of $\HC_\C$ and those of $\AC_\C$.

In section~\ref{secRegular_representations}, we study $I_\tau$, for $\tau\in T_\C$ regular and prove Theorem~\ref{thm*_decomposition_regular_tau}.

In section~\ref{secTau_in_UC}, we study $I_\tau$, for $\tau\in \UC_\C$ and prove Theorem~\ref{thm*_UC}.

\paragraph{Funding}
The author was supported by the ANR grant ANR-15-CE40-0012.

\tableofcontents

\section{Iwahori-Hecke algebras}\label{secIH algebras}\label{secIH_algebras}
Let $G$ be a Kac-Moody group over a non-archimedean local field. Then Gaussent and Rousseau constructed a space $\I$, called a masure on which $G$ acts, generalizing the construction of the Bruhat-Tits buildings (see \cite{gaussent2008kac}, \cite{rousseau2016groupes} and \cite{rousseau2017almost}). In \cite{bardy2016iwahori} Bardy-Panse, Gaussent and Rousseau attached an Iwahori-Hecke algebra $\HC_\RC$ to each  masure satisfying certain conditions and to each ring $\RC$. They in particular attach an Iwahori-Hecke algebra to each almost-split Kac-Moody group over a local field. The algebra $\HC_\RC$ is an algebra of functions defined on some pairs of chambers of the masure, equipped with a convolution product. Then they prove that under some additional hypothesis on the ring $\RC$ (which are satisfied by $\R$ and $\C$),  $\HC_\RC$ admits a Bernstein-Lusztig presentation. In this paper, we will only use the Bernstein-Lusztig presentation of $\HC_\RC$ and we do not introduce masures. We however introduce the standard apartment of a masure. We restrict our study to the case where $\RC=\FC$ is a field. 

\subsection{Standard apartment of a masure}\label{subRootGenSyst}
A \textbf{ Kac-Moody matrix} (or { generalized Cartan matrix}) is a square matrix $A=(a_{i,j})_{i,j\in I}$ indexed by a finite set $I$, with integral coefficients, and such that :
\begin{enumerate}
\item[\tt $(i)$] $\forall \ i\in I,\ a_{i,i}=2$;

\item[\tt $(ii)$] $\forall \ (i,j)\in I^2, (i \neq j) \Rightarrow (a_{i,j}\leq 0)$;

\item[\tt $(iii)$] $\forall \ (i,j)\in I^2,\ (a_{i,j}=0) \Leftrightarrow (a_{j,i}=0$).
\end{enumerate}
A \textbf{root generating system} is a $5$-tuple $\mathcal{S}=(A,X,Y,(\alpha_i)_{i\in I},(\alpha_i^\vee)_{i\in I})$\index{$\mathcal{S}$} made of a Kac-Moody matrix $A$ indexed by the finite set $I$, of two dual free $\Z$-modules $X$ and $Y$ of finite rank, and of a free family $(\alpha_i)_{i\in I}$ (respectively $(\alpha_i^\vee)_{i\in I}$) of elements in $X$ (resp. $Y$) called \textbf{simple roots} (resp. \textbf{simple coroots}) that satisfy $a_{i,j}=\alpha_j(\alpha_i^\vee)$ for all $i,j$ in $I$. Elements of $X$ (respectively of $Y$) are called \textbf{characters} (resp. \textbf{cocharacters}).

Fix such a root generating system $\mathcal{S}=(A,X,Y,(\alpha_i)_{i\in I},(\alpha_i^\vee)_{i\in I})$ and set $\A:=Y\otimes \R$\index{$\A$}. Each element of $X$ induces a linear form on $\A$, hence $X$ can be seen as a subset of the dual $\A^*$. In particular, the $\alpha_{i}$'s (with $i \in I$) will be seen as linear forms on $\A$. This allows us to define, for any $i \in I$, an involution $r_{i}$ of $\A$ by setting $r_{i}(v) := v-\alpha_i(v)\alpha_i^\vee$ for any $v \in \A$. Let $\SCC=\{r_i|i\in I\}$\index{$\SCC$} be the (finite) set of \textbf{simple reflections}.  One defines the \textbf{Weyl group of $\mathcal{S}$} as the subgroup $W^{v}$\index{$W^v$} of $\mathrm{GL}(\A)$ generated by $\SCC$. The pair $(W^{v}, \SCC)$ is a Coxeter system, hence we can consider the length $\ell(w)$ with respect to $\SCC$ of any element $w$ of $W^{v}$. If $s\in \SCC$, $s=r_i$ for some unique $i\in I$. We set $\alpha_s=\alpha_i$ and $\alpha_s^\vee=\alpha_i^\vee$.

The following formula defines an action of the Weyl group $W^{v}$ on $\A^{*}$:  
\[\displaystyle \forall \ x \in \A , w \in W^{v} , \alpha \in \A^{*} , \ (w.\alpha)(x):= \alpha(w^{-1}.x).\]
Let $\Phi:= \{w.\alpha_i|(w,i)\in W^{v}\times I\}$\index{$\Phi,\Phi^\vee$} (resp. $\Phi^\vee=\{w.\alpha_i^\vee|(w,i)\in W^{v}\times I\}$) be the set of \textbf{real roots} (resp. \textbf{real coroots}): then $\Phi$ (resp. $\Phi^\vee$) is a subset of the \textbf{root lattice} $Q_\Z:= \displaystyle \bigoplus_{i\in I}\Z\alpha_i$ (resp. \textbf{coroot lattice} $Q^\vee_\Z=\bigoplus_{i\in I}\Z\alpha_i^\vee$). By \cite[1.2.2 (2)]{kumar2002kac}, one has $\R \alpha^\vee\cap \Phi^\vee=\{\pm \alpha^\vee\}$ and $\R \alpha\cap \Phi=\{\pm \alpha\}$ for all $\alpha^\vee\in \Phi^\vee$ and $\alpha\in \Phi$.

As in the reductive case, define the \textbf{fundamental chamber} as $C_{f}^{v}:= \{v\in \A \ \vert \ \forall s \in \SCC,  \alpha_s(v)>0\}$. 

 Let $\mathcal{T}:= \displaystyle \bigcup_{w\in W^{v}} w.\overline{C^{v}_{f}}$\index{$\T$} be the \textbf{Tits cone}. This is a convex cone (see \cite[1.4]{kumar2002kac}).
 
One sets $Y^+=Y\cap \T$.

\begin{remark}
By \cite[§4.9]{kac1994infinite} and \cite[§ 5.8]{kac1994infinite} the following conditions are equivalent:\begin{enumerate}
\item the Kac-Moody matrix $A$ is of finite type (i.e. is a Cartan matrix),

\item $\A=\T$

\item $W^v$ is finite.
\end{enumerate}
\end{remark}

\subsection{Recalls on Coxeter groups }\label{subReflection_subgroups}

\subsubsection{Bruhat order}

Let $(W_0,\SCC_0)$ be a Coxeter system. We equip it with the Bruhat order $\leq_{W_0}$ (see \cite[Definition 2.1.1]{bjorner2005combinatorics}). We have the following characterization (see \cite[Corollary 2.2.3]{bjorner2005combinatorics}): let $u,w\in W_0$. Then $u\leq_{W_0} w$ if and only if every reduced expression for $w$ has a subword that is a reduced
expression for $u$ if and only if there exists a reduced expression for $w$ whose subword is a reduced expression for $u$. By \cite[Proposition 2.2.9]{bjorner2005combinatorics}, $(W_0,\leq_{W_0})$ is a directed poset, i.e for every finite set $E\subset W_0$, there exists $w\in W_0$ such that $v\leq_{W_0} w$ for all $v\in E$. 

We write $\leq$ instead of $\leq_{W^v}$. For $u,v\in W^v$, we denote by $[u,v]$, $[u,v)$, $\ldots$ the sets $\{w\in W^v|u\leq w\leq v\}$, $\{w\in W^v|u\leq w <v\}$, $\ldots$. 

\subsubsection{Reflections  and coroots}

Let $\RCC=\{wsw^{-1}|w\in W^v, s\in \SCC\}$\index{$\RCC$} be the set of \textbf{reflections} of $W^v$. Let  $r\in \RCC$. Write $r=wsw^{-1}$, where $w\in W^v$, $s\in \SCC$ and $ws>w$ (which is possible because if $ws<w$, then $r=(ws)s(ws)^{-1}$). Then one sets $\alpha_r=w.\alpha_s\in \Phi_+$\index{$\alpha_r,\alpha_r^\vee$} (resp. $\alpha_r^\vee=w.\alpha_s^\vee\in\Phi^\vee_+$). This is well defined by the lemma below.

\begin{lemma}\label{lemDefinition_root_reflection} (see \cite[Lemma 2.2]{hebert2018principal})
Let $w,w'\in W^v$ and $s,s'\in \SCC$ be such that $wsw^{-1}=w's'w'^{-1}$ and $ws>w$, $w's'>w'$. Then $w.\alpha_s=w'.\alpha_{s'}\in \Phi_+$ and $w.\alpha_s^\vee=w'.\alpha_{s'}^\vee\in \Phi^\vee_+$.
\end{lemma}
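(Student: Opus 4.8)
The plan is to identify the reflection $r:=wsw^{-1}=w's'w'^{-1}$ with its action on $\A$ and to read off both the root and the coroot directly from that action. First I would record the action of a conjugate of a simple reflection: writing $\beta=w.\alpha_s$ and $\beta^\vee=w.\alpha_s^\vee$, a short computation using $r_i(v)=v-\alpha_i(v)\alpha_i^\vee$ together with the definition $(w.\alpha)(x)=\alpha(w^{-1}.x)$ gives
\[ wsw^{-1}(v)=v-(w.\alpha_s)(v)\,(w.\alpha_s^\vee)=v-\beta(v)\beta^\vee\qquad\text{for all }v\in\A. \]
Since the geometric (reflection) representation of the Coxeter system $(W^v,\SCC)$ on $\A$ is faithful, the hypothesis $wsw^{-1}=w's'w'^{-1}$ is an equality of linear operators; setting $\beta'=w'.\alpha_{s'}$ and $\beta'^\vee=w'.\alpha_{s'}^\vee$ this yields
\[ \beta(v)\beta^\vee=\beta'(v)\beta'^\vee\qquad\text{for all }v\in\A. \]

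Next I would extract the coroots. As $\beta$ is a root it is nonzero, so I pick $v_0\in\A$ with $\beta(v_0)\neq 0$; then $\beta^\vee=\tfrac{\beta'(v_0)}{\beta(v_0)}\beta'^\vee$, so $\beta^\vee$ and $\beta'^\vee$ are proportional nonzero elements of $\Phi^\vee$. By the fact $\R\alpha^\vee\cap\Phi^\vee=\{\pm\alpha^\vee\}$ recalled above (from \cite[1.2.2 (2)]{kumar2002kac}), this forces $\beta^\vee=\varepsilon\beta'^\vee$ for some $\varepsilon\in\{\pm 1\}$. Substituting back into the displayed identity and using $\beta'^\vee\neq 0$ gives $\varepsilon\beta(v)=\beta'(v)$ for every $v\in\A$, hence $\beta'=\varepsilon\beta$ as well. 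Thus the whole statement reduces to showing $\varepsilon=+1$.

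Finally I would pin down the sign using the length hypotheses. The key input is the standard criterion for Coxeter systems (see e.g.\ \cite{bjorner2005combinatorics}): for $w\in W^v$ and $s\in\SCC$ one has $ws>w$ if and only if $w.\alpha_s\in\Phi_+$. Applying this to $ws>w$ and to $w's'>w'$ gives $\beta=w.\alpha_s\in\Phi_+$ and $\beta'=w'.\alpha_{s'}\in\Phi_+$. Since $\Phi_+\cap(-\Phi_+)=\emptyset$, the case $\varepsilon=-1$ would make the positive root $\beta'$ equal to the negative root $-\beta$, a contradiction; hence $\varepsilon=+1$, so $\beta=\beta'$ and $\beta^\vee=\beta'^\vee$, lying in $\Phi_+$ and $\Phi^\vee_+$ respectively, which is exactly the claim. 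I expect the only delicate points to be the bookkeeping in the reflection-action computation and invoking the length-positivity criterion consistently with the paper's sign conventions; both become routine once faithfulness of the geometric representation is used to convert the group-theoretic equality into the linear identity $\beta(v)\beta^\vee=\beta'(v)\beta'^\vee$.
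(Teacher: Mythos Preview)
Your proof is correct. The paper itself does not prove this lemma but simply defers to \cite[Lemma 2.2]{hebert2018principal}; your argument supplies a complete and standard self-contained proof, using exactly the ingredients the paper already records (the reflection formula $r(x)=x-\alpha_r(x)\alpha_r^\vee$, the fact $\R\alpha^\vee\cap\Phi^\vee=\{\pm\alpha^\vee\}$, and the length--positivity criterion from \cite{kumar2002kac}). One small remark: you invoke faithfulness of the geometric representation, but here this is automatic since $W^v$ is by definition a subgroup of $\mathrm{GL}(\A)$; also the length criterion you cite is stated in the paper for coroots (via \cite[1.3.13]{kumar2002kac}) rather than roots, but the two versions are equivalent and either suffices for your sign argument.
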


\begin{lemma}\label{lemKumar1.3.11}(see \cite[Lemma 2.3]{hebert2018principal})
Let $r,r'\in \RCC$ and $w\in W^v$ be such that $w.\alpha_r=\alpha_{r'}$ or $w.\alpha_r^\vee=\alpha_{r'}^\vee$. Then $wrw^{-1}=r'$.
\end{lemma}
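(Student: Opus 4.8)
The plan is to reduce the statement to the assertion that a reflection is determined by its associated positive root (equivalently, by its associated positive coroot), and to prove the latter using a transvection together with the finiteness of real root strings and the faithfulness of the geometric representation.

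First I would record the explicit formula for a conjugate reflection. Since $r(v)=v-\alpha_r(v)\alpha_r^\vee$ for all $v\in\A$, the $W^v$-equivariance of the pairing gives
\[wrw^{-1}(v)=v-(w.\alpha_r)(v)\,(w.\alpha_r^\vee)\quad\text{for all }v\in\A.\]
Hence $t:=wrw^{-1}\in\RCC$ is the reflection whose root is $w.\alpha_r$ and whose coroot is $w.\alpha_r^\vee$. Under the first hypothesis $w.\alpha_r=\alpha_{r'}\in\Phi_+$, so $w.\alpha_r$ (resp. $w.\alpha_r^\vee$) is exactly the positive root (resp. coroot) attached to $t$ by Lemma~\ref{lemDefinition_root_reflection}; in particular $t$ and $r'$ share the positive root $\alpha_{r'}$. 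The two hypotheses being symmetric under exchanging $X\leftrightarrow Y$ and $\Phi\leftrightarrow\Phi^\vee$, I would treat the root case and obtain the coroot case by the mirror argument.

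It then remains to show $t=r'$, i.e. that $\delta^\vee:=\alpha_{r'}^\vee-w.\alpha_r^\vee$ vanishes. I would consider $g:=r't\in W^v$. Using $\alpha_{r'}(\alpha_{r'}^\vee)=\alpha_{r'}(w.\alpha_r^\vee)=2$, a direct computation shows that $g$ acts on $\A$ by the transvection $v\mapsto v+\alpha_{r'}(v)\,\delta^\vee$ and on $\A^*$ by $\lambda\mapsto\lambda-\lambda(\delta^\vee)\,\alpha_{r'}$, with $\alpha_{r'}(\delta^\vee)=0$; in particular $g$ fixes $\alpha_{r'}$, so $g^n.\lambda=\lambda-n\lambda(\delta^\vee)\,\alpha_{r'}$ for every $\lambda\in\A^*$ and $n\in\Z$. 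If some real root $\lambda$ satisfied $\lambda(\delta^\vee)\neq0$, the roots $g^n.\lambda$ would be infinitely many distinct points of $\Phi$ on the line $\lambda+\R\alpha_{r'}$, contradicting the finiteness of $\Phi\cap(\lambda+\R\alpha_{r'})$ for the real root $\alpha_{r'}$ (\cite{kac1994infinite}). Thus $\lambda(\delta^\vee)=0$ for every root $\lambda$; in particular $\alpha_i(\delta^\vee)=0$ for all $i$, so $g$ fixes the root-lattice span $\bigoplus_i\R\alpha_i$ pointwise. Since the $W^v$-action on this span is the geometric representation, which is faithful (\cite{kac1994infinite}), we get $g=1$; as $g(\alpha_{r'}^\vee)=\alpha_{r'}^\vee+2\delta^\vee$, this forces $\delta^\vee=0$, hence $t=r'$.

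The step I expect to be the main obstacle is precisely the vanishing of $\delta^\vee$: knowing $w.\alpha_r=\alpha_{r'}$ only tells us that $t$ and $r'$ have the same fixed hyperplane, and a reflection in $\mathrm{GL}(\A)$ is not determined by its hyperplane alone. Two reflections sharing a root may a priori differ by a coroot lying in the $W^v$-fixed subspace $\A^{W^v}$, which is nonzero as soon as the Kac-Moody matrix is degenerate (e.g. in affine type). The fact $\R\alpha^\vee\cap\Phi^\vee=\{\pm\alpha^\vee\}$ recalled above only excludes the proportional case, so the genuine content lies in excluding a nonzero fixed-space direction; this is exactly what the root-string argument (to kill all non-fixed directions) combined with the faithfulness of the geometric representation (to kill the fixed direction) is designed to achieve. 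For the coroot hypothesis I would run the same argument on $\A$, using finiteness of strings along the real coroot $\alpha_{r'}^\vee$ and faithfulness of the $W^v$-action on the coroot-lattice span.
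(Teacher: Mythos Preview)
Your argument is correct and self-contained, though it differs from the route the paper implicitly takes. The paper gives no proof here, deferring to \cite[Lemma~2.3]{hebert2018principal}, which in turn rests on \cite[1.3.11~Theorem]{kumar2002kac}. That argument is chamber-geometric: after reducing to the case $u.\alpha_s=\alpha_{s'}$ with $s,s'\in\SCC$, one observes that $usu^{-1}$ fixes the hyperplane $\ker(\alpha_{s'})$ and hence fixes any point in the relative interior of the panel $\overline{C^v_f}\cap\ker(\alpha_{s'})$; since the $W^v$-stabiliser of such a point is $\{1,s'\}$, one concludes $usu^{-1}=s'$ directly, without ever comparing coroots.

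Your approach stays purely linear-algebraic: the transvection computation for $g=r't$ together with finiteness of real root strings forces $\alpha_i(\delta^\vee)=0$ for all $i$, and faithfulness of $W^v$ on $\bigoplus_i\R\alpha_i$ then gives $g=1$. Two small comments. First, calling that action ``the geometric representation'' is slightly imprecise---it is governed by the Cartan entries $a_{ij}$ rather than the Coxeter numbers $-2\cos(\pi/m_{ij})$---but the faithfulness you need is immediate from the root analogue of Lemma~\ref{lemKumar_1.3.14}: if $g$ fixes every simple root it fixes every real root, so $\{\alpha\in\Phi_+:g.\alpha\in\Phi_-\}=\emptyset$ and $\ell(g)=0$. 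Second, once $g=1$ you already have $t=r'$, so the closing remark about $g(\alpha_{r'}^\vee)$ is redundant. The Tits-cone proof is shorter and needs only the stabiliser description of a panel; yours avoids chamber geometry entirely at the cost of invoking root-string finiteness.
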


Let $r\in \RCC$. Then for all $x \in \A$, one has: \[r(x)=x-\alpha_r(x)\alpha_r^\vee.\]  Let $\alpha^\vee\in \Phi^\vee$. One sets $r_{\alpha^\vee}=wsw^{-1}$\index{$r_{\alpha^\vee}$} where $(w,s)\in W^v\times \SCC$ is such that $\alpha^\vee=w.\alpha_s^\vee$. This is well defined, by Lemma~\ref{lemKumar1.3.11}. Thus $\alpha^\vee\mapsto r_{\alpha^\vee}$ and $r\mapsto \alpha_r^\vee$ induce bijections $\Phi^\vee_+\rightarrow \RCC$ and $\RCC\rightarrow \Phi^\vee_+$.

For $w\in W^v$, set $N_{\Phi^\vee}(w)=\{\alpha^\vee\in \Phi^\vee_+|w.\alpha^\vee\in \Phi^\vee_-\}$\index{$N_{\Phi^\vee}(w)$}.

\begin{lemma}\label{lemKumar_1.3.14}(\cite[Lemma 1.3.14]{kumar2002kac})
Let $w\in W^v$.  Then $|N_{\Phi^\vee}(w)|=\ell(w)$ and if $w=s_1\ldots s_r$ is a reduced expression, then $N_{\Phi^\vee}(w)=\{\alpha_{s_r}^\vee,s_r.\alpha_{s_{r-1}}^\vee,\ldots,s_r\ldots s_{2}.\alpha_{s_1}^\vee\}$. 
\end{lemma}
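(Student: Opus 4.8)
The plan is to argue by induction on $\ell(w)$, peeling off the last simple reflection of a reduced expression and tracking how $N_{\Phi^\vee}$ transforms. Two standard facts about the real coroot system $\Phi^\vee$ underlie the argument, and I would either invoke them from \cite{kumar2002kac} or re-derive them from the relation $\R\alpha^\vee\cap\Phi^\vee=\{\pm\alpha^\vee\}$ recalled above together with the dichotomy that every real coroot lies in $Q^\vee_\Z$ and is either a nonnegative or a nonpositive integer combination of the $\alpha_i^\vee$. The first fact is that, for $s=r_i\in\SCC$, one has $s.\alpha_s^\vee=-\alpha_s^\vee$ while $s$ permutes $\Phi^\vee_+\setminus\{\alpha_s^\vee\}$: indeed, for $\alpha^\vee\in\Phi^\vee_+\setminus\{\alpha_s^\vee\}$ the reflection $s$ alters only the $\alpha_i^\vee$-coordinate, and since $\alpha^\vee\notin\R\alpha_i^\vee$ it retains a strictly positive coordinate on some $\alpha_j^\vee$ with $j\neq i$, so $s.\alpha^\vee$ cannot be nonpositive and is therefore positive. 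The second fact is the sign criterion $\ell(ws)>\ell(w)\Leftrightarrow w.\alpha_s^\vee\in\Phi^\vee_+$.

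The core step is the recursion: for $s\in\SCC$ with $\ell(ws)=\ell(w)+1$, I claim $N_{\Phi^\vee}(ws)=\{\alpha_s^\vee\}\sqcup s.N_{\Phi^\vee}(w)$, a disjoint union. To see this I would split $\Phi^\vee_+$ into $\{\alpha_s^\vee\}$ and its complement. For $\alpha_s^\vee$ itself, $ws.\alpha_s^\vee=-w.\alpha_s^\vee$ is negative precisely because $w.\alpha_s^\vee\in\Phi^\vee_+$ by the sign criterion, so $\alpha_s^\vee\in N_{\Phi^\vee}(ws)$. For $\alpha^\vee\in\Phi^\vee_+\setminus\{\alpha_s^\vee\}$, the first fact gives $s.\alpha^\vee\in\Phi^\vee_+$, and $ws.\alpha^\vee\in\Phi^\vee_-$ holds if and only if $s.\alpha^\vee\in N_{\Phi^\vee}(w)$, i.e. if and only if $\alpha^\vee\in s.N_{\Phi^\vee}(w)$. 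Moreover $\alpha_s^\vee\notin N_{\Phi^\vee}(w)$ (again by the sign criterion), so $s.N_{\Phi^\vee}(w)\subset\Phi^\vee_+\setminus\{\alpha_s^\vee\}$ and the two pieces are genuinely disjoint. Passing to cardinalities yields $|N_{\Phi^\vee}(ws)|=|N_{\Phi^\vee}(w)|+1$.

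I would then run the induction on $r=\ell(w)$, starting from the trivial case $w=1$, where $N_{\Phi^\vee}(1)=\emptyset$ and $\ell(1)=0$. For a reduced expression $w=s_1\cdots s_r$, set $w'=s_1\cdots s_{r-1}$, so that $w=w's_r$ with $\ell(w)=\ell(w')+1$. The induction hypothesis supplies both $|N_{\Phi^\vee}(w')|=r-1$ and the explicit list $N_{\Phi^\vee}(w')=\{\alpha_{s_{r-1}}^\vee,\ldots,s_{r-1}\cdots s_2.\alpha_{s_1}^\vee\}$; applying the recursion with $s=s_r$, distributing $s_r.$ over this list, and adjoining $\alpha_{s_r}^\vee$ produces exactly the claimed set, while the cardinality count gives $|N_{\Phi^\vee}(w)|=r=\ell(w)$.

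The main obstacle is not the induction, which is essentially bookkeeping, but securing the two foundational facts in the Kac-Moody (rather than finite) setting: the permutation property of simple reflections on $\Phi^\vee_+$ rests on the positivity dichotomy for real coroots, and the sign criterion must be available for the possibly infinite Coxeter group $W^v$. Both are established in \cite{kumar2002kac}, so in practice I would cite them and reserve the real work for the recursion and the inductive bookkeeping.
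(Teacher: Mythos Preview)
Your proof is correct and follows the standard inductive argument. Note, however, that the paper does not actually give its own proof of this lemma: it is stated with a direct citation to \cite[Lemma 1.3.14]{kumar2002kac} and no argument is supplied. Your write-up is essentially the proof one finds in Kumar, so there is nothing to compare.
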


\subsection{Iwahori-Hecke algebras}\label{subIH algebras}
In this subsection, we give the definition of  the Iwahori-Hecke algebra via its Bernstein-Lusztig presentation.
\subsubsection{The algebra $\AC(T_\FC)$}\label{subsubAlgebra_H(T_F)}

Let  $\RC_1=\Z[(\sigma_s)_{s\in \SCC},(\sigma'_s)_{s\in \SCC}]$, where $(\sigma_s)_{s\in \SCC}, (\sigma'_s)_{s\in \SCC}$ are two families of indeterminates satisfying the following relations:
\begin{itemize}
\item if $\alpha_{s}(Y) = \Z$, then $\sigma_{s} = \sigma'_{s}$\index{$\sigma_s,\sigma_s'$};  
\item if $s,t \in \SCC$ are conjugate (i.e. such that $\alpha_{s}(\alpha^{\vee}_{t}) = \alpha_{t}(\alpha^{\vee}_{s}) = -1$), then $\sigma_{s}=\sigma_{t}=\sigma'_{s}=\sigma'_{t}$.
\end{itemize}

\begin{definition} Let $\FC$ be a field of characteristic $0$ and $f:\RC_1\rightarrow \FC$ be a morphism such that $f(\sigma_s),f(\sigma_s')\in \FC^*$, for every $s\in \SCC$. We write $\sigma_s$ or $\sigma'_s$ instead of $f(\sigma_s)$, $f(\sigma_s')$.  Let $\HC_{W^v,\FC}$ be the \textbf{Hecke algebra of the Coxeter group $W^v$ over $\FC$}, that is: \begin{itemize}

\item  as a vector space, $\HC_{W^v,\FC}=\bigoplus_{w\in W^v} \FC H_w$, where $H_w$, $w\in W^v$ are symbols,

\item $\forall \ s \in \SCC, \forall \ w \in W^{v}$, $H_{s}*H_{w}=\left\{\begin{aligned} & H_{sw} &\mathrm{\ if\ }\ell(sw)=\ell(w)+1\\ & (\sigma_{s}-\sigma_{s}^{-1}) H_{w}+H_{s w} &\mathrm{\ if\ }\ell(sw)=\ell(w)-1 .\end{aligned}\right . \ $
\end{itemize}

\end{definition}

Let $\FC[Y]$ be the group algebra of $Y$ over $\FC$, that is:\begin{itemize}
\item as a vector space, $\FC[Y]=\bigoplus_{\lambda\in Y} \FC Z^\lambda$, where the $Z^\lambda$, $\lambda\in Y$ are symbols,

\item for all $\lambda,\mu\in Y$, $Z^\lambda*Z^\mu=Z^{\lambda+\mu}$.
\end{itemize}

 We denote by $\FC(Y)$ its field of fractions. For $\theta=\frac{\sum_{\lambda\in Y}{a_\lambda Z^\lambda}}{\sum_{\lambda\in Y} b_\lambda Z^\lambda}\in \FC(Y)$ and $w\in W^v$, set ${^w}\theta:=\frac{\sum_{\lambda\in Y}a_\lambda Z^{w.\lambda}}{\sum_{\lambda\in Y}b_\lambda Z^{w.\lambda}}$. 

\medskip

Let $\AC(T_\FC)$ be the algebra defined as follows: \begin{itemize}
\item as a vector space, $\AC(T_\FC)= \FC(Y)\otimes \HC_{W^v, \FC}$ (we write $\theta*h$ instead of $\theta\otimes h$ for $\theta\in \FC(Y)$ and $h\in \HC_{W^v,\FC}$),

\item $\AC(T_\FC)$ is equipped with the unique product $*$ which turns it into an associative algebra and such that, for  $\theta\in \FC(Y)$ and $s\in \SCC$, one has: \[H_{s}*\theta-{^s\theta}*H_{s} =Q_s(Z)(\theta-{^s\theta}),\] where $Q_s(Z)=\frac{(\sigma_s-\sigma_s^{-1})+(\sigma_s'-\sigma_s'^{-1})Z^{-\alpha_s^\vee}}{1-Z^{-2\alpha_s^\vee}}$\index{$Q_s(Z)$}. 

\end{itemize}

By \cite[Proposition 2.10]{hebert2018principal}, such an algebra exists and is unique.

\subsubsection{The Bernstein-Lusztig Hecke algebra and the Iwahori-Hecke algebra}

Let $C^v_f=\{x\in \A|\alpha_i(x)>0\forall i\in I\}$, $\T=\bigcup_{w\in W^v} w.\overline{C}^v_f$ be the \textbf{Tits cone} and $Y^+=Y\cap \T$.

\begin{definition}\label{defBernstein-Lusztig algebra}
 Let $\FC$ be a field of characteristic $0$ and $f:\RC_1\rightarrow \FC$ be a morphism such that $f(\sigma_s),f(\sigma_s')\in \FC^*$, for every $s\in \SCC$. The \textbf{Bernstein-Lusztig-Hecke algebra of } $(\A,(\sigma_s)_{s\in \SCC},(\sigma'_s)_{s\in \SCC})$ over $\FC$ is the subalgebra $\AC_\FC=\bigoplus_{\lambda\in Y,w\in W^v}\FC Z^\lambda*H_w=\bigoplus_{\lambda\in Y,w\in W^v}\FC  H_w* Z^\lambda$ of $\AC(T_\FC)$. The \textbf{Iwahori-Hecke algebra of $(\A,(\sigma_s)_{s\in \SCC},(\sigma'_s)_{s\in \SCC})$ over $\FC$} is the subalgebra $\HC_\FC=\bigoplus_{\lambda\in Y^+,w\in W^v}\FC Z^\lambda*H_w=\bigoplus_{\lambda\in Y^+,w\in W^v}\FC Z^\lambda*H_w$ of $\AC_\FC$. Note that for $G$ reductive, we recover the usual Iwahori-Hecke algebra of $G$, since $ \T=\A$.
\end{definition}

\begin{remark}\label{remIH algebre dans le cas KM deploye}

\begin{enumerate}
\item The algebra $\AC_\FC$  was first defined in \cite[Theorem 6.2]{bardy2016iwahori} without defining $\AC(T_\FC)$. Let $\mathcal{K}$ be a non-Archimedean local field and $q$ be its residue cardinal. Let $\mathbf{G}$ be the minimal Kac-Moody group associated with $\mathcal{S}=(A,X,Y,(\alpha_i)_{i\in I},(\alpha_i^\vee)_{i\in I})$\index{$\mathcal{S}$} and $G=\mathbf{G}(\mathcal{K})$ (see \cite[Section 8]{remy2002groupes} or \cite{tits1987uniqueness} for the definition). Let $\FC$ to be a field containing $\Z[\sqrt{q}^{\pm 1}]$ and  take $f(\sigma_s)=f(\sigma'_s)=\sqrt{q}$ for all $s\in \SCC$. Then $\HC_\FC$ is the Iwahori-Hecke algebra of $G$ (see \cite[Definition 2.5 and 6.6 Proposition]{bardy2016iwahori}). In the case where $G$ is an untwisted affine Kac-Moody group, these algebras were introduced in \cite{braverman2016iwahori}. Note also  that our frameworks is more general than the one of split Kac-Moody groups over local fields. It enables for example to study the Iwahori-Hecke algebras associated to almost split Kac-Moody groups over local fields, as in \cite{bardy2016iwahori}. In this case we do not have necessarily $\sigma_s=\sigma_s'=\sigma_t=\sigma_t'$ for all $s,t\in \SCC$. Most of our results remain true in this case (the only result where we need such an assumption is Proposition~\ref{propDescription_endomorphism_algebra}) but the formulas are slightly more complicated.

\item Let $s\in \SCC$. Then if $\sigma_s=\sigma'_s$, $Q_s(Z)=\frac{(\sigma_s-\sigma_s^{-1})}{1-Z^{-\alpha_s^\vee}}$.

\item\label{itPolynomiality_Bernstein_Lusztig} Let $s\in \SCC$ and $\theta\in \FC[Y]$. Then $Q_s(Z)(\theta-{^s\theta})\in \FC[Y]$ and if moreover $\theta\in \FC[Y^+]$, then $Q_s(Z)(\theta-{^s\theta})\in \FC[Y^+]$. Indeed, let $\lambda\in Y$. Then $Q_s(Z)(Z^\lambda-Z^{s.\lambda})=Q_s(Z).Z^\lambda(1-Z^{-\alpha_s(\lambda)\alpha_s^\vee})$. Assume that $\sigma_s=\sigma_s'$. Then \[ \frac{1-Z^{-\alpha_s(\lambda)\alpha_s^\vee}}{1-Z^{-\alpha_s^\vee}}=\left\{\begin{aligned} &\sum_{j=0}^{\alpha_s(\lambda)-1}Z^{-j\alpha_s^\vee}  &\mathrm{\ if\ }\alpha_s(\lambda)\geq 0\\ & -Z^{\alpha_s^\vee}\sum_{j=0}^{-\alpha_s(\lambda)-1}Z^{j\alpha_s^\vee}  &\mathrm{\ if\ }\alpha_s(\lambda)\leq 0,\end{aligned}\right.\] and thus $Q_s(Z)(Z^\lambda-Z^{s.\lambda})\in \FC[Y]$. Assume $\sigma_s'\neq \sigma_s$. Then $\alpha_s(Y)=2\Z$ and a similar computation enables to conclude. In particular, $\AC_\FC$ and $\HC_\FC$ are subalgebras of $\AC(T_\FC)$.

\end{enumerate}

\end{remark}

\begin{lemma}\label{lemCommutation relation}(see \cite[Lemma 2.8]{hebert2018principal})
Let $\theta\in \FC[Y]$ and $w\in W^v$. Then $\theta* H_w -H_w* \theta^{w^{-1}}\in \AF^{<w}:=\bigoplus_{v<w} H_v\FC[Y]$.  In particular, $\AF^{\leq w}:=\bigoplus_{v\leq w}H_v\C[Y]$ is a left finitely generated $\FC[Y]$-submodule of $\AF$.
\end{lemma}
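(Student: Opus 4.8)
The plan is to prove the displayed containment by induction on $\ell(w)$, and then to deduce the ``in particular'' assertion from it. The engine is the defining relation $H_s*\theta-{}^s\theta*H_s=Q_s(Z)(\theta-{}^s\theta)$. Substituting ${}^s\theta$ for $\theta$ in it (and using ${}^s({}^s\theta)=\theta$) gives the commutation formula past a single simple generator:
\[
\theta*H_s=H_s*{}^s\theta+r,\qquad r:=Q_s(Z)(\theta-{}^s\theta).
\]
By Remark~\ref{remIH algebre dans le cas KM deploye}(\ref{itPolynomiality_Bernstein_Lusztig}) the correction term $r$ lies in $\FC[Y]$; this is the key point making the error ``polynomial''. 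For the base case $\ell(w)=0$ we have $w=1$, $\theta*H_1=H_1*\theta^{1^{-1}}$ and $\AF^{<1}=\{0\}$, so there is nothing to prove.

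For the inductive step I would write $w=sw'$ with $s\in\SCC$ and $\ell(w)=\ell(w')+1$, so that $H_w=H_s*H_{w'}$. Then
\[
\theta*H_w=(H_s*{}^s\theta+r)*H_{w'}=H_s*({}^s\theta*H_{w'})+r*H_{w'}.
\]
Applying the induction hypothesis to ${}^s\theta$ and $w'$ gives ${}^s\theta*H_{w'}=H_{w'}*({}^s\theta)^{w'^{-1}}+E$ with $E\in\AF^{<w'}$; since $w^{-1}=w'^{-1}s$ we have $({}^s\theta)^{w'^{-1}}={}^{w'^{-1}s}\theta=\theta^{w^{-1}}$, and $H_s*H_{w'}=H_w$, so
\[
\theta*H_w-H_w*\theta^{w^{-1}}=H_s*E+r*H_{w'}.
\]
It then remains to show both summands lie in $\AF^{<w}$. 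For $r*H_{w'}$, apply the induction hypothesis to $r\in\FC[Y]$ and $w'$ to land in $H_{w'}\FC[Y]+\AF^{<w'}\subseteq\AF^{<w}$, using $w'<w$ and transitivity of the Bruhat order. For $H_s*E$, expand $E=\sum_{v<w'}H_v\eta_v$ with $\eta_v\in\FC[Y]$ and use the Hecke relation, which gives $H_s*H_v\in H_v\FC+H_{sv}\FC$ in both cases $\ell(sv)=\ell(v)\pm1$.

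The one genuinely combinatorial point — and the step I expect to be the main obstacle — is to check that the indices surviving in $H_s*E$ stay strictly below $w$: that $v<w'$ together with $sw'>w'$ forces both $v<w$ and $sv<w$. I would argue this from the subword characterization of the Bruhat order recalled in Subsection~\ref{subReflection_subgroups}. Fix a reduced expression $w'=s_1\cdots s_k$, so that $w=s\,s_1\cdots s_k$ is reduced of length $k+1$. The inclusion $v<w$ is immediate from $v<w'<w$. For $sv$: if $sv>v$, then prepending $s$ to a reduced subword of $w'$ realizing $v$ yields a subword of $w$ realizing $sv$, and the length count $\ell(sv)=\ell(v)+1\le k<\ell(w)$ excludes $sv=w$, so $sv<w$; if $sv<v$, then $sv<v<w'<w$ by transitivity. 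Granting this, $H_s*H_v\eta_v\in H_v\FC[Y]+H_{sv}\FC[Y]\subseteq\AF^{<w}$, hence $H_s*E\in\AF^{<w}$, which closes the induction.

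Finally, for the ``in particular'' statement I would use the main identity in both directions to identify $\AF^{\le w}=\bigoplus_{v\le w}H_v\FC[Y]$ with $\sum_{v\le w}\FC[Y]*H_v$. The identity gives $\theta*H_v\in H_v\FC[Y]+\AF^{<v}\subseteq\AF^{\le w}$ for every $v\le w$, so $\sum_{v\le w}\FC[Y]*H_v\subseteq\AF^{\le w}$; symmetrically, substituting ${}^v\theta$ in the main identity yields $H_v*\theta\in\FC[Y]*H_v+\AF^{<v}$, whence a downward induction on the Bruhat order gives $\AF^{\le w}\subseteq\sum_{v\le w}\FC[Y]*H_v$. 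Since the Bruhat interval $\{v\mid v\le w\}$ is finite, this exhibits $\AF^{\le w}$ as a left $\FC[Y]$-submodule generated by the finite set $\{H_v\mid v\le w\}$, as claimed.
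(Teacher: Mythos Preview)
Your argument is correct. The paper does not give its own proof of this lemma; it simply cites \cite[Lemma 2.8]{hebert2018principal}, and your induction on $\ell(w)$ via the Bernstein--Lusztig relation is exactly the standard argument one expects in that reference. The Bruhat-order check that $v<w'=sw<w$ implies both $v<w$ and $sv<w$ is handled cleanly by your subword argument; the only cosmetic remark is that what you call a ``downward induction'' in the final paragraph is really an upward induction on $\ell(v)$ (you need the statement for all $u<v$ to absorb $\AF^{<v}$ into $\sum_{u<v}\FC[Y]*H_u$), but the mathematics is right.
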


\subsection{Principal series representations}\label{subPrincipal series representations}
In this subsection, we introduce the principal series representations of $\AF$. 

We now fix $(\A,(\sigma_s)_{s\in \SCC},(\sigma'_s)_{s\in \SCC})$ as in Subsection~\ref{subIH algebras} and a field $\FC$ as in Definition~\ref{defBernstein-Lusztig algebra}. Let $\HC_\FC$ and $\AC_\FC$ be the Iwahori-Hecke and the Bernstein-Lusztig Hecke algebras of $(\A,(\sigma_s)_{s\in \SCC},(\sigma'_s)_{s\in \SCC})$  over $\FC$.

Let  $T_\FC= \Hom_{\mathrm{Gr}}(Y,\FC^*)$\index{$T_\FC$} be the group of group morphisms from $Y$ to $\FC^*$. Let $\tau\in T_\FC$. Then $\tau$ induces an algebra morphism $\tau:\FC[Y]\rightarrow \FC$ by the formula $\tau(\sum_{\lambda\in Y} a_\lambda  Z^\lambda)=\sum_{\lambda\in Y} a_\lambda \tau(\lambda)$, for $\sum a_\lambda Z^\lambda\in \FC[Y]$. This equips $\FC$ with the structure of an  $\FC[Y]$-module.

Let $I_\tau=\mathrm{Ind}^{\AC_\FC}(\tau)=\AC_\FC\otimes_{\FC[Y]} \FC$\index{$I_\tau$}. As a vector space, $I_\tau=\bigoplus_{w\in W^v} \FC \vb_\tau$, where $\vb_\tau$ is some symbol. The actions of $\AC_\FC$ on $I_\tau$ is as follows. Let $h=\sum_{w\in W^v} H_w P_w\in \AC_\FC$, where $P_w\in \FC[Y]$ for all $w\in W^v$. Then $h.\vb_\tau=\sum_{w\in W^v} \tau(P_w)H_w\vb_\tau$. In particular, $I_\tau$ is a principal $\AC_\FC$-module generated by $\vb_\tau$.

 We regard the elements of $\FC[Y]$ as polynomial functions on $T_\FC$ by setting: \[\tau(\sum_{\lambda\in Y}a_\lambda Z^\lambda)=\sum_{\lambda\in Y}a_\lambda\tau(\lambda),\] for all $(a_\lambda)\in \FC^{(Y)}$. The ring $\FC[Y]$ is a unique factorization domain. Let $\theta\in \FC(Y)$ and $(f,g)\in \FC[Y]\times \FC[Y]^*$ be such that $\theta=\frac{f}{g}$ and $f$ and $g$ are coprime. Set $\DC(\theta)=\{\tau\in T_\FC|\theta(g)\neq 0\}$\index{$\DC(\theta)$}. Then we regard  $\theta$ as a map from $\DC(\theta)$ to $\FC$ by setting $\theta(\tau)=\frac{f(\tau)}{g(\tau)}$ for all $\tau\in \DC(\theta)$.

 For $w\in W^v$, let $\pi^H_w:\ATF\rightarrow \FC(Y)$\index{$\pi^H_w$} be defined by $\pi^H_w(\sum_{v\in W^v} H_w\theta_v)=\theta_w$. If $\tau\in T_\FC$, let $\FC(Y)_\tau=\{\frac{f}{g}|f,g\in \FC[Y]\text{ and } g(\tau)\neq 0\}\subset \FC(Y)$\index{$\FC(Y)_\tau, \C(Y)_\tau$}. Let $\ATF_\tau=\bigoplus_{w\in W^v} H_w \FC(Y)_\tau \subset \ATF$\index{$\ATF_\tau,\ATC_\tau$}. This is a not a subalgebra of $\ATF$ (consider for example $\frac{1}{Z^\lambda-1}*H_s=H_s*\frac{1}{Z^{s.\lambda}-1}+\ldots$ for some well chosen $\lambda\in Y$, $s\in \SCC$ and $\tau\in T_\FC$). It is however an $\HFW-\FC(Y)_\tau$ bimodule. For $\tau\in T_\FC$, we define $\ev_\tau:\ATF_\tau\rightarrow \HFW$\index{$ev_\tau$} by $\ev_\tau(h)=h(\tau)=\sum_{w\in W^v} H_w\theta_w(\tau)$ if $h=\sum_{w\in W^v}H_w \theta_w\in \HC(Y)_\tau$. This is a morphism of $\HFW-\FC(Y)_\tau$-bimodule.

\subsection{Weights and intertwining operators}

In this subsection, we recall results on intertwining operators and weights from \cite{hebert2018principal} and prove general facts on the weights of $\AC_\FC$-modules.

 If $M,M'$ is an $\AC_\FC$-module, we write $\Hom(M,M')$ the space of $\AC_\FC$-module morphisms from $M$ to $M'$, $\End(M)$ the algebra of $AC_\FC$-module endomorphisms ...

Let $M$ be a $\AC_\FC$-module. For $\tau\in T_\FC$, set \[M(\tau)=\{m\in M|\theta.m=\tau(\theta).m\ \forall \theta\in \FC[Y]\},\Wt(M)=\{\tau\in T_\FC| M(\tau)\neq\{0\}\}\] and  \index{$M(\tau)$, $I_\tau(\tau)$}\[M(\tau,\text{gen})=\{m\in M|\exists k\in \N|\forall \theta\in \FC[Y], (\theta-\tau(\theta))^k.m=0\}\supset M(\tau).\]\index{$M(\tau,\mathrm{gen})$, $I_\tau(\tau,\mathrm{gen})$}

Let $M$ be a $\AC_\FC$-module and $\tau\in T_\FC$. For $x\in M(\tau)$ define $\Upsilon_x:I_\tau\rightarrow M$\index{$\Upsilon$} by $\Upsilon_x(u.\vb_\tau)=u.x$, for all $u\in \AC_\FC$. Then $\Upsilon_x$ is well defined. Indeed, let $u\in \AC_\FC$ be such that $u.\vb_\tau=0$. Then $u\in \FC[Y]$ and $\tau(u)=0$. Therefore $u.x=0$ and hence $\Upsilon_x$ is well defined. The following lemma is then  easy to prove.

\begin{lemma}\label{lemFrobenius_reciprocity} (Frobenius reciprocity, see \cite[Proposition 1.10]{kato1982irreducibility})
Let $M$ be a $\AC_\FC$-module, $\tau\in T_\FC$ and $x\in M(\tau)$. Then the map $\Upsilon:M(\tau)\rightarrow \Hom(I_\tau,M)$  mapping each $x\in M(\tau)$ to $\Upsilon_x$ is a vector space isomorphism and $\Upsilon^{-1}(f)=f(\vb_\tau)$ for all $f\in  \Hom(I_\tau,M)$. 
\end{lemma}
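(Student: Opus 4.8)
The plan is to produce an explicit two-sided inverse to $\Upsilon$ and then check that everything is $\FC$-linear; the whole argument is formal and uses only two features of $I_\tau$ recorded above, namely that $\vb_\tau$ generates $I_\tau$ as an $\AC_\FC$-module (so every element of $I_\tau$ is $u.\vb_\tau$ for some $u\in\AC_\FC$) and that $\theta.\vb_\tau=\tau(\theta)\vb_\tau$ for all $\theta\in\FC[Y]$. I would define $\Psi\colon\Hom(I_\tau,M)\to M$ by $\Psi(f)=f(\vb_\tau)$. First I would check that $\Psi$ in fact takes values in $M(\tau)$: for $\theta\in\FC[Y]$ and $f\in\Hom(I_\tau,M)$, the $\AC_\FC$-linearity of $f$ gives $\theta.f(\vb_\tau)=f(\theta.\vb_\tau)=f(\tau(\theta)\vb_\tau)=\tau(\theta)f(\vb_\tau)$, so $f(\vb_\tau)\in M(\tau)$. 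Thus $\Psi$ is a well-defined, manifestly $\FC$-linear map $\Hom(I_\tau,M)\to M(\tau)$.

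Next I would record that each $\Upsilon_x$ is genuinely an $\AC_\FC$-module morphism (its well-definedness being already established above): for $a\in\AC_\FC$ and $u.\vb_\tau\in I_\tau$ one has $\Upsilon_x\big(a.(u.\vb_\tau)\big)=\Upsilon_x\big((au).\vb_\tau\big)=(au).x=a.(u.x)=a.\Upsilon_x(u.\vb_\tau)$, using associativity of the action. Linearity of $x\mapsto\Upsilon_x$ is immediate, since $\Upsilon_{x+cy}(u.\vb_\tau)=u.(x+cy)=\Upsilon_x(u.\vb_\tau)+c\,\Upsilon_y(u.\vb_\tau)$ for $c\in\FC$. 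It then remains to verify that $\Psi$ and $\Upsilon$ are mutually inverse. On one side, $\Psi(\Upsilon_x)=\Upsilon_x(\vb_\tau)=\Upsilon_x(1.\vb_\tau)=1.x=x$, so $\Psi\circ\Upsilon=\mathrm{id}_{M(\tau)}$. On the other side, for $f\in\Hom(I_\tau,M)$ and any $u.\vb_\tau\in I_\tau$, the morphism property of $f$ gives $\Upsilon_{\Psi(f)}(u.\vb_\tau)=u.f(\vb_\tau)=f(u.\vb_\tau)$; since every element of $I_\tau$ is of this form, $\Upsilon_{\Psi(f)}=f$, i.e. $\Upsilon\circ\Psi=\mathrm{id}$. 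Hence $\Upsilon$ is an $\FC$-linear isomorphism with $\Upsilon^{-1}=\Psi$, which is exactly the assertion.

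There is no real obstacle here: the statement is the usual adjunction (Frobenius reciprocity) for the induced module $I_\tau$, and the only points needing attention are the two already singled out — that $f(\vb_\tau)$ really is a $\tau$-weight vector, so that $\Psi$ lands in $M(\tau)$ and not merely in $M$, and that $\vb_\tau$ generates $I_\tau$, which is what forces $\Upsilon_{\Psi(f)}$ and $f$ to agree everywhere once they agree on the generator.
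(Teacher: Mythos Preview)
Your proof is correct and is exactly the standard verification the paper has in mind: the paper establishes well-definedness of $\Upsilon_x$ just before the lemma and then declares the lemma ``easy to prove,'' and you have supplied precisely those omitted details (that $f(\vb_\tau)\in M(\tau)$, that $\Upsilon_x$ is an $\AC_\FC$-morphism, and that $\Psi$ and $\Upsilon$ are mutual inverses).
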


\begin{proposition}\label{propIntertwining_nonzero}
Let $\tau,\tau'\in T_\FC$. Then:\begin{enumerate}
\item $\Hom(I_\tau,I_{\tau'})\neq \{0\}$ if and only if $\tau'\in W^v.\tau$.

\item If $\tau\in T_\FC$ is regular, then \[I_\tau=\bigoplus_{w\in W^v} I_\tau(w.\tau)\] and for $w\in W$, one has \[\dim \mathrm{Hom}(I_{w.\tau},I_\tau)=\dim I_\tau(w.\tau)=1.\]
\end{enumerate} 
\end{proposition}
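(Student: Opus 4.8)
The plan is to reduce both parts to the determination of the weight set $\Wt(I_{\tau'})$ of a principal series module and then to apply Frobenius reciprocity (Lemma~\ref{lemFrobenius_reciprocity}), which identifies $\Hom(I_\tau,I_{\tau'})$ with the weight space $I_{\tau'}(\tau)$. So the central step is to prove that $\Wt(I_{\tau'})=W^v.\tau'$ for every $\tau'\in T_\FC$. To this end I would first examine the action of $\FC[Y]$ on $I_{\tau'}$ in the basis $(H_w\vb_{\tau'})_{w\in W^v}$. For $\theta\in\FC[Y]$, Lemma~\ref{lemCommutation relation} gives $\theta*H_w-H_w*\theta^{w^{-1}}\in\AF^{<w}$; applying this to $\vb_{\tau'}$ and using that $\vb_{\tau'}$ is a $\tau'$-eigenvector for $\FC[Y]$, so that $\theta^{w^{-1}}\vb_{\tau'}=(w.\tau')(\theta)\vb_{\tau'}$, yields
\[\theta.(H_w\vb_{\tau'})=(w.\tau')(\theta)\,H_w\vb_{\tau'}+\sum_{v<w}c_v\,H_v\vb_{\tau'}\]
for suitable $c_v\in\FC$. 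Thus the $\FC[Y]$-action is triangular for the Bruhat order, with diagonal character $w.\tau'$ attached to $H_w\vb_{\tau'}$. Since Bruhat intervals $[e,w]$ are finite, each $I_{\tau'}^{\le w}:=\bigoplus_{v\le w}\FC H_v\vb_{\tau'}$ is a finite-dimensional $\FC[Y]$-submodule, $I_{\tau'}$ is weighted, and $I_{\tau'}$ is the directed union of the $I_{\tau'}^{\le w}$ (the Bruhat order being directed).

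The remaining work is linear algebra over $\FC[Y]$ on these finite-dimensional pieces. Refining the Bruhat order on $[e,w]$ to a linear extension produces a complete flag of $\FC[Y]$-submodules of $I_{\tau'}^{\le w}$ whose one-dimensional subquotients carry the characters $v.\tau'$, $v\le w$. As $\FC[Y]$ is commutative and these characters are genuine $\FC$-points, $I_{\tau'}^{\le w}$ decomposes as the direct sum of its generalized weight spaces, indexed by the distinct values among the $v.\tau'$; moreover each nonzero generalized weight space contains an honest weight vector, obtained as a common kernel vector of the commuting nilpotent operators $\theta-\sigma(\theta)$. Passing to the union over $w$ gives $\Wt(I_{\tau'})=W^v.\tau'$. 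Part (1) then follows at once: by Frobenius reciprocity $\Hom(I_\tau,I_{\tau'})\cong I_{\tau'}(\tau)$, which is nonzero exactly when $\tau\in\Wt(I_{\tau'})=W^v.\tau'$, that is, when $\tau'\in W^v.\tau$.

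For part (2) the extra ingredient is regularity: $W_\tau=\{1\}$ makes $w\mapsto w.\tau$ a bijection onto $W^v.\tau$, so the diagonal characters are pairwise distinct. Hence in the flag of each $I_\tau^{\le w}$ every character $v.\tau$ occurs exactly once, forcing $\dim I_\tau(v.\tau,\mathrm{gen})=1$; a one-dimensional generalized weight space is an ordinary weight space, so $\dim I_\tau(w.\tau)=1$ for all $w\in W^v$ and $I_\tau=\bigoplus_{w\in W^v}I_\tau(w.\tau)$. Frobenius reciprocity finally gives $\dim\Hom(I_{w.\tau},I_\tau)=\dim I_\tau(w.\tau)=1$.

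I expect the main obstacle to be the generalized weight-space decomposition over a field $\FC$ that need not be algebraically closed, together with its compatibility across the infinite-dimensional module $I_{\tau'}$. Both are controlled by the finiteness of the Bruhat intervals and the directedness of the Bruhat order, which let me argue entirely on the finite-dimensional submodules $I_{\tau'}^{\le w}$ and then pass to the limit; the triangularity computation itself and the identification of the diagonal character with $w.\tau'$ are routine consequences of Lemma~\ref{lemCommutation relation}.
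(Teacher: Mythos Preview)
Your argument is correct. The paper does not prove this proposition directly: it simply cites \cite[Propositions 3.4, 3.10 and 3.5 (2)]{hebert2018principal}. What you wrote is a self-contained proof, and it is essentially the standard one (and presumably the one in the cited reference): use Lemma~\ref{lemCommutation relation} to show that $\FC[Y]$ acts upper-triangularly on the basis $(H_w\vb_{\tau'})$ with diagonal characters $w.\tau'$, conclude on the finite-dimensional pieces $I_{\tau'}^{\le w}$ that the generalized weight spaces are indexed by $\{v.\tau':v\le w\}$ with the expected multiplicities, then pass to the directed union and apply Frobenius reciprocity. Your handling of the non-algebraically-closed field is fine precisely because all diagonal entries already lie in $\FC$; the finite-dimensionality of Bruhat intervals and the directedness of the Bruhat order do the rest. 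So the only difference from the paper is that you supply the details in place of a citation.
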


\begin{proof}
This is a consequence of \cite[Propositions 3.4, 3.10 and 3.5 (2)]{hebert2018principal}.
\end{proof}

\begin{lemma}\label{lemStructure_Weights_module}
Let $M$ be a $\AC_\FC$-module and $\tau\in \Wt(M)$. Let $w\in W^v$ be such that $I_{w.\tau}$ is isomorphic to $I_\tau$. Then $w.\tau\in \Wt(M)$. 
\end{lemma}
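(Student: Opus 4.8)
The plan is to reinterpret membership in $\Wt(M)$ as the nonvanishing of a space of homomorphisms out of a principal series representation, and then to transport this along the given isomorphism $I_{w.\tau}\simeq I_\tau$. The point is that $\Wt(M)$ is governed by Frobenius reciprocity, which is functorial in the first argument, so an abstract isomorphism of induced modules should immediately carry one weight to the other.

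First I would unwind the definitions. By construction $\tau\in\Wt(M)$ means exactly $M(\tau)\neq\{0\}$, and by Frobenius reciprocity (Lemma~\ref{lemFrobenius_reciprocity}) the map $\Upsilon\colon M(\tau)\to \Hom(I_\tau,M)$ is a vector space isomorphism. Hence $\tau\in\Wt(M)$ if and only if $\Hom(I_\tau,M)\neq\{0\}$, and likewise $w.\tau\in\Wt(M)$ if and only if $\Hom(I_{w.\tau},M)\neq\{0\}$. This converts both the hypothesis and the desired conclusion into statements purely about $\Hom$ spaces.

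Next I would invoke the hypothesis that $I_{w.\tau}$ and $I_\tau$ are isomorphic as $\AC_\FC$-modules: fix an isomorphism $\phi\colon I_\tau\xrightarrow{\ \sim\ }I_{w.\tau}$. Precomposition $f\mapsto f\circ\phi$ is then a vector space isomorphism $\Hom(I_{w.\tau},M)\xrightarrow{\ \sim\ }\Hom(I_\tau,M)$, with inverse $g\mapsto g\circ\phi^{-1}$. Combining this with the previous step yields a chain of isomorphisms $M(w.\tau)\cong\Hom(I_{w.\tau},M)\cong\Hom(I_\tau,M)\cong M(\tau)$.

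Finally, since $\tau\in\Wt(M)$ we have $M(\tau)\neq\{0\}$, so every term in this chain is nonzero; in particular $M(w.\tau)\neq\{0\}$, which is precisely $w.\tau\in\Wt(M)$. I do not expect a genuine obstacle here: the statement is essentially the functoriality of Frobenius reciprocity together with the given isomorphism, and the only thing needing care is the routine verification that precomposition with a module isomorphism induces a bijection of $\Hom$ spaces.
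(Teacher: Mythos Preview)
Your proof is correct and takes essentially the same approach as the paper: both use Frobenius reciprocity to identify $M(\tau)\cong\Hom(I_\tau,M)$ and then transport along the isomorphism $I_{w.\tau}\simeq I_\tau$. The paper simply unwinds this more concretely, picking $x\in M(\tau)\setminus\{0\}$, forming $\phi=\Upsilon_x\colon I_\tau\to M$, composing with an isomorphism $\psi\colon I_{w.\tau}\to I_\tau$, and observing that $\phi\circ\psi(\vb_{w.\tau})\in M(w.\tau)\setminus\{0\}$.
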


\begin{proof}
Let $x\in M(\tau)\setminus \{0\}$. Let $\phi:I_\tau\twoheadrightarrow M$ be defined by $\phi(h.\vb_\tau)=h.x$ for all $h\in \AC_\FC$. By \cite[Lemma 3.6]{hebert2018principal}, $\phi$ is well defined. Let $\psi:I_{w.\tau}\rightarrow I_\tau$ be an  isomorphism. Then $\phi\circ\psi\neq 0$. One has $I_{w.\tau}=\AC_\FC.\vb_{w.\tau}$ and thus  $\phi\circ\psi(\vb_{w.\tau})\in M(w.\tau)\setminus\{0\}$, which proves the lemma.
\end{proof}

 \begin{proposition}\label{propMaximal_proper_submodules}
 \begin{enumerate}
\item  Let $\tau\in T_\FC$ and $M$ be a proper submodule of $I_\tau$. Then there exists a maximal submodule $M'$ of $I_\tau$ containing $M$.

\item There exists an irreducible representation  $M$ of $\AC_\FC$ such that $\tau\in \Wt(M)$.

\item The map $M\mapsto I_\tau/M$, from the set of maximal submodules of $I_\tau$ to the set of isomorphism classes of irreducible representations admitting $\tau$ as a weight is surjective.

 \end{enumerate}

 \end{proposition}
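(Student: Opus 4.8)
The plan is to derive all three statements from two facts already available: $I_\tau$ is a \emph{cyclic} $\AC_\FC$-module, generated by the weight vector $\vb_\tau$, and Frobenius reciprocity (Lemma~\ref{lemFrobenius_reciprocity}). The starting observation, used repeatedly, is that a submodule $N\subseteq I_\tau$ equals $I_\tau$ if and only if $\vb_\tau\in N$, since $I_\tau=\AC_\FC.\vb_\tau$; equivalently, every proper submodule avoids $\vb_\tau$. I also record that $\vb_\tau\in I_\tau(\tau)$: from the definition of the action, $\theta.\vb_\tau=\tau(\theta)\vb_\tau$ for every $\theta\in\FC[Y]$.

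For (1) I would apply Zorn's lemma to the poset $\PC$ of proper submodules of $I_\tau$ containing $M$, ordered by inclusion. It is nonempty as $M\in\PC$. Given a chain $(M_i)_{i\in J}$ in $\PC$, the union $\bigcup_{i\in J}M_i$ is again a submodule containing $M$, and the key point is that it stays proper: if $\vb_\tau$ belonged to the union it would lie in some $M_i$, forcing $M_i=I_\tau$ and contradicting $M_i\in\PC$. Hence the union is an upper bound in $\PC$, and Zorn's lemma yields a maximal element $M'$. Any proper submodule strictly containing $M'$ would also contain $M$ and hence lie in $\PC$, contradicting maximality; thus $M'$ is a maximal submodule of $I_\tau$.

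For (2) I would apply (1) with $M=\{0\}$ to obtain a maximal submodule $M'$. Being proper, $M'$ does not contain the generator $\vb_\tau$, so the image $\overline{\vb_\tau}$ of $\vb_\tau$ in the irreducible quotient $I_\tau/M'$ is nonzero; it still satisfies $\theta.\overline{\vb_\tau}=\tau(\theta)\overline{\vb_\tau}$, so $\tau\in\Wt(I_\tau/M')$, giving the desired irreducible representation. For (3), let $N$ be irreducible with $\tau\in\Wt(N)$ and choose $x\in N(\tau)\setminus\{0\}$. Frobenius reciprocity produces $\Upsilon_x\colon I_\tau\to N$ with $\Upsilon_x(\vb_\tau)=x\neq 0$, so its image is a nonzero submodule of $N$, hence all of $N$ by irreducibility. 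Then $M':=\Ker\Upsilon_x$ is a submodule with $I_\tau/M'\simeq N$; since $N$ is irreducible, $M'$ is maximal and maps to the class of $N$, proving surjectivity.

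The only genuinely delicate step is the properness of the union in the Zorn argument of (1), which is exactly where cyclicity of $I_\tau$ intervenes; everything else is a formal consequence of Frobenius reciprocity together with the fact that $\vb_\tau$ is a $\tau$-weight vector generating $I_\tau$.
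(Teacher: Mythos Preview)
Your proof is correct and follows essentially the same approach as the paper: Zorn's lemma using that $\vb_\tau$ generates $I_\tau$ for (1), passage to the quotient for (2), and for (3) the paper cites \cite[Proposition 3.8]{hebert2018principal} to obtain the surjection $I_\tau\twoheadrightarrow N$, whereas you unpack this directly via Frobenius reciprocity (Lemma~\ref{lemFrobenius_reciprocity}), which is precisely what that external proposition encapsulates.
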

 
 \begin{proof}
 Let $\MCC(M)$ be the set of proper submodules of $I_\tau$ containing $M$. Let $J$ be a totally ordered set and $(M_j)_{j\in J}$ be an increasing family of $\MCC(M)$. Then $\bigcup_{j\in J} M_j$ is a submodule of $I_\tau$ containing $M$. Moreover, $\vb_\tau\notin M_j$ for all $j\in J$ and thus $\vb_\tau\notin \bigcup_{j\in J} M_j$: $\bigcup_{j\in J} M_j\neq I_\tau$. By Zorn's lemma we deduce that $\MCC(M)$ admits a maximal element, which proves (1).
 
 Let $M$ be a maximal  submodule of $I_\tau$. Let $M'=I_\tau/M$. Then $M'$ is irreducible and the image of $\vb_\tau$ is a nonzero element of $M'(\tau)$, which proves (2).
 
 Let $M$ be an irreducible representation of $\AC_\FC$ admitting $\tau$ as a weight. By  \cite[Proposition 3.8]{hebert2018principal}, there exists a surjective morphism of $\AC_\FC$-modules $\phi:I_\tau\twoheadrightarrow M$. Then $M\simeq I_\tau/\ker(\phi)$ and $\ker(\phi)$ is a maximal submodule of $I_\tau$. 
 \end{proof}

 Set $B_s=\sigma_s H_s-\sigma_s^2\in \HC_{W^v,\FC}$\index{$B_s$}. One has $B_s^2=-(1+\sigma_s^2)B_s$. Let  $\zeta_{s}=-\sigma_sQ_s(Z)+\sigma_s^2\in \FC(Y)\subset \AC(T_\FC)$\index{$\zeta_s$}.  When $\sigma_s=\sigma_s'=\sqrt{q}$ for all $s\in \SCC$, we have $\zeta_{s}=\frac{1-qZ^{-\alpha_s^\vee}}{1-Z^{-\alpha^\vee_s}}\in \FC(Y)$. Let \[F_s=B_s+\zeta_{s}\in \AC(T_\FC)\index{$F_s$}.\]

Let  $\alpha^\vee\in \Phi^\vee$. Write $\alpha^\vee=w.\alpha_s^\vee$ for $w\in W^v$ and $s\in \SCC$. We set $\zeta_{\alpha^\vee}=(\zeta_s)^w$\index{$\zeta_{\alpha^\vee}$}.  

Let $\alpha^\vee\in\Phi^\vee$. Write $\alpha=w.\alpha_s^\vee$, with $w\in W^v$ and $s\in \SCC$. We set $\sigma_{\alpha^\vee}=\sigma_s$\index{$\sigma_{\alpha^\vee}$, $\sigma'_{\alpha^\vee}$} and $\sigma_{\alpha^\vee}'=w.\sigma_s'$. This is well defined by Lemma~\ref{lemKumar_1.3.14} and by the relations on the $\sigma_t$, $t\in \SCC$ (see Subsection~\ref{subIH algebras}).

 Let $w\in W^v$. Let $w=s_1\ldots s_r$ be a reduced expression of $w$. Set \[F_w=F_{s_r}\ldots F_{s_1}=(B_{s_r}+\zeta_{s_r})\ldots (B_{s_1}+\zeta_{s_1})\in \AC(T_\FC).\]\index{$F_w$} By the lemma below, this does not depend on the choice of the reduced expression of $w$.

\begin{lemma}\label{lemReeder 4.3}(see \cite[Lemma 5.14]{hebert2018principal}) Let $w\in W^v$. \begin{enumerate}

\item\label{itWell_definedness_Fw} The element $F_w\in \AC(T_\FC)$ is well defined, i.e it does not depend on the choice of a reduced expression for $w$.

\item\label{itLeading_coefficient_Fw} There exists $a\in \FC^*$ such that $F_w-aH_w\in \ATF^{<w}=\bigoplus_{v<w}H_v\FC(Y)$.

\item\label{itCommutation_relation} If $\theta\in \FC(Y)$, then $\theta*F_w=F_w*{^{w^{-1}}}\theta$.

\item\label{itDomain_Fw} If $\tau\in T_\FC$ is such that $\zeta_{\beta^\vee}\in \FC(Y)_\tau$ for all $\beta^\vee\in N_{\Phi^\vee}(w)$, then $F_w\in \AC(T_\FC)_\tau$ and $F_w(\tau).\vb_\tau \in I_\tau(w.\tau)$.

\item\label{itF_w_well_defined_at_regular} Let $\tau\in T^{\mathrm{reg}}_\FC$. Then $F_w\in \AC(T_\FC)_\tau$.

\end{enumerate}
\end{lemma}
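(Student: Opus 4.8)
The plan is to prove the assertions in the order (\ref{itLeading_coefficient_Fw}), (\ref{itCommutation_relation}), (\ref{itWell_definedness_Fw}), (\ref{itDomain_Fw}), (\ref{itF_w_well_defined_at_regular}). The idea is to isolate the behaviour of a single $F_s$, propagate it along a reduced word (so that well-definedness is \emph{not} assumed for (\ref{itLeading_coefficient_Fw}) and (\ref{itCommutation_relation})), and then obtain (\ref{itWell_definedness_Fw}) as a rigidity statement rather than by verifying the braid relations through a direct rank-two computation.

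For a single reflection I would first rewrite $F_s=\sigma_sH_s+(\zeta_s-\sigma_s^2)=\sigma_sH_s-\sigma_sQ_s(Z)$, so that $F_s-\sigma_sH_s\in\FC(Y)=\ATF^{<s}$; this is (\ref{itLeading_coefficient_Fw}) for $w=s$ with nonzero leading scalar $\sigma_s$. Computing $\theta*F_s$ and $F_s*{}^s\theta$ directly from $H_s*\theta={}^s\theta*H_s+Q_s(Z)(\theta-{}^s\theta)$, the two coincide precisely because $\zeta_s=\sigma_s^2-\sigma_sQ_s(Z)$, giving $\theta*F_s=F_s*{}^s\theta$. For general $w$ with reduced expression $s_1\cdots s_r$, iterating this identity and the rule ${}^{u}({}^{v}\theta)={}^{uv}\theta$ yields (\ref{itCommutation_relation}). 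For (\ref{itLeading_coefficient_Fw}) I would induct on $\ell(w)$: since each $F_{s_i}\in H_{s_i}\FC(Y)\oplus\FC(Y)$, and since Lemma~\ref{lemCommutation relation} together with the Hecke relations and the lifting property of the Bruhat order give $\ATF^{\le v}*\ATF^{\le s}\subseteq\ATF^{\le vs}$ whenever $\ell(vs)=\ell(v)+1$, expanding the reduced product places $F_w$ in $\bigoplus_{u\le w}H_u\FC(Y)$ with $H_w$-coefficient $a=\prod_i\sigma_{s_i}\in\FC^*$; reducedness is what prevents the top term from being truncated.

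The main obstacle is (\ref{itWell_definedness_Fw}), which I would deduce from the following rigidity lemma: if $D\in\ATF^{<w}$ satisfies $\theta*D=D*{}^{w^{-1}}\theta$ for all $\theta\in\FC(Y)$, then $D=0$. To prove it, write $D=\sum_{v<w}H_v\theta_v$, pick $v_0$ maximal in $\supp(D)$, and compare the $H_{v_0}$-coefficients of the two sides using Lemma~\ref{lemCommutation relation}: the error terms lie strictly below, so one gets ${}^{v_0^{-1}}\theta\cdot\theta_{v_0}=\theta_{v_0}\cdot{}^{w^{-1}}\theta$ for all $\theta$; if $\theta_{v_0}\neq0$ this forces $v_0^{-1}$ and $w^{-1}$ to act identically on $Y$, hence $v_0=w$ by faithfulness of the $W^v$-action, contradicting $v_0<w$. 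Given two reduced expressions of $w$, the corresponding products both satisfy the commutation relation of the previous paragraph and, by (\ref{itLeading_coefficient_Fw}), share the same leading term $aH_w$ — the scalar $a$ being unchanged across a braid move, since for an odd bond $s$ and $t$ are conjugate and $\sigma_s=\sigma_t$, while for an even bond the multiset of generators is preserved. Their difference therefore lies in $\ATF^{<w}$ and satisfies the hypotheses of the rigidity lemma, so it vanishes; by Matsumoto--Tits it suffices to compare across single braid moves, and (\ref{itWell_definedness_Fw}) follows.

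Finally, for (\ref{itDomain_Fw}) I would track the $\FC(Y)$-coefficients produced when $F_w$ is expanded along a reduced word: moving the factors $\zeta_{s_i}$ and $Q_{s_i}(Z)$ through the $H$'s and using the inversion-set description of Lemma~\ref{lemKumar_1.3.14}, the denominators that appear are exactly those of the translates $\zeta_{\beta^\vee}$ for $\beta^\vee\in N_{\Phi^\vee}(w)$; hence $\zeta_{\beta^\vee}\in\FC(Y)_\tau$ for all such $\beta^\vee$ forces $F_w\in\AC(T_\FC)_\tau$. To identify the weight, evaluate (\ref{itCommutation_relation}) at $\tau$ and apply it to $\vb_\tau$: for $\theta\in\FC[Y]$ one gets $\theta.\big(F_w(\tau)\vb_\tau\big)=F_w(\tau).\big({}^{w^{-1}}\theta.\vb_\tau\big)=({}^{w^{-1}}\theta)(\tau)\,F_w(\tau)\vb_\tau=(w.\tau)(\theta)\,F_w(\tau)\vb_\tau$, so $F_w(\tau)\vb_\tau\in I_\tau(w.\tau)$ (and it is nonzero because its leading term is $aH_w\vb_\tau$ with $a\neq0$). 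Assertion (\ref{itF_w_well_defined_at_regular}) is then immediate from (\ref{itDomain_Fw}): the only pole of $\zeta_{\beta^\vee}$ lies in the fixed-point locus of the reflection $r_{\beta^\vee}$, which a regular $\tau$ avoids for every $\beta^\vee\in\Phi^\vee$, so in particular $\zeta_{\beta^\vee}\in\FC(Y)_\tau$ for all $\beta^\vee\in N_{\Phi^\vee}(w)$.
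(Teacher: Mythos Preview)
The paper does not give its own proof of this lemma; it is stated with a bare citation to \cite[Lemma~5.14]{hebert2018principal}. So there is no in-paper argument to compare against, and your sketch supplies more than the paper itself does.

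Your approach is sound, and the route to (\ref{itWell_definedness_Fw}) via the rigidity lemma is a genuine alternative to the standard argument (which verifies the braid relations $\Pi(F_s,F_t,m)=\Pi(F_t,F_s,m)$ by a direct rank-two computation, as in Reeder and in the cited reference). The two ingredients you need---that the leading scalar $\prod_i\sigma_{s_i}$ is braid-invariant (in the Kac--Moody setting $m(s,t)\in\{2,3,4,6,\infty\}$, and $\sigma_s=\sigma_t$ for the only odd finite value $m=3$ by the relations imposed on $\RC_1$), and that $W^v\hookrightarrow\GL(\A)$ acts faithfully on $Y$---are both available, so the rigidity argument goes through. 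This is arguably cleaner than the direct verification.

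The one place your sketch is thin is (\ref{itDomain_Fw}). Moving the $\zeta_{s_i}$ past individual $H_{s_j}$'s via the Bernstein--Lusztig relation produces correction terms $Q_{s_j}(Z)(\zeta_{s_i}-{}^{s_j}\zeta_{s_i})$ whose poles you have not controlled, and it is not immediate that the twisted denominators stay inside the set governed by $N_{\Phi^\vee}(w)$. A cleaner organisation is to push each $\zeta_{s_j}$ past the whole block $F_{s_{j-1}}\cdots F_{s_1}$ in a single step using the already-established commutation (\ref{itCommutation_relation}); this produces one twist of $\zeta_{s_j}$ with no $Q$-corrections, and the associated coroot can then be matched against Lemma~\ref{lemKumar_1.3.14}. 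Your deduction of (\ref{itF_w_well_defined_at_regular}) from (\ref{itDomain_Fw}) is correct; the fact that a pole of $\zeta_{\beta^\vee}$ at $\tau$ forces $r_{\beta^\vee}.\tau=\tau$ is exactly \cite[Remark~5.1]{hebert2018principal}.
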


\section{Weighted representations of $\AC_\FC$ and $\HC_\FC$}\label{secWeighted_representations}

In this section, the field $\FC$ is not necessarily $\C$. We set $\AC_\FC^\emptyset=\AC_\FC$ and $\AC_\FC^+=\HC_\FC$. Let $\epsilon\in \{+,\emptyset\}$.  A $\FC[Y^\epsilon]$-module $M$ is called \textbf{weighted} if for all $x\in M$, $\FC[Y^\epsilon].x$ is a finite dimensional. A $\AC_\FC^\epsilon$-module is called \textbf{weighted} if the induced $\FC[Y^\epsilon]$-module is. 
 
 In this section, we   characterize the weighted representations of $\HC_\FC$ which can be extended to a representation of $\AC_\FC$ (see Proposition~\ref{propCharacterization_restriction_weighted_representations}) . We also prove that if $M$ is a weighted representation of $\AC_\FC$, then the $\HC_\FC$ submodules of $M$ are exactly the restrictions to $\HC_\FC$ of the $\AC_\FC$-submodules of $M$ (see Proposition~\ref{propWeighted_representations_AC_HC}).

\begin{proposition}\label{propWeighted_representations_AC_HC}
Let $M$ be a weighted $\AC_\FC$-module. Then a subset $M'\subset M$ is an $\HC_\FC$-submodule of $M$ if and only if it is a $\AC_\FC$-submodule of $M$.
\end{proposition}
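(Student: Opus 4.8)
The plan is to prove the two implications separately, the interesting content being the ``only if'' direction. Since $\HC_\FC\subseteq \AC_\FC$, any $\AC_\FC$-submodule is automatically an $\HC_\FC$-submodule, which settles one direction. For the converse, let $M'$ be an $\HC_\FC$-submodule of the weighted module $M$; I must show $\AC_\FC\cdot M'\subseteq M'$. Using the decomposition $\AC_\FC=\bigoplus_{\lambda\in Y,\,w\in W^v}\FC\, H_w*Z^\lambda$ and the fact that each $H_w$ lies in $\HC_\FC$, it suffices to prove that $Z^\lambda\cdot m'\in M'$ for every $\lambda\in Y$ and $m'\in M'$: indeed, once this is known, $(H_w*Z^\lambda)\cdot m'=H_w\cdot(Z^\lambda\cdot m')\in \HC_\FC\cdot M'\subseteq M'$, and $\AC_\FC\cdot M'\subseteq M'$ follows by linearity.

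First I would record two facts about $Y^+=Y\cap\T$. Since the simple roots form a free family, the fundamental chamber $C^v_f$ is a nonempty open cone, so $\T$ has nonempty interior; a scaling argument (for $\lambda\in Y$ and $\nu_0\in Y\cap \mathrm{int}(\T)$, the point $\lambda+n\nu_0=n(\nu_0+\tfrac1n\lambda)$ lies in $\T$ for $n$ large, being a positive multiple of a point close to $\nu_0$) then shows $Y=Y^+-Y^+$. Moreover $Y^+$ is a submonoid of $Y$, because $\T$ is a convex cone stable under addition and contains $0$. With this in hand, writing an arbitrary $\lambda\in Y$ as $\mu-\nu$ with $\mu,\nu\in Y^+$ gives $Z^\lambda\cdot m'=Z^\mu\cdot(Z^{-\nu}\cdot m')$, so the whole problem reduces to the claim that $Z^{-\nu}\cdot m'\in M'$ for every $\nu\in Y^+$ and $m'\in M'$.

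This last claim is where weightedness enters, and is the heart of the argument. Fix $m'\in M'$ and set $V=\FC[Y]\cdot m'$; by hypothesis $M$ is weighted, so $V$ is finite-dimensional. The operator $T:=Z^\nu|_V$ is invertible on $V$ (its inverse $Z^{-\nu}$ preserves $V$, since $V$ is a $\FC[Y]$-module), so by Cayley--Hamilton its inverse is a polynomial in $T$: writing the characteristic polynomial of $T$ as $t^d+c_{d-1}t^{d-1}+\cdots+c_0$ with $c_0\neq 0$, one obtains $T^{-1}=q(T)$ for the explicit polynomial $q(t)=-\tfrac1{c_0}(t^{d-1}+c_{d-1}t^{d-2}+\cdots+c_1)$. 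Consequently $Z^{-\nu}\cdot m'=q(Z^\nu)\cdot m'=\sum_{j=0}^{d-1} q_j\,Z^{j\nu}\cdot m'$; since $\nu\in Y^+$ and $Y^+$ is a submonoid, each $j\nu\in Y^+$, so $q(Z^\nu)\in\FC[Y^+]\subseteq\HC_\FC$ and therefore $Z^{-\nu}\cdot m'\in\HC_\FC\cdot M'\subseteq M'$. This proves the claim and, combined with the two reductions above, the proposition.

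The main obstacle is precisely this last step: passing from the positive part $\FC[Y^+]$ to all of $\FC[Y]$ requires inverting the operators $Z^\nu$ while staying inside the action of $\HC_\FC$. Weightedness is exactly what makes this possible, since it guarantees that $V$ is finite-dimensional and hence that $Z^{-\nu}$ acts on $V$ as a \emph{polynomial} in $Z^\nu$ with non-negative exponents; on an infinite-dimensional module this fails, which is consistent with the existence of weighted $\HC_\FC$-modules that do not extend to $\AC_\FC$ recalled in the introduction.
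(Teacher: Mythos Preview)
Your proof is correct and follows essentially the same approach as the paper: both reduce to showing $Z^{-\nu}\cdot m'\in M'$ for $\nu\in Y^+$, and both exploit that $Z^\nu$ acts as an invertible operator on a finite-dimensional subspace to conclude. The only cosmetic difference is that the paper works inside $M'_x=\FC[Y^+]\cdot x\subset M'$ and uses ``injective on a finite-dimensional space implies surjective'' to see $Z^{-\nu}\cdot x\in M'_x$, whereas you work on $V=\FC[Y]\cdot m'$ and invoke Cayley--Hamilton to express $Z^{-\nu}\cdot m'$ as a polynomial in $Z^{\nu}$ applied to $m'$; these are equivalent pieces of linear algebra.
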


\begin{proof}
Let $M'\subset M$ be an $\HC_\FC$-submodule of $M$. Let $x\in M'$ and $M'_x=\FC[Y^+].x$. For $\lambda\in Y^+$, define $\phi_{\lambda,x}:M'_x\rightarrow M'_x$ by $\phi_{\lambda,x}(y)=Z^\lambda.y$, for $y\in M'_x$. Then $\phi_{\lambda,x}$ is injective and as $M'_x$ is finite dimensional, $\phi_{\lambda,x}$ is an isomorphism. Let $y=(\phi_{\lambda,x})^{-1}(x)$.  Then $Z^\lambda.y=x$ and $y=Z^{-\lambda}.x\in M'_x$. Let $\mu\in Y$. By writing $\mu=\lambda_+-\lambda_-$, with $\lambda_+,\lambda_-\in Y^+$, we deduce that $Z^\mu.x\in M'_x\subset M$. Therefore $M$ is stable under the action of $\HFW$ and of $\FC[Y]$ and hence $M$ is a $\AC_\FC$-module.
\end{proof}

\begin{proposition}\label{propCharacterization_restriction_weighted_representations}
Let $M$ be a weighted representation of $\HC_\FC$. Then the following are equivalent:\begin{enumerate}
\item $M$ is the restriction of a representation of $\AC_\FC$,

\item $Z^\lambda.x\neq 0$, for all $\lambda\in Y^+$ and $x\in M\setminus\{0\}$,

\item for every $\tau\in \Wt(M)$, $\tau(Y^+)\subset \FC^*$.
\end{enumerate}
\end{proposition}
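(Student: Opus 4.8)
The plan is to prove the cycle of implications $(1)\Rightarrow(2)\Rightarrow(3)\Rightarrow(1)$, exploiting the weightedness hypothesis at the two nontrivial steps. For $(1)\Rightarrow(2)$, I would argue by contraposition: suppose $M$ extends to an $\AC_\FC$-module $\tilde M$ and that $Z^\lambda.x=0$ for some $\lambda\in Y^+$ and some $x\neq 0$. Since $\lambda\in Y^+$, the element $Z^\lambda$ is invertible in $\AC_\FC$ with inverse $Z^{-\lambda}\in\FC[Y]$; applying $Z^{-\lambda}$ to $Z^\lambda.x=0$ inside $\tilde M$ gives $x=0$, a contradiction. The implication $(2)\Rightarrow(3)$ is elementary: if $\tau\in\Wt(M)$, pick $x\in M(\tau)\setminus\{0\}$; then for $\lambda\in Y^+$ we have $Z^\lambda.x=\tau(\lambda)x$, and by (2) the left side is nonzero, forcing $\tau(\lambda)\neq 0$, i.e. $\tau(Y^+)\subset\FC^*$.

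The substance of the argument lies in $(3)\Rightarrow(1)$, where I must build an $\AC_\FC$-action extending the given $\HC_\FC$-action, and this is where the weightedness is essential. The idea mirrors the proof of Proposition~\ref{propWeighted_representations_AC_HC}: I need to extend the action of $\FC[Y^+]$ to an action of $\FC[Y]$, which amounts to defining $Z^{-\lambda}.x$ for $\lambda\in Y^+$ and $x\in M$, and then checking compatibility with the $\HFW$-action (equivalently with the Bernstein--Lusztig relations). For a fixed $x$, set $M_x=\FC[Y^+].x$, which is finite dimensional by hypothesis. For $\lambda\in Y^+$ the map $\phi_{\lambda,x}:M_x\to M_x$, $y\mapsto Z^\lambda.y$, is an endomorphism of a finite dimensional space; I claim it is injective, hence bijective. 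Injectivity is exactly where condition (3) enters: I decompose $M_x$ into generalized weight spaces for the commutative algebra $\FC[Y^+]$ (possible over a field, after passing to the algebraic closure if $\FC$ is not algebraically closed, or by working directly with the weights appearing in $M$), and since every weight $\tau$ of $M$ satisfies $\tau(\lambda)\neq 0$, the operator $\phi_{\lambda,x}$ acts on each generalized weight space with the single eigenvalue $\tau(\lambda)\neq 0$, so it is invertible. I then define $Z^{-\lambda}.x:=\phi_{\lambda,x}^{-1}(x)$.

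The main obstacle is verifying that this extension is well defined independently of the auxiliary choices and that it assembles into a genuine $\FC[Y]$-module structure globally, not merely a collection of local inverses. Concretely, I must check: first, that $Z^{-\lambda}.x$ does not depend on which finitely generated invariant subspace containing $x$ one uses (this follows because any two such subspaces are contained in a common finite dimensional $\FC[Y^+]$-stable subspace on which $\phi_{\lambda,\cdot}$ is still invertible, and inverses of a restriction agree); second, that the resulting operators satisfy $Z^{-\lambda}Z^{-\mu}=Z^{-(\lambda+\mu)}$ and $Z^\lambda Z^{-\lambda}=\mathrm{Id}$, which again follows from the invertibility of $\phi$ and the commutativity of $\FC[Y^+]$; and third, that the extended $\FC[Y]$-action is compatible with the $\HFW$-action so that the Bernstein--Lusztig relations defining $\AC_\FC$ hold. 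For this last and most delicate point I would observe that the relation $H_s*\theta-{}^s\theta*H_s=Q_s(Z)(\theta-{}^s\theta)$ already holds on $\HC_\FC$ for $\theta\in\FC[Y^+]$, and since by Remark~\ref{remIH algebre dans le cas KM deploye}(\ref{itPolynomiality_Bernstein_Lusztig}) the right-hand side stays polynomial, the identity extends to all $\theta\in\FC[Y]$ by multiplying through by suitable invertible $Z^\lambda$ and using that these now act invertibly; a clean way to phrase this is to note that the uniqueness of $\AC(T_\FC)$ forces the extension to respect the product once the $\FC[Y]$-action is consistent, so the compatibility check reduces to confirming that $Z^\lambda$ acts invertibly, which (3) guarantees.
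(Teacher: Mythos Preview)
Your overall strategy matches the paper's: prove $(1)\Rightarrow(2)$ trivially, deduce the equivalence of $(2)$ and $(3)$ from the generalized weight space decomposition (the paper packages this as Lemmas~\ref{lemDecomposition_weighted_modules} and~\ref{lemInjectivity_multiplication_weighted_modules}), extend the $\FC[Y^+]$-action to an $\FC[Y]$-action by inverting each $\phi_\lambda$ on finite dimensional $\FC[Y^+]$-stable subspaces (the paper's Lemma~\ref{lemExtension_action_CY+}), and finally check that the resulting collection of operators is a genuine $\AC_\FC$-module. Up through the construction of the $\FC[Y]$-action your sketch is fine and essentially identical to the paper.

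The gap is in your last paragraph. Verifying that the Bernstein--Lusztig relation ``extends to all $\theta\in\FC[Y]$ by multiplying through by suitable invertible $Z^\lambda$'' is the right intuition, but the appeal to ``uniqueness of $\AC(T_\FC)$'' does no work here: that uniqueness concerns the algebra structure on $\AC(T_\FC)$, not whether a given family of linear operators on $M$ satisfies associativity. What must actually be shown is that for $h,h'\in\AC_\FC$ one has $(h*h').x=h.(h'.x)$, where $h.x$ is \emph{defined} by writing $h=\sum a_{w,\lambda}H_wZ^\lambda$ and setting $h.x=\sum a_{w,\lambda}H_w.(Z^\lambda.x)$. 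The paper reduces this to the single identity $(Z^\lambda*H_w).x=Z^\lambda.(H_w.x)$ for $\lambda\in Y$ (Lemma~\ref{lemFirst_step_associativity_extension}), proved by choosing $\nu\in Y^+$ with $\nu+\lambda\in Y^+$, applying $Z^\nu$ to both sides, and using the $\HC_\FC$-module structure together with the purely algebraic identity $\sum_{u\leq v\leq w}H_u R^{\nu}_{u,v}R^{\lambda}_{v,w}=\sum_{v\leq w}H_v R^{\nu+\lambda}_{v,w}$ (Lemma~\ref{lemTranslation_associativity_R}, which is just associativity in $\AC_\FC$). This is precisely your ``multiply through'' idea made rigorous; without writing it out, the compatibility claim is not yet established.
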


The condition is necessary because if $M$ is a $\AC_\FC$-module, one has $x=Z^{-\lambda}.Z^\lambda.x$, for all $\lambda\in Y$. In the sequel of this section, we prove that this condition is indeed sufficient. The idea of our proof is to extend the action of $\FC[Y^+]$ to an action of $\FC[Y]$ and then to define an action of $\AC_\FC$. The difficulty is then to prove that it is indeed an action, i.e that $(h*h').x=h.(h'.x)$ for every $h,h'\in \AC_\FC$ and $x\in M$.

\begin{lemma}\label{lemDecomposition_weighted_modules}
Let $\epsilon\in \{\emptyset,+\}$ and $M$ be a weighted $\FC[Y^\epsilon]$-module. Then $M=\bigoplus_{\tau\in \Wt(M)} M(\tau,\mathrm{gen})$.
\end{lemma}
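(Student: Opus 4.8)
The plan is to reduce everything to finite-dimensional modules over the commutative algebra $\FC[Y^\epsilon]$ and then invoke the structure theory of Artinian rings. Since $M=\sum_{x\in M}\FC[Y^\epsilon].x$ with each $\FC[Y^\epsilon].x$ finite dimensional by hypothesis, and since $M(\tau,\mathrm{gen})\cap N=N(\tau,\mathrm{gen})$ for every submodule $N$, it suffices to establish two things: (\emph{spanning}) every finitely generated submodule $N$ decomposes as $N=\bigoplus_{\tau}N(\tau,\mathrm{gen})$, whence $M=\sum_{\tau}M(\tau,\mathrm{gen})$ follows at once by applying this to $N=\FC[Y^\epsilon].x$; and (\emph{directness}) for any finite family of distinct weights the spaces $M(\tau,\mathrm{gen})$ are in direct sum. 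First I would record that a nonzero $N(\tau,\mathrm{gen})$ always contains a nonzero element of $N(\tau)$ (a minimal-power argument on the nilpotent operators $\theta-\tau(\theta)$ produces such a vector), so that the nonzero terms of the decomposition are precisely those indexed by $\tau\in\Wt(M)$, matching the statement.

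For the finite-dimensional heart, fix a finitely generated, hence finite-dimensional, submodule $N$. The action of $\FC[Y^\epsilon]$ factors through $A:=\FC[Y^\epsilon]/\mathrm{Ann}_{\FC[Y^\epsilon]}(N)$, which embeds in $\End_\FC(N)$ and is therefore a finite-dimensional, hence Artinian, commutative $\FC$-algebra. I would then use that such an algebra is a finite product $A=\prod_i A_i$ of local Artinian rings; writing $\mathfrak{m}_i$ for the maximal ideal of $A_i$ and $e_i$ for the corresponding idempotent, this gives $N=\bigoplus_i e_iN$. The key identification is that each residue field satisfies $A_i/\mathfrak{m}_i=\FC$, so the composite $\FC[Y^\epsilon]\to A_i\to\FC$ is a character, i.e. a weight $\tau_i$, and $e_iN=N(\tau_i,\mathrm{gen})$. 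To check this last equality I would verify that on $e_iN$ every operator $\theta-\tau_i(\theta)$ is nilpotent: each $Z^\lambda$ acts as $\tau_i(Z^\lambda)$ plus a nilpotent (the image of $\mathfrak{m}_i$ being nilpotent), and a short multiplicative computation then shows that $Z^\mu-\tau_i(Z^\mu)$, and hence every $\theta-\tau_i(\theta)$, is nilpotent on $e_iN$.

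Granting the finite-dimensional case, spanning is immediate by the reduction above. For directness, suppose $\sum_{\tau\in F}m_\tau=0$ with $F$ finite and $m_\tau\in M(\tau,\mathrm{gen})$; set $N=\sum_{\tau\in F}\FC[Y^\epsilon].m_\tau$ and form $A$ as before. Each $\tau$ factors through $A$ (if $a\in\mathrm{Ann}(N)$ had $\tau(a)\neq0$ it would act invertibly on $M(\tau,\mathrm{gen})$, contradicting $a.m_\tau=0$ for $m_\tau\neq0$), giving pairwise distinct maximal ideals and orthogonal idempotents $e_\tau$ with $e_\tau m_{\tau'}=\delta_{\tau,\tau'}m_\tau$; applying $e_{\tau_0}$ to the relation forces $m_{\tau_0}=0$. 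I expect the genuine obstacle to be the identification $e_iN=N(\tau_i,\mathrm{gen})$, which rests on the residue fields being $\FC$ — this is exactly where algebraic closedness of $\FC$ is needed (over $\Q$, for instance, $\FC[Z^{\pm1}]/(Z^2-2)$ is weighted yet has no weight in $T_\FC$) — together with the verification that nilpotency holds against all of $\FC[Y^\epsilon]$ and not merely against the generators.
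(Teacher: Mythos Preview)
Your approach is essentially the same as the paper's: reduce to the cyclic finite-dimensional submodules $M_x=\FC[Y^\epsilon].x$ and invoke the generalized-weight-space decomposition there, then observe that directness of the sum over $\Wt(M)$ is a general fact. The paper simply asserts directness and, for the finite-dimensional step, cites \cite[Lemma~3.1]{hebert2018principal}; you instead supply both pieces explicitly via the structure theory of commutative Artinian $\FC$-algebras (factor $\FC[Y^\epsilon]/\mathrm{Ann}(N)$ as a product of local rings and use the primitive idempotents). That is a perfectly good and self-contained way to recover what the cited lemma provides.

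Your closing remark is well taken and worth emphasizing: the identification of the local factors with generalized weight spaces does require the residue fields to equal $\FC$, hence $\FC$ algebraically closed; your example $\Q[Z^{\pm1}]/(Z^2-2)$ is a genuine weighted $\Q[Y]$-module with $\Wt(M)=\emptyset$, so the lemma as literally stated fails over $\Q$. The paper works over an arbitrary characteristic-zero field in this section, so this hypothesis is implicit (the cited lemma presumably carries it). Otherwise your argument is complete and matches the paper's route.
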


\begin{proof}
One has $\sum_{\tau\in \Wt(M)} M(\tau,\mathrm{gen})=\bigoplus_{\tau\in \Wt(M)} M(\tau,\mathrm{gen})$.

 Let $x\in M$ and $M_x=\FC[Y^\epsilon].x$. Then by \cite[Lemma 3.1]{hebert2018principal}, \[x\in M_x=\bigoplus_{\tau\in \Wt(M_x)}M_x(\tau,\mathrm{gen})\subset   \bigoplus_{\tau\in \Wt(M)}M(\tau,\mathrm{gen}).\] Thus $\bigoplus_{\tau\in \Wt(M)}M(\tau,\mathrm{gen})=M$, which proves the lemma.
\end{proof}

\begin{lemma}\label{lemInjectivity_multiplication_weighted_modules}
Let $M$ be a weighted $\FC[Y^+]$-module. Suppose that there exists $\lambda\in Y^+$ and $x\in M\setminus\{0\}$ such that $Z^\lambda.x=0$. Then there exists $\tau\in \Wt(M)$ such that $\tau(\lambda)=0$.
\end{lemma}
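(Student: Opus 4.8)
The plan is to reduce the statement to the behaviour of $Z^\lambda$ on a single generalized weight space, using the decomposition supplied by Lemma~\ref{lemDecomposition_weighted_modules}, and then to run a one-line linear-algebra argument there.

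First I would invoke Lemma~\ref{lemDecomposition_weighted_modules} to write $M=\bigoplus_{\tau\in \Wt(M)} M(\tau,\mathrm{gen})$. Since $\FC[Y^+]$ is commutative, $Z^\lambda$ commutes with every $\theta\in \FC[Y^+]$, and consequently $Z^\lambda$ preserves each generalized weight space $M(\tau,\mathrm{gen})$. Decomposing $x=\sum_\tau x_\tau$ with $x_\tau\in M(\tau,\mathrm{gen})$ (only finitely many nonzero), the hypothesis $Z^\lambda.x=0$ together with the directness of the sum forces $Z^\lambda.x_\tau=0$ for every $\tau$. As $x\neq 0$, there is some $\tau_0\in \Wt(M)$ with $x_{\tau_0}\neq 0$ and $Z^\lambda.x_{\tau_0}=0$. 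I then claim $\tau_0(\lambda)=0$, which is exactly the assertion of the lemma.

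To prove the claim I would consider the cyclic submodule $V=\FC[Y^+].x_{\tau_0}$. By the weightedness hypothesis $V$ is finite dimensional, by commutativity it is stable under $Z^\lambda$, and $V\subset M(\tau_0,\mathrm{gen})$ because $M(\tau_0,\mathrm{gen})$ is itself a $\FC[Y^+]$-submodule (applying $(\theta-\tau_0(\theta))^k$ commutes past any element of $\FC[Y^+]$). Specializing the defining property of $M(\tau_0,\mathrm{gen})$ to $\theta=Z^\lambda$ shows that $Z^\lambda-\tau_0(\lambda)$ acts nilpotently on the finite-dimensional space $V$, so the only eigenvalue of $Z^\lambda|_V$ is $\tau_0(\lambda)$. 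If $\tau_0(\lambda)\neq 0$, then $Z^\lambda|_V$ would be invertible, hence injective, contradicting $Z^\lambda.x_{\tau_0}=0$ with $x_{\tau_0}\neq 0$; therefore $\tau_0(\lambda)=0$.

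The only point needing care — the main obstacle, such as it is — is that $M$ is merely a $\FC[Y^+]$-module, so $Z^\lambda$ need not act invertibly on all of $M$; the argument must confine itself to the finite-dimensional, $Z^\lambda$-stable cyclic piece $V$ lying inside one generalized weight space, where the single-eigenvalue dichotomy becomes available. This is precisely what the weightedness assumption and the submodule structure of $M(\tau_0,\mathrm{gen})$ provide, and the rest is routine.
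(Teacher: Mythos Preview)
Your proof is correct and follows essentially the same approach as the paper: reduce via Lemma~\ref{lemDecomposition_weighted_modules} to a nonzero element $x$ in a single generalized weight space $M(\tau,\mathrm{gen})$, then exploit that $Z^\lambda-\tau(\lambda)$ acts nilpotently there. The paper does this slightly more directly by expanding $(Z^\lambda-\tau(\lambda))^k.x=0$ binomially (all terms involving a positive power of $Z^\lambda$ vanish since $Z^\lambda.x=0$, leaving $(-1)^k\tau(\lambda)^k x=0$), whereas you detour through the finite-dimensional cyclic module $V$ to phrase the same nilpotency/invertibility dichotomy; the passage to $V$ is harmless but unnecessary, since the nilpotency of $Z^\lambda-\tau(\lambda)$ on $x$ alone already yields the conclusion.
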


\begin{proof}
By Lemma~\ref{lemDecomposition_weighted_modules}, one can assume that $x\in M(\tau,\mathrm{gen})$, for some $\tau\in \Wt(M)$. Let $k\in \Ne$ be such that $(Z^\lambda-\tau(\lambda)\Id)^k.x=0$. Then $(Z^\lambda-\tau(\lambda)\Id)^k.x=\sum_{j=0}^k {k\choose{j}}\tau(\lambda)^j Z^{(k-j)\lambda}.x=\tau(\lambda)^k.x=0$ and thus $\tau(\lambda)=0$.
\end{proof}

\begin{lemma}\label{lemExtension_action_CY+}
Let $M$ be a weighted $\FC[Y^+]$-module. Suppose that for all $\tau\in \Wt(M)$, $\tau(Y^+)\subset \FC^*$. Then there exists a unique action of $\FC[Y]$ on $M$ which induces the action of  $\FC[Y^+]$ on $M$.
\end{lemma}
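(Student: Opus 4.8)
The plan is to use the hypothesis to invert each operator $Z^\lambda$ ($\lambda\in Y^+$) and then extend the action by the formula $Z^{\lambda_+-\lambda_-}=Z^{\lambda_+}(Z^{\lambda_-})^{-1}$. First I would observe that by hypothesis $\tau(\lambda)\neq 0$ for every $\tau\in\Wt(M)$ and every $\lambda\in Y^+$, so the contrapositive of Lemma~\ref{lemInjectivity_multiplication_weighted_modules} shows that $Z^\lambda\colon M\to M$ is injective for each $\lambda\in Y^+$. Now fix $\lambda\in Y^+$ and $x\in M$, and set $V_x=\FC[Y^+].x$; since $M$ is weighted this is a finite-dimensional $\FC[Y^+]$-submodule of $M$, hence $Z^\lambda$-stable, and $Z^\lambda|_{V_x}$ is injective and therefore bijective. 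I would then define $Z^{-\lambda}.x:=(Z^\lambda|_{V_x})^{-1}(x)$.

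The first technical point is that this does not depend on the choice of $V_x$ and is $\FC$-linear. For this I would prove the compatibility statement: if $V\subseteq V'$ are finite-dimensional $Z^\lambda$-stable subspaces and $x\in V$, then $(Z^\lambda|_V)^{-1}(x)=(Z^\lambda|_{V'})^{-1}(x)$, which follows immediately from the injectivity of $Z^\lambda$ on $V'$. Applying this with $V'=V_{x_1}+V_{x_2}$ (still finite-dimensional and stable, and containing $x_1,x_2,x_1+x_2$) yields additivity, and scalar homogeneity is similar, so $Z^{-\lambda}\in\End_\FC(M)$ is a well-defined two-sided inverse of $Z^\lambda$. Working inside a common finite-dimensional stable submodule $V_x$ also shows that all the operators $Z^\lambda$ and $Z^{-\lambda}$ ($\lambda\in Y^+$) commute pairwise, since the $Z^\lambda$ do.

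Next, recalling that $Y=Y^+-Y^+$ (as already used in the proof of Proposition~\ref{propWeighted_representations_AC_HC}, a consequence of the Tits cone $\T$ having nonempty interior, so that a fixed interior lattice point in $Y^+$ absorbs any $\mu\in Y$ after suitable translation), I would set $Z^\mu:=Z^{\lambda_+}Z^{-\lambda_-}$ for any decomposition $\mu=\lambda_+-\lambda_-$ with $\lambda_\pm\in Y^+$. Independence of the decomposition uses the submonoid structure of $Y^+$: if $\lambda_+-\lambda_-=\lambda_+'-\lambda_-'$, then $\lambda_++\lambda_-'=\lambda_+'+\lambda_-$ in $Y^+$, whence $Z^{\lambda_+}Z^{\lambda_-'}=Z^{\lambda_+'}Z^{\lambda_-}$, and multiplying by the commuting inverses $Z^{-\lambda_-}Z^{-\lambda_-'}$ gives the claim. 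Using commutativity one then checks $Z^\mu Z^\nu=Z^{\mu+\nu}$ and $Z^0=\Id$, so extending linearly defines an action of $\FC[Y]$ on $M$ which restricts to the given one on $\FC[Y^+]$.

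Finally, for uniqueness I would note that in any $\FC[Y]$-action extending the given $\FC[Y^+]$-action, the operator $Z^{-\lambda}$ must be a two-sided inverse of $Z^\lambda$ (since $Z^{-\lambda}Z^\lambda=Z^0=\Id$ in $\FC[Y]$); as $Z^\lambda$ already has the two-sided inverse constructed above, and two-sided inverses in $\End_\FC(M)$ are unique, every $Z^\mu$ is forced, so the extension is unique. I expect the main obstacle to be the second step: passing from the \emph{local} bijectivity of $Z^\lambda$ on each finite-dimensional submodule $V_x$ to a single \emph{globally} defined linear inverse operator $Z^{-\lambda}$ on all of $M$. This is exactly where weightedness is used, and where the gluing compatibility must be verified with care; once this is in place, the remaining verifications are formal consequences of commutativity and of $Y=Y^+-Y^+$.
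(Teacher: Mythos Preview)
Your proof is correct and follows essentially the same strategy as the paper: use Lemma~\ref{lemInjectivity_multiplication_weighted_modules} for injectivity of each $Z^\lambda$ ($\lambda\in Y^+$), use weightedness to get bijectivity on each finite-dimensional $\FC[Y^+].x$, invert, and extend via $Y=Y^+-Y^+$. The only minor difference is organizational: where you glue local inverses $(Z^\lambda|_{V_x})^{-1}$ and check compatibility, the paper observes directly that $\phi_\lambda\colon M\to M$ is globally bijective (surjective because each $\phi_{\lambda,x}$ is, injective by Lemma~\ref{lemInjectivity_multiplication_weighted_modules}) and takes its global inverse, which sidesteps the gluing step you flagged as the main obstacle.
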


\begin{proof}
We begin by proving the uniqueness of such an action. Suppose that we can extend $.$ to $\FC[Y]$. For $\lambda\in Y$, define $\phi_\lambda:M\rightarrow M$ by $\phi_\lambda(m)=Z^\lambda.m$, for $\lambda\in Y^+$ and $m\in M$. Let $\lambda\in Y^+$. Then $\phi_\lambda$ is a bijection and its inverse is $\phi_{-\lambda}$. Let now $\mu\in Y$ and $m\in M$. Write $\mu=\lambda_+-\lambda_-$, with $\lambda_+,\lambda_-\in Y^+$. Then $Z^\mu.m=\phi_{\lambda_+}\big((\phi_{\lambda_-})^{-1}(m)\big)$, which proves the uniqueness of such an action.

Suppose now that for every $\tau\in \Wt(M)$, $\tau(Y^+)\subset \FC^*$.  Let $\lambda\in Y^+$ and $x\in M$. Let $M_x=\FC[Y^+].x$ and $\phi_{\lambda,x}:M_x\rightarrow M_x$ be defined by $\phi_{\lambda,x}(y)=Z^\lambda.y$ for all $y\in M_x$. Then by Lemma~\ref{lemInjectivity_multiplication_weighted_modules}, $\phi_{\lambda,x}$ is injective and by assumption, $M_x$ is finite dimensional. Thus $\phi_{\lambda,x}$ is an isomorphism. Thus the map $\phi_\lambda:M\rightarrow M$ defined by $\phi_\lambda(x)=Z^\lambda.x$ for all $x\in M$ is surjective. By Lemma~\ref{lemInjectivity_multiplication_weighted_modules}, $\phi_\lambda$ is an isomorphism. One sets $\phi_{-\lambda}=\phi_\lambda^{-1}$. Then $(\phi_\mu)_{\mu\in Y^+}$ is commutative and thus $(\phi_{\pm \mu})_{\mu\in Y^+}$ is commutative.  If $\mu\in Y$, $\mu=\mu_+-\mu-$, with $\mu_-,\mu_+\in Y^+$, one sets $\phi_{\mu}=\phi_{\mu_+}\circ\phi_{-\mu_-}$. Then $\phi_{\mu}$ does not depend on the choice of $\mu_-,\mu_+$ such that $\mu_+-\mu_-=\mu$ and $(\phi_{\mu})_{\mu\in Y}$ is commutative. One has $\phi_{\lambda}\circ \phi_{\mu}=\phi_{\lambda+\mu}$ for all $\lambda,\mu\in Y$. For $\mu\in Y$, one sets $Z^\mu.x=\phi_{\mu}(x)$. Then $(Z^\lambda.Z^\mu).x=Z^\lambda.(Z^\mu.x)$ for all $\lambda,\mu\in Y$, and thus this defines an action of $\FC[Y]$ on $M$. 
\end{proof}

We now fix a weighted representation $M$ of $\HC_\FC$ such that $Z^\lambda.x\neq 0$ for all $\lambda\in Y^+$ and $x\in M\setminus\{0\}$. Using Lemma~\ref{lemExtension_action_CY+}, we equip $M$ with the structure of an $\FC[Y]$-module. For $h=\sum_{\lambda\in Y,w\in W^v}a_{w,\lambda} H_wZ^\lambda\in \AC_\FC$, and $x\in M$, one sets $h.x=\sum_{\lambda\in Y,w\in W^v} a_{w,\lambda} H_w.(Z^\lambda.x)$. We now prove that $M$ is a $\AC_\FC$-module by proving that for all $h,h'\in \AC_\FC$ and all $x\in M$, one has $(h*h').x=h.(h'.x)$.

Let $w\in W^v$ and $\lambda\in Y$. Using Lemma~\ref{lemCommutation relation}, we write  $Z^\lambda*H_w=\sum_{v\leq w} H_v *R_{v,w}^\lambda$, where $(R_{v,w}^\lambda)_{v\leq w}\in \FC[Y]^{[1,w]}$.

\begin{lemma}\label{lemTranslation_associativity_R}
Let  $\lambda,\nu\in Y$ and $w\in W^v$. Then: \[\sum_{u\leq v\leq w}H_u* R_{u,v}^{\nu}*R_{v,w}^\lambda=\sum_{v\leq w} H_v* R_{v,w}^{\nu+\lambda}.\]
\end{lemma}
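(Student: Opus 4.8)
The plan is to read off this identity from the associativity of the product $*$ on $\AC_\FC$, applied to the triple product $Z^\nu * Z^\lambda * H_w$. The defining relation $Z^\lambda * H_w = \sum_{v\le w} H_v * R^\lambda_{v,w}$ simply rewrites an element in the direct-sum decomposition $\AC_\FC = \bigoplus_{u\in W^v} H_u * \FC[Y]$, and since the coefficients $R^\lambda_{v,w}$ lie in the \emph{commutative} ring $\FC[Y]$ and sit to the right of the $H_v$'s, multiplying two such coefficients together requires no further straightening.

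Concretely, I would compute $Z^\nu * (Z^\lambda * H_w)$ in two ways. On one hand, $Z^\nu * Z^\lambda = Z^{\nu+\lambda}$ in $\FC[Y]$, so associativity gives
\[
Z^\nu * (Z^\lambda * H_w) = (Z^\nu * Z^\lambda) * H_w = Z^{\nu+\lambda} * H_w = \sum_{v\le w} H_v * R^{\nu+\lambda}_{v,w},
\]
which is the right-hand side of the claim. On the other hand, expanding $Z^\lambda * H_w$ first and then moving $Z^\nu$ past each $H_v$ via the same defining relation $Z^\nu * H_v = \sum_{u\le v} H_u * R^\nu_{u,v}$ yields
\[
Z^\nu * (Z^\lambda * H_w) = \sum_{v\le w} (Z^\nu * H_v) * R^\lambda_{v,w} = \sum_{v\le w}\sum_{u\le v} H_u * R^\nu_{u,v} * R^\lambda_{v,w} = \sum_{u\le v\le w} H_u * R^\nu_{u,v} * R^\lambda_{v,w},
\]
which is exactly the left-hand side. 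Since both displays evaluate the same element of $\AC_\FC$, the identity follows at once.

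I do not expect any real obstacle here: the argument is pure associativity of $*$ together with the commutativity of $\FC[Y]$, and the index set $\{(u,v)\mid u\le v\le w\}$ that appears is forced by transitivity of the Bruhat order combined with the support conditions $v\le w$ and $u\le v$ coming from Lemma~\ref{lemCommutation relation}. The only point worth stating explicitly is that multiplying $H_u$ on the right by $R^\nu_{u,v} * R^\lambda_{v,w}\in \FC[Y]$ keeps the expression in the chosen basis, so the two computations can be identified directly as equal elements of $\AC_\FC$, without even needing to invoke uniqueness of the coefficients of the decomposition $\AC_\FC=\bigoplus_u H_u*\FC[Y]$.
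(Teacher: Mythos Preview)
Your proof is correct and is exactly the approach the paper takes: the paper's proof is the single sentence ``This follows from the associativity of $*$: $Z^{\nu}*(Z^\lambda*H_w)=Z^{\nu+\lambda}*H_w$,'' and you have spelled out precisely the two computations of this element that yield the two sides of the claimed identity.
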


\begin{proof}
This follows from the associativity of $*$: $Z^{\nu}*(Z^\lambda*H_w)=Z^{\nu+\lambda}*H_w$.
\end{proof}

\begin{lemma}\label{lemFirst_step_associativity_extension}
Let $x\in M$, $w\in W^v$ and $\lambda\in Y$. Then $(Z^\lambda*H_w).x=Z^\lambda.(H_w.x)$.
\end{lemma}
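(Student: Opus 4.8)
The plan is to reduce the identity to the dominant case $\lambda\in Y^+$, where everything happens inside $\HC_\FC$ and one may invoke the $\HC_\FC$-module axioms directly, and then to bootstrap to arbitrary $\lambda\in Y$ by multiplying by a suitable dominant cocharacter and inverting it. Throughout I use that, by construction, $h.x$ is a well-defined $\FC$-linear function of $h\in\AC_\FC$ (the expansion in the basis $\{H_wZ^\lambda\}$ being unique), and that $(Z^\lambda*H_w).x=\sum_{v\leq w}H_v.(R_{v,w}^\lambda.x)$ by the defining formula for the action together with the $\FC$-linearity of the extended $\FC[Y]$-action. For the dominant case I first note that, since the Tits cone is $W^v$-stable, $Y^+$ is $W^v$-stable; combined with Remark~\ref{remIH algebre dans le cas KM deploye}(\ref{itPolynomiality_Bernstein_Lusztig}) and an induction on $\ell(w)$, this shows that for $\lambda\in Y^+$ the element $Z^\lambda*H_w$ lies in $\HC_\FC$ and that all coefficients $R_{v,w}^\lambda$ lie in $\FC[Y^+]$. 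As the extended $\FC[Y]$-action restricts on $\FC[Y^+]$ to the given $\HC_\FC$-action, the defining formula for $.$ agrees with the genuine $\HC_\FC$-action on every element of $\HC_\FC$ of the form $\sum_v H_v*P_v$ with $P_v\in\FC[Y^+]$. Applying this to $Z^\lambda*H_w$ and then the module axiom $(Z^\lambda*H_w).x=Z^\lambda.(H_w.x)$, valid because $Z^\lambda,H_w\in\HC_\FC$, proves the statement for $\lambda\in Y^+$.

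Next, for general $\lambda\in Y$ I would write $\lambda=\lambda_+-\lambda_-$ with $\lambda_\pm\in Y^+$ and compute $Z^{\lambda_-}.\big((Z^\lambda*H_w).x\big)$. Starting from $(Z^\lambda*H_w).x=\sum_{v\leq w}H_v.(R_{v,w}^\lambda.x)$, I move $Z^{\lambda_-}$ across each $H_v$ using the dominant case just proved (applied to $H_v$ and to the vector $R_{v,w}^\lambda.x$), re-expand $Z^{\lambda_-}*H_v=\sum_{u\leq v}H_u*R_{u,v}^{\lambda_-}$, and use that $.$ is a genuine $\FC[Y]$-action by Lemma~\ref{lemExtension_action_CY+} to combine the two polynomial factors. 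This yields $Z^{\lambda_-}.\big((Z^\lambda*H_w).x\big)=\sum_{u\leq v\leq w}H_u.\big((R_{u,v}^{\lambda_-}*R_{v,w}^\lambda).x\big)$. At this point Lemma~\ref{lemTranslation_associativity_R} is exactly what is needed: it identifies $\sum_{u\leq v\leq w}H_u*R_{u,v}^{\lambda_-}*R_{v,w}^\lambda$ with $\sum_{v\leq w}H_v*R_{v,w}^{\lambda_+}$ inside $\AC_\FC$, so by $\FC$-linearity of the action this common element acts as $(Z^{\lambda_+}*H_w).x$. Since $\lambda_+\in Y^+$, the dominant case and the $\FC[Y]$-action property give $(Z^{\lambda_+}*H_w).x=Z^{\lambda_+}.(H_w.x)=Z^{\lambda_-}.\big(Z^\lambda.(H_w.x)\big)$.

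Finally, having obtained $Z^{\lambda_-}.\big((Z^\lambda*H_w).x\big)=Z^{\lambda_-}.\big(Z^\lambda.(H_w.x)\big)$, I conclude by injectivity: by the construction in Lemma~\ref{lemExtension_action_CY+} the map $Z^{\lambda_-}.$ is a bijection of $M$, hence $(Z^\lambda*H_w).x=Z^\lambda.(H_w.x)$, as desired. The main obstacle is precisely this passage to non-dominant $\lambda$: one cannot appeal to the $\HC_\FC$-module axioms directly because $Z^\lambda\notin\HC_\FC$ in general, and the device is to clear the ``denominator'' by multiplying by $Z^{\lambda_-}$, returning to $Y^+$, and then invoking injectivity of the extended action; the intermediate bookkeeping is governed exactly by the cocycle identity of Lemma~\ref{lemTranslation_associativity_R}. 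The secondary delicate point is the dominant case itself, namely that $R_{v,w}^\lambda\in\FC[Y^+]$ for $\lambda\in Y^+$, which rests on the $W^v$-stability of $Y^+$.
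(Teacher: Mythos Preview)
Your proof is correct and follows essentially the same route as the paper's: both arguments multiply by a dominant $Z^\nu$ (your $\lambda_-$), push it across $H_v$ using the $\HC_\FC$-module axioms, invoke the cocycle identity of Lemma~\ref{lemTranslation_associativity_R} to land at $Z^{\nu+\lambda}*H_w$ with $\nu+\lambda\in Y^+$, and then cancel $Z^\nu$ by injectivity. The only organizational difference is that you isolate the dominant case $\lambda\in Y^+$ as a preliminary step, whereas the paper absorbs it into the single chain of equalities; this makes your write-up a bit more transparent but the mathematics is identical.
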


\begin{proof}
Let $\nu\in Y^+$. Then: \[\begin{aligned} Z^{\nu}.\big((Z^\lambda*H_w).x\big) & = Z^\nu.\big(\sum_{v\leq w} H_v .(R_{v,w}^\lambda.x)\big)\\
& = \sum_{v\leq w} (Z^\nu*H_v).(R_{v,w}^\lambda .x)\\
&= \sum_{ v\leq w}\sum_{u\leq v} (H_u*R_{u,v}^\nu*R_{v,w}^\lambda).x\\
&= \sum_{v\leq w} (H_v*R_{v,w}^{\nu+\lambda} ).x.\end{aligned}\] 

We now assume that  $\nu+\lambda\in Y^+$. Such a $\nu$ exists. Indeed, one can choose $\nu'\in Y\cap C^v_f$ and take $\nu=N\nu'$, for $N\in \N$ large enough. Then \[Z^{\nu}.\big(Z^{\lambda}.(H_w.x)\big)=Z^{\nu+\lambda}.(H_w.x)=(\sum_{v\leq w} H_v*R_{v,w}^{\nu+\lambda}) .x=Z^{\nu}.\big((Z^\lambda*H_w).x\big).\] Therefore $(Z^\lambda.H_w).x=Z^\lambda.(H_w.x)$.
\end{proof}

We can now prove Proposition~\ref{propCharacterization_restriction_weighted_representations}.

We have to prove that for all $h,h'\in \AC_\FC$, and $x\in M$, one has $(h*h').x=h.(h'.x)$. Let $u,v\in W^v$, $\lambda,\mu\in Y$ and $x\in M$. Write $Z^\lambda*H_v=\sum_{w\in W^v,\nu\in Y} a_{w,\nu} H_wZ^\nu$. Then $(H_uZ^\lambda)*(H_vZ^\mu)=\sum_{w\in W^v,\nu\in Y}a_{w,\nu} H_u*H_w*Z^{\mu+\nu}$. Therefore \[\begin{aligned} (H_uZ^\lambda*H_vZ^\mu).x & =\sum_{w\in W^v,\nu\in Y}a_{w,\nu}H_u*H_w.(Z^{\nu+\mu}.x) \\
 & =  H_u.\bigg(\sum_{w\in W^v,\mu\in Y} a_{w,\nu} H_w.\big(Z^\nu.(Z^\mu.x)\big)\bigg)\\
 &= H_u.\big((Z^\lambda*H_v).(Z^\mu .x)\big)\\
 &=H_u.\bigg(Z^\lambda.\big(H_v.\big(Z^\mu.x)\big)\bigg)\text{ by Lemma~\ref{lemFirst_step_associativity_extension}}\\
 &= (H_u*Z^\lambda).\big( H_v.\big(Z^\mu.x)\big) \\
&=  (H_u*Z^\lambda).\big((H_v*Z^\mu).x\big),\end{aligned}\] which proves the proposition.

\begin{remark}
By \cite[Lemma 4.5]{hebert2018principal}, if $\T=\mathring{\T}\cup \bigcap_{s\in \SCC}\ker(\alpha_s)$, then for every nonzero algebra morphism $\tau:\FC[Y^+]\rightarrow \FC$,  one has $\tau(Y^+)\subset \FC^*$. Therefore in this case, every weighted representation of $\HC_\FC$ extends to a representation of $\AC_\FC$. This is the case for example when $\HC_\FC$ is associated to an affine Kac-Moody group or to a size $2$ Kac-Moody matrix. By \cite[Lemma 4.9]{hebert2018principal}, there exist Kac-Moody matrices for which there exist weighted representations of $\HC_\FC$ which do not extend to representations of $\AC_\FC$. 
\end{remark}

\section{Decomposition of regular principal series representations}\label{secRegular_representations}

In this section, the field $\FC$ is not necessarily $\C$. Let $\tau\in T_\FC$. We call $\tau$ \textbf{regular} if $W_\tau=\{1\}$, that is if for all $w\in W^v$, $w.\tau=\tau$ implies $w=1$. Let $\tau\in T_\FC$ be regular. In this section, we describe the submodules of $I_\tau$ and prove that there exists a unique  irreducible representation of $\AC_\FC$ admitting $\tau$ as a weight. The main tools that we use are the weights of the sumodules and the intertwining operators $I_{w.\tau}\rightarrow I_{w'.\tau}$, for $w,w'\in W^v$.

In subsection~\ref{subDiagram_and_distance}, we introduce, for $\tau\in T_\FC$ regular, the graph of $\tau$, whose vertices are the $I_{w.\tau}$, for $w\in W^v$ and a semi-distance on it.

In subsection~\ref{subIrreducible_constituents}, we study the irreducible representations admitting $\tau$ as a weight.

In subsection~\ref{subIndecomposable_submodules}, we study the strongly indecomposable submodules of $I_\tau$ and prove that the sumbodules of $I_\tau$ can be written as sums of strongly indecomposable submodules.

In subsection~\ref{subWeights_submodules}, we give a way to compute the weights of a submodule.

In subsection~\ref{subExamples_regular_case}, we apply the results of this section to some examples.

\subsection{Graph and semi-distance associated to $I_\tau$}\label{subDiagram_and_distance}

Let $\tau\in T_\FC$ be regular. By Lemma~\ref{lemFrobenius_reciprocity} and Proposition~\ref{propIntertwining_nonzero}, one has $\dim \Hom(I_{w.\tau},I_{w'.\tau})=1$, for all $w,w'\in W^v$. For every $w,w'\in W^v$, we fix $A_{w,w',\tau}\in \Hom(I_{w.\tau},I_{w'.\tau})\setminus\{0\}$. 

The \textbf{graph of morphisms} $\GC_\tau$ of $\tau$ is the non-oriented graph defined as follows. Its vertices are the $I_{w.\tau}$, for $w\in W^v$.  Two vertices $I_{w.\tau}$, $I_{w'.\tau}$ are  joined by an edge if and only if $\ell(w'^{-1}w)=1$.

A \textbf{path in $\GC_\tau$} is a finite  sequence $(I_{w_i.\tau})_{i\in \llbracket 1,n\rrbracket}\in (\GC_\tau)^n$, where $n\in \N$ and $w_iw_{i+1}^{-1}\in \SCC$ for all $i\in \llbracket 1,n-1\rrbracket$. Let $n\in \N$ and $\Gamma=(I_{w_1.\tau},\ldots,I_{w_n.\tau})$ be a path in $\GC_\tau$. We say that $\Gamma$ \textbf{is an intertwining path} if $A_\Gamma:=A_{w_{n-1},w_{n},\tau}\circ \ldots \circ A_{w_{1},w_2,\tau} :I_{w_1.\tau}\rightarrow I_{w_n.\tau}$ is nonzero.

The \textbf{graph of isomorphisms} $\tilde{\GC}_\tau$ is the graph obtained  from $\GC_\tau$ by deleting the edges $(I_{w.\tau},I_{sw.\tau})$,  $w\in W^v$, $s\in \SCC$ such that $A_{w,sw,\tau}$ is not an isomorphism (this is equivalent to assuming that $A_{sw,w,\tau}$ is not an isomorphism since $\dim \Hom(I_{w.\tau},I_{sw.\tau})=1$). Note that by  \cite[Lemma 5.4]{hebert2018principal}, $A_{w,sw,\tau}$ is an isomorphism if and only $w.\tau(\zeta_s)w.\tau(^s\zeta_s)\neq 0$ (or equivalently $w.\tau(\alpha_s^\vee)\notin\{q,q^{-1}\}$ in the split case).

If $\Gamma$ is an path in $\GC_\tau$, we set \[\ell_{\not\simeq}(\Gamma) =|\{i\in \llbracket 1,n-1\rrbracket|A_{w_{i},w_{i+1}.\tau})\text{ is not an isomorphim}\}|.\]

If $P_1,P_2$ are two vertices of $\GC_\tau$, then we set $d(P_1,P_2)=\ell_{\not\simeq}(\Gamma)$, where $\Gamma$ is any intertwining path joining $P_1$ to $P_2$. The aim of this subsection is to prove that this is well defined. For this we prove the following:\begin{itemize}
\item there exists an intertwining path $\Gamma$  joining $P_1$ to $P_2$ (see  Proposition~\ref{propExistence_intertwining_paths})

\item $\ell_{\not\simeq}(\Gamma)$ is independent of the choice of such a path (see Proposition~\ref{propDefinition_tau_distance}).
\end{itemize}

 Our proof is based on the ``word property'' in Coxeter groups. Note that  we will prove that $d$ is symmetric and satisfies the triangle inequality (see Proposition~\ref{propDefinition_tau_distance}), but in general, it is not a distance (for example if $I_{w.\tau}\simeq I_{\tau}$ for every $w\in W^v$, then $d(P,P')=0$, for every $P,P'\in \GC_\tau$). However it induces a distance on the set of connected components of $\tilde{\GC}_\tau$. This semi-distance will enable us to study the strongly indecomposable submodules of $I_\tau$.

\subsubsection{Existence of intertwining paths between two vertices}

We begin by proving the existence of intertwining paths between any two vertices of $\GC_\tau$. 

Recall that $\HFW=\bigoplus_{w\in W^v} \FC H_w\subset \AC_\FC$. For $w\in W^v$, set $\HC_{W^v,\FC}^{\leq w}=\bigoplus_{v\leq w} \FC H_v$ and $\HC_{W^v,\FC}^{< w}=\bigoplus_{v<w}\FC H_v$.

\begin{lemma}\label{lemProduct in the Coxeter IH algebra} Let $w\in W^v$ and $s\in \SCC$ be such that $ws>w$. Then: \[(\HC_{W^v,\FC}^{\leq w}\setminus\HC_{W^v,\FC}^{<w})*(\HC_{W^v,\FC}^{\leq s}\setminus\HC_{W^v,\FC}^{<s})\subset \HC_{W^v,\FC}^{\leq ws}\setminus\HC_{W^v,\FC}^{<ws}.\]
\end{lemma}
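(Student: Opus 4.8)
The plan is to expand the product against the standard basis $(H_u)_{u \in W^v}$ and then to control two things independently: the Bruhat support of $h * h'$, and the coefficient of $H_{ws}$.

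First I would fix convenient representatives. Since $\HC_{W^v,\FC}^{\leq s} = \FC H_1 \oplus \FC H_s$ and $\HC_{W^v,\FC}^{<s} = \FC H_1$, any $h' \in \HC_{W^v,\FC}^{\leq s}\setminus \HC_{W^v,\FC}^{<s}$ can be written $h' = aH_1 + bH_s$ with $a \in \FC$ and $b \in \FC^*$; likewise I write $h = \sum_{v \leq w} c_v H_v$ with $c_w \neq 0$. Then $h * h' = \sum_{v\leq w} c_v\,(aH_v + b\,H_v * H_s)$, and I would evaluate $H_v * H_s$ using the right-handed analogue of the defining relation: $H_v * H_s = H_{vs}$ when $vs > v$, and $H_v * H_s = (\sigma_s - \sigma_s^{-1})H_v + H_{vs}$ when $vs < v$. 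Consequently, the only basis elements $H_u$ that can occur in $h*h'$ are indexed by $u = v$ or $u = vs$ for some $v \leq w$.

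Next I would control the support. Every $v \leq w$ satisfies $v \leq w < ws$ (as $ws > w$ means $w \lessdot ws$), so the indices $u = v$ all lie in $[1,ws]$. For the indices $u = vs$ I would invoke the lifting property of the Bruhat order, see \cite[Proposition 2.2.7]{bjorner2005combinatorics}: here $s$ is a right descent of $ws$ since $(ws)s = w < ws$, and $v < ws$. If $vs < v$, then $vs < v \leq w < ws$, so $vs \leq ws$; if $vs > v$, then $s$ is a right descent of $ws$ but not of $v$, and the lifting property applied to $v < ws$ yields $vs \leq ws$. In either case $vs \leq ws$, so every basis element occurring in $h*h'$ is indexed by an element $\leq ws$, i.e. $h*h' \in \HC_{W^v,\FC}^{\leq ws}$.

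Finally I would isolate the coefficient of $H_{ws}$. The terms $aH_v$ and $(\sigma_s - \sigma_s^{-1})H_v$ are supported on $\{v : v\leq w\} \subset [1,ws)$ and never produce $H_{ws}$; a term $H_{vs}$ equals $H_{ws}$ only when $vs = ws$, i.e. $v = w$, and for $v = w$ one has $vs = ws > w = v$, so $H_w * H_s = H_{ws}$ contributes with coefficient $c_w b$. Hence the coefficient of $H_{ws}$ in $h*h'$ is $c_w b \neq 0$, giving $h*h' \notin \HC_{W^v,\FC}^{<ws}$ and finishing the proof. I expect the main obstacle to be the support estimate $vs \leq ws$ for all $v \leq w$: everything else is bookkeeping, whereas this is the one point requiring a genuine fact about the Bruhat order, and the only subtlety is to apply the lifting property in its correct (right-handed, mixed-descent) form, handling the same-descent case $vs < v$ by the elementary chain $vs < v \leq w < ws$.
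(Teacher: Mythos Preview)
Your proof is correct and is essentially the paper's argument written out in full: the paper's one-line proof simply asserts the Bruhat containment $[1,w]\cdot[1,s]\subset[1,ws]$ (your support estimate) together with $[1,w)\cdot s\cup[1,w]\subset[1,ws)$ (your observation that only $v=w$ contributes to the $H_{ws}$-coefficient), whereas you justify the first of these via the lifting property and carry out the coefficient bookkeeping explicitly.
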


\begin{proof}
This follows from the fact that $[1,w].[1,s]\subset [1,ws]$ and that $[1,w).s\cup [1,w]\subset [1,ws]$.
\end{proof}

\begin{proposition}\label{propExistence_intertwining_paths}(see  \cite[(1.21)]{kato1982irreducibility}).
Let $\tau\in T_\FC$ be regular. Let $w\in W^v$ and $w=s_k\ldots s_1$ be a reduced writing of $w$, where $k\in \N$ and $s_1,\ldots,s_k\in \SCC$. For $j\in \llbracket 1,k\rrbracket$, set $w_j=s_{j-1}\ldots s_1$ (where we set $s_0\ldots s_1=1$) and $\tau_j=w_j.\tau$. Then $\Gamma=(I_{\tau_1},I_{\tau_2},\ldots,I_{\tau_k})$ is an intertwining path joining $I_\tau$ to $I_{w.\tau}$.
\end{proposition}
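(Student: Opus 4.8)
The plan is to build the intertwining path one edge at a time, multiplying by intertwining maps associated to a reduced word for $w$, and to prove that the resulting composite is nonzero by tracking the ``leading term'' in the basis $(H_v)_{v\in W^v}$. The key structural input is Lemma~\ref{lemReeder 4.3}: each factor $F_{s_j}$ acts on $I_\tau$ and produces a weight vector of the correct weight, and by part~(\ref{itLeading_coefficient_Fw}) the element $F_w$ has nonzero ``leading coefficient'' with respect to the Bruhat order, i.e. $F_w - a H_w \in \ATF^{<w}$ for some $a\in\FC^*$.

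First I would observe that since $\tau$ is regular, Lemma~\ref{lemReeder 4.3}(\ref{itF_w_well_defined_at_regular}) guarantees $F_u\in\AC(T_\FC)_\tau$ for every $u\in W^v$, so each $F_{s_j}$ can be evaluated at the relevant weight and gives a well-defined operator. For each $j$, I would use the vector $F_{w_j}(\tau).\vb_\tau\in I_\tau(w_j.\tau)=I_{\tau}(\tau_j)$ coming from Lemma~\ref{lemReeder 4.3}(\ref{itDomain_Fw}); via Frobenius reciprocity (Lemma~\ref{lemFrobenius_reciprocity}), this weight vector corresponds to a nonzero element of $\Hom(I_{\tau_j},I_\tau)$, and by Proposition~\ref{propIntertwining_nonzero} such a Hom-space is one-dimensional. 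Thus the single-edge intertwiner $A_{w_j,w_{j+1},\tau}: I_{\tau_j}\to I_{\tau_{j+1}}$, which exists because $w_{j+1}w_j^{-1}=s_j\in\SCC$, can be realized (up to a nonzero scalar) as the operator induced by the factor $F_{s_j}$. Composing these realizes $A_\Gamma$ as the operator induced by $F_{s_k}\cdots F_{s_1}=F_w$, up to a nonzero scalar and the identifications from Frobenius reciprocity.

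The crucial step is then to show $A_\Gamma\neq 0$, equivalently that $F_w(\tau).\vb_\tau\neq 0$ in $I_\tau$. Here I would invoke Lemma~\ref{lemReeder 4.3}(\ref{itLeading_coefficient_Fw}): writing $F_w = aH_w + \sum_{v<w} H_v\theta_v$ with $a\in\FC^*$, evaluation at $\tau$ and application to $\vb_\tau$ yields $F_w(\tau).\vb_\tau = a\, H_w\vb_\tau + \sum_{v<w}\theta_v(\tau)\,H_v\vb_\tau$. Since the $H_v\vb_\tau$ for $v\in W^v$ form a basis of $I_\tau$ and the coefficient $a$ of $H_w\vb_\tau$ is nonzero, this vector is nonzero. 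Hence the composite $A_\Gamma$ is nonzero, so $\Gamma$ is an intertwining path.

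The main obstacle I anticipate is the bookkeeping that identifies the composite $A_{w_{k-1},w_k,\tau}\circ\cdots\circ A_{w_1,w_2,\tau}$ with the operator induced by $F_w$ rather than merely asserting it edge by edge: one must check that at each stage the image weight vector is genuinely the one produced by the next factor $F_{s_j}$, using the commutation relation Lemma~\ref{lemReeder 4.3}(\ref{itCommutation_relation}) to move scalars in $\FC(Y)$ past the $F$-factors correctly. Because each individual Hom-space is one-dimensional, the composite is determined up to scalar by its nonvanishing, so it suffices to verify that no factor collapses the leading term; the leading-coefficient computation above handles exactly this, and regularity ensures every intermediate evaluation is defined. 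Thus the nonvanishing of $F_w(\tau).\vb_\tau$ is the heart of the argument, and the rest is the functorial translation via Frobenius reciprocity.
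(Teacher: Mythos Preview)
Your overall strategy—realize each edge intertwiner via an $F_{s_j}$-element and track leading terms in the basis $(H_v)$—is exactly the paper's. However, the shortcut ``$A_\Gamma\neq 0 \Leftrightarrow F_w(\tau).\vb_\tau\neq 0$'' does not work as stated, and this is precisely the obstacle you flag but do not resolve.

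The vector $F_w(\tau).\vb_\tau$ lies in $I_\tau(w.\tau)$ and, via Frobenius reciprocity, corresponds to an element of $\Hom(I_{w.\tau},I_\tau)$, i.e.\ a map in the \emph{reverse} direction from $A_\Gamma:I_\tau\to I_{w.\tau}$. More importantly, tracing the composite through Frobenius reciprocity shows that $A_\Gamma(\vb_\tau)=h_1\cdots h_{k-1}.\vb_{\tau_k}$ where each $h_j=F_{s_j}(\tau_{j+1})\in\HCW$ is evaluated at a \emph{different} point $\tau_{j+1}$. This product in $\HCW$ is not $F_w$ (nor $F_{w^{-1}}$) evaluated at any single $\tau'$, so Lemma~\ref{lemReeder 4.3}(\ref{itLeading_coefficient_Fw}) applied once to $F_w$ does not give its nonvanishing; the commutation relation (\ref{itCommutation_relation}) does not collapse these distinct evaluation points into one.

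The paper closes this gap by working directly in $\HCW$: each $h_j=F_{s_j}(\tau_{j+1})$ lies in $\HCW^{\leq s_j}\setminus\HCW^{<s_j}$ (this is Lemma~\ref{lemReeder 4.3}(\ref{itLeading_coefficient_Fw}) applied to the single factor $F_{s_j}$), and then Lemma~\ref{lemProduct in the Coxeter IH algebra} yields inductively $h_1\cdots h_j\in\HCW^{\leq w_{j+1}^{-1}}\setminus\HCW^{<w_{j+1}^{-1}}$, using that $s_1\cdots s_j=w_{j+1}^{-1}$ is reduced. This step-by-step leading-term argument is exactly the ``bookkeeping'' you anticipate, and it is the actual substance of the proof; once it is carried out, invoking $F_w(\tau)$ as a single object becomes unnecessary.
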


\begin{proof}
 
Let $j\in \llbracket 1,k-1\rrbracket$. By Lemma~\ref{lemReeder 4.3} (\ref{itDomain_Fw}) and (\ref{itF_w_well_defined_at_regular}), \[x_j:= F_{s_j}(s_j.\tau_j)\vb_{s_j.\tau_j}=F_{s_j}(\tau_{j+1})\vb_{\tau_{j+1}}\in I_{\tau_{j+1}}(\tau_j).\]

For $\tau'\in T_\FC$ and $w\in W^v$, set $I_{\tau'}^{\leq w}=\bigoplus_{v\leq w} \FC \vb_{\tau'}$ and $I_{\tau'}^{< w}=\bigoplus_{v< w} \FC \vb_{\tau'}$.

Set $f_j=\Upsilon_{x_j}\circ \ldots \circ \Upsilon_{x_1}\in \Hom(I_{\tau},I_{\tau_{j+1}})$
 (where the $\Upsilon_{x_j}:I_{\tau_j}\rightarrow I_{\tau_{j+1}}$ are defined in Lemma~\ref{lemFrobenius_reciprocity}). Let $\PC_j:$ ``$f_j(\vb_\tau)\in I_{\tau_{j+1}}^{\leq w_{j+1}^{-1}}\setminus I_{\tau_{j+1}}^{<w_j^{-1}}$''.
  Then $\PC_1$ is true by Lemma~\ref{lemReeder 4.3} (\ref{itLeading_coefficient_Fw}). Let $j\in \llbracket 1,k-2\rrbracket$ and assume that $\PC_j$ is true. Write $f_j(\vb_\tau)=h.\vb_{\tau_{j+1}}$, where $h\in \HC_{\FC,W^v}^{\leq w_{j+1}^{-1}}\setminus\HC_{\FC,W^v}^{< w_{j+1}^{-1}}$. Then one has $\Upsilon_{x_{j+1}}\big(f_j(\vb_\tau)\big)=h.\Upsilon_{x_{j+1}}(\vb_{\tau_{j+1}})$. Write $x_{j+1}=h'. \vb_{\tau_{j+2}}$, where $h'\in (\HC_{\FC,W^v}^{\leq s_{j+1}}\setminus \FC). \vb_{\tau_{j+1}}$. Then $f_{j+1}(\vb_{\tau})=h.h'.\vb_{\tau_{j+1}}$.
   By Lemma~\ref{lemProduct in the Coxeter IH algebra}, we deduce that $\PC_{j+1}$ is true. Thus $\PC_{k-1}$ is true and in particular, $f_{k-1}(\vb_\tau)\neq 0$, which proves the lemma.
\end{proof}

\subsubsection{Independence of the choice of a path}

We now prove that if $I_{\tau_1},I_{\tau_2}$ are two vertices of $\GC_\tau$ and $\Gamma, \Gamma'$ are intertwining paths joining them, then $\ell_{\not\simeq}(\Gamma)=\ell_{\not\simeq}(\Gamma')$. 

Let $(W^v)^*=\SCC^{(\N)}$. For $w^*=(s_1,\ldots,s_k)\in (W^v)^*$, we set $\pi(w^*)=s_1\ldots s_k\in W^v$. For $s,t\in \SCC$ denote by $m(s,t)$ the order of $st$ in $W^v$. If $m(s,t)$ is finite, we denote by $w_0^*(s,t)$ the $m(s,t)$-tuple $(s,t,s,t\ldots )$. One has $\pi\big(w^*_0(s,t)\big)=\pi\big(w_0^*(t,s)\big)=:w_0(s,t)$. If $w\in W^v$ and $\Gamma=(I_{w_1.\tau},I_{w_2.\tau},\ldots,I_{w_n.\tau})$ is a path,  we set $\Gamma^*=(w_2w_1^{-1},\ldots,w_nw_{n-1}^{-1})$. Let  $w^*,\tilde{w}^*\in (W^v)^*$. We say that $\tilde{w}^*$\textbf{ is obtained from $w^*$ by a braid-move} if there exist $s,t\in \SCC$ such that $m(s,t)$ is finite 	and $u^*,v^*\in (W^v)^*$ such that \[w^*=(u^*,w_0^*(s,t),v^*)\text{ and }\tilde{w}^*=(u^*,w_0^*(t,s),v^*).\]

\begin{lemma}\label{lemConservation_tau_distance_braid_moves1}
Let $\tau\in T_\FC$ be regular.  Let $s,t\in \SCC$ be such that $s\neq t$ and $m(s,t)$ is finite. Let  \[\Gamma_s=(I_\tau,I_{s.\tau},I_{ts.\tau},\ldots,I_{w_0(s,t).\tau})\mathrm{\ and\ }\Gamma_t=(I_{\tau},I_{t.\tau},I_{st.\tau},\ldots,I_{w_0(s,t).\tau}).\] Then $\ell_{\not\simeq}(\Gamma_s)=\ell_{\not\simeq}(\Gamma_t)$ and   $A_{\Gamma_s}\in \FC^* A_{\Gamma_t}$.
\end{lemma}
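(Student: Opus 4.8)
The plan is to reduce everything to rank-$2$ combinatorics inside the finite dihedral group $\langle s,t\rangle$. First I would note that $\Gamma_s^\ast=w_0^\ast(s,t)$ and $\Gamma_t^\ast=w_0^\ast(t,s)$ are exactly the two reduced expressions of the longest element $w_0(s,t)$ of $\langle s,t\rangle$; in particular each of $\Gamma_s,\Gamma_t$ is a reduced path (its number of edges equals $m(s,t)=\ell\big(w_0(s,t)\big)$) joining the common vertices $I_\tau$ and $I_{w_0(s,t).\tau}$.

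For the proportionality $A_{\Gamma_s}\in\FC^\ast A_{\Gamma_t}$, I would apply Proposition~\ref{propExistence_intertwining_paths} to the two reduced expressions of $w_0(s,t)$: it shows that $\Gamma_s$ and $\Gamma_t$ are intertwining paths, so $A_{\Gamma_s}\neq 0$ and $A_{\Gamma_t}\neq 0$. Since $\tau$ is regular, Lemma~\ref{lemFrobenius_reciprocity} and Proposition~\ref{propIntertwining_nonzero} give $\dim\Hom\big(I_\tau,I_{w_0(s,t).\tau}\big)=1$, and two nonzero elements of a line are proportional, which yields the claim.

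The remaining point, $\ell_{\not\simeq}(\Gamma_s)=\ell_{\not\simeq}(\Gamma_t)$, is the heart of the matter. Writing the path reflections as $s_1,\dots,s_m$ and the vertices as $w_1=1$, $w_{i+1}=s_iw_i$, the $i$-th edge is $(I_{w_i.\tau},I_{s_iw_i.\tau})$ and, by the criterion recalled after the definition of $\tilde{\GC}_\tau$ (namely \cite[Lemma 5.4]{hebert2018principal}), it fails to be an isomorphism iff $w_i.\tau(\zeta_{s_i})\,w_i.\tau({}^{s_i}\zeta_{s_i})=0$. Setting $\beta_i^\vee=w_i^{-1}.\alpha_{s_i}^\vee\in\Phi^\vee_+$ (positive since the path is reduced), I would observe that $w_i.\tau(\alpha_{s_i}^\vee)=\tau(\beta_i^\vee)$ and $s_iw_i.\tau(\alpha_{s_i}^\vee)=\tau(\beta_i^\vee)^{-1}$; as $\zeta_{s_i}$ is a rational function of $Z^{\alpha_{s_i}^\vee}$ with parameters $\sigma_{s_i}=\sigma_{\beta_i^\vee}$, $\sigma'_{s_i}=\sigma'_{\beta_i^\vee}$ (well defined from $\beta_i^\vee$), the non-isomorphism condition becomes a condition $C_\tau(\beta_i^\vee)$ depending only on $\beta_i^\vee$ and $\tau$ (in the split case simply $\tau(\beta_i^\vee)\in\{q,q^{-1}\}$). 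Hence $\ell_{\not\simeq}(\Gamma_s)=|\{\beta^\vee\in E:C_\tau(\beta^\vee)\}|$, where $E=\{\beta_1^\vee,\dots,\beta_m^\vee\}$. Since all $w_i\in\langle s,t\rangle$ and $s_i\in\{s,t\}$, each $\beta_i^\vee$ is a positive coroot of the rank-$2$ subsystem $\Phi^\vee\cap(\Z\alpha_s^\vee\oplus\Z\alpha_t^\vee)$; as the path is reduced these $m$ coroots are distinct, so they exhaust the positive coroots of that subsystem. Thus $E=N_{\Phi^\vee}\big(w_0(s,t)\big)$ is independent of the reduced expression, and the number of $\tau$-critical coroots is the same for both paths, giving $\ell_{\not\simeq}(\Gamma_s)=\ell_{\not\simeq}(\Gamma_t)$.

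The main obstacle is the intrinsicness step in the third paragraph: one must check carefully, especially in the non-split case where $\zeta_s$ involves the parameters $\sigma_s,\sigma'_s$, that the failure of an edge to be an isomorphism depends only on its associated coroot $\beta^\vee$ and on $\tau$. This is exactly what makes the two paths — which share the same coroot set $E$ but order it differently — give the same count; it is also the reason the quantities $\sigma_{\alpha^\vee},\sigma'_{\alpha^\vee},\zeta_{\alpha^\vee}$ were set up as functions of the coroot.
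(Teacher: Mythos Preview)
Your argument is correct and, for the equality $\ell_{\not\simeq}(\Gamma_s)=\ell_{\not\simeq}(\Gamma_t)$, takes a cleaner route than the paper. The paper proceeds by case analysis on $m(s,t)\in\{2,3,4,6\}$: it shows directly that the first edge of $\Gamma_s$ is an isomorphism iff the last edge of $\Gamma_t$ is (checking by hand, for $m=3$, that $ts.\alpha_t^\vee=\alpha_s^\vee$ and $R_s=R_t$; for $m$ even, that $sw_0(s,t)$ fixes $\alpha_s^\vee$), and then translates this by every $v\in\langle s,t\rangle$ to pair off all edges. Your approach bypasses the case split by observing that the non-isomorphism condition at an edge depends only on the associated inversion coroot $\beta_i^\vee=w_i^{-1}.\alpha_{s_i}^\vee$ through $\tau(\zeta_{\beta_i^\vee})\tau(\zeta_{-\beta_i^\vee})=0$, and that the set $\{\beta_1^\vee,\dots,\beta_m^\vee\}=N_{\Phi^\vee}\big(w_0(s,t)\big)$ is independent of the reduced expression (Lemma~\ref{lemKumar_1.3.14}). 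This is exactly why the paper set up $\sigma_{\alpha^\vee},\sigma'_{\alpha^\vee},\zeta_{\alpha^\vee}$ as well-defined functions of the coroot, and you are right to flag that step as the crux. The paper's approach has the virtue of making the edge-by-edge correspondence explicit; yours is shorter and works uniformly. For the proportionality $A_{\Gamma_s}\in\FC^\ast A_{\Gamma_t}$ you argue exactly as the paper does.
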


\begin{proof}
Write $\Gamma_s=(I_{\tau_1^{(s)}},\ldots,I_{\tau_k^{(s)}})$ and $\Gamma_t=(I_{\tau_1^{(t)}},\ldots,I_{\tau_k^{(t)}})$. 
Let us prove that  \begin{equation}\label{eqCondition_nonisomorphism}
I_\tau=I_{\tau_1^{(s)}}\simeq I_{\tau_2^{(s)}}=I_{s.\tau}\text{ if and only if } I_{\tau_{k}^{(t)}}\simeq I_{\tau_{k-1}^{(t)}}.\end{equation}
For $u\in \SCC$, set \[R_u(T)=-\sigma_u\frac{(\sigma_u-\sigma_u^{-1})+(\sigma'_u-\sigma_u'^{-1})T}{1-T^2}+\sigma_u^2\in \FC(T),\] where $T$ is an indeterminate.  
By \cite[Lemma 5.4]{hebert2018principal} , for $\tau'\in T_\FC$ and $u\in \SCC$, $I_{\tau'}$ is not isomorphic to $I_{u.\tau'}$ 
if and only if $R_u\big(\tau'(\alpha_u^\vee)\big)R_u\big(\tau'(-\alpha_u^\vee)\big)=0$.

By \cite[1.3.21 Proposition]{kumar2002kac}, $m(s,t)\in \{2,3,4,6\}$. Suppose $m(s,t)=3$.  Then $k=4$, $\tau_k^{(t)}=tst.\tau$ and $\tau_{k-1}^{(t)}=st.\tau$. Thus $I_{\tau_{k}^{(t)}}\simeq I_{\tau_{k-1}^{(t)}}$ if and only if  $R_t\big(st.\tau(\alpha_t^\vee)\big)R_t\big(st.\tau(-\alpha_t^\vee)\big)=0$. Moreover $sts=tst$, thus $s$ and $t$ are conjugate and hence $R_s=R_t$, by assumptions on the $\sigma_u,\sigma_u'$, $u\in \SCC$. By Lemma~\ref{lemDefinition_root_reflection}, $ts.\alpha_t^\vee=\alpha_s^\vee$, which proves~(\ref{eqCondition_nonisomorphism}).

Suppose $m(s,t)$ is even. Set $w=sw_0(s,t)=w_0(s,t)s$. Then $k=m(s,t)+1$, $\tau_k^{(t)}=w_0(s,t).\tau$ and $\tau_{k-1}^{(t)}=w.\tau$.   Thus $I_{\tau_{k}^{(t)}}\simeq I_{\tau_{k-1}^{(t)}}$ if and only if $R_s\big(w.\tau(\alpha_s^\vee)\big)R_s\big(w.\tau(-\alpha_s^\vee)\big)=0$.  Moreover, $w=w^{-1}$ and $ws=sw=w_0(s,t)$. Thus $wsw^{-1}=s$ and thus by Lemma~\ref{lemDefinition_root_reflection}, $w.\alpha_s^\vee=\alpha_s^\vee$, which proves~(\ref{eqCondition_nonisomorphism}).

We deduce that~(\ref{eqCondition_nonisomorphism}) holds in both cases. By applying~(\ref{eqCondition_nonisomorphism}) to \[\Gamma_s(v):=(I_{v.\tau},I_{sv.\tau},I_{tsv.\tau},\ldots,I_{w_0(s,t)v.\tau})\text{ and }\Gamma_t(v):=(I_{v.\tau},I_{tv.\tau},I_{stv.\tau},\ldots,I_{w_0(s,t)v.\tau}),\] for every $v\in \langle s,t\rangle$, we deduce that $\ell_{\not\simeq }(\Gamma_s)=\ell_{\not\simeq }(\Gamma_t)$. By Proposition~\ref{propExistence_intertwining_paths}, $\Gamma_s$ and $\Gamma_t$ are intertwining paths and as $\dim \Hom (I_{v.\tau}, I_{w_0(s,t)v.\tau})=1$, one has   $\FC^* A_{\Gamma_s}=\FC^* A_{\Gamma_t}$.
\end{proof}

We deduce the following lemma:

\begin{lemma}\label{lemConservation_tau_distance_braid_moves}
Let $w,w'\in W^v$ and $\Gamma,\tilde{\Gamma}$ be two paths joining $I_{w.\tau}$ to $I_{w'.\tau}$. We assume that $\tilde{\Gamma}^*$ is obtained from $\Gamma^*$ by a braid-move. Then $\ell_{\not\simeq}(\Gamma)=\ell_{\not\simeq}(\tilde{\Gamma})$ and $\Gamma$ is an intertwining path if and only if $\tilde{\Gamma}$ is an intertwining path.
\end{lemma}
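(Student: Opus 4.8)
We have two paths $\Gamma, \tilde\Gamma$ joining the same endpoints $I_{w.\tau}$ and $I_{w'.\tau}$, where $\tilde\Gamma^*$ is obtained from $\Gamma^*$ by a braid move. We want to show the two quantities $\ell_{\not\simeq}$ agree, and that one path is intertwining iff the other is. A braid move localizes the difference between the two paths: there are $u^*, v^* \in (W^v)^*$ and reflections $s,t\in\SCC$ with finite $m(s,t)$ such that $\Gamma^* = (u^*, w_0^*(s,t), v^*)$ and $\tilde\Gamma^* = (u^*, w_0^*(t,s), v^*)$. Since the edge-sequences agree outside the braid portion, the two paths pass through the same vertices before and after the braid subword; only the middle portion differs.

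**Plan for the proof.** The plan is to reduce the global statement to the purely local statement already proved in Lemma~\ref{lemConservation_tau_distance_braid_moves1}. First I would decompose each path as a concatenation $\Gamma = (\Gamma_1, \Gamma^{\mathrm{mid}}, \Gamma_2)$ and $\tilde\Gamma = (\Gamma_1, \tilde\Gamma^{\mathrm{mid}}, \Gamma_2)$, where $\Gamma_1$ corresponds to $u^*$, $\Gamma_2$ to $v^*$, and the middle portions correspond to $w_0^*(s,t)$ and $w_0^*(t,s)$ respectively. Let $v_0 \in W^v$ be the common group element reached at the start of the braid portion (i.e. $v_0 = w$ left-multiplied by $\pi$ of the reversed $u^*$, matching the indexing convention in the definition of $\Gamma^*$). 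Then $\Gamma^{\mathrm{mid}}$ and $\tilde\Gamma^{\mathrm{mid}}$ are exactly the paths $\Gamma_s(v_0)$ and $\Gamma_t(v_0)$ appearing in the last paragraph of the proof of Lemma~\ref{lemConservation_tau_distance_braid_moves1}, up to relabeling by $v_0$. Since $\ell_{\not\simeq}$ is additive over concatenation of paths — it simply counts the non-isomorphism edges — I would write $\ell_{\not\simeq}(\Gamma) = \ell_{\not\simeq}(\Gamma_1) + \ell_{\not\simeq}(\Gamma^{\mathrm{mid}}) + \ell_{\not\simeq}(\Gamma_2)$ and similarly for $\tilde\Gamma$, so that the outer contributions cancel and the equality $\ell_{\not\simeq}(\Gamma) = \ell_{\not\simeq}(\tilde\Gamma)$ follows from $\ell_{\not\simeq}(\Gamma^{\mathrm{mid}}) = \ell_{\not\simeq}(\tilde\Gamma^{\mathrm{mid}})$, which is precisely what Lemma~\ref{lemConservation_tau_distance_braid_moves1} gives (applied with the shift by $v_0$).

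**The intertwining claim.** For the second assertion, I would use multiplicativity of $A_{(-)}$ under concatenation: $A_\Gamma = A_{\Gamma_2} \circ A_{\Gamma^{\mathrm{mid}}} \circ A_{\Gamma_1}$ and likewise $A_{\tilde\Gamma} = A_{\Gamma_2} \circ A_{\tilde\Gamma^{\mathrm{mid}}} \circ A_{\Gamma_1}$. Lemma~\ref{lemConservation_tau_distance_braid_moves1} furnishes more than equality of $\ell_{\not\simeq}$: it gives $A_{\Gamma^{\mathrm{mid}}} \in \FC^* A_{\tilde\Gamma^{\mathrm{mid}}}$, i.e. the two middle intertwiners differ only by a nonzero scalar. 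Hence $A_\Gamma \in \FC^* A_{\tilde\Gamma}$, and in particular $A_\Gamma \neq 0$ if and only if $A_{\tilde\Gamma} \neq 0$, which is exactly the statement that $\Gamma$ is intertwining iff $\tilde\Gamma$ is. One subtle point requiring care is that $A_{\Gamma^{\mathrm{mid}}}$ and $A_{\tilde\Gamma^{\mathrm{mid}}}$ map between the same source and target vertices $I_{v_0.\tau}$ and $I_{w_0(s,t)v_0.\tau}$; this holds because $\pi(w_0^*(s,t)) = \pi(w_0^*(t,s)) = w_0(s,t)$, so the braid move does not change the endpoint of the middle segment, and thus the decompositions of $\Gamma$ and $\tilde\Gamma$ genuinely share the outer factors $A_{\Gamma_1}$ and $A_{\Gamma_2}$.

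**Expected main obstacle.** The calculations here are routine once the setup is right; the only genuine care is bookkeeping. The main thing to get correct is the indexing convention relating $\Gamma^*$ to the sequence of group elements $w_i$, so that the middle subpaths really are $\Gamma_s(v_0)$ and $\Gamma_t(v_0)$ for the correct $v_0$ rather than some left/right-translated variant; a sign or inverse error in matching $\Gamma^* = (w_2w_1^{-1}, \ldots)$ against the definition of the braid move would break the reduction. Beyond that, I would simply invoke additivity of $\ell_{\not\simeq}$ and multiplicativity of $A_{(-)}$ under concatenation (both immediate from the definitions) together with Lemma~\ref{lemConservation_tau_distance_braid_moves1}, and the proof is complete.
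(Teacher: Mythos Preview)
Your proposal is correct and matches the paper's approach exactly: the paper states this lemma with the preamble ``We deduce the following lemma'' and gives no separate proof, treating it as an immediate consequence of Lemma~\ref{lemConservation_tau_distance_braid_moves1}. Your decomposition into outer and middle pieces, together with additivity of $\ell_{\not\simeq}$ and the relation $A_{\Gamma^{\mathrm{mid}}}\in\FC^* A_{\tilde\Gamma^{\mathrm{mid}}}$ from Lemma~\ref{lemConservation_tau_distance_braid_moves1} (applied with $v_0.\tau$ in place of $\tau$), is precisely the intended argument.
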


Let $w^*,\tilde{w}^*\in (W^v)^*$. We say that $\tilde{w}^*$ \textbf{is obtained from $w^*$ by a nil-move} if there exist $u^*,v^*\in (W^v)^*$ and $s\in \SCC$ such that $w^*=(u^*,s,s,v^*)$ and $\tilde{w}^*=(u^*,v^*)$.

\begin{lemma}\label{lemComposition_intertwinning_morphisms_nonisomorphism}
Let $\tau\in T_\FC$ be regular. Let $u,v\in W^v$ be such that $I_{u.\tau}$ and $I_{v.\tau}$ are not isomorphic. Then $A_{u,v,\tau}\circ A_{v,u,\tau}=0$.
\end{lemma}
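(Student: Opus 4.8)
The plan is to exploit regularity through a dimension count. Since $\tau$ is regular, Lemma~\ref{lemFrobenius_reciprocity} and Proposition~\ref{propIntertwining_nonzero} give $\dim\Hom(I_{w.\tau},I_{w'.\tau})=1$ for all $w,w'\in W^v$; taking $w=w'=v$ (resp.\ $w=w'=u$) shows that $\End(I_{v.\tau})$ and $\End(I_{u.\tau})$ are one-dimensional, each spanned by the identity. Consequently the two composites lie in these lines: there exist $c,c'\in\FC$ with
\[
A_{u,v,\tau}\circ A_{v,u,\tau}=c\,\Id_{I_{v.\tau}}\quad\text{and}\quad A_{v,u,\tau}\circ A_{u,v,\tau}=c'\,\Id_{I_{u.\tau}}.
\]
The whole task then reduces to proving $c=0$.

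The next step is to identify the two scalars. Composing $A_{v,u,\tau}$ with the first relation and using associativity of composition yields
\[
c\,A_{v,u,\tau}=A_{v,u,\tau}\circ\big(A_{u,v,\tau}\circ A_{v,u,\tau}\big)=\big(A_{v,u,\tau}\circ A_{u,v,\tau}\big)\circ A_{v,u,\tau}=c'\,A_{v,u,\tau}.
\]
Since $A_{v,u,\tau}\neq 0$ by definition of these intertwiners, this forces $c=c'$.

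Finally I would argue by contradiction: if $c\neq 0$, then the two relations exhibit $\frac{1}{c}A_{u,v,\tau}$ as a two-sided inverse of $A_{v,u,\tau}$, so $A_{v,u,\tau}:I_{v.\tau}\to I_{u.\tau}$ is an isomorphism of $\AC_\FC$-modules, contradicting the hypothesis $I_{u.\tau}\not\simeq I_{v.\tau}$. Hence $c=0$ and $A_{u,v,\tau}\circ A_{v,u,\tau}=0$. The only point requiring care is that $I_{u.\tau}$ and $I_{v.\tau}$ may be infinite-dimensional (as $W^v$ can be infinite), so one cannot conclude from a single composite being a nonzero scalar that the factors are isomorphisms; this is precisely why I compute both composites and match $c'=c$ by associativity, turning the one-sided information into a genuine two-sided inverse. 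Everything else is formal.
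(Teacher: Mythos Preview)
Your proof is correct and takes essentially the same approach as the paper: both write the two composites as scalars times the identity (using one-dimensionality of $\End$) and then argue that a nonzero scalar would force an isomorphism. The only difference is ordering: the paper first observes $\gamma\gamma'=0$ from non-isomorphism and then deduces $\gamma=\gamma'=0$ via the associativity trick $A_{v,u,\tau}\circ A_{u,v,\tau}\circ A_{v,u,\tau}$, whereas you first prove $c=c'$ by the same associativity trick and then derive the contradiction; your version is arguably cleaner in making the two-sided inverse explicit.
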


\begin{proof}
We have $A_{u,v,\tau}\circ A_{v,u,\tau}\in \mathrm{End}(I_{v.\tau})=\FC \Id$ and $A_{v,u,\tau}\circ A_{u,v,\tau}\in \mathrm{Hom}(I_{u.\tau})=\FC \Id$. Write $A_{u,v,\tau}\circ A_{v,u,\tau}=\gamma \Id$ and $A_{v,u,\tau}\circ A_{u,v,\tau}=\gamma'\Id$, with $\gamma,\gamma'\in \FC$. As $I_{u.\tau}$ and $I_{v.\tau}$ are not isomorphic, we have $\gamma\gamma'=0$. Exchanging $u$ and $v$ if necessary, we may assume $\gamma=0$. Then $A_{v,u,\tau}\circ A_{u,v,\tau}\circ A_{v,u,\tau}=0=\gamma' A_{v,u,\tau}$ and hence $\gamma'=0$, which proves the lemma.
\end{proof}

\begin{lemma}\label{lemConservation_tau_distance_nil_move}

Let $w,w'\in W^v$ and $\Gamma,\tilde{\Gamma}$ be two paths joining $I_{w.\tau}$ to $I_{w'.\tau}$. We assume that $\Gamma$ is an intertwining path and that $\tilde{\Gamma}^*$ is obtained from $\Gamma^*$ by a nil-move. Then $\tilde{\Gamma}$ is an intertwining path and  $\ell_{\not\simeq}(\Gamma)=\ell_{\not\simeq}(\tilde{\Gamma})$. 
\end{lemma}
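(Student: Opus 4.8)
The plan is to read off exactly what a nil-move does to the path and then to exploit that $\tau$ is regular, so that every endomorphism algebra $\End(I_{w.\tau})$ is one-dimensional.

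First I would unwind the combinatorics. Writing $\Gamma=(I_{w_1.\tau},\ldots,I_{w_n.\tau})$ with $w_1=w$ and $w_n=w'$, so that $\Gamma^*=(w_2w_1^{-1},\ldots,w_nw_{n-1}^{-1})$, the hypothesis $\Gamma^*=(u^*,s,s,v^*)$ means that there is an index $j$ with $w_{j+1}w_j^{-1}=s$ and $w_{j+2}w_{j+1}^{-1}=s$; hence $w_{j+1}=sw_j$ and $w_{j+2}=w_j$. Thus $\Gamma$ runs through $\ldots,I_{w_j.\tau},I_{sw_j.\tau},I_{w_j.\tau},\ldots$, and $\tilde{\Gamma}$ is obtained by deleting this detour, i.e. $\tilde{\Gamma}=(I_{w_1.\tau},\ldots,I_{w_j.\tau},I_{w_{j+3}.\tau},\ldots,I_{w_n.\tau})$, every remaining edge (and hence the intertwining map attached to it) being unchanged.

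Next I would factor the intertwining maps along the detour. Set $H=A_{w_{j-1},w_j,\tau}\circ\cdots\circ A_{w_1,w_2,\tau}$ and $T=A_{w_{n-1},w_n,\tau}\circ\cdots\circ A_{w_{j+2},w_{j+3},\tau}$, with the convention $H=\Id$ or $T=\Id$ when the detour sits at an end. Since $w_{j+2}=w_j$, both $T\circ H\colon I_{w_1.\tau}\to I_{w_n.\tau}$ and the composite $A_{sw_j,w_j,\tau}\circ A_{w_j,sw_j,\tau}\in\End(I_{w_j.\tau})$ make sense, and one has $A_\Gamma=T\circ A_{sw_j,w_j,\tau}\circ A_{w_j,sw_j,\tau}\circ H$ while $A_{\tilde{\Gamma}}=T\circ H$. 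Because $\tau$ is regular, Lemma~\ref{lemFrobenius_reciprocity} and Proposition~\ref{propIntertwining_nonzero} give $\End(I_{w_j.\tau})=\FC\,\Id$, so $A_{sw_j,w_j,\tau}\circ A_{w_j,sw_j,\tau}=\gamma\,\Id$ for some $\gamma\in\FC$, whence $A_\Gamma=\gamma\,A_{\tilde{\Gamma}}$. As $\Gamma$ is an intertwining path, $A_\Gamma\neq 0$, which forces $\gamma\neq 0$ and therefore $A_{\tilde{\Gamma}}=\gamma^{-1}A_\Gamma\neq 0$; so $\tilde{\Gamma}$ is an intertwining path.

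It remains to compare $\ell_{\not\simeq}(\Gamma)$ and $\ell_{\not\simeq}(\tilde{\Gamma})$, which is the only delicate point. Every edge of $\tilde{\Gamma}$ carries the same map as in $\Gamma$, so the two values can differ only through the contribution of the two deleted steps $A_{w_j,sw_j,\tau}$ and $A_{sw_j,w_j,\tau}$, and I must check that each of them is an isomorphism. From $\gamma\neq 0$ and Lemma~\ref{lemComposition_intertwinning_morphisms_nonisomorphism} (contrapositive) one gets $I_{w_j.\tau}\simeq I_{sw_j.\tau}$; since $\dim\Hom(I_{w_j.\tau},I_{sw_j.\tau})=1$ and this line contains an isomorphism, the spanning map $A_{w_j,sw_j,\tau}$ is itself an isomorphism, and likewise $A_{sw_j,w_j,\tau}$. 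Hence neither deleted step is counted in $\ell_{\not\simeq}(\Gamma)$, so $\ell_{\not\simeq}(\Gamma)=\ell_{\not\simeq}(\tilde{\Gamma})$, which completes the proof.
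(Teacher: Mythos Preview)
Your proof is correct and follows essentially the same approach as the paper's: both identify the $(s,s)$ detour in $\Gamma$, use that $A_\Gamma\neq 0$ forces $A_{sw_j,w_j,\tau}\circ A_{w_j,sw_j,\tau}\neq 0$, and then invoke Lemma~\ref{lemComposition_intertwinning_morphisms_nonisomorphism} to conclude the two deleted edges are isomorphisms, so neither $A_{\tilde{\Gamma}}$ nor $\ell_{\not\simeq}$ is affected. Your write-up is in fact more explicit than the paper's (which contains a harmless swap of $\Gamma$ and $\tilde{\Gamma}$ in its first line), particularly in spelling out why $A_{w_j,sw_j,\tau}$ itself must be an isomorphism once the Hom-space is one-dimensional and contains one.
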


\begin{proof}
Write $\Gamma^*=(u^*,v^*)$ and $\tilde{\Gamma}^*=(u^*,s,s,v^*)$, with $u^*,v^*\in (W^v)^*$ and $s\in \SCC$. Set $u=\pi(u^*)$ and $\pi(v^*)$. 

As $\Gamma$ is an intertwining path, one has: \[A_{u^{-1}w,v^{-1}u^{-1}w,\tau}\circ A_{su^{-1}w,u^{-1}w,\tau}\circ A_{u^{-1}w,su^{-1}w,\tau}\circ A_{w,u^{-1}w,\tau}\neq 0.\]  By Lemma~\ref{lemComposition_intertwinning_morphisms_nonisomorphism} we deduce that $A_{su^{-1}w,u^{-1}w,\tau}$ and $A_{u^{-1}w,su^{-1}w,\tau}$ are isomorphisms and that $A_{su^{-1}w,u^{-1}w,\tau}\circ A_{u^{-1}w,su^{-1}w,\tau}\in \FC^* \Id$. Therefore $A_{u^{-1}w,v^{-1}u^{-1}w,\tau}\circ A_{w,u^{-1}w,\tau}\neq 0,$ and the lemma follows.
\end{proof}

If $w\in W^v$, we denote by $\tilde{C}(w)$ the connected component of $\tilde{\GC}_\tau$ containing $I_{w.\tau}$.

\begin{proposition}\label{propDefinition_tau_distance}
\begin{enumerate}

\item\label{itWell_definedness_distance} Let $w,w'\in W^v$. Then if $\Gamma_1,\Gamma_2$ are two intertwining paths joining $I_{w'.\tau}$ to $I_{w.\tau}$, one has $\ell_{\not\simeq}(\Gamma_1)=\ell_{\not\simeq}(\Gamma_2)$.

\item If $w\in W^v$ then $\tilde{C}(w)=\{I_{w'.\tau}|w'\in W^v| I_{w'.\tau}\simeq I_{w.\tau}\}$.

\item If $w,w'\in W^v$, then every path $\Gamma$ in $\GC_\tau$ joining $I_{w.\tau}$ to $I_{w'.\tau}$ satisfies $\ell_{\not\simeq}(\Gamma)\geq d(I_{w.\tau},I_{w'.\tau})$.

\item\label{itDistance_indeed_distance} The map $d:\GC_\tau\times \GC_\tau\rightarrow \N$ is symmetric and satisfies the triangle inequality. Moreover it induces a distance on the set $\tilde{\mathcal{C}}$ of connected components of $\tilde{\GC}_\tau$, by setting $d\big(\tilde{C}(w),\tilde{C}(w')\big)=d(I_{w.\tau},I_{w'.\tau})$ for $w,w'\in W^v$. 
\end{enumerate}
\end{proposition}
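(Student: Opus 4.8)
The plan is to translate all four statements into the combinatorics of words in the Coxeter group $(W^v,\SCC)$ and then feed them into the two conservation Lemmas~\ref{lemConservation_tau_distance_braid_moves} and \ref{lemConservation_tau_distance_nil_move} together with the word property of Coxeter groups (Tits' solution of the word problem): any word can be transformed into a reduced word for the same element using \emph{only} braid-moves and nil-deletions, and any two reduced words for the same element are connected by braid-moves alone. The point is that a path $\Gamma$ from $I_{w.\tau}$ to $I_{w'.\tau}$ amounts to a starting vertex together with the word $\Gamma^*$, and both braid-moves and nil-moves on $\Gamma^*$ preserve the endpoints. For (1), I would start from two intertwining paths $\Gamma_1,\Gamma_2$ with the same endpoints and reduce each $\Gamma_i^*$ to a reduced word. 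At every step a braid-move preserves the intertwining property and $\ell_{\not\simeq}$ (Lemma~\ref{lemConservation_tau_distance_braid_moves}), and a nil-deletion applied to an intertwining path again preserves both (Lemma~\ref{lemConservation_tau_distance_nil_move}). Invoking the word property in the form ``reduction needs only braid-moves and nil-deletions'' means I never have to insert a factor $ss$, so the hypotheses of Lemma~\ref{lemConservation_tau_distance_nil_move} hold throughout. Thus each $\Gamma_i$ produces a reduced intertwining path $\Gamma_i^{\mathrm{red}}$ with $\ell_{\not\simeq}(\Gamma_i^{\mathrm{red}})=\ell_{\not\simeq}(\Gamma_i)$; since $\Gamma_1^{\mathrm{red}\,*}$ and $\Gamma_2^{\mathrm{red}\,*}$ are reduced words for the same element, they are braid-connected and Lemma~\ref{lemConservation_tau_distance_braid_moves} equates their $\ell_{\not\simeq}$. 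Combined with Proposition~\ref{propExistence_intertwining_paths} this makes $d$ well defined and also shows that \emph{every} reduced path between two fixed vertices is intertwining.

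For (2), I would establish the chain $I_{w'.\tau}\in\tilde{C}(w)\Leftrightarrow d(I_{w.\tau},I_{w'.\tau})=0\Leftrightarrow I_{w.\tau}\simeq I_{w'.\tau}$. The first equivalence is immediate, since a path with $\ell_{\not\simeq}=0$ is exactly a path lying inside $\tilde{\GC}_\tau$. For the second, if $d=0$ then an intertwining path realizing it has all edges isomorphisms, so $A_\Gamma$ is a composition of isomorphisms and $I_{w.\tau}\simeq I_{w'.\tau}$. Conversely, fix a reduced intertwining path $\Gamma=(I_{v_0.\tau},\ldots,I_{v_k.\tau})$ and its reverse $\bar\Gamma$, which is again reduced hence intertwining by (1); collapsing each inner pair $A_{v_j,v_{j-1},\tau}\circ A_{v_{j-1},v_j,\tau}=\gamma_j\,\Id$ in $\End(I_{v_{j-1}.\tau})$ telescopes to $A_{\bar\Gamma}\circ A_\Gamma=\bigl(\prod_{j}\gamma_j\bigr)\Id$, where by Lemma~\ref{lemComposition_intertwinning_morphisms_nonisomorphism} one has $\gamma_j\neq0$ precisely when the $j$-th edge is an isomorphism. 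If $I_{w.\tau}\simeq I_{w'.\tau}$ then $A_\Gamma$ is surjective, so $\prod_j\gamma_j\neq0$ (else $A_{\bar\Gamma}=0$, contradicting that $\bar\Gamma$ is intertwining), forcing $\ell_{\not\simeq}(\Gamma)=0$ and $d=0$.

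Parts (3) and (4) are then formal. For (3), I reduce an \emph{arbitrary} path $\Gamma$ (not assumed intertwining) to a reduced path using braid-moves, which preserve $\ell_{\not\simeq}$ unconditionally, and nil-deletions, which remove two edges traversing a single edge back and forth and hence lower $\ell_{\not\simeq}$ by $0$ or $2$; the resulting reduced path is intertwining by (1), so $\ell_{\not\simeq}(\Gamma)\geq\ell_{\not\simeq}(\Gamma^{\mathrm{red}})=d(I_{w.\tau},I_{w'.\tau})$. For (4), symmetry holds because reversing a reduced intertwining path preserves both the intertwining property and $\ell_{\not\simeq}$ (each edge has the same isomorphism status in either direction); the triangle inequality follows by concatenating intertwining paths for $(P_1,P_2)$ and $(P_2,P_3)$ into one path whose $\ell_{\not\simeq}$ is additive and then applying (3). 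Finally (2) gives $d=0\Leftrightarrow$ same connected component, which shows both that $d$ descends to $\tilde{\mathcal{C}}$ (independence of representatives being forced by the triangle inequality, since distinct representatives are at $d$-distance $0$) and that the induced map separates distinct components.

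I expect the main obstacle to be the bookkeeping in (1): one must use the word property in exactly the sharp form that reduction to a reduced expression requires \emph{only} braid-moves and nil-deletions. This is what guarantees that Lemma~\ref{lemConservation_tau_distance_nil_move}, whose hypothesis is that the \emph{input} path be intertwining, is applicable at every step, and that no insertion of an $ss$ factor—which could break the intertwining property when the corresponding edge is not an isomorphism—is ever needed. Once this reduction scheme is in place, (2)--(4) reduce to the telescoping identity and routine additivity/symmetry arguments.
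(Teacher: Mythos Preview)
Your proposal is correct and follows essentially the same route as the paper: reduce paths via the word property using only braid-moves and nil-deletions, invoke Lemmas~\ref{lemConservation_tau_distance_braid_moves} and~\ref{lemConservation_tau_distance_nil_move} at each step, and then derive (2)--(4) formally from (1) together with Proposition~\ref{propExistence_intertwining_paths}. Your treatment of (2) via the telescoping identity $A_{\bar\Gamma}\circ A_\Gamma=(\prod_j\gamma_j)\Id$ is in fact a bit more explicit than the paper's one-line assertion that ``$A_\Gamma$ is an isomorphism and thus each $A_{w_i,w_{i+1},\tau}$ is an isomorphism''; the latter is true but, in infinite dimensions, not entirely automatic, and your argument via Lemma~\ref{lemComposition_intertwinning_morphisms_nonisomorphism} spells out why.
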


\begin{proof}
(1) By the word property (\cite[Theorem 3.3.1]{bjorner2005combinatorics}), there exist $n_1,n_2\in \N$ and sequences $\Gamma_1^{(1)}=\Gamma_1,\ldots,\Gamma_1^{(n_1)}$, $\Gamma_2^{(1)}=\Gamma_2,\ldots,\Gamma_2^{(n_2)}$ of paths such that $\Gamma_1^{(n_1)}=\Gamma_2^{(n_2)}$ and for all $i\in \{1,2\}$ and $j\in \llbracket 1,n_i-1\rrbracket$, $\Gamma_i^{(j+1)}$ is obtained from $\Gamma_i^{(j)}$ by a nil-move or a braid-move and such that $\Gamma_i^{(n_i)}$ has length $\ell (w'w^{-1})$. Then by Lemma~\ref{lemConservation_tau_distance_braid_moves} and Lemma~\ref{lemConservation_tau_distance_nil_move}, $\ell_{\not\simeq}(\Gamma_1)=\ell_{\not\simeq}(\Gamma_1^{(n_1)})=\ell_{\not\simeq}(\Gamma_2)=\ell_{\not\simeq}(\Gamma_2^{(n_2)})$, which  proves~(\ref{itWell_definedness_distance}).

(2) Let $I_{w_1.\tau}\in \tilde{C}(w)$. Then there exists a path $\Gamma$ from $I_{w_1.\tau}$ to $I_{w.\tau}$ composed uniquely of isomorphisms and thus $I_{w_1.\tau}\simeq I_{w.\tau}$. Let $w_1\in W^v$ be such that $I_{w_1.\tau}$ is isomorphic to $I_{w.\tau}$. Let $\Gamma$ be an intertwining path joining $w_1.\tau$ to $w.\tau$, which exists by Proposition~\ref{propExistence_intertwining_paths}. Write $\Gamma=(I_{w_1.\tau},I_{w_2.\tau},\ldots,I_{w_k.\tau})$. Then $A_\Gamma=A_{w_{k-1},w_k,\tau}\circ \ldots \circ A_{w_1,w_2,\tau}$ is an isomorphism and thus for all $i\in \llbracket 1,k-1\rrbracket$, $A_{w_i,w_{i+1},\tau}$ is an isomorphism. Therefore $\Gamma$ is contained in $\tilde{C}(w)$ and thus $I_{w_1.\tau}\in \tilde{C}(w)$, which proves (2).

(3), (4) Let $w,w'\in W^v$. Let us prove that $d(I_{w.\tau},I_{w'.\tau})=d(I_{w'.\tau},I_{w.\tau})$. Maybe considering $\tilde{\tau}=w'.\tau$ and $ww'^{-1}$, we may assume that $w'=1$. Let  $w=s_k\ldots s_1$ be a reduced writing of $w$, with  $s_1, \ldots,s_k\in \SCC$. Then by Proposition~\ref{propExistence_intertwining_paths},  $\Gamma=(I_{\tau},I_{s_1.\tau},\ldots,I_{s_k\ldots s_1.\tau})$ is an intertwining path joining $I_\tau$ to $I_{w.\tau}$. By Proposition~\ref{propExistence_intertwining_paths}, $\Gamma':=(I_{s_k\ldots s_1.\tau},I_{s_{k-1}\ldots s_1.\tau},\ldots,I_{\tau})$ is an intertwining path from $I_{w.\tau}$ to $I_\tau$. As for all $w\in W^v$ and $s\in \SCC$, $\dim \Hom(I_{w.\tau},I_{sw.\tau})=1$, $A_{w,sw,\tau}$ is an isomorphism if and only if $A_{sw,w,\tau}$ is an isomorphism. Therefore $\ell_{\not\simeq}(\Gamma)=\ell_{\not\simeq}(\Gamma')$ and hence $d(I_{\tau},I_{w.\tau})=d(I_{w.\tau},I_\tau)$: $d$ is symmetric.

Let $w,w'\in W^v$ and $\Gamma$ be a path from $I_{w.\tau}$ to $I_{w'.\tau}$. We may assume that $w'=1$. Then using the word property we can transform $\Gamma$ into a path $\Gamma'$ of length $\ell(w)$, by using nil-moves and braid-moves. By Proposition~\ref{propExistence_intertwining_paths}, $\Gamma'$ is then an intertwining path. For each braid-move, $\ell_{\not\simeq}$ remains unchanged (by Lemma~\ref{lemConservation_tau_distance_braid_moves}) and for each nil-move $\ell_{\not\simeq}$ either remain unchanged or decrease by $2$. Thus  $\ell_{\not\simeq}(\Gamma)\geq d(I_\tau,I_{w.\tau})$. 

Let $w,w'\in W^v$. Let $\Gamma$ (resp. $\Gamma'$) be an intertwining path between $I_\tau$ and $I_{w'.\tau}$ (resp. between $I_{w'.\tau}$ and $I_{w.\tau}$). Then the concatenation $\Gamma''$ of $\Gamma$ and $\Gamma'$ is a path between $I_\tau$ and $I_{w'.\tau}$ and thus $d(I_{\tau},I_{w.\tau})\leq \ell_{\not\simeq}(\Gamma'')= d(I_\tau,I_{w'.\tau})+d(I_{w'.\tau},I_{w.\tau})$, which proves that $d$ satisfies the triangle inequality. By (2), for $w,w'\in W^v$, $d(I_{w.\tau},I_{w'.\tau})=0$ if and only if $I_{w.\tau}\simeq I_{w'.\tau}$ which proves that $d$ induces a distance on the set of  connected components of $\tilde{\GC}_\tau$. 
\end{proof}

\subsection{Irreducible representation admitting $\tau$ as a weight}\label{subIrreducible_constituents}

Let $\tau\in T_\FC$ be regular. In this section, we prove the existence of  a unique irreducible representation $M$ of $\AC_\FC$ admitting $\tau$ as a weight. We describe it as  a quotient of $I_\tau$.

\begin{lemma}\label{lemMaximal_submodule_regular_case}(see \cite[Corollary 3.3]{rogawski1985modules})
There exists a unique maximal  submodule $M_\tau^{\max}$ of $I_\tau$.
\end{lemma}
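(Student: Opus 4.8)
The plan is to exploit the weight-space decomposition of $I_\tau$ together with the fact that $\vb_\tau$ generates $I_\tau$. Since $\tau$ is regular, Proposition~\ref{propIntertwining_nonzero}(2) gives $I_\tau=\bigoplus_{w\in W^v}I_\tau(w.\tau)$ with each $I_\tau(w.\tau)$ one-dimensional; moreover the characters $w.\tau$ are pairwise distinct, because $w.\tau=w'.\tau$ forces $w^{-1}w'\in W_\tau=\{1\}$. In particular this is a decomposition into genuine weight spaces, so $I_\tau(w.\tau,\mathrm{gen})=I_\tau(w.\tau)$ for every $w$, and $I_\tau$ is weighted (each $x$ is a finite sum of eigenvectors for $\FC[Y]$, so $\FC[Y].x$ is finite-dimensional).

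First I would show that every $\AC_\FC$-submodule $M$ of $I_\tau$ is a direct sum of weight lines. Indeed $M$ is in particular an $\FC[Y]$-submodule and hence weighted, so by Lemma~\ref{lemDecomposition_weighted_modules} one has $M=\bigoplus_{\tau'\in\Wt(M)}M(\tau',\mathrm{gen})$. Since $M(\tau',\mathrm{gen})=M\cap I_\tau(\tau',\mathrm{gen})=M\cap I_\tau(\tau')$ and each $I_\tau(\tau')$ is one-dimensional, each summand is either $0$ or all of $I_\tau(\tau')$. Thus $M=\bigoplus_{w\in S_M}I_\tau(w.\tau)$, where $S_M=\{w\in W^v\mid I_\tau(w.\tau)\subseteq M\}$, and a submodule is completely recorded by the subset $S_M\subseteq W^v$.

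Then I would translate properness into a condition on $S_M$. Because $I_\tau=\AC_\FC.\vb_\tau$ is generated by $\vb_\tau$, any submodule containing $\vb_\tau$ equals $I_\tau$; and since $\vb_\tau$ spans the one-dimensional line $I_\tau(\tau)$, one has $\vb_\tau\in M$ if and only if $1\in S_M$. Hence $M$ is proper if and only if $1\notin S_M$. Finally, set $M_\tau^{\max}=\sum_{M\subsetneq I_\tau}M$, the sum of all proper submodules; it is again a submodule, it is again a direct sum of weight lines, and its associated subset is $S_{M_\tau^{\max}}=\bigcup_{M\subsetneq I_\tau}S_M$, which does not contain $1$ since none of the $S_M$ does. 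Therefore $\vb_\tau\notin M_\tau^{\max}$, so $M_\tau^{\max}$ is proper; being a proper submodule that contains every proper submodule, it is the unique maximal submodule.

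The only genuinely non-formal step is the weight-space decomposition of an arbitrary submodule, i.e. the passage from $x\in M$ to each of its weight components lying in $M$; this is exactly what Lemma~\ref{lemDecomposition_weighted_modules} (the separation of the commuting, diagonalizable $\FC[Y]$-action into its distinct eigencharacters) provides, so I expect no real obstacle once regularity is invoked to guarantee one-dimensional, pairwise-distinct weight spaces.
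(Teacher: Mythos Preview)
Your proof is correct and follows essentially the same approach as the paper's: both hinge on the criterion ``$M$ is proper $\iff$ $\vb_\tau\notin M$ $\iff$ $M(\tau)=\{0\}$'' and then take the sum of all proper submodules. The paper's version is terser---it introduces the projection $\pi^1$ onto the $\tau$-weight line but leaves implicit the step that each proper submodule lies in $\ker\pi^1$; your use of Lemma~\ref{lemDecomposition_weighted_modules} to decompose an arbitrary submodule as $\bigoplus_{w\in S_M}I_\tau(w.\tau)$ is exactly what justifies that implicit step, so you have made the argument more explicit rather than genuinely different.
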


\begin{proof}
Using Proposition~\ref{propIntertwining_nonzero}~(2) we choose a basis $(\xi_w)_{w\in W^v}$ of $I_\tau$ such that $\xi_w\in I_\tau(w.\tau)$ for all $w\in W^v$. Let $\pi^1:I_\tau\rightarrow \FC$ be defined by $\pi^1(\sum_{w\in W^v} a_w \xi_w)=a_1$, for all $(a_w)\in \FC^{(W^v)}$. Let $M$ be a  submodule of $I_\tau$.  Then    $M(\tau)\subset I_\tau(\tau)=\FC \vb_\tau$. Thus $M$ is a proper submodule of $I_\tau$ if and only if $M(\tau)=\{0\}$. Therefore the sum of all the proper submodules of $I_\tau$ is a proper submodule of $I_\tau$, which proves the lemma.
\end{proof}

Let $w\in W^v$ and $\tau'=w.\tau$. Then $M_{\tau'}^\irr:=I_{\tau'}/M_{\tau'}^{\max}$ is an irreducible $\AC_\FC$-module.

We define $\sim$ on $W^v$ by $w\sim w'$ if $I_{w.\tau}\simeq I_{w'.\tau}$ for all $w,w'\in W^v$. This is an equivalence relation. If $w\in W^v$ we denote its class by $[w]$.

\begin{proposition}\label{propWeights_irreducible_components}(see \cite[Proposition 3.5]{rogawski1985modules})

\begin{enumerate}
\item Let $M$ be an irreducible $\AC_\FC$-module such that $M(\tau)\neq \{0\}$. Then $M
\simeq M_\tau^\irr$.

\item\label{itWeights_irreducible_constituents} The set of weights of $M_\tau^\irr$ is $\Wt(M_\tau^\irr)=[1].\tau$ and  $\dim M_\tau^\irr=|[1]|$. In particular, if $w\in W^v$, then $M_\tau^{\irr}$ is isomorphic to $M_{w,\tau}^{\irr}$ if and only if $w\sim 1$.

\end{enumerate}

\end{proposition}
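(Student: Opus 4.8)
The plan is to derive everything from three ingredients already available: Frobenius reciprocity (Lemma~\ref{lemFrobenius_reciprocity}), the uniqueness of the maximal submodule (Lemma~\ref{lemMaximal_submodule_regular_case}), and the vanishing of ``round-trip'' compositions of non-isomorphism intertwiners (Lemma~\ref{lemComposition_intertwinning_morphisms_nonisomorphism}). For part (1), I would start from a nonzero $x\in M(\tau)$ and form the map $\Upsilon_x:I_\tau\to M$ of Lemma~\ref{lemFrobenius_reciprocity}. Its image is a nonzero submodule of the irreducible module $M$, so $\Upsilon_x$ is surjective and $M\simeq I_\tau/\Ker\,\Upsilon_x$ with $\Ker\,\Upsilon_x$ maximal. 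Since $M_\tau^{\max}$ is the unique maximal submodule, $\Ker\,\Upsilon_x=M_\tau^{\max}$ and thus $M\simeq M_\tau^\irr$.

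For part (2), I would first reduce the weight computation to a question about the basis vectors $\xi_w$. As $\tau$ is regular, the characters $\{w.\tau\mid w\in W^v\}$ are pairwise distinct and $I_\tau=\bigoplus_{w\in W^v}I_\tau(w.\tau)$ with each $I_\tau(w.\tau)=\FC\xi_w$ one-dimensional (Proposition~\ref{propIntertwining_nonzero}(2)). Because $M_\tau^{\max}$ is $\FC[Y]$-stable and $\FC[Y]$ acts diagonalizably on $I_\tau$ with distinct characters, one gets $M_\tau^{\max}=\bigoplus_w\big(M_\tau^{\max}\cap\FC\xi_w\big)$, hence $M_\tau^\irr=\bigoplus_w \FC\xi_w/(M_\tau^{\max}\cap\FC\xi_w)$. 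So every weight space of $M_\tau^\irr$ is at most one-dimensional, and $w.\tau\in\Wt(M_\tau^\irr)$ if and only if $\xi_w\notin M_\tau^{\max}$, that is (since $M_\tau^{\max}$ is the sum of all proper submodules) if and only if $\AC_\FC.\xi_w=I_\tau$.

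The heart of the argument, and the step I expect to be the main obstacle, is to decide for which $w$ the equality $\AC_\FC.\xi_w=I_\tau$ holds. Here I would use that $\AC_\FC.\xi_w=\mathrm{im}\,\Upsilon_{\xi_w}=\mathrm{im}\,A_{w,1,\tau}$, since $\Upsilon_{\xi_w}$ is a nonzero element of the line $\Hom(I_{w.\tau},I_\tau)=\FC A_{w,1,\tau}$. If $I_{w.\tau}\not\simeq I_\tau$, then Lemma~\ref{lemComposition_intertwinning_morphisms_nonisomorphism} gives $A_{1,w,\tau}\circ A_{w,1,\tau}=0$, so $\mathrm{im}\,A_{w,1,\tau}\subseteq\Ker\,A_{1,w,\tau}\subsetneq I_\tau$ (the kernel is proper as $A_{1,w,\tau}\neq 0$); thus $\AC_\FC.\xi_w$ is a proper submodule and $w.\tau\notin\Wt(M_\tau^\irr)$. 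Conversely, if $I_{w.\tau}\simeq I_\tau$, then any isomorphism lies in $\FC A_{w,1,\tau}$, so $A_{w,1,\tau}$ is itself an isomorphism, in particular surjective, giving $w.\tau\in\Wt(M_\tau^\irr)$. This establishes $\Wt(M_\tau^\irr)=\{w.\tau\mid w\sim 1\}=[1].\tau$.

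Combining the last two paragraphs yields $\dim M_\tau^\irr=|\Wt(M_\tau^\irr)|=|[1].\tau|=|[1]|$, the last equality once more by regularity of $\tau$. For the final ``in particular'', if $w\sim 1$ then $w.\tau\in\Wt(M_\tau^\irr)$, so $M_\tau^\irr(w.\tau)\neq\{0\}$ and part (1) applied at the (regular) character $w.\tau$ gives $M_\tau^\irr\simeq M_{w.\tau}^\irr$; conversely, if $M_\tau^\irr\simeq M_{w.\tau}^\irr$, then $\tau\in\Wt(M_\tau^\irr)=\Wt(M_{w.\tau}^\irr)$, and applying the weight computation at $w.\tau$ forces $I_\tau\simeq I_{w.\tau}$, i.e.\ $w\sim 1$. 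I do not foresee circularity here, since the weight computation is applied at $w.\tau$ as a separate instance, proved by the same self-contained intertwining argument rather than from the conclusion at $\tau$.
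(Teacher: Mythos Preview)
Your proof is correct and follows essentially the same approach as the paper. Both proofs use Frobenius reciprocity together with the uniqueness of $M_\tau^{\max}$ for part~(1), and Lemma~\ref{lemComposition_intertwinning_morphisms_nonisomorphism} for part~(2). The only cosmetic difference is in how the composition lemma is deployed: the paper argues that $A_{w,1,\tau}(I_{w.\tau})(\tau)=0$ and hence the image sits inside $M_\tau^{\max}$, while you observe more directly that $\mathrm{im}\,A_{w,1,\tau}\subset \Ker A_{1,w,\tau}\subsetneq I_\tau$; and for the inclusion $[1].\tau\subset \Wt(M_\tau^{\irr})$ the paper quotes Lemma~\ref{lemStructure_Weights_module} whereas you argue that $A_{w,1,\tau}$ is an isomorphism --- which is exactly the content of that lemma's proof.
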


\begin{proof}
Let $x\in M(\tau)\setminus\{0\}$. By Lemma~\ref{lemFrobenius_reciprocity} there exists  $\phi:\in \Hom(I_\tau,M)$ such that $\phi(\vb_\tau)=x$.  By Lemma~\ref{lemMaximal_submodule_regular_case}, $\ker\phi\subset M_\tau^{\mathrm{max}}$ and thus  $\phi$ induces a nonzero map $\overline{\phi}:I_\tau/M_\tau^{\mathrm{max}}=M_\tau^{\mathrm{irr}}\rightarrow M$. As $M_\tau^{\mathrm{irr}}$ and $M$ are irreducible, $\overline{\phi}$ is an isomorphism, which proves (1).

By Lemma~\ref{lemStructure_Weights_module}, $[1].\tau\subset \Wt(M^{\irr}_\tau)$. Let $w\in W^v$ be such that $I_{w.\tau}$ is not isomorphic to $I_\tau$. Let $\phi=A_{w,1,\tau}:I_{w.\tau}\rightarrow I_\tau$. Then by Lemma~\ref{lemComposition_intertwinning_morphisms_nonisomorphism}, $\phi(I_{w.\tau})(\tau)=0$. Therefore, $\phi(I_{w.\tau})\subset M_\tau^{\mathrm{max}}$ and hence $M_\tau^{\mathrm{max}}(w.\tau)\neq 0$. By Proposition~\ref{propIntertwining_nonzero}~(2) we deduce that $M_\tau^{\mathrm{max}}(w.\tau)=I_\tau(w.\tau)$ and hence $w.\tau\notin\Wt(M_\tau^{\mathrm{irr}})$, which proves (2).
  \end{proof}

\begin{remark}
Let $M$ be an irreducible $\AC_\FC$-module such that for some $w\in W^v$, $M\subset I_{w.\tau}$. Then there exists $w'\in W^v$ such that $M$ is isomorphic to $M^{\irr}_{w'.\tau}$. However, there can exist $w\in W^v$ such that $M^{\irr}_{w.\tau}$ is not contained in any $I_{w'.\tau}$. This is the case for example if $M^{\irr}_{w.\tau}$ is finite dimensional, by \cite[Proposition 3.12]{hebert2018principal}.
\end{remark}

\subsection{Strongly indecomposable submodules of $I_\tau$}\label{subIndecomposable_submodules}

\begin{definition}
Let $M$ be a submodule of $\AC_\FC$. One says that $M$ is  \textbf{indecomposable} if for all submodules $M_1,M_2$ of $M$ such that $M_1\oplus M_2=M$, one has $M_1=M$ or $M_2=M$.

We say that $M$ is \textbf{strongly indecomposable} if for every family $(M_j)_{j\in J}$ of submodules of $M$,  \[\sum_{j\in J} M_j=M\implies \exists j\in J|\ M_j=M.\]
\end{definition}

A $\AC_\FC$-module $M$ is strongly indecomposable if and only if  there exists a proper submodule $M_{\mathrm{max}}$ containing every proper submodule of $M$.

Let $\tau\in T_\FC$ be regular. Recall that if $w,w'\in W^v$, $A_{w,w',\tau}$ is an (arbitrary) element of $\Hom (I_{w.\tau},I_{w'.\tau})\setminus\{0\}$. If $w\in W^v$, we set $M_{w,\tau}=A_{w,1,\tau}(I_{w.\tau}) \subset I_\tau$. 

In this subsection, we prove that   the strongly indecomposable submodules of $I_\tau$ are exactly the  $M_{w,\tau}$, for $w\in W^v$ (see Lemma~\ref{lemIndecomposability_Mwtau} and Lemma~\ref{lemDescription_indecomposable_submodules_image_morphism}). We then study how a submodule of $I_\tau$ can be decomposed as a sum of strongly indecomposable submodules (see Theorem~\ref{thmKrull_Schmidt_theorem}).

\subsubsection{Characterization  of the strongly indecomposable submodules of $I_\tau$}

\begin{lemma}\label{lemIndecomposability_Mwtau}
Let $w\in W^v$. Then $M_{w,\tau}=A_{w,1,\tau}(I_{w.\tau})$ is strongly indecomposable.
\end{lemma}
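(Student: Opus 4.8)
The plan is to invoke the characterization of strong indecomposability recalled just above the statement: a module is strongly indecomposable precisely when it possesses a proper submodule containing all of its proper submodules. So I will produce such a largest proper submodule of $M_{w,\tau}$.

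First I would observe that $w.\tau$ is again regular, since $W_{w.\tau}=wW_\tau w^{-1}=\{1\}$. Applying Lemma~\ref{lemMaximal_submodule_regular_case} with $w.\tau$ in place of $\tau$ then yields a unique maximal submodule $M_{w.\tau}^{\max}$ of $I_{w.\tau}$; as in the proof of that lemma it is the sum of all proper submodules of $I_{w.\tau}$, and hence contains every one of them.

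Next I would set $\phi=A_{w,1,\tau}\colon I_{w.\tau}\to I_\tau$, so that $M_{w,\tau}=\phi(I_{w.\tau})\cong I_{w.\tau}/\ker\phi$. Since $\phi\neq 0$, its kernel is a proper submodule of $I_{w.\tau}$, whence $\ker\phi\subset M_{w.\tau}^{\max}$. The main step is the standard lattice correspondence for the surjection $\phi$: the submodules of $M_{w,\tau}$ are exactly the $\phi(N)$ with $N$ a submodule of $I_{w.\tau}$ containing $\ker\phi$, and such a $\phi(N)$ equals $M_{w,\tau}$ if and only if $N=I_{w.\tau}$. Consequently $\phi(M_{w.\tau}^{\max})$ is a proper submodule of $M_{w,\tau}$, and any proper submodule $\phi(N)$ of $M_{w,\tau}$ has $N$ proper in $I_{w.\tau}$, hence $N\subset M_{w.\tau}^{\max}$ and $\phi(N)\subset\phi(M_{w.\tau}^{\max})$. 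Thus $\phi(M_{w.\tau}^{\max})$ contains every proper submodule of $M_{w,\tau}$, and the characterization yields the claim.

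The argument is essentially formal once the unique maximal submodule of $I_{w.\tau}$ is available; the only point needing care is that taking images under $\phi$ preserves properness, i.e. $\phi(M_{w.\tau}^{\max})\neq M_{w,\tau}$, which holds because $M_{w.\tau}^{\max}\supset\ker\phi$ is proper in $I_{w.\tau}$. I do not anticipate any real obstacle here.
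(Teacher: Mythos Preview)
Your proof is correct and follows essentially the same approach as the paper: both arguments transfer the strong indecomposability of $I_{w.\tau}$ (i.e., the existence of its unique maximal submodule $M_{w.\tau}^{\max}$, from Lemma~\ref{lemMaximal_submodule_regular_case}) to its image $M_{w,\tau}$ via the surjection $\phi=A_{w,1,\tau}$ and the lattice correspondence for $\ker\phi$. The only cosmetic difference is that you use the characterization ``has a largest proper submodule'' stated just before the lemma, while the paper verifies the sum-decomposition definition directly by pulling back $\sum_j M_j=M_{w,\tau}$ to $\sum_j \phi^{-1}(M_j)=I_{w.\tau}$; the key input and the logic are the same.
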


\begin{proof}
Let $(M_j)_{j\in J}$ be a family of submodules of $M_{w,\tau}$ such that $M_{w,\tau}=\sum_{j\in J} M_j$.   For $j\in J$, set $N_j=(A_{w,1,\tau})^{-1}(M_j)$. Let $x\in I_{w.\tau}$ and $y=A_{w,1,\tau}(x)$. Write $y=\sum_{j\in J} y_j$, where $y_j\in M_j$ for all $j\in J$. For  $j\in J$ such that $y_j\neq 0$, choose $x_j\in N_j$ such that $A_{w,1,\tau}(x_j)=y_j$. For   $j\in J$ such that $y_j=0$, set $x_j=0$. Then $x-\sum_{j\in J}x_j\in \mathrm{ker}(A_{w,1,\tau})$. Let $j\in J$. Then $\mathrm{ker}(A_{w,1,\tau})\subset N_j$ and thus $x\in \sum_{j\in J} N_j$. Therefore $\sum_{j\in J} N_j=I_{w.\tau}$. By Lemma~\ref{lemMaximal_submodule_regular_case}, there exists $j\in J$ such that $N_j=I_{w.\tau}$. Then $M_j=M_{w,\tau}$, which proves the lemma.
\end{proof}

\begin{lemma}\label{lemCondition_inclusion_Mwtau}
Let $M\subset I_\tau$ and $w\in W^v$ be such that $w.\tau\in \Wt(M)$. Then $M_{w,\tau}\subset M$.
\end{lemma}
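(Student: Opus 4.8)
The plan is to exploit that $\tau$ is regular, so that each weight space of $I_\tau$ is one-dimensional, together with Frobenius reciprocity. First I would note that since $M\subset I_\tau$, one has $M(w.\tau)\subset I_\tau(w.\tau)$. By Proposition~\ref{propIntertwining_nonzero}~(2), $\dim I_\tau(w.\tau)=1$. The hypothesis $w.\tau\in \Wt(M)$ means precisely that $M(w.\tau)\neq\{0\}$; being a nonzero subspace of the one-dimensional space $I_\tau(w.\tau)$, it must coincide with it, so $M(w.\tau)=I_\tau(w.\tau)$.

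Next I would choose $x\in M(w.\tau)\setminus\{0\}$ and apply Frobenius reciprocity (Lemma~\ref{lemFrobenius_reciprocity}, with $w.\tau$ in place of $\tau$): this yields a nonzero morphism $\Upsilon_x\in \Hom(I_{w.\tau},M)$ characterized by $\Upsilon_x(\vb_{w.\tau})=x$. Composing $\Upsilon_x$ with the inclusion $M\hookrightarrow I_\tau$, I regard it as a nonzero element of $\Hom(I_{w.\tau},I_\tau)$.

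Finally, since $\tau$ is regular, Proposition~\ref{propIntertwining_nonzero}~(2) also gives $\dim \Hom(I_{w.\tau},I_\tau)=1$, a line spanned by $A_{w,1,\tau}$. Hence $\Upsilon_x=c\, A_{w,1,\tau}$ for some $c\in \FC^*$, so the two maps have the same image. Therefore $M_{w,\tau}=A_{w,1,\tau}(I_{w.\tau})=\Upsilon_x(I_{w.\tau})\subset M$, as desired. No step here presents a genuine obstacle; the only point requiring care is the identification of $M(w.\tau)$ with the full (one-dimensional) weight space of $I_\tau$, which is exactly where the regularity of $\tau$ enters decisively, both to force the weight space to be a line and to guarantee that $\Hom(I_{w.\tau},I_\tau)$ is itself one-dimensional.
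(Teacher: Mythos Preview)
Your proof is correct and follows essentially the same approach as the paper: both use Frobenius reciprocity to produce a nonzero morphism $I_{w.\tau}\to M$, regard it inside the one-dimensional space $\Hom(I_{w.\tau},I_\tau)=\FC A_{w,1,\tau}$, and conclude that its image is $M_{w,\tau}$. Your preliminary observation that $M(w.\tau)=I_\tau(w.\tau)$ is true but not actually needed---the existence of any nonzero $x\in M(w.\tau)$ suffices.
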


\begin{proof}
By Lemma~\ref{lemFrobenius_reciprocity}, there exists a nonzero intertwiner $f:I_{w.\tau}\rightarrow M$. Then $f\in \Hom(I_{w.\tau},I_\tau)=\FC A_{w,1,\tau}$, which proves the lemma.
\end{proof}

\begin{lemma}\label{lemPreimage_weight_subspaces} Let $w,w'\in W^v$ be such that $w'.\tau\in \Wt(M_{w,\tau})$. Then $M_{w,\tau}(w'.\tau)=A_{w,1,\tau}\big(I_{w.\tau}(w'.\tau)\big)$. In particular, $ A_{w',1,\tau}\in \FC^* A_{w,1,\tau}\circ A_{w',w,\tau}$.
\end{lemma}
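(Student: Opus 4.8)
The plan is to lean on the fact that $\tau$, and hence every $w.\tau$, is regular (indeed $W_{w.\tau}=wW_\tau w^{-1}=\{1\}$), so that Proposition~\ref{propIntertwining_nonzero}(2) applies to each of $I_\tau$ and $I_{w.\tau}$: both split as direct sums of their weight spaces, all of which are one-dimensional. Combined with the elementary observation that any $\AC_\FC$-morphism $f\colon M\to N$ satisfies $f\big(M(\tau'')\big)\subset N(\tau'')$ for every $\tau''\in T_\FC$, this reduces everything to a comparison of one-dimensional lines.

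First I would establish the set equality $M_{w,\tau}(w'.\tau)=A_{w,1,\tau}\big(I_{w.\tau}(w'.\tau)\big)$. The inclusion $\supseteq$ is formal: $A_{w,1,\tau}$ carries $I_{w.\tau}(w'.\tau)$ into $I_\tau(w'.\tau)$ and into its image $M_{w,\tau}$, hence into $M_{w,\tau}\cap I_\tau(w'.\tau)=M_{w,\tau}(w'.\tau)$. For $\subseteq$, I take $m\in M_{w,\tau}(w'.\tau)$, write $m=A_{w,1,\tau}(x)$ with $x\in I_{w.\tau}$, and decompose $x=\sum_{\tau''}x_{\tau''}$ along the weight decomposition $I_{w.\tau}=\bigoplus_{\tau''}I_{w.\tau}(\tau'')$. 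Since $A_{w,1,\tau}(x_{\tau''})\in I_\tau(\tau'')$ and $I_\tau=\bigoplus_{\tau''}I_\tau(\tau'')$, comparing $w'.\tau$-components forces $m=A_{w,1,\tau}(x_{w'.\tau})$ with $x_{w'.\tau}\in I_{w.\tau}(w'.\tau)$, which gives $\subseteq$.

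For the ``in particular'' clause I would show the composite $A_{w,1,\tau}\circ A_{w',w,\tau}$ is nonzero; since $\dim\Hom(I_{w'.\tau},I_\tau)=1$ is spanned by the nonzero element $A_{w',1,\tau}$, nonvanishing immediately yields $A_{w,1,\tau}\circ A_{w',w,\tau}\in\FC^*A_{w',1,\tau}$. The vector $\vb_{w'.\tau}$ generates $I_{w'.\tau}$ as an $\AC_\FC$-module, so the nonzero morphism $A_{w',w,\tau}$ cannot annihilate it; as $\vb_{w'.\tau}\in I_{w'.\tau}(w'.\tau)$, its image $A_{w',w,\tau}(\vb_{w'.\tau})$ is a nonzero vector of the one-dimensional line $I_{w.\tau}(w'.\tau)$. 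Here is where the hypothesis $w'.\tau\in\Wt(M_{w,\tau})$ does its work, and this is the one genuinely non-formal point: it says $M_{w,\tau}(w'.\tau)\neq\{0\}$, so by the equality just proved $A_{w,1,\tau}$ is a nonzero, hence injective, map on the one-dimensional space $I_{w.\tau}(w'.\tau)$; therefore $A_{w,1,\tau}\big(A_{w',w,\tau}(\vb_{w'.\tau})\big)\neq 0$ and the composite is nonzero. The main obstacle is thus simply guaranteeing that $A_{w,1,\tau}$ does not kill the relevant weight line, which is exactly the content of the weight hypothesis together with one-dimensionality; the rest is bookkeeping with weight decompositions.
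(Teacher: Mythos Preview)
Your proof is correct and follows essentially the same approach as the paper: both arguments use the weight decomposition of $I_{w.\tau}$ (via Proposition~\ref{propIntertwining_nonzero}(2)) to prove the set equality by projecting onto the $w'.\tau$-component, and both deduce the ``in particular'' clause by showing the composite is nonzero and then invoking $\dim\Hom(I_{w'.\tau},I_\tau)=1$. The only cosmetic difference is in the second part: the paper starts from $y=A_{w',1,\tau}(\vb_{w'.\tau})\in M_{w,\tau}(w'.\tau)=I_\tau(w'.\tau)$ and pulls it back through the equality to land on a scalar multiple of $A_{w',w,\tau}(\vb_{w'.\tau})$, whereas you argue directly that $A_{w,1,\tau}$ is injective on the one-dimensional line $I_{w.\tau}(w'.\tau)$ --- these are the same idea viewed from opposite ends.
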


\begin{proof}
As $A_{w,1,\tau}$ is a $\AC_\FC$-module morphism, it is an $\FC[Y]$-module morphism and thus $M_{w,\tau}(w'.\tau)\supset A_{w,1,\tau}\big(I_{w.\tau}(w'.\tau)\big)$. Let $y\in M_{w,\tau}(w'.\tau)$ and $x\in (A_{w,1,\tau})^{-1}(\{y\})$.   Using Proposition~\ref{propIntertwining_nonzero}, write $x=\sum_{v\in W^v} x_v$, where for every $v\in W^v$, $x_v\in I_{w.\tau}(v.\tau)$. Then for all $v\in W^v\setminus\{w'\}$, $A_{w,1,\tau}(x_v)\in I_{\tau}(v.\tau)$ and thus $A_{w,1,\tau}(x_v)=0$. Consequently, $y=A_{w,1,\tau}(x)=A_{w,1,\tau}(x_{w'})\in A_{w,1,\tau}\big(I_{w.\tau}(w'.\tau)\big)$ and thus $M_{w,\tau}(w'.\tau)=A_{w,1,\tau}\big(I_{w.\tau}(w'.\tau)\big)$.

 By Proposition~\ref{propIntertwining_nonzero}~(2), $M_{w,\tau}(w'.\tau)=I_\tau(w'.\tau)$. Let $y=A_{w',1,\tau}(\vb_{w'.\tau})$. Then there exists $x\in I_{w.\tau}(w'.\tau)$ such that $y=A_{w,1,\tau}(x)$. Then there exists $\gamma\in \FC^*$ such that  $x=\gamma A_{w',w,\tau}(\vb_{w'.\tau})$. Therefore $y=A_{w,1,\tau}\circ A_{w',w,\tau}(\vb_{w'.\tau)})$. In particular, $A_{w,1,\tau}\circ A_{w',w,\tau}\neq 0$ and thus $ A_{w',1,\tau}\in \FC^* A_{w,1,\tau}\circ A_{w',w,\tau}$.
\end{proof}

\begin{lemma}\label{lemInjectivity_map_modules}
Let $w,w'\in W^v$.  Then $I_{w.\tau}\simeq I_{w'.\tau}$ if and only if $M_{w,\tau}=M_{w',\tau}$. 
\end{lemma}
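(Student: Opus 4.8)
The plan is to exploit throughout that, for regular $\tau$, every space $\Hom(I_{v.\tau},I_{v'.\tau})$ is one-dimensional (Proposition~\ref{propIntertwining_nonzero}~(2) and Lemma~\ref{lemFrobenius_reciprocity}), so that in particular $\End(I_{v.\tau})=\FC\,\Id$ and a nonzero morphism between two isomorphic vertices is automatically an isomorphism. Both implications then reduce to manipulating the chosen intertwiners $A_{v,v',\tau}$ inside these one-dimensional $\Hom$-spaces.

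For the implication $I_{w.\tau}\simeq I_{w'.\tau}\Rightarrow M_{w,\tau}=M_{w',\tau}$, I would first observe that since $\Hom(I_{w'.\tau},I_{w.\tau})$ is one-dimensional and contains an isomorphism, the generator $A_{w',w,\tau}$ is itself an isomorphism, so $A_{w',w,\tau}(I_{w'.\tau})=I_{w.\tau}$. Hence $M_{w,\tau}=A_{w,1,\tau}(I_{w.\tau})=(A_{w,1,\tau}\circ A_{w',w,\tau})(I_{w'.\tau})$. The composite $A_{w,1,\tau}\circ A_{w',w,\tau}$ lies in $\Hom(I_{w'.\tau},I_\tau)=\FC A_{w',1,\tau}$ and is nonzero (a nonzero map precomposed with an isomorphism cannot vanish, else $A_{w,1,\tau}$ itself would be zero), so it equals $c A_{w',1,\tau}$ with $c\in\FC^*$. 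Taking images gives $M_{w,\tau}=A_{w',1,\tau}(I_{w'.\tau})=M_{w',\tau}$.

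For the converse, assume $M_{w,\tau}=M_{w',\tau}$. I would first record that $w.\tau\in\Wt(M_{w,\tau})$ and $w'.\tau\in\Wt(M_{w',\tau})$: indeed $\vb_{w.\tau}$ generates $I_{w.\tau}$, so $A_{w,1,\tau}(\vb_{w.\tau})$ is a nonzero element of $I_\tau(w.\tau)$, and symmetrically for $w'$. Using $M_{w,\tau}=M_{w',\tau}$, both $w.\tau$ and $w'.\tau$ are weights of each of the two (equal) modules, so Lemma~\ref{lemPreimage_weight_subspaces} applies twice: once with the pair $(w,w')$, yielding $A_{w',1,\tau}=c_2\,A_{w,1,\tau}\circ A_{w',w,\tau}$, and once with $(w',w)$, yielding $A_{w,1,\tau}=c_1\,A_{w',1,\tau}\circ A_{w,w',\tau}$, for suitable $c_1,c_2\in\FC^*$.

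Substituting the first relation into the second gives $A_{w,1,\tau}=c_1c_2\,A_{w,1,\tau}\circ(A_{w',w,\tau}\circ A_{w,w',\tau})$. The inner composite lies in $\End(I_{w.\tau})=\FC\,\Id$, say $A_{w',w,\tau}\circ A_{w,w',\tau}=\gamma\,\Id$; then $A_{w,1,\tau}=c_1c_2\gamma\,A_{w,1,\tau}$ with $A_{w,1,\tau}\neq0$ forces $\gamma=(c_1c_2)^{-1}\neq0$. If $I_{w.\tau}$ and $I_{w'.\tau}$ were non-isomorphic, Lemma~\ref{lemComposition_intertwinning_morphisms_nonisomorphism} would give $A_{w',w,\tau}\circ A_{w,w',\tau}=0$, i.e.\ $\gamma=0$, a contradiction; hence $I_{w.\tau}\simeq I_{w'.\tau}$. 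The only delicate point is to keep the index conventions of Lemma~\ref{lemPreimage_weight_subspaces} straight so that the two relations compose into a genuine endomorphism of a single module, after which the nonvanishing of $\gamma$ collides directly with Lemma~\ref{lemComposition_intertwinning_morphisms_nonisomorphism}.
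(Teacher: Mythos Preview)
Your proof is correct and follows essentially the same approach as the paper. For the nontrivial direction ($M_{w,\tau}=M_{w',\tau}\Rightarrow I_{w.\tau}\simeq I_{w'.\tau}$), both you and the paper apply Lemma~\ref{lemPreimage_weight_subspaces} twice and substitute to force $A_{w',w,\tau}\circ A_{w,w',\tau}$ (or the symmetric composite) to be a nonzero scalar, then invoke Lemma~\ref{lemComposition_intertwinning_morphisms_nonisomorphism}; the only difference is that the paper leaves the easy direction implicit, whereas you spell it out.
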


\begin{proof}
One has $w'.\tau\in \Wt(M_{w,\tau})$ and $w.\tau\in \Wt(M_{w'.\tau})$. Thus by Lemma~\ref{lemPreimage_weight_subspaces} one has: \[\FC^* A_{w',1,\tau}=\FC^* A_{w,1,\tau}\circ A_{w',w,\tau}=\FC^*  A_{w',1,\tau}\circ A_{w,w',\tau}\circ A_{w',w,\tau}.\] By Lemma~\ref{lemComposition_intertwinning_morphisms_nonisomorphism} we deduce that  $A_{w,w',\tau}$ and $A_{w',w,\tau}$ are isomorphisms, which proves the lemma.
\end{proof}


\begin{lemma}\label{lemChasles_distance_weights_Mwtau}
Let $w'\in W^v$ and $w.\tau\in \Wt(M_{w',\tau})$ (i.e $M_{w,\tau}\subset M_{w',\tau}$). Then $d(I_{w.\tau},I_{\tau})=d(I_{w.\tau},I_{w'.\tau})+d(I_{w'.\tau},I_{\tau})$. In particular, $d(I_{w.\tau},I_{\tau})\geq d(I_{w'.\tau},I_\tau)$ and the equality holds if and only if $M_{w,\tau}=M_{w',\tau}$
\end{lemma}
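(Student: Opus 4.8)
The plan is to reduce the identity to the additivity of $\ell_{\not\simeq}$ along a concatenation of intertwining paths, the only delicate point being that such a concatenation need not remain intertwining. First I would record that the hypothesis $w.\tau\in\Wt(M_{w',\tau})$ is exactly the inclusion $M_{w,\tau}\subset M_{w',\tau}$ (this is the content of the parenthetical, via Lemma~\ref{lemCondition_inclusion_Mwtau} together with the fact that $A_{w,1,\tau}(\vb_{w.\tau})$ is a nonzero element of $M_{w,\tau}(w.\tau)$). The key input is then Lemma~\ref{lemPreimage_weight_subspaces}, applied with the roles of $w$ and $w'$ exchanged: since $w.\tau\in\Wt(M_{w',\tau})$, it produces a scalar $c\in\FC^*$ with
\[A_{w,1,\tau}=c\,A_{w',1,\tau}\circ A_{w,w',\tau}.\]
This factorization says precisely that passing from $I_{w.\tau}$ to $I_\tau$ through $I_{w'.\tau}$ does not annihilate the intertwiner.

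Next I would use Proposition~\ref{propExistence_intertwining_paths} to choose an intertwining path $\Gamma_1$ from $I_{w.\tau}$ to $I_{w'.\tau}$ and an intertwining path $\Gamma_2$ from $I_{w'.\tau}$ to $I_\tau$. Since $\tau$ is regular, each relevant $\Hom$-space is one-dimensional, so $A_{\Gamma_1}\in\FC^*A_{w,w',\tau}$ and $A_{\Gamma_2}\in\FC^*A_{w',1,\tau}$. The main obstacle — and the reason the factorization above is needed — is to verify that the concatenation $\Gamma''$ of $\Gamma_1$ and $\Gamma_2$ is again an intertwining path; this is where the argument could fail, but it holds because
\[A_{\Gamma''}=A_{\Gamma_2}\circ A_{\Gamma_1}\in\FC^*\,A_{w',1,\tau}\circ A_{w,w',\tau}=\FC^*A_{w,1,\tau},\]
which is nonzero as $A_{w,1,\tau}\neq 0$. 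Since $\ell_{\not\simeq}$ is plainly additive under concatenation, $\ell_{\not\simeq}(\Gamma'')=\ell_{\not\simeq}(\Gamma_1)+\ell_{\not\simeq}(\Gamma_2)$, and by the well-definedness of the semi-distance (Proposition~\ref{propDefinition_tau_distance}(\ref{itWell_definedness_distance})) these three quantities compute $d(I_{w.\tau},I_\tau)$, $d(I_{w.\tau},I_{w'.\tau})$ and $d(I_{w'.\tau},I_\tau)$ respectively, giving the claimed equality.

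Finally the "in particular" assertions follow formally. Nonnegativity of $d(I_{w.\tau},I_{w'.\tau})$ yields $d(I_{w.\tau},I_\tau)\geq d(I_{w'.\tau},I_\tau)$, with equality exactly when $d(I_{w.\tau},I_{w'.\tau})=0$. By Proposition~\ref{propDefinition_tau_distance} the latter is equivalent to $I_{w.\tau}\simeq I_{w'.\tau}$, which by Lemma~\ref{lemInjectivity_map_modules} is equivalent to $M_{w,\tau}=M_{w',\tau}$. I expect no computation beyond these citations; the entire content of the proof is the observation that the factorization of Lemma~\ref{lemPreimage_weight_subspaces} is exactly what guarantees the concatenated path stays intertwining, so that $\ell_{\not\simeq}$ adds.
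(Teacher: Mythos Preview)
Your proof is correct and follows essentially the same route as the paper: the factorization $A_{w,1,\tau}\in\FC^*A_{w',1,\tau}\circ A_{w,w',\tau}$ from Lemma~\ref{lemPreimage_weight_subspaces} is used to show that the concatenation of intertwining paths through $I_{w'.\tau}$ remains intertwining, whence additivity of $\ell_{\not\simeq}$ gives the distance identity, and the equality case is handled via Lemma~\ref{lemInjectivity_map_modules}. Your write-up is in fact slightly more explicit than the paper's in citing Proposition~\ref{propExistence_intertwining_paths} for the existence of $\Gamma_1,\Gamma_2$ and Proposition~\ref{propDefinition_tau_distance} for the step $d=0\Leftrightarrow I_{w.\tau}\simeq I_{w'.\tau}$.
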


\begin{proof}
By Lemma~\ref{lemPreimage_weight_subspaces}, one has $\FC^* A_{w,1,\tau}=\FC^* A_{w',1,\tau}\circ A_{w,w',\tau}$. Therefore if $\Gamma_1$ is an intertwining path from $I_{w.\tau}$ to $I_{w'.\tau}$ and $\Gamma_2$ is an intertwining path from $I_{w'.\tau}$ to $I_\tau$, the concatenation of $\Gamma_1$ and $\Gamma_2$ is an intertwining path from $I_{w.\tau}$ to $I_\tau$. Thus $d(I_{w.\tau},I_\tau)=d(I_{w.\tau},I_{w'.\tau})+d(I_{w'.\tau},I_\tau)$. 

Thus $d(I_{w.\tau},I_\tau)\geq d(I_{w'.\tau},I_\tau)$ and the equality holds if and only if $d(I_{w.\tau},I_{w'.\tau})=0$ if and only if $M_{w,\tau}=M_{w',\tau}$, by Lemma~\ref{lemInjectivity_map_modules}.
\end{proof}

\begin{lemma}\label{lemDecomposition_weight_subspaces_regular_case}
\begin{enumerate} \item Let $M$ be a submodule of $I_\tau$. Then $M=\bigoplus_{\tau'\in \Wt(M)}  M(\tau')=\bigoplus_{\tau'\in \Wt(M)} I_\tau(\tau')$. 

\item Let $\MC$ be a family of submodules of $I_\tau$. Then $\Wt(\sum_{N\in\mathcal{M}} N)=\bigcup_{N\in \mathcal{M}} \Wt(N)$.

\end{enumerate}
\end{lemma}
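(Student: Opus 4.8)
The starting point is the decomposition furnished by Proposition~\ref{propIntertwining_nonzero}~(2): since $\tau$ is regular, $I_\tau=\bigoplus_{w\in W^v} I_\tau(w.\tau)$ with each weight space $I_\tau(w.\tau)$ one-dimensional, and the weights $w.\tau$ are pairwise distinct (because $w.\tau=w'.\tau$ forces $w^{-1}w'\in W_\tau=\{1\}$). In particular $\Wt(I_\tau)=W^v.\tau$, and as $M$ is an $\FC[Y]$-submodule of the weighted module $I_\tau$, $M$ is itself weighted with $\Wt(M)\subset W^v.\tau$. The plan for (1) is to run the generalized weight-space decomposition of Lemma~\ref{lemDecomposition_weighted_modules} and then to show that, under regularity, every generalized weight space involved is an honest weight space which moreover fills up the corresponding line of $I_\tau$.

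For the collapse, I would first record that $I_\tau(\tau',\mathrm{gen})=I_\tau(\tau')$ for every $\tau'$: writing an element of $I_\tau(\tau',\mathrm{gen})$ in the basis adapted to $\bigoplus_{w} I_\tau(w.\tau)$ and applying $(\theta-\tau'(\theta))^k$ shows, using that the characters $w.\tau$ are distinct, that only the component on $I_\tau(\tau')$ survives. Consequently, for a submodule $M$ one gets $M(\tau',\mathrm{gen})\subset I_\tau(\tau',\mathrm{gen})=I_\tau(\tau')$, whence $M(\tau',\mathrm{gen})=M\cap I_\tau(\tau')=M(\tau')$. Now Lemma~\ref{lemDecomposition_weighted_modules} gives $M=\bigoplus_{\tau'\in\Wt(M)} M(\tau',\mathrm{gen})=\bigoplus_{\tau'\in\Wt(M)} M(\tau')$. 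Finally, for $\tau'=w.\tau\in\Wt(M)$ the space $M(\tau')$ is a nonzero subspace of the line $I_\tau(w.\tau)$, hence equals it; this yields the second equality $M=\bigoplus_{\tau'\in\Wt(M)} I_\tau(\tau')$ and proves (1).

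Part (2) is then formal. The inclusion $\bigcup_{N\in\MC}\Wt(N)\subset\Wt(\sum_{N} N)$ holds because $N(\tau')\subset(\sum_N N)(\tau')$. For the reverse inclusion I would apply (1) to each $N\in\MC$, writing $N=\bigoplus_{\tau'\in\Wt(N)} I_\tau(\tau')$, so that $\sum_{N\in\MC} N$ is a sum of ``coordinate'' subspaces of the fixed direct sum $I_\tau=\bigoplus_{w} I_\tau(w.\tau)$. Such a sum equals the coordinate subspace indexed by the union of the index sets, namely $\bigoplus_{\tau'\in\bigcup_N\Wt(N)} I_\tau(\tau')$. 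Comparing this with the decomposition of $M:=\sum_N N$ given by (1), and using that the lines $I_\tau(\tau')$ are linearly independent and nonzero, forces $\Wt(M)=\bigcup_{N}\Wt(N)$.

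The only genuinely substantive step is the collapse carried out in the second paragraph, which is where regularity of $\tau$ is used essentially: it guarantees both that the weights are distinct (so that generalized weight spaces reduce to ordinary ones and $\Wt(M)\subset W^v.\tau$) and that the weight lines are one-dimensional (so that each nonzero $M(\tau')$ is forced to be all of $I_\tau(\tau')$). Once a submodule is known to be the direct sum of a subset of the fixed lines $I_\tau(w.\tau)$, everything else---including the whole of (2)---is a bookkeeping statement about coordinate subspaces of a direct sum.
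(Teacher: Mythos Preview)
Your proof is correct and follows essentially the same approach as the paper's. The paper cites \cite[Lemma 3.3 2.]{hebert2018principal} for the generalized-weight-space decomposition of $M$ (where you use Lemma~\ref{lemDecomposition_weighted_modules}), then invokes Proposition~\ref{propIntertwining_nonzero} to collapse $I_\tau(\tau',\mathrm{gen})$ to $I_\tau(\tau')$ and deduce $M(\tau',\mathrm{gen})=M(\tau')$; part~(2) is then handled by the same coordinate-subspace bookkeeping you describe. Your treatment is slightly more explicit in spelling out why the generalized weight spaces collapse, but the structure and the key inputs are identical.
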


\begin{proof}
(1) By \cite[Lemma 3.3 2.]{hebert2018principal}, one has $M=\bigoplus_{\tau'\in \Wt(M)} M(\tau',\text{gen})$. By Proposition~\ref{propIntertwining_nonzero}, for all $\tau'\in \Wt(M)$, $M(\tau',\mathrm{gen})\subset I(\tau',\mathrm{gen})=I(\tau')$ and thus $M(\tau',\mathrm{gen})=M(\tau')$, which proves (i). 

(2) By (1), $\bigoplus_{\tau'\in W^v.\tau}I_\tau(\tau')\supset \sum_{N\in \mathcal{M}} N=\sum_{N\in \mathcal{M}} \sum_{\tau'\in \Wt(N)} I_\tau(\tau')=\sum_{\tau'\in \bigcup_{N\in \mathcal{M}}\Wt(N)} I_\tau(\tau')$, which proves (2).
\end{proof}

\begin{lemma}\label{lemDescription_indecomposable_submodules_image_morphism}
Let  $M\subset I_\tau$ be a strongly indecomposable submodule. Then there exists $w\in W^v$ such that $M=M_{w,\tau}$. More precisely, let $n=\min\{d(I_{v.\tau},I_{\tau}) |v.\tau\in \Wt(M)\}$ and  $w.\tau\in \Wt(M)$ be such that  $d(I_{w.\tau},I_\tau)=n$. Then $M=M_{w,\tau}$.
\end{lemma}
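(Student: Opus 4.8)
The plan is to exploit the fact that, for regular $\tau$, every submodule of $I_\tau$ is completely determined by its set of weights. Indeed, by Lemma~\ref{lemDecomposition_weight_subspaces_regular_case}~(1) any submodule $N$ of $I_\tau$ satisfies $N=\bigoplus_{\tau'\in\Wt(N)}I_\tau(\tau')$, so two submodules of $I_\tau$ with the same weight set coincide. First I note that $M$ cannot be $\{0\}$: applying the strong indecomposability condition to the empty family would force the existence of an index in $\emptyset$. Hence $\Wt(M)\neq\emptyset$, the quantity $n$ is a well-defined element of $\N$, and we may pick $w\in W^v$ with $w.\tau\in\Wt(M)$ and $d(I_{w.\tau},I_\tau)=n$.

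Next I would show that $M=\sum_{v.\tau\in\Wt(M)}M_{v,\tau}$. For each $v.\tau\in\Wt(M)$, Lemma~\ref{lemCondition_inclusion_Mwtau} gives $M_{v,\tau}\subset M$, so the right-hand side is contained in $M$. Conversely, $A_{v,1,\tau}(\vb_{v.\tau})$ is a nonzero element of $I_\tau(v.\tau)$ (it lies in $I_\tau(v.\tau)$ since $A_{v,1,\tau}$ is a module morphism, and it is nonzero because $I_{v.\tau}=\AC_\FC.\vb_{v.\tau}$ and $A_{v,1,\tau}\neq 0$), so that $v.\tau\in\Wt(M_{v,\tau})$. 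Therefore, by Lemma~\ref{lemDecomposition_weight_subspaces_regular_case}~(2), $\Wt\big(\sum_{v.\tau\in\Wt(M)}M_{v,\tau}\big)=\bigcup_{v.\tau\in\Wt(M)}\Wt(M_{v,\tau})\supset\Wt(M)$, while the reverse inclusion holds because the sum is contained in $M$. The two submodules thus have the same weight set, hence are equal by the observation above. Now strong indecomposability applies to this family directly: there exists $v_0.\tau\in\Wt(M)$ with $M_{v_0,\tau}=M$, which already establishes the existence statement.

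It then remains to identify $M$ with $M_{w,\tau}$ for the distance-minimizing $w$. Since $w.\tau\in\Wt(M)=\Wt(M_{v_0,\tau})$, Lemma~\ref{lemCondition_inclusion_Mwtau} yields $M_{w,\tau}\subset M_{v_0,\tau}$, so Lemma~\ref{lemChasles_distance_weights_Mwtau} gives $d(I_{w.\tau},I_\tau)\geq d(I_{v_0.\tau},I_\tau)$, with equality if and only if $M_{w,\tau}=M_{v_0,\tau}$. On the other hand, $v_0.\tau\in\Wt(M)$ forces $d(I_{v_0.\tau},I_\tau)\geq n=d(I_{w.\tau},I_\tau)$ by minimality of $n$. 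Combining the two inequalities gives equality of the distances, whence $M_{w,\tau}=M_{v_0,\tau}=M$, as desired.

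The main obstacle, and the conceptual heart of the argument, is the reduction $M=\sum_{v.\tau\in\Wt(M)}M_{v,\tau}$; everything after it is bookkeeping with the semi-distance $d$ and its additivity along inclusions of the $M_{v,\tau}$. This reduction rests entirely on the regularity of $\tau$, through the multiplicity-one decomposition of Lemma~\ref{lemDecomposition_weight_subspaces_regular_case}, which makes a submodule recoverable from its weight set; without it one could not guarantee that the family $(M_{v,\tau})_{v.\tau\in\Wt(M)}$ exhausts $M$, and strong indecomposability could not be invoked.
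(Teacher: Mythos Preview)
Your proof is correct and follows essentially the same approach as the paper: both decompose $M$ as a sum of the $M_{v,\tau}$ over weights of $M$ (using Lemma~\ref{lemDecomposition_weight_subspaces_regular_case} and Lemma~\ref{lemCondition_inclusion_Mwtau}), invoke strong indecomposability, and then use Lemma~\ref{lemChasles_distance_weights_Mwtau} to identify the resulting $M_{v_0,\tau}$ with the distance-minimizing $M_{w,\tau}$. The only organizational difference is that the paper targets the minimizing $w$ from the start and rules out the other summands directly, whereas you first extract some $v_0$ and then compare; the content is the same.
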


\begin{proof}
By Lemma~\ref{lemCondition_inclusion_Mwtau},  
 $M_{w,\tau}+\sum_{w'.\tau\in \Wt(M)\setminus \Wt(M_{w,\tau})} M_{w',\tau}\subset M$ and thus  by Lemma~\ref{lemDecomposition_weight_subspaces_regular_case} and Proposition~\ref{propIntertwining_nonzero}, \[M_{w,\tau}+\sum_{w'.\tau\in \Wt(M)\setminus \Wt(M_{w,\tau})} M_{w',\tau}=M.\] 
 
Let $w'.\tau\in \Wt(M)$ be such that $\Wt(M_{w',\tau})\ni w.\tau$. Then by  Lemma~\ref{lemCondition_inclusion_Mwtau}, $M_{w',\tau}\supset M_{w,\tau}$. By  Lemma~\ref{lemChasles_distance_weights_Mwtau} and by definition of $w$, 
one has $d(I_{w'.\tau},I_\tau)\leq d(I_{w.\tau},I_\tau)\leq d(I_{w'.\tau},I_\tau)$, thus $M_{w',\tau}=M_{w,\tau}$ and in particular, $w'.\tau\in \Wt(M_{w,\tau})$. 
As \[\Wt(\sum_{w'.\tau\in \Wt(M)\setminus \Wt(M_{w,\tau})} M_{w',\tau})=\bigcup_{w'.\tau\in \Wt(M)\setminus \Wt(M_{w,\tau })}\Wt(M_{w',\tau})\]
 we deduce that $\sum_{w'.\tau\in \Wt(M)\setminus \Wt(M_{w,\tau})} M_{w',\tau}$ does not contain $M_{w,\tau}$. As $M$ is strongly indecomposable we deduce that $M=M_{w,\tau}$, which proves the lemma.
\end{proof}

\subsubsection{Semi-distance on $\GC_\tau$ and ascending chains of strongly indecomposable submodules}

\begin{proposition}
Let $w'\in W^v$ and  $w.\tau\in \Wt(M_{w',\tau})$ (i.e $M_{w,\tau}\subset M_{w',\tau}$). Let  $n=d(I_{w.\tau},I_{w'.\tau})$ and $M_1,\ldots,M_k$ be  a sequence of strongly  indecomposable submodules of $I_\tau$ such that \[M_{w,\tau}=M_1\subsetneq M_2\subsetneq \ldots \subsetneq M_k=M_{w',\tau}.\] 
Then $k\leq n+1$ and there exist strongly indecomposable submodules $M_1',\ldots,M'_n$ and $\sigma:\llbracket 1,k\rrbracket \rightarrow \llbracket 1,n\rrbracket$ strictly increasing such that 
\[M_1'\subsetneq M_2'\subsetneq \ldots \subsetneq M_{n+1}',\ \sigma(1)=1, \sigma(k)=n+1\text{ and }M_i=M'_{\sigma(i)}\text{ for }i\in \llbracket 1,k\rrbracket.\] 
\end{proposition}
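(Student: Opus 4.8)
The plan is to first translate everything into the combinatorial language already developed. By Lemma~\ref{lemDescription_indecomposable_submodules_image_morphism} each $M_i$ equals $M_{v_i,\tau}$ for some $v_i\in W^v$, and I may take $v_1=w$, $v_k=w'$ (the choice of representative of the $\sim$-class is irrelevant by Lemma~\ref{lemInjectivity_map_modules}). I would then set $a_i:=d(I_{v_i.\tau},I_\tau)$. Since $M_{v_i,\tau}\subsetneq M_{v_{i+1},\tau}$, Lemma~\ref{lemChasles_distance_weights_Mwtau} gives $a_i=d(I_{v_i.\tau},I_{v_{i+1}.\tau})+a_{i+1}$, and strictness of the inclusion forces $I_{v_i.\tau}\not\simeq I_{v_{i+1}.\tau}$ (Lemma~\ref{lemInjectivity_map_modules}), hence $d(I_{v_i.\tau},I_{v_{i+1}.\tau})\geq 1$. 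Summing over $i$ and applying Lemma~\ref{lemChasles_distance_weights_Mwtau} once more to the outer inclusion $M_{w,\tau}\subseteq M_{w',\tau}$ yields $\sum_{i=1}^{k-1}d(I_{v_i.\tau},I_{v_{i+1}.\tau})=a_1-a_k=d(I_{w.\tau},I_{w'.\tau})=n$; as each summand is at least $1$, this already gives $k-1\leq n$, i.e. $k\leq n+1$.

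The refinement is obtained by subdividing each inclusion $M_{v_i,\tau}\subsetneq M_{v_{i+1},\tau}$ into steps of $d$-length $1$. Writing $\delta_i=d(I_{v_i.\tau},I_{v_{i+1}.\tau})$, I would produce for each $i$ a chain of strongly indecomposable submodules $M_{v_i,\tau}=P^i_0\subsetneq P^i_1\subsetneq\cdots\subsetneq P^i_{\delta_i}=M_{v_{i+1},\tau}$ whose consecutive terms are at $d$-distance $1$, and then concatenate these chains, which glue along the shared endpoints $M_{v_{i+1},\tau}$. The resulting chain $M'_1\subsetneq\cdots\subsetneq M'_{n+1}$ has $1+\sum_i\delta_i=n+1$ terms, runs from $M_{w,\tau}$ to $M_{w',\tau}$, and contains $M_i$ at position $\sigma(i)=1+\sum_{l<i}\delta_l$; this $\sigma$ is strictly increasing with $\sigma(1)=1$ and $\sigma(k)=n+1$, which is the asserted conclusion (the ``$M'_n$'' in the statement being a typo for $M'_{n+1}$).

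The heart of the matter, and the main obstacle, is the refinement of a single inclusion $M_{u,\tau}\subsetneq M_{u',\tau}$. Here I would fix an intertwining path $\Gamma=(I_{z_0.\tau},\ldots,I_{z_m.\tau})$ from $I_{u.\tau}$ to $I_{u'.\tau}$ (existing by Proposition~\ref{propExistence_intertwining_paths}), so that $\ell_{\not\simeq}(\Gamma)=d(I_{u.\tau},I_{u'.\tau})=\delta$ by Proposition~\ref{propDefinition_tau_distance}. Since $M_{u,\tau}\subseteq M_{u',\tau}$, Lemma~\ref{lemPreimage_weight_subspaces} gives $A_{u,1,\tau}\in\FC^*\,A_{u',1,\tau}\circ A_{u,u',\tau}$ with $A_{u,u',\tau}=A_\Gamma$ up to a scalar; as this composite is nonzero, every head and tail subpath of $\Gamma$ is nonzero as well. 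Factoring through each intermediate vertex then shows $A_{u',1,\tau}\circ A_{z_j,u',\tau}\in\FC^*\,A_{z_j,1,\tau}$ and $A_{z_{j+1},1,\tau}\circ A_{z_j,z_{j+1},\tau}\in\FC^*\,A_{z_j,1,\tau}$, whence the images $M_{z_j,\tau}$ form a monotone chain $M_{u,\tau}=M_{z_0,\tau}\subseteq\cdots\subseteq M_{z_m,\tau}=M_{u',\tau}$. By Lemma~\ref{lemInjectivity_map_modules} a step $M_{z_j,\tau}\subseteq M_{z_{j+1},\tau}$ is strict precisely when $A_{z_j,z_{j+1},\tau}$ is not an isomorphism, and there are exactly $\ell_{\not\simeq}(\Gamma)=\delta$ such edges; deleting repetitions therefore gives a strictly increasing chain of length $\delta+1$ in which each strict step crosses a single non-isomorphism edge and hence has $d$-length $1$, with each term strongly indecomposable by Lemma~\ref{lemIndecomposability_Mwtau}. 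The delicate points I would verify with care are that the intermediate composites are genuinely nonzero (so the chain increases rather than collapses) and that monotonicity persists even when $\Gamma$ is not reduced; both follow from the head/tail nonvanishing observation above.
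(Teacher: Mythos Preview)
Your proof is correct and follows essentially the same approach as the paper's: both identify each $M_i$ as some $M_{w_i,\tau}$, use Lemma~\ref{lemPreimage_weight_subspaces}/Lemma~\ref{lemChasles_distance_weights_Mwtau} to obtain the Chasles relation $\sum_i d(I_{w_i.\tau},I_{w_{i+1}.\tau})=n$ (hence $k\le n+1$), and then read off the refinement from the non-isomorphism edges of an intertwining path. The only difference is organizational: the paper concatenates intertwining paths $\Gamma_i$ for all the inclusions into one big path $\Gamma$ first and then extracts the $M'_j$ from its vertices, whereas you refine each inclusion separately and then glue the resulting chains of modules; this is the same argument with the bookkeeping rearranged.
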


\begin{proof}
By Lemma~\ref{lemDescription_indecomposable_submodules_image_morphism} there exist $w_1,\ldots,w_k\in W^v$ such that for all $i\in \llbracket 1,k\rrbracket$, $M_i=M_{w_i,\tau_i}$. Then $w_{k-1}.\tau\in \Wt(M_{k-1})\subset \Wt(M_k)$ and by Lemma~\ref{lemPreimage_weight_subspaces},  $ A_{w_k,1,\tau}\circ A_{w_{k-1},w_k,\tau}\in \FC^* A_{w_{k-1},1,\tau} $. By induction, \begin{equation}\label{eqIntertwiner}
0\neq A_{w_k,1,\tau}\circ A_{w_{k-1},w_k,\tau}\circ\ldots A_{w_1,w_2,\tau}\in \FC^* A_{w_1,1,\tau}.
\end{equation} Set $w_{k+1}=1$. For $i\in \llbracket 1,k \rrbracket$, choose an intertwining path $\Gamma_i$ from $I_{w_i.\tau}$ to $I_{w_{i+1}.\tau}$, whose existence is provided by Proposition~\ref{propExistence_intertwining_paths}. Let $\Gamma$ (resp. $\Gamma'$) be the concatenation of $\Gamma_1$, $\Gamma_2,\ldots, \Gamma_{k-1}$ (resp. $\Gamma_1$, $\Gamma_2,\ldots, \Gamma_{k}$). Then by~(\ref{eqIntertwiner}), $\Gamma'$ and thus $\Gamma$ are intertwining paths from $I_{w_1.\tau}$  to $I_{w_k.\tau}$. Therefore, \begin{equation}\label{eqChasles_distance}
n=d(I_{w_1.\tau},I_{w_k.\tau})=d(I_{w_1.\tau},I_{w_2.\tau})+\ldots+d(I_{w_{k-1}.\tau},I_{w_k.\tau}).
\end{equation} By Lemma~\ref{lemChasles_distance_weights_Mwtau} we deduce that $n\geq k$.

 Write $\Gamma=(I_{v_1.\tau},\ldots,I_{v_m.\tau})$, where $m\in \N$ and $v_1,\ldots,v_m\in W^v$. Let $K=\{i\in \llbracket 1,m-1\rrbracket|\ I_{v_i.\tau}\not \simeq I_{v_{i+1}.\tau}\}$. Then by definition, $|K|=n$.  For $i\in K$, set $\tilde{M_i}=M_{v_i,\tau}$ and set $\tilde{M}_m=M_{v_m,\tau}$. 
 Write $K\cup\{m\}=\{k_1,\ldots,k_{n+1}\}$, $k_1<\ldots <k_{n+1}$ and for $i\in \llbracket 1,n+1\rrbracket$, set $M_i'=\tilde{M}_{k_i}$. 
 As $\Gamma'$ is an intertwining path, one has  $M'_1\subset M'_2\subset \ldots \subset M'_{n+1}$ and by Lemma~\ref{lemChasles_distance_weights_Mwtau}, the inclusions are strict. By definition, $M'_1\simeq M_{v_1,\tau}=M_1$ and $M'_{n+1}\simeq M_{v_m,\tau}=M_k$. Set $\sigma(1)=1$ and for $i\in \llbracket 1,k-1\rrbracket$, $\sigma(i+1)=\sigma(i)+d(I_{w_i.\tau},I_{w_{i+1}.\tau})$. By~(\ref{eqChasles_distance}), $\sigma(k)=n+1$. Let $i\in \llbracket 1,k\rrbracket$ and assume that $M'_{\sigma(i)}=M_i$. As $\ell_{\not\simeq}(\Gamma_i)=d(I_{w_i.\tau},I_{w_{i+1}.\tau})$,  $M'_{\sigma(i+1)}=M_{i+1}$ and the proposition follows.
\end{proof}

\subsubsection{Decomposition of submodules as sums of strongly indecomposable submodules}

\begin{lemma}\label{lemNoetherianity_indecomposable_submodules}
Let $J$ be  a totally ordered set and $(M_j)_{j\in J}$ be an increasing family of strongly indecomposable submodules of $I_\tau$. Then $(M_j)_{j\in J}$ is stationary.
\end{lemma}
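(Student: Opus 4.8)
The plan is to reduce the stabilization of the chain to the well-ordering of $\N$ by attaching to each $M_j$ a non-negative integer invariant that decreases along strict inclusions. The two ingredients that make this work are already available: Lemma~\ref{lemDescription_indecomposable_submodules_image_morphism}, which realizes every strongly indecomposable submodule of $I_\tau$ as some $M_{w,\tau}$, and Lemma~\ref{lemChasles_distance_weights_Mwtau}, which controls the semi-distance $d(I_{\bullet.\tau},I_\tau)$ under inclusion of such submodules.

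Concretely, for each $j\in J$ I would use Lemma~\ref{lemDescription_indecomposable_submodules_image_morphism} to pick $w_j\in W^v$ with $M_j=M_{w_j,\tau}$, and set $n_j=d(I_{w_j.\tau},I_\tau)\in \N$ (this is a genuine non-negative integer by Proposition~\ref{propDefinition_tau_distance}~(\ref{itDistance_indeed_distance}), and it is independent of the choice of $w_j$: if $M_{w,\tau}=M_{w',\tau}$ then $I_{w.\tau}\simeq I_{w'.\tau}$ by Lemma~\ref{lemInjectivity_map_modules}, so $d(I_{w.\tau},I_{w'.\tau})=0$ by Proposition~\ref{propDefinition_tau_distance}~(2), and symmetry plus the triangle inequality force $d(I_{w.\tau},I_\tau)=d(I_{w'.\tau},I_\tau)$). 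The key monotonicity is then immediate: whenever $j\leq j'$ the family is increasing, so $M_{w_j,\tau}=M_j\subset M_{j'}=M_{w_{j'},\tau}$, and Lemma~\ref{lemChasles_distance_weights_Mwtau} applied with $w=w_j$, $w'=w_{j'}$ gives $n_j\geq n_{j'}$, with equality if and only if $M_j=M_{j'}$.

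To conclude, I would invoke the well-ordering of $\N$: the non-empty set $\{n_j\mid j\in J\}\subset \N$ has a least element, attained at some $j_0\in J$. For every $j\geq j_0$ we have $M_{j_0}\subset M_j$, hence $n_{j_0}\geq n_j$ by the monotonicity above; since $n_{j_0}$ is minimal we also have $n_j\geq n_{j_0}$, so $n_j=n_{j_0}$, and the equality case of Lemma~\ref{lemChasles_distance_weights_Mwtau} yields $M_j=M_{j_0}$. Thus $M_j=M_{j_0}$ for all $j\geq j_0$, i.e.\ the family is stationary. The only conceptual point, rather than an obstacle, is recognizing that $d(I_{\bullet.\tau},I_\tau)$ furnishes exactly the integer-valued ``codistance'' that is strictly decreasing on strict inclusions of strongly indecomposable submodules; once this is in hand the argument is a one-line application of well-ordering, and no further computation is required.
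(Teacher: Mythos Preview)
Your proof is correct and follows essentially the same approach as the paper: both arguments realize each $M_j$ as some $M_{w_j,\tau}$ via Lemma~\ref{lemDescription_indecomposable_submodules_image_morphism}, use Lemma~\ref{lemChasles_distance_weights_Mwtau} to see that the integer $d(I_{w_j.\tau},I_\tau)$ is (strictly) decreasing along (strict) inclusions, and conclude by well-ordering of $\N$. The paper phrases the minimization over the weights of the union $M=\sum_j M_j$ rather than over your set $\{n_j\}$, but this is only cosmetic bookkeeping; the substance is identical.
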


\begin{proof}
Let $M=\sum_{j\in J} M_j$. Let $n=\min\{d(I_{v.\tau},I_{\tau})|v.\tau\in \Wt(M)\}$ and $w.\tau\in \Wt(M)$ be  such that $d(I_{w.\tau},I_\tau)=n$. One has $\Wt(M)=\bigcup_{j\in J}\Wt(M_j)$ and thus there exists  $k\in J$ such that $w.\tau\in \Wt(M_k)$. By  Lemma~\ref{lemDescription_indecomposable_submodules_image_morphism}, $M_k=M_{w,\tau}$. Let $k'\in J$ be such that $k'\geq k$ and $w'\in W^v$ be such that $M_{k'}=M_{w',\tau}$, which exists by Lemma~\ref{lemDescription_indecomposable_submodules_image_morphism}. Then $M_{w,\tau}\subset M_{w',\tau}$ and by Lemma~\ref{lemChasles_distance_weights_Mwtau}, $M_{w,\tau}=M_{w',\tau}=M_k=M_{k'}$, which proves the lemma. 
\end{proof}

\begin{theorem}\label{thmKrull_Schmidt_theorem}
Let $\tau\in T_\FC$ be regular. Let $W^v(\tau)=W^v/\sim$ where $w\sim w'$ if and only if $I_{w.\tau}\simeq I_{w'.\tau}$, for $w,w'\in W^v$. Then:\begin{itemize}

\item[(i)] The map from $W^v(\tau)$ to the set of strongly indecomposable submodules of $I_\tau$, which maps each $[w]_\tau\in W^v(\tau)$ to $M_{w,\tau}=A_{w,1,\tau}(I_{w.\tau})$ is well defined and is a bijection.

\end{itemize}

Let  $M$ be a submodule of $I_\tau$ and $\mathrm{SI}(M)$ (resp. $\mathrm{MSI}(M)$) be the set of (resp. maximal) strongly indecomposable submodules of $I_\tau$. Then:
\begin{itemize}

\item[(ii)] One has $M=\sum_{N\in \mathrm{MSI}(M)}N$.

\item[(iii)] Suppose that $M=\sum_{N\in \mathcal{M}}N$, where $\mathcal{M}\subset \mathrm{SI}(M)$. Then $\mathcal{M}\supset \mathrm{MSI}(M)$.
\end{itemize}
\end{theorem}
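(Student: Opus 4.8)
The plan is to assemble the three parts from the structural lemmas already established, treating each point in turn. For point (i) the claim splits into three verifications. Each $M_{w,\tau}$ is strongly indecomposable by Lemma~\ref{lemIndecomposability_Mwtau}, so the map genuinely takes values in the set of strongly indecomposable submodules of $I_\tau$. Well-definedness and injectivity are simultaneously supplied by Lemma~\ref{lemInjectivity_map_modules}, which asserts that $I_{w.\tau}\simeq I_{w'.\tau}$ if and only if $M_{w,\tau}=M_{w',\tau}$: the forward implication shows that $[w]_\tau\mapsto M_{w,\tau}$ does not depend on the chosen representative, and the converse gives injectivity. Surjectivity is exactly Lemma~\ref{lemDescription_indecomposable_submodules_image_morphism}, which produces, for any strongly indecomposable $M\subset I_\tau$, an element $w\in W^v$ with $M=M_{w,\tau}$.

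The common preliminary for points (ii) and (iii) is to record that $M=\sum_{N\in\mathrm{SI}(M)}N$ for every submodule $M$ of $I_\tau$. First I would note that for each $w.\tau\in\Wt(M)$ one has $M_{w,\tau}\subset M$ by Lemma~\ref{lemCondition_inclusion_Mwtau}, and that $M_{w,\tau}$ contains the one-dimensional weight space $I_\tau(w.\tau)$: indeed Frobenius reciprocity (Lemma~\ref{lemFrobenius_reciprocity}) gives $A_{w,1,\tau}(\vb_{w.\tau})\neq 0$, so $M_{w,\tau}(w.\tau)=I_\tau(w.\tau)$ by Proposition~\ref{propIntertwining_nonzero}(2). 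Combining this with the weight-space decomposition $M=\bigoplus_{\tau'\in\Wt(M)}I_\tau(\tau')$ of Lemma~\ref{lemDecomposition_weight_subspaces_regular_case}(1) yields $M=\sum_{w.\tau\in\Wt(M)}M_{w,\tau}$, and since each summand is a strongly indecomposable submodule of $M$ this gives $M=\sum_{N\in\mathrm{SI}(M)}N$.

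For point (ii) the remaining task is to pass from $\mathrm{SI}(M)$ to $\mathrm{MSI}(M)$. Here the key input is Lemma~\ref{lemNoetherianity_indecomposable_submodules}, the ascending chain condition on strongly indecomposable submodules; consequently every nonempty family of strongly indecomposable submodules admits maximal elements, so each $N\in\mathrm{SI}(M)$ is contained in some $N'\in\mathrm{MSI}(M)$. Then $M=\sum_{N\in\mathrm{SI}(M)}N\subset\sum_{N'\in\mathrm{MSI}(M)}N'\subset M$, forcing equality. For point (iii), suppose $M=\sum_{N\in\mathcal{M}}N$ with $\mathcal{M}\subset\mathrm{SI}(M)$ and fix $N_0\in\mathrm{MSI}(M)$; writing $N_0=M_{w,\tau}$ by Lemma~\ref{lemDescription_indecomposable_submodules_image_morphism}, the equality $\Wt(M)=\bigcup_{N\in\mathcal{M}}\Wt(N)$ of Lemma~\ref{lemDecomposition_weight_subspaces_regular_case}(2) produces some $N\in\mathcal{M}$ with $w.\tau\in\Wt(N)$, whence $N_0=M_{w,\tau}\subset N$ by Lemma~\ref{lemCondition_inclusion_Mwtau}; maximality of $N_0$ then gives $N_0=N\in\mathcal{M}$, so $\mathrm{MSI}(M)\subset\mathcal{M}$.

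The bulk of the difficulty has already been absorbed into the preparatory lemmas, so the main delicate point in this assembly is the passage from the ascending chain condition of Lemma~\ref{lemNoetherianity_indecomposable_submodules} to the existence of a maximal strongly indecomposable submodule above a given one. This requires rephrasing the chain condition as a maximality principle (a use of Zorn's lemma, or equivalently the standard reformulation of the ascending chain condition), after which both the covering $M=\sum_{N\in\mathrm{MSI}(M)}N$ in (ii) and the minimality statement in (iii) follow formally.
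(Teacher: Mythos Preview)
Your proof is correct and follows essentially the same approach as the paper's own proof: all three parts are assembled from the same preparatory lemmas (Lemmas~\ref{lemIndecomposability_Mwtau}, \ref{lemInjectivity_map_modules}, \ref{lemDescription_indecomposable_submodules_image_morphism} for (i); Lemmas~\ref{lemCondition_inclusion_Mwtau}, \ref{lemDecomposition_weight_subspaces_regular_case} for (ii) and (iii)). If anything, you are more explicit than the paper in the proof of (ii): the paper's argument literally only establishes $M=\sum_{N\in\mathrm{SI}(M)}N$, whereas you spell out the passage to $\mathrm{MSI}(M)$ via the ascending chain condition of Lemma~\ref{lemNoetherianity_indecomposable_submodules}, which is the intended use of that lemma.
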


\begin{proof}
(i) is a consequence of  Lemma~\ref{lemIndecomposability_Mwtau},  Lemma~\ref{lemIndecomposability_Mwtau} and Lemma~\ref{lemInjectivity_map_modules}.

(ii) Let $w.\tau\in \Wt(M)$. Then by definition of $M_{w,\tau}$ and   Lemma~\ref{lemCondition_inclusion_Mwtau}, $M(w.\tau)\subset M_{w,\tau}\subset M$. Thus by Lemma~\ref{lemIndecomposability_Mwtau} and  Lemma~\ref{lemDecomposition_weight_subspaces_regular_case}, $M\supset \sum_{N\in \mathrm{SI}(M)}N\supset \sum_{w.\tau\in \Wt(M)}M(w.\tau)\supset M$.

(iii) Let $N\in \mathrm{MSI}(M)$. By Lemma~\ref{lemDescription_indecomposable_submodules_image_morphism}, there exists $w\in W^v$ such that $N=M_{w,\tau}$. Then $M_{w,\tau}\subset M$, thus by Lemma~\ref{lemDecomposition_weight_subspaces_regular_case} (ii), $w.\tau\in \Wt(M)=\bigcup_{N\in \mathcal{M}}\Wt(N)$. Let $N\in \mathcal{M}$ be such that $w.\tau\in \Wt(N)$. Then $M_{w,\tau}\subset N\subset M$ and thus $N=M_{w,\tau}$. Therefore $M_{w,\tau}\in \mathcal{M}$, which completes the proof of the theorem.
\end{proof}

\subsection{Weights of the submodules of $I_\tau$}\label{subWeights_submodules}

Let $\tau\in T_\FC$ be regular. We proved in  Lemma~\ref{lemDecomposition_weight_subspaces_regular_case} that a submodule of $I_\tau$ is completely determined by its weights. In this subsection, we give a method to determine the weights of the submodules $M_{w,\tau}$, for $w\in W^v$, from the graph  $\tilde{\GC}_\tau$.

\begin{lemma}\label{lemWeights_intersection_modules regular_case}
Let $M$ and $M'$ be submodules of $I_\tau$. Then $\Wt(M\cap M')=\Wt(M)\cap \Wt(M')$. 
\end{lemma}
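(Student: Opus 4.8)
The plan is to exploit the rigidity of submodules of $I_\tau$ provided by Lemma~\ref{lemDecomposition_weight_subspaces_regular_case}: for a regular $\tau$, every submodule $N\subset I_\tau$ satisfies $N=\bigoplus_{\tau'\in\Wt(N)}I_\tau(\tau')$, and in particular $N(\tau')=I_\tau(\tau')$ for each weight $\tau'$ of $N$. This says that a submodule is determined by its weight set and that each of its weight spaces is the \emph{full} weight space of $I_\tau$; this is precisely what makes intersections behave well. I expect the only genuine content to be the reverse inclusion, which is where this structural fact is needed.

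First I would dispose of the easy inclusion $\Wt(M\cap M')\subseteq \Wt(M)\cap\Wt(M')$. Since $M\cap M'$ is a submodule of both $M$ and $M'$, any $\tau'$ with $(M\cap M')(\tau')\neq\{0\}$ automatically satisfies $M(\tau')\neq\{0\}$ and $M'(\tau')\neq\{0\}$, so $\tau'\in\Wt(M)\cap\Wt(M')$. This direction uses nothing beyond monotonicity of $\Wt$ under inclusion and holds for arbitrary modules.

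For the reverse inclusion I would take $\tau'\in\Wt(M)\cap\Wt(M')$ and apply Lemma~\ref{lemDecomposition_weight_subspaces_regular_case}~(1) to each of $M$ and $M'$ separately, obtaining $M(\tau')=I_\tau(\tau')$ and $M'(\tau')=I_\tau(\tau')$. Since $\tau'$ is a weight, $I_\tau(\tau')\neq\{0\}$ (indeed it is one-dimensional by Proposition~\ref{propIntertwining_nonzero}~(2)). Now observe that for any submodules the equality $(M\cap M')(\tau')=M(\tau')\cap M'(\tau')$ holds at the level of weight spaces, because an element lies in $(M\cap M')(\tau')$ exactly when it lies in $M\cap M'$ and is a $\tau'$-weight vector. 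Therefore $(M\cap M')(\tau')=I_\tau(\tau')\cap I_\tau(\tau')=I_\tau(\tau')\neq\{0\}$, so $\tau'\in\Wt(M\cap M')$.

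The crux of the argument is the reverse inclusion, and its only subtlety is recognizing that the coincidence $M(\tau')=M'(\tau')=I_\tau(\tau')$ (rather than merely $M(\tau'),M'(\tau')\neq\{0\}$) is what forces their intersection to be nonzero; in a general module two distinct one-dimensional weight subspaces sharing the same character could intersect trivially, but here both equal the full line $I_\tau(\tau')$. Once this is in place the two inclusions combine to give $\Wt(M\cap M')=\Wt(M)\cap\Wt(M')$, completing the proof.
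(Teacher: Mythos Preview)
Your proof is correct and follows essentially the same approach as the paper. The paper invokes Proposition~\ref{propIntertwining_nonzero}~(2) directly to get $\dim I_\tau(w.\tau)=1$ and hence $M(w.\tau)=M'(w.\tau)=I_\tau(w.\tau)$, while you cite Lemma~\ref{lemDecomposition_weight_subspaces_regular_case}~(1), which packages that same one-dimensionality fact; the substance of the argument is identical.
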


\begin{proof}
One has $\Wt(M\cap M')\subset \Wt(M)\cap \Wt(M')$. Let $w.\tau\in \Wt(M)\cap \Wt(M')$. Then by Proposition~\ref{propIntertwining_nonzero}, $1\leq \dim M(w.\tau)\leq \dim I_\tau(w.\tau)=1$ and $1\leq \dim M'(w.\tau)\leq \dim I_\tau(w.\tau)=1$. Thus $M'(w.\tau)=M(w.\tau)=I_\tau(w.\tau)\subset M\cap M'$. Hence $w.\tau\in \Wt(M\cap M')$, which proves the lemma. 
\end{proof}

\begin{lemma}\label{lemRank_theorem_weights}
Let $w\in W^v$. Then $\Wt\big(\mathrm{Ker}(A_{w,1,\tau})\big)\sqcup \Wt\big(\mathrm{Im}(A_{w,1,\tau})\big)=W^v.\tau$. 
\end{lemma}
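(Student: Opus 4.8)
The plan is to reduce the statement to a weight-by-weight computation, exploiting that regularity forces every weight space of $I_{w.\tau}$ and of $I_\tau$ to be one-dimensional. First I would record the two inputs I need. Since $\tau$ is regular, so is $w.\tau$ (as $W_{w.\tau}=wW_\tau w^{-1}=\{1\}$), and hence by Proposition~\ref{propIntertwining_nonzero}~(2) both source and target decompose as $I_{w.\tau}=\bigoplus_{\tau'\in W^v.\tau} I_{w.\tau}(\tau')$ and $I_\tau=\bigoplus_{\tau'\in W^v.\tau}I_\tau(\tau')$ with all weight spaces one-dimensional; in particular $\Wt(I_{w.\tau})=\Wt(I_\tau)=W^v.\tau$. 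Moreover, by Lemma~\ref{lemDecomposition_weight_subspaces_regular_case}, any submodule $N$ of $I_{w.\tau}$ or of $I_\tau$ satisfies $N(\tau')=N\cap I_{\bullet}(\tau')$, which is either $0$ or the full line $I_{\bullet}(\tau')$, so that $\tau'\in \Wt(N)$ if and only if $N(\tau')\neq 0$.

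Next I would use that $A:=A_{w,1,\tau}$ is an $\AC_\FC$-module morphism, hence in particular an $\FC[Y]$-module morphism, so it carries $I_{w.\tau}(\tau')$ into $I_\tau(\tau')$ for every $\tau'\in W^v.\tau$. Writing an arbitrary $x\in I_{w.\tau}$ in its weight components and using directness of the weight decomposition of $I_\tau$, one checks that $\mathrm{Ker}(A)(\tau')=\mathrm{Ker}(A)\cap I_{w.\tau}(\tau')$ is exactly the kernel of the restriction $A|_{I_{w.\tau}(\tau')}$, and that $\mathrm{Im}(A)(\tau')=A\big(I_{w.\tau}(\tau')\big)$ is exactly its image (this last identity is the computation already carried out in Lemma~\ref{lemPreimage_weight_subspaces}).

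With this in place the conclusion is immediate from the rank--nullity theorem applied to the restriction $A|_{I_{w.\tau}(\tau')}:I_{w.\tau}(\tau')\to I_\tau(\tau')$, a linear map between one-dimensional $\FC$-vector spaces: for each $\tau'\in W^v.\tau$ one has \[\dim \mathrm{Ker}(A)(\tau')+\dim \mathrm{Im}(A)(\tau')=\dim I_{w.\tau}(\tau')=1,\] so exactly one of the two summands equals $1$ and the other equals $0$. Translating via the equivalence $\tau'\in\Wt(N)\Leftrightarrow N(\tau')\neq 0$ from the first paragraph, this says that each $\tau'\in W^v.\tau$ lies in exactly one of $\Wt(\mathrm{Ker}(A))$ and $\Wt(\mathrm{Im}(A))$. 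Since both of these weight sets are contained in $W^v.\tau$, we obtain the disjoint union $\Wt\big(\mathrm{Ker}(A_{w,1,\tau})\big)\sqcup \Wt\big(\mathrm{Im}(A_{w,1,\tau})\big)=W^v.\tau$.

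There is no genuine obstacle here beyond bookkeeping: the only points requiring care are that the weight spaces really are one-dimensional and that $A$ respects the weight grading and induces the identities for $\mathrm{Ker}(A)(\tau')$ and $\mathrm{Im}(A)(\tau')$. The whole strategy is that regularity \emph{linearizes} the problem, so that ordinary rank--nullity, applied separately on each one-dimensional weight line, does all the work.
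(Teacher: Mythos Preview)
Your proof is correct and follows essentially the same approach as the paper's: both arguments use that $A_{w,1,\tau}$ respects the weight decomposition and that each weight space is one-dimensional, so on each weight line the map is either zero (weight in the kernel) or an isomorphism (weight in the image). The paper carries this out by an explicit case analysis with basis vectors $\xi_v,\xi_v'$, while you package the same dichotomy as rank--nullity on one-dimensional spaces; the content is identical.
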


\begin{proof}
Using Proposition~\ref{propIntertwining_nonzero}, we write $I_\tau=\bigoplus_{v\in W^v} \FC \xi_v$ (resp. $I_{w.\tau}=\bigoplus_{v\in W^v} \FC \xi_v'$) where for all $v\in W^v$, $\xi_v\in I_\tau(v.\tau)\setminus\{0\}$ (resp. $\xi'_v\in I_{w.\tau}(v.\tau)\setminus\{0\}$). Let $v.\tau\in W^v.\tau$. Suppose $v.\tau\in \Wt\big(\mathrm{Im}(A_{w,1,\tau})\big)$. Then $\xi_v\in \mathrm{Im}(A_{w,1,\tau})$ and by Lemma~\ref{lemPreimage_weight_subspaces}, there exists $x\in I_{w.\tau}(v.\tau)$ such that $A_{w,1,\tau}(x)=\xi_v$. Then $x\in \FC^* \xi_v'$ and thus $\xi_v'\notin \mathrm{Ker}(A_{w,1,\tau})$. Suppose now $v.\tau\notin \Wt\big(\mathrm{Im}(A_{w,1,\tau})\big)$. As $A_{w,1,\tau}(\xi'_v)\in \FC \xi_v$ we necessarily have $A_{w,1,\tau}(\xi_v')=0$ and thus $v.\tau\in \mathrm{Ker}(A_{w,1,\tau})$, which proves the lemma.
\end{proof}


\begin{proposition}\label{propWeights_intertwiners}
\begin{enumerate}
\item\label{itWeight_simple_noniso} Let $w\in W^v$ and $s\in \SCC$. We assume that $I_{w.\tau}$ is not isomorphic to $I_{sw.\tau}$. Let $f=A_{w,sw.\tau}:I_{w.\tau}\rightarrow I_{sw.\tau}$. Then: \[\Wt \big(\mathrm{Im}(f)\big)=\{uw.\tau|u\in W^v|us>u\}\text{ and }\Wt\big(\mathrm{Ker}(f)\big)=\{uw.\tau|u\in W^v|us<u\}.\]

\item Let $w_1,\ldots,w_{n+1}\in W^v$. For $i\in \llbracket 1,n-1\rrbracket$, set $f_i=A_{w_i,w_{i+1},\tau}$.  Then: \[\Wt\big(\mathrm{Im}(f_n\circ \ldots \circ f_1)\big)=\bigcap_{i=1}^n \Wt\big(\mathrm{Im}(f_i)\big)\text{ and }\Wt\big(\mathrm{Ker}(f_n\circ \ldots \circ f_1)\big)=\bigcup_{i=1}^n \Wt\big(\mathrm{Ker}(f_i)\big).\]
\end{enumerate}

\end{proposition}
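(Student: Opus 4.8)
The plan is to reduce everything to the behaviour of intertwiners on individual weight lines. Since $\tau$ is regular, Proposition~\ref{propIntertwining_nonzero}~(2) shows that each $I_{v.\tau}$ is the direct sum of its one-dimensional weight spaces $I_{v.\tau}(\mu)$, $\mu\in W^v.\tau$, and any $\AC_\FC$-morphism is $\FC[Y]$-linear, hence weight-preserving: it carries $I_{v.\tau}(\mu)$ into $I_{v'.\tau}(\mu)$. So on every line $\mu$ such a morphism is either zero or an isomorphism, and by the argument of Lemma~\ref{lemPreimage_weight_subspaces} one has $\mathrm{Im}(g)(\mu)=g\big(I_{v.\tau}(\mu)\big)$ for $g\in\Hom(I_{v.\tau},I_{v'.\tau})$; in particular $\mu\in\Wt(\mathrm{Im}(g))$ iff $g$ is nonzero on the line $\mu$.

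Part (2) then follows immediately. Writing $c_i(\mu)\in\FC$ for the scalar by which $f_i$ acts from $I_{w_i.\tau}(\mu)$ to $I_{w_{i+1}.\tau}(\mu)$, the composite $f_n\circ\cdots\circ f_1$ acts on the line $\mu$ by $\prod_{i=1}^n c_i(\mu)$. Hence $\mu\in\Wt(\mathrm{Im}(f_n\circ\cdots\circ f_1))$ iff $\prod_i c_i(\mu)\neq0$ iff $c_i(\mu)\neq0$ for all $i$ iff $\mu\in\bigcap_i\Wt(\mathrm{Im}(f_i))$, which is the first formula. For the kernel I would invoke the argument of Lemma~\ref{lemRank_theorem_weights}, applied to each $f_i$ and to the composite, giving $\Wt(\mathrm{Ker})=W^v.\tau\setminus\Wt(\mathrm{Im})$; taking complements turns the intersection into the union $\bigcup_i\Wt(\mathrm{Ker}(f_i))$.

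The substance is part (1). Put $\tau'=w.\tau$, so that $f=A_{w,sw,\tau}$ is, up to scalar, the unique intertwiner $I_{\tau'}\to I_{s.\tau'}$; by Frobenius reciprocity (Lemma~\ref{lemFrobenius_reciprocity}) I may take $f=\Upsilon_x$ with $x=F_s(s.\tau')\vb_{s.\tau'}\in I_{s.\tau'}(\tau')$. Writing a weight of $I_{\tau'}$ as $uw.\tau=u.\tau'$ and using $\xi_u:=F_u(\tau')\vb_{\tau'}\in I_{\tau'}(u.\tau')$ (defined by Lemma~\ref{lemReeder 4.3}~(\ref{itF_w_well_defined_at_regular})), the reduction above gives $f(\xi_u)=F_u(\tau')\,x$, a scalar multiple of the weight-$u.\tau'$ vector of $I_{s.\tau'}$; the claim is that this scalar is nonzero iff $us>u$, equivalently iff $u\alpha_s^\vee\in\Phi^\vee_+$ (Lemma~\ref{lemKumar_1.3.14}). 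I would first settle the rank-one case $u\in\{1,s\}$ from the explicit shape $F_s=\sigma_sH_s+(\zeta_s-\sigma_s^2)$, the relation $H_s^2=(\sigma_s-\sigma_s^{-1})H_s+1$, and the identity $\zeta_s+{}^s\zeta_s=1+\sigma_s^2$: a direct computation gives $f(\vb_{\tau'})=x\neq0$ and $f(\xi_s)=(\zeta_s\,{}^s\zeta_s)(\tau')\,\vb_{s.\tau'}$, which vanishes precisely because the hypothesis $I_{\tau'}\not\simeq I_{s.\tau'}$ means $(\zeta_s\,{}^s\zeta_s)(\tau')=0$ (\cite[Lemma 5.4]{hebert2018principal}).

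For general $u$ I would induct on $\ell(u)$, peeling off a simple reflection and using the commutation relation Lemma~\ref{lemReeder 4.3}~(\ref{itCommutation_relation}) together with the fact that $\ev_{\tau'}$ is a bimodule morphism to express $f(\xi_u)$ through $f(\xi_{u'})$ for a shorter $u'$. The step either raises the weight cleanly (non-vanishing is preserved, and the length bookkeeping is checked to match $us>u$) or it ``folds back'', in which case a factor of the form $(\zeta_t\,{}^t\zeta_t)$ evaluated at $u'.\tau'$ appears; this last case is the main obstacle. The key observation resolving it is that the reflection $t$ occurring at a fold is forced to be conjugate to $s$ by the relevant minimal coset element, namely $u'^{-1}\alpha_t^\vee=\alpha_s^\vee$, whence $t=u'su'^{-1}$ and $\alpha_t^\vee=u'\alpha_s^\vee$ (Lemma~\ref{lemKumar1.3.11}); since conjugate simple reflections carry equal parameters $\sigma,\sigma'$, one gets $\zeta_t\,{}^t\zeta_t={}^{u'}(\zeta_s\,{}^s\zeta_s)$ and therefore $(\zeta_t\,{}^t\zeta_t)(u'.\tau')=(\zeta_s\,{}^s\zeta_s)(\tau')=0$ by hypothesis. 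Thus every potentially obstructing factor that would spoil the clean combinatorial answer actually coincides with the global non-isomorphism factor and vanishes, so the surviving lines are exactly those with $us>u$; the two displayed weight sets partition $W^v.\tau$ by Lemma~\ref{lemRank_theorem_weights}, completing part (1). The delicate points to write out carefully are precisely the evaluation-point bookkeeping in the inductive step and this conjugacy reduction.
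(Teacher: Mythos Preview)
Your argument for part~(2) is correct and coincides with the paper's: both reduce to the one-dimensional weight lines and track scalars along the composition, then take complements via Lemma~\ref{lemRank_theorem_weights}.

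For part~(1) your approach diverges from the paper's and carries a genuine gap. You propose to induct on $\ell(u)$, peeling off a simple reflection $t$ and relating $f(\xi_u)$ to $f(\xi_{u'})$; at a ``fold'' you assert a factor $(\zeta_t\,{}^t\zeta_t)(u'.\tau')$ appears with $u'^{-1}\alpha_t^\vee=\alpha_s^\vee$, so that it coincides with $(\zeta_s\,{}^s\zeta_s)(\tau')=0$. This conjugacy is unjustified: nothing in the recursion you describe forces the peeled reflection $t$ to be conjugate to $s$ (in the infinite dihedral group $s$ and $t$ are never conjugate, yet plenty of $u$ satisfy $us<u$). More basically, $f(\xi_u)$ and $f(\xi_{u'})$ lie in \emph{different} one-dimensional weight spaces of $I_{s.\tau'}$, so ``expressing one through the other'' has no evident meaning; the ``evaluation-point bookkeeping'' you flag as delicate is exactly where the argument is incomplete. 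A direct $F_w$-computation \emph{can} be made to work, but it is not inductive in $\ell(u)$: one shows in a single stroke that $f(\xi_u)$ equals $(F\cdot F_s)(s.\tau')\vb_{s.\tau'}$ for an appropriate $F$, and the only possible fold is $F_s^2=\zeta_s\,{}^s\zeta_s$, occurring precisely when a reduced expression of $u^{-1}$ begins with $s$, i.e.\ when $us<u$. No general $t$ or conjugacy reduction is needed, and your description suggests you have not located this clean formulation.

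The paper's route is shorter and more conceptual. For $us>u$ it observes that $su^{-1}$ has a reduced expression beginning with $s$, so the corresponding path in $\GC_\tau$ from $I_{u.\tau}$ to $I_{s.\tau}$ passes through $I_\tau$ and is, by Proposition~\ref{propExistence_intertwining_paths}, an intertwining path; hence $f\circ A_{u,1,\tau}\neq0$ and $u.\tau\in\Wt(\mathrm{Im}(f))$. For $us<u$ it applies the same statement to the reverse map $f'=A_{s,1,\tau}\colon I_{s.\tau}\to I_\tau$ (with $s.\tau$ in the role of $\tau$ and $us$ in the role of $u$), obtaining $u.\tau\in\Wt(\mathrm{Im}(f'))\subset\Wt(\ker(f))$ since $f\circ f'=0$ by Lemma~\ref{lemComposition_intertwinning_morphisms_nonisomorphism}. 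No explicit $F_w$-calculation is required.
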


\begin{proof}
Maybe considering $\tau'=w.\tau$, we may assume that $w=1$. Let $f'=A_{s,1,\tau}:I_{s.\tau}\rightarrow I_\tau$. Let $u\in W^v$ be such that $us>u$. Let $\tilde{\tau}=u.\tau$. Let $u^{-1}=s_1\ldots s_k$ be a reduced writing of $u^{-1}$, with $s_1,\ldots,s_k\in \SCC$. Then $su^{-1}=ss_1\ldots s_k$ is a reduced writing.
 Let $\Gamma=(I_{\tilde{\tau}},I_{s_k.\tilde{\tau}}, \ldots,I_{s_1\ldots s_k.\tilde{\tau}},I_{ss_1\ldots s_k.\tilde{\tau}})$. Then by Proposition~\ref{propExistence_intertwining_paths}, $\Gamma$ is an intertwining path from $I_{\tilde{\tau}}=I_{u.\tau}$ to $I_{s.\tau}$ and $\Gamma$ contains $I_\tau$. Thus \[0\neq A_\Gamma=A_{1,s,u^{-1}.\tilde{\tau}}\circ A_{(I_{\tilde{\tau}},\ldots,I_{s_1\ldots s_k.\tilde{\tau}})}=f\circ A_{(I_{\tilde{\tau}},\ldots,I_{s_1\ldots s_k.\tilde{\tau}})}\in \FC^* f\circ A_{u,1,\tau} .\]
 
 As $I_{u.\tau}=\AC_\FC.\vb_{u.\tau}$ we deduce that $\{0\}\neq f\big(A_{u,1,\tau}( \vb_{u.\tau})\big)\in I_{s.\tau}(u.\tau)$.

In particular $u.\tau\in \Wt \big(\mathrm{Im}(f)\big)$.  By Lemma~\ref{lemRank_theorem_weights} we deduce that $\Wt\big(\ker(f)\big)\subset \{u.\tau|u\in W^v, us<u\}$.
 
 Let now $u\in W^v$ be such that $us<u$. Let $u'=us$ and $\tau'=s.\tau$. Then by the result we just proved applied to $\tau'$, we have  $u'.\tau\in \mathrm{Im}(f')$. Moreover by Lemma~\ref{lemComposition_intertwinning_morphisms_nonisomorphism}, $f\circ f'=0$. Thus $u'.\tau'=u.\tau\in \Wt\big(\mathrm{Ker}(f)\big)$, which proves the reverse inclusion and proves~(\ref{itWeight_simple_noniso}). 
 
Let $i\in \llbracket 1,n+1\rrbracket$.  Using Proposition~\ref{propIntertwining_nonzero}, we write $I_{w_i.\tau}=\bigoplus_{v\in W^v}\FC \xi_v^i$, where $\xi_v^i\in I_{w_i.\tau}(v.\tau)$, for $v\in W^v$. Let $v.\tau\in \bigcup_{i=1}^n \Wt\big(\mathrm{Ker} (f_i)\big)$ and let $i\in \llbracket 1,n\rrbracket $ be such that $v.\tau\in \Wt\big(\mathrm{Ker}(f_i)\big)$.  Then $f_{i-1}\circ\ldots f_1(\xi_v^1)\in I_{w_i.\tau}(v.\tau)$ and thus $f_i\circ  f_{i-1}\circ\ldots f_1(\xi_v^1)=0$. Hence $f_n\circ\ldots\circ f_1(\xi_v^1)=0$ and thus $v.\tau\in \Wt\big(\mathrm{Ker}(f_n\circ\ldots\circ f_1)\big)$. 

Let $v.\tau\in W^v.\tau \setminus \bigcup_{i=1}^n \Wt\big(\mathrm{Ker} (f_i)\big)$. Let $i\in \llbracket 1,n\rrbracket$, 
$f_i\circ \ldots\circ f_1(\xi_v^1)\in \FC \xi_v^i$. Suppose that
 $f_{i-1}\circ\ldots \circ f_1 (\xi_v^1)\in \FC^* \xi_v^{i}$. Then by assumption, $f_i\circ \ldots \circ f_1(\xi_v^1)\neq 0$ and thus $v.\tau\notin \Wt\big(\mathrm{Ker}(f_n\circ\ldots\circ f_1)\big)$. Consequently $\Wt\big(\mathrm{Ker}(f_n\circ \ldots \circ f_1)\big)=\bigcup_{i=1}^n \Wt\big(\mathrm{Ker}(f_i)\big)$ and we conclude with Lemma~\ref{lemRank_theorem_weights}.
\end{proof}

\begin{remark}\label{rkRight-angled_case}
Suppose that for all $s,t\in \SCC$ such that $s\neq t$, the order of $st$ is infinite (this is the case if and only if for all $s,t\in \SCC$ such that $s\neq t$, the coefficients of the Kac-Moody matrix satisfy $a_{s,t}a_{t,s}\geq 4$,  by \cite[1.3.21 Proposition]{kumar2002kac}). Then for all strongly  indecomposable submodules $M,M'$ of $I_\tau$, one has   $M\cap M'=\{0\}$,   $M\subset M'$ or $M'\subset M$. Therefore the strongly indecomposable submodules of $I_\tau$ are exactly the indecomposable submodules of $I_\tau$ and one can replace the sums by direct sums in Theorem~\ref{thmKrull_Schmidt_theorem}. 

Indeed, let $w\in W^v$. Let $w=s_k\ldots s_1$ be the (unique) reduced writing of $w$, where $s_1,\ldots,s_k\in \SCC$. For $i\in \llbracket 1,k\rrbracket$, set $f_i=A_{s_i\ldots s_1,s_{i-1}\ldots s_1,\tau}$. Then $A_{w,1,\tau}=f_1\circ \ldots\circ f_k$. If for all $i\in \llbracket 1,k\rrbracket$, $f_i$ is an isomorphism, then $M_{w,\tau}=I_\tau$. Otherwise, let $n$ be the maximum of the $i\in \llbracket 1,k\rrbracket$ such that $f_i$ is not an isomorphism. Then $\Wt(M_{w,\tau})$ is the set of  $v.\tau\in W^v.\tau$ such that the reduced writing of $v$ ends up with $s_n\ldots s_1$. Indeed, let $w'=s_ns_{n-1}\ldots s_1$. One has $M_{w,\tau}=f_k\circ\ldots\circ f_1(I_{w.\tau})=f_n\circ\ldots \circ f_1 (I_{w'.\tau})$. By Proposition~\ref{propWeights_intertwiners}, $\Wt(M_{w,\tau})=\bigcap_{i=1}^n \Wt\big(\mathrm{Im}(f_i)\big)$. Moreover by Proposition~\ref{propWeights_intertwiners}, if $i\in \llbracket 1,n\rrbracket$ is such that $f_i$ is not an isomorphism, then $\Wt\big(\mathrm{Im}(f_i)\big)$ is the set of $v.\tau$ such that the reduced writing of $v\in W^v$ ends up with $s_i\ldots s_1$.

 Note that it is not true in general, see Example~\ref{exSL3}.
\end{remark}

\subsection{Examples}\label{subExamples_regular_case}

\subsubsection{The trivial and the Steinberg representations}

Assume for simplicity that $\FC=\C$ and that there exists $\sigma\in \R_{>1}$ such that $\sigma_s=\sigma_s'=\sigma$, for all $s\in \SCC$. Let $\epsilon\in \{-1,1\}$. Let $\tau_\epsilon\in T_\C$ be such that $\tau_\epsilon(\alpha_s^\vee)=\sigma^{2\epsilon}$, for all $s\in \SCC$ (such a $\tau_\epsilon$ exists by \cite[Lemma 6.2]{hebert2018principal}). Then $\tau_\epsilon$ is regular, as proved in the proof of \cite[Lemma A.1]{hebert2018principal}. By \cite[Lemma A.1]{hebert2018principal}, $I_{\tau_\epsilon}$ admits a unique maximal proper submodule $M_\epsilon$. Moreover, $M_\epsilon$ has codimension $1$. Then $I_{\tau_\epsilon}/M_{\epsilon}$ is the \textbf{trivial representation} if $\epsilon=1$ and the \textbf{Steinberg representation} if $\epsilon=-1$.

\begin{example}\label{exSL3}
Suppose that the Kac-Moody matrix $A$ is $\begin{pmatrix}
2 & -1\\ -1 & 2
\end{pmatrix}$ (this is the case for example for $\mathbf{G}=\mathrm{SL}_3$). Write $\SCC=\{s,t\}$. Then $s.\alpha_t^\vee=t.\alpha_s^\vee=\alpha_s^\vee+\alpha_t^\vee$. Thus we have the following graph:
\[\xymatrix{
    I_{\tau_\epsilon}\ \ \ar[r]^{\not\simeq} \ar[rd]^{\not \simeq}   & I_{s.\tau_\epsilon\ \ }\ar[l]\ar[r]^{\simeq} & I_{ts.\tau_\epsilon}\ \ \ar[l]\ar[rd]^{\not\simeq} \\
   \ & I_{t.\tau_\epsilon}\ \  \ar[lu]\ar[r]^{\simeq} & I_{st.\tau_\epsilon}\ \ \ar[l] \ar[r]^{\not\simeq} & \ar[l]\ar[lu]I_{tst.\tau_{\epsilon}}=I_{sts.\tau_\epsilon} &\  
  }\]
  
  By Proposition~\ref{propWeights_intertwiners}, one has $\Wt(M_{s,\tau_\epsilon})=\{s.\tau_\epsilon,ts.\tau_{\epsilon}, sts.\tau_{\epsilon}=tst.\tau_\epsilon\}$, $\Wt(M_{t,\tau_\epsilon})=\{t.\tau_\epsilon,st.\tau_\epsilon,sts.\tau_\epsilon=tst.\tau_\epsilon\}$.
 Let $\Gamma=(I_{sts.\tau_\epsilon},I_{ts.\tau_\epsilon},I_{s.\tau_\epsilon},I_{\tau_\epsilon})$ and $\Gamma'= (I_{tst.\tau_\epsilon},I_{st.\tau_\epsilon},I_{t.\tau_\epsilon},I_{\tau_\epsilon})$. Then $\Gamma$ and $\Gamma'$ are intertwining maps and thus $M_{sts,\tau_\epsilon}$ and thus $M_{sts,\tau_\epsilon}\subset M_{s,\tau_\epsilon}\cap M_{t.\tau_\epsilon}$. By Lemma~\ref{lemWeights_intersection_modules regular_case}, we deduce that $\{0\}\subsetneq \Wt(M_{sts,\tau_\epsilon})\subset \Wt(M_{s,\tau_\epsilon})\cap \Wt(M_{t,\tau_\epsilon})$. Consequently $\Wt(M_{sts,\tau_\epsilon})=\{sts.\tau_\epsilon\}$. The proper submodules of $I_{\tau_\epsilon}$ are $M_{s,\tau_\epsilon}$, $M_{t,\tau_\epsilon}$, $M_{sts.\tau_\epsilon}$ and $M_{s,\tau_\epsilon}+M_{t,\tau_\epsilon}$. Note that $M_{s,\tau_\epsilon}+M_{t,\tau_\epsilon}$ is indecomposable, but not strongly indecomposable.

\end{example}

\begin{example}\label{exSteinberg_representation_right_angled}
We assume that the order of $st$ is infinite for all $s,t\in \SCC$ such that $s\neq t$.  Then every element of $W^v$ admits a unique reduced writing. Let $\epsilon\in \{-1,1\}$.

\begin{enumerate}
\item\label{itProper_indecomposable_steinberg} The proper strongly indecomposable submodules of $I_{\tau_\epsilon}$ are exactly the $M_{s,\tau_{\epsilon}}=A_{s,1,\tau_\epsilon}(I_{s.\tau_\epsilon})$, for $s\in \SCC$. If $s\in \SCC$, then $\Wt(M_{s,\tau_\epsilon})$ is the set of $w.\tau_\epsilon$ such that the reduced writing of $w\in W^v$ ends up with  an $s$.

\item\label{itProper_submodules_steinberg}  The proper submodules of $I_{\tau_\epsilon}$ are exactly the $\bigoplus_{s\in \SCC'}M_{s,\tau_\epsilon}$ such that $\SCC'\subset \SCC$.
\end{enumerate}
\end{example}

\begin{proof}
As $\Phi^\vee\subset \bigoplus_{s\in \SCC} \N \alpha_s^\vee\cup -\bigoplus_{s\in \SCC} \N \alpha_s^\vee$, one has \[\{\alpha^\vee\in \Phi^\vee|\tau_\epsilon(\alpha^\vee)\in \{\sigma^2,\sigma^{-2}\}\}=\{\pm\alpha_s^\vee|s\in \SCC\}.\] Let $w\in W^v$ and $s\in \SCC$ be such that $I_{sw.\tau_\epsilon}$ is not isomorphic to $I_{w.\tau_\epsilon}$.
 Then by \cite[Lemma 5.4]{hebert2018principal}, $\tau_{\epsilon}(w.\alpha_s^\vee)\in \{\sigma^2,\sigma^{-2}\}$. Thus $w.\alpha_s^\vee=\eta \alpha_t^\vee$, where $\eta\in\{-1,1\}$ and $t\in \SCC$. If $\eta=1$, set $w'=w$ and if $\eta=-1$, set $w'=tw$. By \cite[1.3.11 Theorem (b5)]{kumar2002kac}, $w's=tw'$. Suppose $w'\neq 1$. Let $w'=s_1\ldots s_k$ be the reduced writing of $w'$, with $k\geq 1$ and $s_1,\ldots,s_k\in \SCC$. Then $sw'=w't=ss_1\ldots s_k=s_1\ldots s_k t$. If $\ell(w's)=\ell(w')+1$, then these writings are reduced and thus $s=s_1$ and $t=s_k$ by the uniqueness of the writing. This is impossible and thus $\ell(w's)=\ell(w')-1$. But then $s=s_1$ and $t=s_k$ and hence $w's=s_2\ldots s_k=s_1\ldots s_{k-1}$ is a reduced writing. Thus $s_1=s_2$: a contradiction. Therefore $w'=1$ and $s=t$. Thus $w\in \{1,s\}$. Moreover, the graph $\GC_{\tau_\epsilon}$ is a homogeneous tree with valency $|\SCC|+1$.  Therefore the  graph of isomorphisms $\tilde{\GC}_{\tau_\epsilon}$ of $\tau_\epsilon$ has exactly $|\SCC|+1$ connected components: the component containing $I_{\tau_\epsilon}$ and the components containing  $I_{s.\tau_{\epsilon}}$, for $s\in \SCC$. By Lemma~\ref{lemDescription_indecomposable_submodules_image_morphism} we deduce that the proper strongly indecomposable submodules of $I_{\tau_{\epsilon}}$ are exactly the $M_{s,\tau_\epsilon}$, for $s\in  \SCC$. Using Proposition~\ref{propWeights_intertwiners} we deduce (\ref{itProper_indecomposable_steinberg}), which implies (\ref{itProper_submodules_steinberg}), by Remark~\ref{rkRight-angled_case}. 
\end{proof}

\subsubsection{Some representations of $\widehat{\mathrm{SL}}_2$}

Suppose that $\A$ is associated with the affine Kac-Moody matrix $A=\begin{pmatrix}
2 & -2\\ -2 & 2
\end{pmatrix}$. Then $A$ is the affine Kac-Moody matrix associated with the Cartan matrix $(2)$. Let $\mathring{X}=\Z\alpha=\Z$ and $\mathring{Y}=\Z\alpha^\vee=\Z$ for some symbols $\alpha,\alpha^\vee$. Let $X=\mathring{X}\oplus \Z \delta\oplus \Z \delta'$ and $Y=\mathring{Y}\oplus \Z c\oplus \Z d$, where $\delta,\delta',c,d$ are symbols, $\delta(d)=1$, $\delta(c)=\delta(\alpha^\vee)=0$ and $\alpha(c)=\alpha(d)=0$. By \cite[13.1]{kumar2002kac}, we can take $\alpha_0=\delta-\alpha$ and $\alpha_0^\vee=c-\alpha^\vee$ and then $\Phi=\{\pm\alpha+k\delta|k\in\Z\}$, $\Phi^\vee=\{\pm\alpha^\vee+k c|k\in \Z\}$ and $\delta$ is invariant under the action of $W^v$.

Let $a\in C^v_f$ be such that $a-d\in \R\alpha^\vee\oplus \R c$. For $w\in W^v$, $w.a\in w.C^v_f$. Write $w.a=d+x_w\alpha^\vee+y_wc$. Let $w\neq w'\in W^v$. Then $w.a\in w.C^v_f$ and $w'.a\in w'.C^v_f$. Moreover, $w.C^v_f=w.C^v+\R c\neq w'.C^v_f=w'.C^v_f+\R c$ and   thus $(x_w)_{w\in W^v}$ is injective.

 Let $\tau\in T_\C$ be such that $\tau(\alpha^\vee)=\sigma^2$ and $\tau(c)=1$. Then $\big(w^{-1}.\tau(a)\big)_{w\in W^v}$ is injective and thus $\tau$ is regular. Moreover, $\tau(\beta^\vee)=\tau(\alpha^\vee)=\sigma^2$ for all $\beta^\vee\in \Phi^\vee$. Thus by \cite[Lemma 5.4]{hebert2018principal} for all $w\neq w'\in W^v$, $I_{w.\tau}$ and $I_{w'.\tau}$ are not isomorphic. Write $\SCC=\{s,t\}$. Then $W^v$ is the infinite dihedral group. The graph of $I_\tau$ is thus: \[\xymatrix{ \ldots\ar[r]^{\not\simeq} & I_{st.\tau}\ar[l]\ar[r]^{\not\simeq} & I_{t.\tau} \ar[l]\ar[r]^{\not\simeq}& I_{\tau}\ar[l]\ar[r]^{\not\simeq} & I_{s.\tau }\ar[l]\ar[r]^{\not\simeq}& I_{ts.\tau} \ar[l]\ar[r]^{\not\simeq} & \ldots\ar[l]  }.\] Therefore $I_\tau$ admits not irreducible submodule. The family $(M_{(st)^n,\tau})_{n\in \N}$ is a strictly decreasing sequence of submodules and thus $I_\tau$ is not artinian. By Proposition~\ref{propWeights_irreducible_components}, for every $w\in W^v$, $M_{w,\tau}^{\mathrm{irr}}$ is one dimensional and thus $w.\tau$ extends uniquely to a one-dimensional representation of $\AC_\C$.

 \section{Study of $I_\tau$ for $\tau\in \mathcal{U}_\C$}\label{secTau_in_UC}

We now assume that $\FC=\C$ and  that $|\sigma_s|>1, |\sigma_s'|>1$ for all $s\in \SCC$. The ring $\C[Y]$ is a  unique factorization domain. For $\alpha^\vee$, write $\zeta_{\alpha^\vee}=\frac{\zeta_{\alpha^\vee}^{\mathrm{num}}}{\zeta_{\alpha^\vee}^{\mathrm{den}}}$\index{$\zeta_{\alpha^\vee}^{\mathrm{den}}$, $\zeta_{\alpha^\vee}^{\mathrm{num}}$} where $\zeta_{\alpha^\vee}^{\mathrm{num}},\zeta_{\alpha^\vee}^{\mathrm{den}}\in \FC[Y]$ are pairwise coprime. For example if $\alpha^\vee\in \Phi^\vee$ is such that $\sigma_{\alpha^\vee}=\sigma_{\alpha^\vee}'$  we can take $\zeta_{\alpha^\vee}^{\mathrm{den}}=1-Z^{-\alpha^\vee}$ and in any case we will choose $\zeta_{\alpha^\vee}^{\mathrm{den}}$ among $\{1-Z^{-\alpha^\vee},1+Z^{-\alpha^\vee},1-Z^{-2\alpha^\vee}\}$. 

Let $\UC_\C$\index{$\UC_\C$} be the set of $\tau\in T_\C$ such that for all $\alpha^\vee\in \Phi^\vee$,  $\tau(\zeta_{\alpha^\vee}^{\mathrm{num}})\neq 0$. When $\sigma_s=\sigma_s'=\sqrt{q}$ for all $s\in \SCC$, then $\UC_\C=\{\tau\in T_\C|\tau(\alpha^\vee)\neq q,\ \forall \alpha^\vee\in \Phi^\vee\}$.  
  By \cite[Lemma 5.4]{hebert2018principal}, if $\tau\in \UC_\C$, then $I_{w.\tau}\simeq I_\tau$ for all $w\in W^v$. 
 
 \medskip
 
Let $\tau\in \UC_\C$. The aim of this section is to study the submodules of $I_\tau$ (see Theorem~\ref{thmBijection_modules_ideals}) and then to deduce a description of the irreducible representations of $\AC_\C$ admitting $\tau$ as a weight (see Theorem~\ref{thmIrreducible_representations_tau_UC}). 

\medskip

The proof of Theorem~\ref{thmBijection_modules_ideals} is based on the study of the weights of the submodules  of $I_\tau$. Let $M$ be a submodule of $I_\tau$. As $I_\tau\simeq I_{w.\tau}$ for every $w\in W^v$, it suffices to study $M(\tau)$. In order to study it, we first study $M(\tau,\mathrm{gen})$ and $I_\tau(\tau,\mathrm{gen})$. 

To describe $I_\tau(\tau,\mathrm{gen})$, we begin by proving a decomposition $W_\tau=R_\tau\ltimes \Wta$, where $\Wta$ is some reflection subgroup of $W_\tau$ and $R_\tau$ is the generalization of the $R$-group (see Lemma~\ref{lemDecomposition_Wtau_Rgroup}). The group $\Wta$ is a Coxeter group for some set of simple reflections $\SCC_\tau$. We proved in \cite[Lemma 6.21]{hebert2018principal} that if $r=r_{\beta^\vee}\in \SCC_\tau$,  then $K_r:=F_{r_\beta^\vee}-\zeta_{\beta^\vee}$ is  an element of $\AC(T_\C)_\tau$. Using products of $K_r$, for $r\in \SCC_\tau$, we describe the   ``$\Wta$-part'' $I_\tau(\tau,\mathrm{gen},\Wta)$ of $I_\tau(\tau,\mathrm{gen})$. We prove that if $w_R\in R_\tau$, then $F_{w_R}\in \AC(T_\C)_\tau$ (see Lemma~\ref{lemFwr_well_defined_at_tau}), which enables us  to define an element $\psi_{w_R}\in \End(I_\tau)$.  Combining the $\psi_{w_R}$, $w_R\in R_\tau$ and $I_\tau(\tau,\mathrm{gen},\Wta)$, we deduce a description of $I_\tau(\tau,\mathrm{gen})$ (see Proposition~\ref{propDescription_generalized_weight_spaces}).  

In subsection~\ref{subRgroup}  we define, for $\tau\in T_\C$ a group $R_\tau$ such that $W_\tau$ decomposes as $W_\tau=R_\tau \ltimes \Wta$. We then associate to each $w_R\in R_\tau$ an element $\psi_{w_R}\in \End(I_\tau)$.

In subsection~\ref{subI_tau(tau,gen)} we study  $I_\tau(\tau,\mathrm{gen})$, for $\tau\in \UC_\C$. In the case where the Kac-Moody matrix has size $2$, we deduce a description of $I_\tau(\tau)$, using the $\psi_{w_R}$, $w_R\in R_\tau$.  We conjecture that this description remains valid in the general case (see Conjecture~\ref{conjItau}). We then restrict our study to the $\tau\in \UC_\C$ satisfying this conjecture.

In subsection~\ref{subGeneralized_weight_spaces_submodules}, we study the weight spaces and generalized weight spaces of the submodules and quotients of $I_\tau$.

In subsection~\ref{subStudy_End(Itau)}, we study $\End(I_\tau)$ and  describe it as  the group algebra of $R_\tau$ under some additional assumptions (for example when  $\AC_\C$ is associated to a split Kac-Moody group), using the $\psi_{w_R}$, $w_R\in R_\tau$.

In subsection~\ref{subSubmodules_Itau}, we establish a bijection between the right ideals of $\End(I_\tau)$ and the submodules of $I_\tau$.

In subsection~\ref{subIrreducible_representations} we describe the irreducible representations admitting $\tau$ as a weight.

\subsection{The $R$-group}\label{subRgroup}

\subsubsection{Definition of $R_\tau$ and decomposition of $W_\tau$}

In this subsection, we introduce a group generalizing the group called ``the Knapp-Stein $R$-group'' in \cite{keys1982decomposition}.

Recall that $\RCC=\{wsw^{-1}|w\in W^v,s\in \SCC\}$ is the set of reflections of $W^v$. For $\tau\in T_\C$, set $W_\tau=\{w\in W^v|\ w.\tau=\tau\}$\index{$W_\tau$}, $\Phi^\vee_{(\tau)}=\{\alpha^\vee\in \Phi^\vee_+| \zeta_{\alpha^\vee}^{\mathrm{den}}(\tau)=0\}$\index{$\Phi^\vee_{(\tau)}$},  $\RCC_{(\tau)}=\{r=r_{\alpha^\vee}\in \RCC|\alpha^\vee\in \Phi^\vee_{(\tau)}\}$\index{$\RCC_{(\tau)}$} and \[\Wta=\langle \RCC_{(\tau)}\rangle=\langle \{r=r_{\alpha^\vee}\in \RCC| \zeta_{\alpha^\vee}^{\mathrm{den}}(\tau)=0\}\rangle\subset W^v.\]\index{$\Wta$}

 By \cite[Remark 5.1]{hebert2018principal}, $\Wta\subset W_\tau$. When $\alpha_s(Y)=\Z$ for all $s\in \SCC$, then $\Wta=\langle W_\tau\cap \RCC\rangle$.

  By \cite[6.4.1]{hebert2018principal}, $(\Wta,\SCC_\tau)$ is a Coxeter system, where $\SCC_\tau\subset \RCC$ is the set introduced in \cite[Definition 6.11]{hebert2018principal}. We denote by $\ell_\tau$ the corresponding length and by $<_\tau$ the corresponding Bruhat order. By \cite[Lemma 6.12]{hebert2018principal}, for all $w,w'\in W^v$ such that $w\leq_\tau w'$, one has $w\leq w'$.

\begin{definition}
The \textbf{$R$-group of $\tau$} is the subgroup $R_\tau=\{w\in W_\tau|w.\Phi_{(\tau),+}^\vee=\Phi_{(\tau),+}^\vee\}$\index{$R_\tau$} of $W_\tau$. \end{definition}

\begin{lemma}\label{lemConjugation_Decomposition_Wtau} 
Let $w\in W^v$ and $\tau\in T_\C$. Then $w.W_{\tau}.w^{-1}=W_{w.\tau}$, $w.\Phi^\vee_{(\tau)}=\Phi^\vee_{(w.\tau)}$ and $w.W_{(\tau)}w^{-1}=W_{(w.\tau)}$. In particular, $\Wta$ is normal in $W_\tau$ and $W_\tau$ stabilizes $\Phi^\vee_{(\tau)}$.
\end{lemma}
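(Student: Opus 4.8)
The statement asserts three conjugation-equivariance facts: $w.W_\tau.w^{-1}=W_{w.\tau}$, $w.\Phi^\vee_{(\tau)}=\Phi^\vee_{(w.\tau)}$, and $w.W_{(\tau)}w^{-1}=W_{(w.\tau)}$, with the ``in particular'' clause following by taking $w\in W_\tau$. Let me think about what each piece needs.

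The first identity is pure stabilizer bookkeeping. For $v\in W^v$, I have $v\in W_{w.\tau}$ iff $v.(w.\tau)=w.\tau$ iff $(w^{-1}vw).\tau=\tau$ iff $w^{-1}vw\in W_\tau$ iff $v\in w.W_\tau.w^{-1}$. Since the action of $W^v$ on $T_\C$ is the one induced from the action on $Y$ (or on $T_\C$ via $(w.\tau)(\lambda)=\tau(w^{-1}.\lambda)$), this is immediate and needs no input beyond the group action being an action.

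The second identity is the real content. I want $w.\Phi^\vee_{(\tau)}=\Phi^\vee_{(w.\tau)}$, where $\Phi^\vee_{(\tau)}=\{\alpha^\vee\in\Phi^\vee_+\mid \zeta^{\mathrm{den}}_{\alpha^\vee}(\tau)=0\}$. The key point is to track how $\zeta^{\mathrm{den}}_{\alpha^\vee}$ transforms under $w$. Recall $\zeta_{\alpha^\vee}=(\zeta_s)^{w_0}$ where $\alpha^\vee=w_0.\alpha_s^\vee$, and the denominators were chosen among $\{1-Z^{-\alpha^\vee},1+Z^{-\alpha^\vee},1-Z^{-2\alpha^\vee}\}$. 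For $\beta^\vee=w.\alpha^\vee$, these candidate denominators satisfy ${^w}(1-Z^{-\alpha^\vee})=1-Z^{-\beta^\vee}$ and similarly for the other two, since $w$ acts on $\C[Y]$ by $Z^\lambda\mapsto Z^{w.\lambda}$. Hence $\zeta^{\mathrm{den}}_{w.\alpha^\vee}={^w}(\zeta^{\mathrm{den}}_{\alpha^\vee})$ up to the coprimality normalization, and evaluating at $w.\tau$ gives $\zeta^{\mathrm{den}}_{w.\alpha^\vee}(w.\tau)={^w}(\zeta^{\mathrm{den}}_{\alpha^\vee})(w.\tau)=\zeta^{\mathrm{den}}_{\alpha^\vee}(\tau)$, because $(w.\tau)({^w}\theta)=\tau(\theta)$ for $\theta\in\C(Y)$. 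I also need $w$ to preserve positivity appropriately; but since $\Phi^\vee_{(\tau)}\subset\Phi^\vee_+$ and I can always replace $w.\alpha^\vee$ by its positive representative (with the same denominator up to sign of the exponent, which does not change the vanishing locus), the set equality follows. The main obstacle I anticipate is precisely this bookkeeping: confirming that the coprime-normalized numerator/denominator decomposition is compatible with the $W^v$-action and that the choice of denominator from the prescribed list is $W^v$-equivariant, so that the vanishing condition transports cleanly. I would handle the sign subtlety ($w.\alpha^\vee$ possibly negative) by noting $\zeta^{\mathrm{den}}$ depends on $\alpha^\vee$ only through the coroot up to the symmetry $1-Z^{-\alpha^\vee}$ versus its relation under $\alpha^\vee\mapsto-\alpha^\vee$, which affects neither membership in $\Phi^\vee_+$ after renormalization nor the vanishing at $\tau$.

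The third identity then follows formally: $W_{(\tau)}=\langle\{r_{\alpha^\vee}\mid\alpha^\vee\in\Phi^\vee_{(\tau)}\}\rangle$, and by Lemma~\ref{lemKumar1.3.11} conjugation satisfies $w r_{\alpha^\vee}w^{-1}=r_{w.\alpha^\vee}$. Applying the second identity, $w$ sends the generating set $\{r_{\alpha^\vee}\mid\alpha^\vee\in\Phi^\vee_{(\tau)}\}$ bijectively onto $\{r_{\beta^\vee}\mid\beta^\vee\in\Phi^\vee_{(w.\tau)}\}$, hence conjugates the generated subgroups, giving $w.W_{(\tau)}w^{-1}=W_{(w.\tau)}$. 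Finally, the ``in particular'' clause: taking $w\in W_\tau$ gives $w.\tau=\tau$, so $w.\Phi^\vee_{(\tau)}=\Phi^\vee_{(\tau)}$ (i.e.\ $W_\tau$ stabilizes $\Phi^\vee_{(\tau)}$) and $w.W_{(\tau)}w^{-1}=W_{(\tau)}$ (i.e.\ $\Wta$ is normal in $W_\tau$). I expect the writeup to be short, with essentially all the work concentrated in the transformation law for $\zeta^{\mathrm{den}}_{\alpha^\vee}$ under $w$.
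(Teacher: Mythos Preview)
Your approach is essentially identical to the paper's: the paper dismisses the first equality as clear, proves the second via the transformation law ${^{w^{-1}}}\zeta_{\beta^\vee}^{\mathrm{den}}=\zeta_{w^{-1}.\beta^\vee}^{\mathrm{den}}$ together with $\zeta^{\mathrm{den}}_{\beta^\vee}(w.\tau)=({^{w^{-1}}}\zeta_{\beta^\vee}^{\mathrm{den}})(\tau)$, and derives the third from the second via the conjugation formula for reflections. The positivity bookkeeping you worry about is glossed over in the paper (which tacitly treats $\Phi^\vee_{(\tau)}$ as symmetric under $\alpha^\vee\mapsto -\alpha^\vee$, consistent with its later use of $\Phi^\vee_{(\tau),\pm}$); your observation that the vanishing of $\zeta^{\mathrm{den}}_{\alpha^\vee}$ at $\tau$ is invariant under this sign change is exactly what justifies this.
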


\begin{proof}
The first equality is clear. Let $\beta^\vee\in \Phi^\vee_{(w.\tau)}$. Then \[\zeta^{\mathrm{den}}_{\beta^\vee}(w.\tau)=0=({^{w^{-1}}}\zeta_{\beta^\vee}^{\mathrm{den}})(\tau)=(\zeta_{w^{-1}.\beta^\vee}^{\mathrm{den}})(\tau).\] Thus $w^{-1}.\beta^\vee\in \Phi^\vee_{(\tau)}$ and hence $\Phi^\vee_{(w.\tau)}\subset w.\Phi^\vee_{(\tau)}$. Similarly $\Phi^\vee_{(\tau)}=\Phi^\vee_{(w^{-1}.w.\tau)}\subset w^{-1}.\Phi^\vee_{(w.\tau)}$ and so $\Phi^\vee_{(w.\tau)}= w.\Phi^\vee_{(\tau)}$. We deduce that $\RCC_{(w.\tau)}=w.\RCC_{(\tau)}.w^{-1}$. Consequently $W_{(w.\tau)}=w.\Wta.w^{-1}$.
\end{proof}

\begin{lemma}\label{lemDecomposition_Wtau_Rgroup}(see \cite[I § 3 Theorem 1]{keys1982decomposition})

One has the following decomposition: $W_\tau=R_\tau\ltimes W_{(\tau)}$. 

\end{lemma}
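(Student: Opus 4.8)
The plan is to verify the two defining features of an internal semidirect product. Since $\Wta$ is already known to be normal in $W_\tau$ (Lemma~\ref{lemConjugation_Decomposition_Wtau}), it suffices to prove that $R_\tau\cap\Wta=\{1\}$ and that $W_\tau=R_\tau\Wta$. The central combinatorial device will be, for $w\in W_\tau$, the inversion set $N_{(\tau)}(w)=\{\alpha^\vee\in\Phi^\vee_{(\tau)}\mid w.\alpha^\vee\in-\Phi^\vee_{(\tau)}\}$. This set is finite: since $\Phi^\vee_{(\tau)}\subset\Phi^\vee_+$ one has $N_{(\tau)}(w)\subset N_{\Phi^\vee}(w)$, which has cardinality $\ell(w)$ by Lemma~\ref{lemKumar_1.3.14}. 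I would first record two elementary facts. First, $\Phi^\vee_{(\tau)}$ is precisely the set of positive roots of the Coxeter system $(\Wta,\SCC_\tau)$: the inclusion $\supseteq$ is immediate from $\RCC_{(\tau)}\subset\Wta$, and for $\subseteq$ one writes any reflection of $\Wta$ as $v r_{\alpha^\vee}v^{-1}=r_{v.\alpha^\vee}$ with $v\in\Wta\subset W_\tau$ and uses that $w\in W_\tau$ preserves $\Phi^\vee_{(\tau)}\cup(-\Phi^\vee_{(\tau)})$ (Lemma~\ref{lemConjugation_Decomposition_Wtau}) together with the symmetry $\zeta^{\mathrm{den}}_{\alpha^\vee}(\tau)=0\Leftrightarrow\zeta^{\mathrm{den}}_{-\alpha^\vee}(\tau)=0$. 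Second, applying this same preservation property to $w$ and to $w^{-1}$ shows that $N_{(\tau)}(w)=\emptyset$ is equivalent to $w.\Phi^\vee_{(\tau)}=\Phi^\vee_{(\tau)}$, so that $R_\tau=\{w\in W_\tau\mid N_{(\tau)}(w)=\emptyset\}$.

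The key technical step is a length-type formula for $N_{(\tau)}$ valid for \emph{all} of $W_\tau$, not merely for $\Wta$. For $s\in\SCC_\tau$ with simple coroot $\alpha_s^\vee\in\Phi^\vee_{(\tau)}$, the reflection $s$ permutes $\Phi^\vee_{(\tau)}\setminus\{\alpha_s^\vee\}$ and sends $\alpha_s^\vee$ to $-\alpha_s^\vee$ (a standard property of simple reflections in the Coxeter system $(\Wta,\SCC_\tau)$). Substituting $\beta^\vee\mapsto s.\beta^\vee$ in the definition of $N_{(\tau)}(ws)$ and bookkeeping the single coroot $\alpha_s^\vee$ separately, I would obtain, for every $w\in W_\tau$,
\[
\bigl|N_{(\tau)}(ws)\bigr|=\bigl|N_{(\tau)}(w)\bigr|+1\ \text{ if }\ w.\alpha_s^\vee\in\Phi^\vee_{(\tau)},\qquad \bigl|N_{(\tau)}(ws)\bigr|=\bigl|N_{(\tau)}(w)\bigr|-1\ \text{ otherwise.}
\]
The point is that the computation only uses the action of $s$ on $\Phi^\vee_{(\tau)}$, so it does not require $w\in\Wta$.

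For the trivial intersection, take $v\in R_\tau\cap\Wta$. Iterating the formula for $v\in\Wta$ and using the standard criterion $\ell_\tau(vs)>\ell_\tau(v)\Leftrightarrow v.\alpha_s^\vee\in\Phi^\vee_{(\tau)}$ gives $|N_{(\tau)}(v)|=\ell_\tau(v)$; since $v\in R_\tau$ means $N_{(\tau)}(v)=\emptyset$, we get $\ell_\tau(v)=0$, hence $v=1$. For surjectivity, fix $w\in W_\tau$ and minimise $|N_{(\tau)}(wv)|$ over $v\in\Wta$ (a minimum exists, these being nonnegative integers). At a minimiser $v_0$ the formula forces $u.\alpha_s^\vee\in\Phi^\vee_{(\tau)}$ for every $s\in\SCC_\tau$, where $u=wv_0$. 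It then remains to upgrade this ``dominance'' to $u\in R_\tau$: the set $u.\Phi^\vee_{(\tau)}$ is the positive system of the automorphic image of $(\Wta,\SCC_\tau)$ under $u$, with simple coroots $\{u.\alpha_s^\vee\mid s\in\SCC_\tau\}\subset\Phi^\vee_{(\tau)}$; since a positive system is the set of roots lying in the nonnegative cone of its simple coroots and $\Phi^\vee_{(\tau)}$ is closed under such positive combinations, we get $u.\Phi^\vee_{(\tau)}\subseteq\Phi^\vee_{(\tau)}$, hence equality, i.e.\ $u\in R_\tau$. Then $w=u v_0^{-1}\in R_\tau\Wta$, and combining with normality of $\Wta$ and the trivial intersection yields $W_\tau=R_\tau\ltimes\Wta$.

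I expect the two genuinely load-bearing points to be the cocycle formula for arbitrary $w\in W_\tau$ (the finite-group arguments of Keys~\cite{keys1982decomposition} used simple transitivity on chambers, which must here be replaced by this explicit length computation inside the reflection subgroup) and the ``dominant implies $R_\tau$'' step, which requires knowing that $u$ carries the canonical simple system of $(\Wta,\SCC_\tau)$ onto a set of coroots generating the same reflection subgroup and that positivity of those images propagates to all of $\Phi^\vee_{(\tau)}$. Both rest on the already-established fact that $(\Wta,\SCC_\tau)$ is a Coxeter system with positive root set $\Phi^\vee_{(\tau)}$; the remaining ingredients are standard reflection-subgroup theory.
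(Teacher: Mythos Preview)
Your argument is correct, but the paper's route to the factorisation $W_\tau=R_\tau\Wta$ is shorter and avoids your most delicate step. The paper does not minimise $|N_{(\tau)}(wv)|$ over $v\in\Wta$; instead it inducts on the ambient length $\ell(w)$ in $W^v$. If $w\notin R_\tau$, choose any $\alpha^\vee\in\Phi^\vee_{(\tau),+}$ with $w.\alpha^\vee<0$; since $\alpha^\vee\in N_{\Phi^\vee}(w)$, Lemma~\ref{lemKumar_1.3.14} gives an index $j$ in a reduced expression $w=s_1\cdots s_k$ with $\alpha^\vee=s_k\cdots s_{j+1}.\alpha_{s_j}^\vee$, so $w=w'r_{\alpha^\vee}$ with $\ell(w')=\ell(w)-1$ and $r_{\alpha^\vee}\in\Wta$, and one concludes by induction. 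This uses only the exchange property in $W^v$ and nothing about the root system of $\Wta$ beyond the containment $r_{\alpha^\vee}\in\Wta$.

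By contrast, your surjectivity argument hinges on the ``dominant implies $R_\tau$'' step, which needs the fact that every $\beta^\vee\in\Phi^\vee_{(\tau)}$ lies in the nonnegative cone of $\{\alpha_s^\vee:s\in\SCC_\tau\}$. That is true --- it is Dyer's theorem on reflection subgroups, and the construction of $\SCC_\tau$ in \cite[6.4.1]{hebert2018principal} is set up precisely so that this holds --- but it is a deeper input than what the paper invokes. Your approach has the conceptual merit of being intrinsic to $\Wta$ (everything is phrased via $|N_{(\tau)}|$ and $\SCC_\tau$), whereas the paper exploits the embedding into $W^v$ to get a more elementary proof. For the trivial intersection $R_\tau\cap\Wta=\{1\}$ the two arguments are essentially the same: the paper also exhibits, for nontrivial $w\in\Wta$, a coroot $\alpha_{r_k}^\vee\in\Phi^\vee_{(\tau)}$ sent to a negative coroot by $w$.
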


\begin{proof}
Let $w\in \Wta$. Write $w=r_1\ldots r_k$, with $k=\ell_\tau(w)$ and $r_i\in \SCC_\tau$ for all $i\in \llbracket 1,k\rrbracket$. Suppose $k\geq 1$. Let $w'=r_1\ldots r_{k-1}$. Then by \cite[Lemma 6.12]{hebert2018principal}, $w'<w$. One has $\ell(w')=\ell(wr_k)<\ell(w)$ and thus by \cite[1.3.13 Lemma]{kumar2002kac}, $w.\alpha_{r_k}^\vee\in \Phi^\vee_-$. Therefore, $w\notin R_\tau$ and we deduce that $R_\tau\cap \Wta=\{1\}$.

We now prove that $W_\tau=R_\tau.\Wta$. Let $n\in \N$. We assume that $\{w\in W_\tau|\ell(w)\leq n\}\subset R_\tau.\Wta$. Let $w\in W_\tau$ be such that $\ell(w)\leq n+1$.   Let us prove that $w\in R_\tau.\Wta$. If $w\in R_\tau$, there is nothing to prove. Suppose that $w \notin R_\tau$. Then there exists $\alpha^\vee\in \Phi^\vee_{(\tau),+}$ such that $w.\alpha^\vee\in \Phi^\vee_{(\tau),-}$. Let $w=s_{1}\ldots s_{k}$ be a reduced expression of $w$, where $k=\ell(w)$ and $s_{1},\ldots ,s_{k}\in \SCC$. Then by \cite[1.3.14 Lemma]{kumar2002kac}, there exists $j\in  \llbracket 1,k\rrbracket$ such that $\alpha^\vee=s_k\ldots s_{j+1}.\alpha_{s_j^\vee}$. Let $w'=s_1\ldots \hat{s_j}\ldots s_k$. Then $w=w'r_{\alpha^\vee}$. As $\alpha^\vee\in \Phi^\vee_{(\tau)}$, $r_{\alpha^\vee}\in \Wta$. As $w'\in \{w\in W_\tau |\ell(w)\leq n\}$,  $w'\in R_\tau.\Wta$ and thus $w\in R_\tau.\Wta$ , which concludes the proof of the lemma.
\end{proof}

\subsubsection{Bruhat order}

We now study how the Bruhat order behave when we multiply an element of $\Wta$ by an element of $R_\tau$. We will use it to prove that some family of $I_\tau(\tau,\mathrm{gen})$ is free and thus to describe $I_\tau(\tau,\mathrm{gen})$ (see Lemma~\ref{lemPropertiesKF} and Proposition~\ref{propDescription_generalized_weight_spaces}).
 
\begin{lemma}\label{lemBruhat_order_product_reflection_arbitrary_element}
Let $w\in W^v$ and $r\in \RCC$. Then either $wr>w$ or $wr<w$.
\end{lemma}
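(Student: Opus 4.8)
The plan is to reduce the claim to the single assertion that $\ell(wr)\neq\ell(w)$, and then to invoke the standard fact that two elements of a Coxeter group differing by a reflection are Bruhat-comparable. Write $r=r_{\alpha^\vee}$ with $\alpha^\vee\in\Phi^\vee_+$, so that $w^{-1}(wr)=r\in\RCC$. Once we know $\ell(wr)\neq\ell(w)$, exactly one of $\ell(wr)>\ell(w)$ or $\ell(wr)<\ell(w)$ holds; in the first case $w<wr$ and in the second $wr<w$, by the characterization of the Bruhat order through reflection relations (cf. \cite[Definition 2.1.1]{bjorner2005combinatorics}, equivalently the subword property \cite[Corollary 2.2.3]{bjorner2005combinatorics}, which shows the longer of $w,wr$ dominates the shorter). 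The two cases are symmetric, since $(wr)r=w$: replacing $w$ by $wr$ interchanges them, so it is enough to treat $\ell(wr)>\ell(w)$, where $w<wr$ is then immediate.

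First I would establish the length inequality $\ell(wr)\neq\ell(w)$ by a parity argument. Consider the map $\varepsilon\colon W^v\to\{\pm 1\}$, $w\mapsto(-1)^{\ell(w)}$. This is a group homomorphism: for $s\in\SCC$ and $v\in W^v$ one has $\ell(sv)=\ell(v)\pm 1$, hence $\varepsilon(sv)=-\varepsilon(v)=\varepsilon(s)\varepsilon(v)$, and $\SCC$ generates $W^v$. Being a homomorphism into an abelian group, $\varepsilon$ is constant on conjugacy classes; since by definition every $r\in\RCC$ is conjugate to a simple reflection $s\in\SCC$, we get $\varepsilon(r)=\varepsilon(s)=-1$. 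Therefore $\varepsilon(wr)=\varepsilon(w)\varepsilon(r)=-\varepsilon(w)$, so $\ell(wr)$ and $\ell(w)$ have opposite parities and in particular $\ell(wr)\neq\ell(w)$. Alternatively, one reaches the same conclusion from the root-theoretic criterion $\ell(wr_{\alpha^\vee})>\ell(w)\Leftrightarrow w.\alpha^\vee\in\Phi^\vee_+$ together with $w.\alpha^\vee\in\Phi^\vee=\Phi^\vee_+\sqcup\Phi^\vee_-$; this variant has the advantage of pinning down which alternative occurs, and the sign of $\ell(wr)-\ell(w)$ can there be read off from Lemma~\ref{lemKumar_1.3.14}.

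Combining the two steps proves the lemma. There is no serious obstacle: the statement is a basic structural fact about Coxeter groups, and the only point needing care is the logical reduction. The essential computation, that $\ell(wr)-\ell(w)$ is odd, is immediate from the sign homomorphism; what must be invoked correctly afterward is that unequal lengths force comparability precisely because $w$ and $wr$ are related by the single reflection $r$, so no incomparable configuration can arise.
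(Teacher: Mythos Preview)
Your argument is correct. The paper's proof takes the root-theoretic route you mention as an alternative: it invokes \cite[1.3.13 Lemma]{kumar2002kac} to get $wr>w\Leftrightarrow w.\alpha_r^\vee\in\Phi^\vee_+$, and then, when $w.\alpha_r^\vee\in\Phi^\vee_-$, observes that $wr.\alpha_r^\vee=-w.\alpha_r^\vee\in\Phi^\vee_+$ to conclude $wr<w$. Your primary approach is different: you use the sign homomorphism to force $\ell(wr)\neq\ell(w)$ and then appeal directly to the definition of the Bruhat order (two elements differing by a single reflection are comparable, with the longer one on top). Both arguments are short and standard; yours is perhaps a touch more self-contained since it avoids importing the root-positivity criterion, while the paper's version has the advantage of telling you immediately \emph{which} inequality holds in terms of the sign of $w.\alpha_r^\vee$, information that gets used elsewhere in the paper (e.g.\ in the proof of Lemma~\ref{lemBruhat_order_in_W_tau}).
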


\begin{proof}
By \cite[1.3.13 Lemma]{kumar2002kac}, one has $wr>w$ if and only if $w.\alpha_r^\vee >0$. Suppose $w.\alpha_r^\vee<0$. Then $wr.\alpha_r^\vee=-w.\alpha_r^\vee>0$ and thus $wr<wr.r=w$, which proves the lemma.
\end{proof}

For $w\in W_\tau$, we set $N_{\Phi^\vee_{(\tau)}}(w)=N_{\Phi^\vee}(w)\cap \Phi^\vee_{(\tau)}$. By Lemma~\ref{lemConjugation_Decomposition_Wtau}, $N_{\Phi^\vee_{(\tau)}}(w)=\{\alpha^\vee\in \Phi^\vee_{(\tau),+}|w.\alpha^\vee\in \Phi^\vee_{(\tau),-}\}$.

\begin{lemma}\label{lemBruhat_order_in_W_tau}

 Let $w_R\in R_\tau$ and $v,w\in \Wta$ be such that $v\leq_\tau w$. Then $v w_R \leq w w_R$ and $w_Rv\leq w_Rw$.

\end{lemma}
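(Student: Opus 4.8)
The plan is to reduce to the case where $v\lessdot_\tau w$ is a covering relation for the Bruhat order of the Coxeter system $(\Wta,\SCC_\tau)$, and then to transport the inequality across multiplication by $w_R$ by conjugating the relevant reflection. Since $v\leq_\tau w$, standard Coxeter theory (see \cite{bjorner2005combinatorics}) provides a saturated chain $v=u_0\lessdot_\tau u_1\lessdot_\tau\cdots\lessdot_\tau u_m=w$, where each step satisfies $u_{i+1}=u_i\rho_i$ for a reflection $\rho_i$ of $\Wta$ (equivalently $\rho_i\in\RCC_{(\tau)}$, these being the $\Wta$-conjugates of $\SCC_\tau$) with $\ell_\tau(u_{i+1})=\ell_\tau(u_i)+1$. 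By transitivity of the Bruhat order of $W^v$ it suffices to prove both inequalities when $v\lessdot_\tau w$; write $w=v\rho$ with $\rho=r_{\beta^\vee}\in\RCC_{(\tau)}$, $\beta^\vee\in\Phi^\vee_{(\tau),+}$.

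For the first inequality, set $\rho'=w_R^{-1}\rho w_R$. As $w_R\in R_\tau$ stabilises $\Phi^\vee_{(\tau),+}$, so does $w_R^{-1}$, hence $w_R^{-1}.\beta^\vee\in\Phi^\vee_{(\tau),+}$; by Lemma~\ref{lemKumar1.3.11} this gives $\rho'=r_{w_R^{-1}.\beta^\vee}\in\RCC_{(\tau)}$ with $\alpha_{\rho'}^\vee=w_R^{-1}.\beta^\vee$. Now $ww_R=(vw_R)\rho'$, so by Lemma~\ref{lemBruhat_order_product_reflection_arbitrary_element} the elements $vw_R$ and $ww_R$ are Bruhat-comparable, and by \cite[1.3.13 Lemma]{kumar2002kac} one has $vw_R<ww_R$ if and only if $(vw_R).\alpha_{\rho'}^\vee\in\Phi^\vee_+$. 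A direct computation gives $(vw_R).\alpha_{\rho'}^\vee=v.\beta^\vee=v.\alpha_\rho^\vee$. Because $(\Wta,\SCC_\tau)$ is a Coxeter system whose coroot system is $\Phi^\vee_{(\tau)}$ with positive part $\Phi^\vee_{(\tau),+}$, the relation $\ell_\tau(v\rho)=\ell_\tau(v)+1$ forces $v.\alpha_\rho^\vee\in\Phi^\vee_{(\tau),+}$. Moreover the sign of an element of $\Phi^\vee_{(\tau)}$ is the same whether read in $\Phi^\vee_{(\tau)}$ or in $\Phi^\vee$, since $\Phi^\vee_{(\tau),+}=\Phi^\vee_{(\tau)}\cap\Phi^\vee_+$ and $v\in\Wta$ stabilises $\Phi^\vee_{(\tau)}$ by Lemma~\ref{lemConjugation_Decomposition_Wtau}; thus $v.\alpha_\rho^\vee\in\Phi^\vee_+$ and $vw_R<ww_R$.

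The second inequality is entirely analogous, working on the left: writing $w=\rho'' v$ with $\rho''=r_{\gamma^\vee}\in\RCC_{(\tau)}$ and setting $\tilde\rho=w_R\rho''w_R^{-1}\in\RCC_{(\tau)}$ (so $\alpha_{\tilde\rho}^\vee=w_R.\gamma^\vee$), one gets $w_Rw=\tilde\rho(w_Rv)$ and $(w_Rv)^{-1}.\alpha_{\tilde\rho}^\vee=v^{-1}.\gamma^\vee\in\Phi^\vee_+$. The left-handed versions of Lemma~\ref{lemBruhat_order_product_reflection_arbitrary_element} and of \cite[1.3.13 Lemma]{kumar2002kac}, obtained from the right-handed statements by passing to inverses (the map $u\mapsto u^{-1}$ preserves both length and Bruhat order), then yield $w_Rv<w_Rw$. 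Chaining the strict inequalities along the covering chain $u_0,\ldots,u_m$ gives $vw_R\leq ww_R$ and $w_Rv\leq w_Rw$.

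The main obstacle is the interplay between the two length functions: the hypothesis only controls $\ell_\tau$, so the argument hinges on translating an $\ell_\tau$-covering relation into a positivity statement about $v.\alpha_\rho^\vee$ inside the full coroot system $\Phi^\vee$. This is precisely where the defining property of $R_\tau$ is indispensable: the stability of $\Phi^\vee_{(\tau),+}$ under $w_R$ guarantees simultaneously that conjugation by $w_R$ keeps the reflection inside $\RCC_{(\tau)}$ and that the positivity of $v.\alpha_\rho^\vee$, read off from the subsystem, survives after multiplication by $w_R$.
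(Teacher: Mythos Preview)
Your proof is correct and follows essentially the same strategy as the paper: reduce to a single step $v\lessdot_\tau v\rho$ with $\rho\in\RCC_{(\tau)}$, then use that $w_R$ (and $w_R^{-1}$) preserves $\Phi^\vee_{(\tau),+}$ together with \cite[1.3.13 Lemma]{kumar2002kac} to conclude that the relevant coroot stays positive. The only cosmetic differences are that the paper phrases the key positivity via inversion sets $N_{\Phi^\vee_{(\tau)}}$ rather than by explicitly conjugating the reflection, and it obtains the second inequality from the first by passing to inverses instead of repeating the argument.
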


\begin{proof}

Let $w'\in \Wta$ and $r\in \RCC_{(\tau)}$ be such that $w'r>_\tau w'$. Then by Lemma~\ref{lemConjugation_Decomposition_Wtau} and by definition of $R_\tau$: \[N_{\Phi^\vee_{(\tau)}}(w_Rw'r)=N_{\Phi^\vee}(w_Rw'r)\cap \Phi^\vee_{(\tau)}=N_{\Phi^\vee_{(\tau)}}(w'r)\supsetneq N_{\Phi^\vee_{(\tau)}} (w_Rw')= N_{\Phi^\vee_{(\tau)}} (w').\] Therefore $w_Rw'r \not < w_Rw'$ and by Lemma~\ref{lemBruhat_order_product_reflection_arbitrary_element},  $w_Rw'r >w_Rw'$.

By definition of the Bruhat order (see \cite[Definition 2.1.1]{bjorner2005combinatorics}), there exist $r_1,\ldots,r_k\in \RCC_{(\tau)}$ such that $v<_\tau v r_1<_\tau vr_1r_2<_\tau \ldots <_\tau v r_1\ldots r_k=w$,  which proves that $w_R w>w_R v$. By applying this result to $w_R^{-1}$, $v^{-1}$ and $w^{-1}$, we deduce that $w_R^{-1} w^{-1} > w_{R}^{-1} v^{-1}$ and thus $(w_R^{-1} w^{-1})^{-1}=ww_R > (w_{R}^{-1} v^{-1})^{-1}=vw_R$, which proves the lemma.
\end{proof}

\subsubsection{Endomorphisms associated to elements of $R_\tau$}

\begin{lemma}\label{lemDefinition_intertwining_operators_associated_Rgroup}
Let $w_R\in R_\tau$. Let  $w_R=s_k\ldots s_1$ be a reduced expression of $w_R$. Let  $j\in \llbracket 1,k\rrbracket$ and  $w_j=s_{j-1}\ldots s_1$. Then $w_j^{-1}.\alpha_{s_j}^\vee\notin \Phi^\vee_{(\tau)}$. 
\end{lemma}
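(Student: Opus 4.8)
The plan is to recognize $w_j^{-1}.\alpha_{s_j}^\vee$ as an inversion of $w_R$, i.e.\ as an element of $N_{\Phi^\vee}(w_R)$, and then to use the defining property of the $R$-group, namely that $w_R$ permutes $\Phi^\vee_{(\tau),+}$, to conclude that no inversion of $w_R$ can lie in $\Phi^\vee_{(\tau)}$.

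First I would unwind the indexing. Since $w_R = s_k\ldots s_1$ is a reduced expression, Lemma~\ref{lemKumar_1.3.14} gives an explicit description of $N_{\Phi^\vee}(w_R)$; rewriting the list there in terms of the present labelling (where $s_k$ is leftmost and $s_1$ rightmost) yields
\[
N_{\Phi^\vee}(w_R)=\{\, s_1 s_2\ldots s_{j-1}.\alpha_{s_j}^\vee \mid j\in \llbracket 1,k\rrbracket \,\}.
\]
Since $w_j^{-1}=(s_{j-1}\ldots s_1)^{-1}=s_1\ldots s_{j-1}$, we have $w_j^{-1}.\alpha_{s_j}^\vee=s_1\ldots s_{j-1}.\alpha_{s_j}^\vee$, which is exactly the term indexed by $j$ in this list. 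In particular $w_j^{-1}.\alpha_{s_j}^\vee\in N_{\Phi^\vee}(w_R)\subseteq \Phi^\vee_+$.

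Next I would invoke that $w_R\in R_\tau$. By definition $w_R.\Phi^\vee_{(\tau),+}=\Phi^\vee_{(\tau),+}$, and $\Phi^\vee_{(\tau),+}=\Phi^\vee_{(\tau)}\subseteq \Phi^\vee_+$ by the very definition of $\Phi^\vee_{(\tau)}$. Hence every coroot of $\Phi^\vee_{(\tau)}$ is sent by $w_R$ into $\Phi^\vee_{(\tau),+}\subseteq \Phi^\vee_+$, so it cannot belong to $N_{\Phi^\vee}(w_R)=\{\alpha^\vee\in \Phi^\vee_+\mid w_R.\alpha^\vee\in \Phi^\vee_-\}$. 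This shows $N_{\Phi^\vee}(w_R)\cap \Phi^\vee_{(\tau)}=\emptyset$. Combining with the previous step, $w_j^{-1}.\alpha_{s_j}^\vee\in N_{\Phi^\vee}(w_R)$ forces $w_j^{-1}.\alpha_{s_j}^\vee\notin \Phi^\vee_{(\tau)}$, which is the claim.

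The only delicate point is pure bookkeeping: one must carefully reconcile the convention of Lemma~\ref{lemKumar_1.3.14} (which reads a reduced word left-to-right as $s_1\ldots s_r$) with the reversed labelling $w_R=s_k\ldots s_1$ adopted here, so that the inversions match up with the partial products $w_j=s_{j-1}\ldots s_1$. Beyond this reindexing there is no genuine obstacle; the substance is the single observation that membership in $R_\tau$ makes the inversion set of $w_R$ disjoint from $\Phi^\vee_{(\tau)}$.
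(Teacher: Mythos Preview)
Your proof is correct and follows essentially the same approach as the paper: both identify $w_j^{-1}.\alpha_{s_j}^\vee$ as an inversion of $w_R$ and then use that $w_R\in R_\tau$ preserves $\Phi^\vee_{(\tau),+}$ to exclude it from $\Phi^\vee_{(\tau)}$. The only cosmetic difference is that you invoke Lemma~\ref{lemKumar_1.3.14} to read off $N_{\Phi^\vee}(w_R)$ directly, whereas the paper obtains the same inversion via the length drop $\ell(w_R r_{w_j^{-1}.\alpha_{s_j}^\vee})<\ell(w_R)$ and \cite[1.3.13]{kumar2002kac}.
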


\begin{proof}
One has $\ell (wr_{w_j^{-1}.\alpha_{s_j}^\vee})=k-1<\ell(w)=k$ and thus by \cite[1.3.13 Lemma]{kumar2002kac}, $w_j^{-1}.\alpha_{s_j}^\vee<0$ and by the definition of $R_\tau$, $w_j^{-1}.\alpha_{s_j}^\vee\notin \Phi^\vee_{(\tau)}$. 
\end{proof}

\begin{lemma}\label{lemFwr_well_defined_at_tau}
Let $w_R\in R_\tau$. Then $F_{w_R}\in \AC(T_\C)_\tau$.
 
\end{lemma}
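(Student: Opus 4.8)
The plan is to deduce this directly from Lemma~\ref{lemReeder 4.3}~(\ref{itDomain_Fw}), which already gives a sufficient condition for $F_w$ to lie in $\AC(T_\C)_\tau$: it suffices to check that $\zeta_{\beta^\vee}\in \C(Y)_\tau$ for every $\beta^\vee\in N_{\Phi^\vee}(w_R)$. So the whole content reduces to controlling the positive coroots that $w_R$ sends to negative ones, and this is exactly where the defining property of the $R$-group enters.

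First I would recall that, since $\C[Y]$ is a unique factorization domain and $\zeta_{\beta^\vee}=\zeta_{\beta^\vee}^{\mathrm{num}}/\zeta_{\beta^\vee}^{\mathrm{den}}$ is written with coprime numerator and denominator, membership $\zeta_{\beta^\vee}\in \C(Y)_\tau$ is equivalent to $\zeta_{\beta^\vee}^{\mathrm{den}}(\tau)\neq 0$. By the definition of $\Phi^\vee_{(\tau)}$, for a positive coroot $\beta^\vee$ this amounts precisely to $\beta^\vee\notin \Phi^\vee_{(\tau)}$. Thus the claim to establish is
\[
N_{\Phi^\vee}(w_R)\cap \Phi^\vee_{(\tau)}=\emptyset,
\]
i.e. no coroot in $\Phi^\vee_{(\tau)}$ is sent by $w_R$ into $\Phi^\vee_-$.

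This last point is immediate from the definition of $R_\tau$: by definition $w_R\in R_\tau$ satisfies $w_R.\Phi^\vee_{(\tau),+}=\Phi^\vee_{(\tau),+}$. Hence if $\beta^\vee\in \Phi^\vee_{(\tau)}=\Phi^\vee_{(\tau),+}$, then $w_R.\beta^\vee\in \Phi^\vee_{(\tau),+}\subset \Phi^\vee_+$, so $\beta^\vee\notin N_{\Phi^\vee}(w_R)$. Contrapositively, every $\beta^\vee\in N_{\Phi^\vee}(w_R)$ lies outside $\Phi^\vee_{(\tau)}$, so $\zeta_{\beta^\vee}^{\mathrm{den}}(\tau)\neq 0$ and $\zeta_{\beta^\vee}\in \C(Y)_\tau$. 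Applying Lemma~\ref{lemReeder 4.3}~(\ref{itDomain_Fw}) with $w=w_R$ then yields $F_{w_R}\in \AC(T_\C)_\tau$, as desired.

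There is essentially no genuine obstacle here: the statement is a clean consequence of the hypothesis-condition in Lemma~\ref{lemReeder 4.3}~(\ref{itDomain_Fw}) together with the stabilization property built into the definition of $R_\tau$. The only subtle point to state carefully is the equivalence between $\zeta_{\beta^\vee}\in \C(Y)_\tau$ and $\zeta_{\beta^\vee}^{\mathrm{den}}(\tau)\neq 0$, which relies on $\C[Y]$ being a factorial ring so that the chosen coprime denominator detects whether the fraction can be evaluated at $\tau$.
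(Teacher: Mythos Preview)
Your argument is correct and follows essentially the same approach as the paper: both reduce to checking that $N_{\Phi^\vee}(w_R)\cap\Phi^\vee_{(\tau)}=\emptyset$ and then invoke Lemma~\ref{lemReeder 4.3}~(\ref{itDomain_Fw}). The paper phrases this via a reduced expression and the auxiliary Lemma~\ref{lemDefinition_intertwining_operators_associated_Rgroup}, whereas you appeal directly to the defining property $w_R.\Phi^\vee_{(\tau),+}=\Phi^\vee_{(\tau),+}$; your version is a mild streamlining of the same idea.
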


\begin{proof}
Let $w_R=s_k\ldots s_1$ be a reduced writing of $w_R$, where $k=\ell(w_R)$ and $s_1,\ldots,s_k\in \SCC$.   For $j\in \llbracket 1,k\rrbracket$, set $w_j=s_{j-1}\ldots s_1\in W^v$. Then by Lemma~\ref{lemDefinition_intertwining_operators_associated_Rgroup} applied to $w_R^{-1}$,  $w_j^{-1}.\alpha_{s_j}^\vee\notin \Phi^\vee_{(\tau)}$ for all $j\in \llbracket 1,k\rrbracket$. 
Therefore $\tau\big(\zeta^{\mathrm{den}}_{w_j^{-1}.\alpha_{s_{j+1}}^\vee}\big)\neq 0$ 
and hence $\zeta_{w_j^{-1}.\alpha_{s_{j+1}}^\vee}\in \C(Y)_\tau$. Thus by Lemma~\ref{lemReeder 4.3}~(\ref{itDomain_Fw}), $F_{w_R}\in \AC(T_\C)_\tau$. Moreover, by Lemma~\ref{lemReeder 4.3} (\ref{itDomain_Fw}) and (\ref{itLeading_coefficient_Fw}), as $w_R\in W_\tau$, one has $F_{w_R}(\tau)\vb_\tau\in I_\tau(\tau)\cap (I_\tau^{\leq w_R}\setminus I_\tau^{<w_R})$. Using Lemma~\ref{lemFrobenius_reciprocity} we deduce  the lemma.
\end{proof}

For $w_R\in R_\tau$, we set $\psi_{w_R}=\Upsilon_{F_{w_R}(\tau)\vb_\tau}\in \mathrm{End}(I_\tau)$\index{$\psi_{w_R}$} (this is well defined by Lemma~\ref{lemFwr_well_defined_at_tau}). Then  there exists $a\in \C^*$ such that  $\psi_{w_R}(\vb_\tau)-aH_{w_R}\vb_\tau\in I_\tau^{< w_R}:= \bigoplus_{v<w_R} \C H_v \vb_\tau$.

\begin{lemma}\label{lemInvertibility_Psi_wr}
Let $\tau\in \UC_\C$ and $w_R\in R_\tau$. Then $\psi_{w_R}$ is invertible in $\mathrm{End}(I_\tau)$ and its inverse is in $\C^* \psi_{w_R^{-1}}$. 
\end{lemma}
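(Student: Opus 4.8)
The plan is to show that both composites $\psi_{w_R^{-1}}\circ\psi_{w_R}$ and $\psi_{w_R}\circ\psi_{w_R^{-1}}$ are nonzero scalar multiples of the identity. By Frobenius reciprocity (Lemma~\ref{lemFrobenius_reciprocity}) the evaluation map $\End(I_\tau)\to I_\tau(\tau)$, $\phi\mapsto\phi(\vb_\tau)$, is injective; since endomorphisms preserve weight spaces and $w_R^{\pm 1}.\tau=\tau$, it suffices to prove $\psi_{w_R^{-1}}\circ\psi_{w_R}(\vb_\tau)\in\C^*\vb_\tau$ (and symmetrically). Once this is established, writing $\psi_{w_R^{-1}}\circ\psi_{w_R}=c\,\Id$ and $\psi_{w_R}\circ\psi_{w_R^{-1}}=c'\,\Id$ with $c,c'\in\C^*$ immediately yields that $\psi_{w_R}$ is invertible with inverse $c^{-1}\psi_{w_R^{-1}}=c'^{-1}\psi_{w_R^{-1}}\in\C^*\psi_{w_R^{-1}}$.

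The computational heart is a quadratic identity for the simple intertwiners. First I would record that $F_s=B_s+\zeta_s=\sigma_s(H_s-Q_s)$ and that $Q_s+{}^sQ_s=\sigma_s-\sigma_s^{-1}$ and $\zeta_s+{}^s\zeta_s=1+\sigma_s^2$; these elementary relations give $F_s*F_s=\zeta_s\,{}^s\zeta_s\in\C(Y)$. Concretely, for any $\mu\in T_\C$ at which $F_s$ is defined, a one-line computation in $I_\mu$ shows that the composite of the simple intertwiner $\Upsilon_{F_s(s.\mu)\vb_{s.\mu}}\colon I_\mu\to I_{s.\mu}$ with its reverse $\Upsilon_{F_s(\mu)\vb_\mu}\colon I_{s.\mu}\to I_\mu$ sends $\vb_\mu$ to $(\zeta_s\,{}^s\zeta_s)(\mu)\vb_\mu$, hence equals $(\zeta_s\,{}^s\zeta_s)(\mu)\,\Id$ by Frobenius reciprocity.

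Next I would fix a reduced expression $w_R=s_1\cdots s_k$ and use Proposition~\ref{propExistence_intertwining_paths} to realize $\psi_{w_R}$, up to a nonzero scalar, as the composite of simple intertwiners along the path of weights $\tau,s_1.\tau,\ldots,w_R.\tau=\tau$, and $\psi_{w_R^{-1}}$ as the composite along the reversed path. Composing the two retraces the path, and peeling off the innermost pair at each step via the single-reflection cancellation above collapses the composite to the scalar $\prod_{j=1}^k(\zeta_{s_j}\,{}^{s_j}\zeta_{s_j})(\tau_j)=\prod_{\delta^\vee\in N_{\Phi^\vee}(w_R)}\zeta_{\delta^\vee}(\tau)\zeta_{-\delta^\vee}(\tau)$, where $\tau_j=s_{j-1}\cdots s_1.\tau$ and the $\delta^\vee$ run over $N_{\Phi^\vee}(w_R)$ (Lemma~\ref{lemKumar_1.3.14}). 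It then remains to check this scalar is nonzero: since $w_R\in R_\tau$ and $R_\tau$ stabilizes $\Phi^\vee_{(\tau),+}$, no $\delta^\vee\in N_{\Phi^\vee}(w_R)$ lies in $\Phi^\vee_{(\tau)}$, so each $\zeta^{\mathrm{den}}_{\pm\delta^\vee}(\tau)\neq 0$ (using $\zeta_{-\delta^\vee}=(1+\sigma_{\delta^\vee}^2)-\zeta_{\delta^\vee}$ to handle the negative coroots) and each $\zeta_{\pm\delta^\vee}$ is finite at $\tau$; and $\tau\in\UC_\C$ forces $\zeta^{\mathrm{num}}_{\pm\delta^\vee}(\tau)\neq 0$. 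Hence every factor, and so $c$, is nonzero.

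The hard part will be the bookkeeping in this telescoping step: one must carefully match the endomorphism $\psi_{w_R}$ (built from $F_{w_R}$ via $\ev_\tau$) with the composite of honest simple intertwiners between the distinct modules $I_{\tau_j}$, since $\ev_\tau$ is only an $\HFW$--$\C(Y)_\tau$--bimodule map and the algebra product introduces the lower-order commutator terms of Lemma~\ref{lemCommutation relation}. The clean single-reflection cancellation is exactly what makes these intermediate contributions disappear two at a time; the only genuinely delicate points are keeping track of the reduced-expression convention defining $F_{w_R}$ and of the weights $\tau_j$ at which each simple factor is evaluated, so that the product of $\zeta$'s comes out indexed precisely by $N_{\Phi^\vee}(w_R)$.
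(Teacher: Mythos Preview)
Your overall strategy---telescope $F_s^2=\zeta_s\,{}^s\zeta_s$ to identify the scalar, then check nonvanishing using $\tau\in\UC_\C$ and $N_{\Phi^\vee}(w_R)\cap\Phi^\vee_{(\tau)}=\emptyset$---is correct and is essentially what the paper does. But there is one genuine gap in your write-up.

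You invoke Proposition~\ref{propExistence_intertwining_paths} to identify $\psi_{w_R}$, up to scalar, with the composite of simple intertwiners along the path $I_\tau\to I_{s_1.\tau}\to\cdots\to I_\tau$. That proposition is stated and proved only for \emph{regular} $\tau$, and its usefulness there comes from $\dim\Hom(I_{v.\tau},I_{w.\tau})=1$: any two nonzero maps are proportional. Here $\tau$ is not regular (indeed $w_R\in W_\tau\setminus\{1\}$), so $\dim\End(I_\tau)>1$ and two nonzero endomorphisms need not be proportional. Your last paragraph flags this as ``the hard part'' but does not resolve it. In fact the identification is an \emph{equality}, provable by a short induction using the commutation relation: if $G=F_{s_1}\cdots F_{s_{k-1}}=\sum_v H_v\theta_v$ then $G*F_{s_k}=\sum_v H_v F_{s_k}\,{}^{s_k}\theta_v$, so $(G*F_{s_k})(\mu)=G(s_k.\mu)*F_{s_k}(\mu)$; iterating gives that the path-composite sends $\vb_\tau$ to exactly $F_{w_R^{-1}}(\tau)\vb_\tau$. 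With this in hand your telescoping is valid.

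The paper sidesteps the path-of-modules picture entirely and works in the algebra $\AC(T_\C)$. Writing $F_{w_R}=\sum_v H_v\theta_v$ with $\theta_v\in\C(Y)_\tau$ and using $\theta*F_{w_R^{-1}}=F_{w_R^{-1}}*{}^{w_R}\theta$ together with $w_R.\tau=\tau$, one sees directly that $F_{w_R}*F_{w_R^{-1}}\in\AC(T_\C)_\tau$ and that $\ev_\tau(F_{w_R}*F_{w_R^{-1}})=F_{w_R}(\tau)*F_{w_R^{-1}}(\tau)$. A prior lemma (the algebraic form of your telescoping) then gives $F_{w_R}*F_{w_R^{-1}}=P\in\C(Y)_\tau$ with $P(\tau)\neq 0$, whence $\psi_{w_R^{-1}}\circ\psi_{w_R}(\vb_\tau)=P(\tau)\vb_\tau$ and the conclusion follows by symmetry. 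This is the same computation as yours, but performed once in $\AC(T_\C)$ rather than step by step across the $I_{\tau_j}$, which makes the evaluation-compatibility issue you worried about a two-line calculation rather than an inductive matching.
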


\begin{proof}
By Lemma~\ref{lemFwr_well_defined_at_tau}, there exists $(\theta_v)\in (\C(Y)_\tau)^{(W^v)}$ such that $F_{w_R}=\sum_{v\in W^v} H_v \theta_v$. Then by Lemma~\ref{lemReeder 4.3} (\ref{itCommutation_relation}) \[F_{w_R}*F_{w_R^{-1}}=\sum_{v\in W^v} H_v\theta_v F_{w_R^{-1}}=\sum_{v\in W^v} H_v F_{w_R^{-1}}* {^{w_R^{-1}}\theta_v}.\] Let $v\in W^v$. Then ${^{w_R^{-1}}\theta_v}\in \C(Y)_{w_R.\tau}=\C(Y)_{\tau}$. By Lemma~\ref{lemFwr_well_defined_at_tau}, $F_{w_R^{-1}}\in \AC(T_\C)_\tau$. As $\AC(T_\C)_\tau$ is an $\HC_{W^v,\C}-\C(Y)_\tau$-bimodule we deduce that $F_{w_R}*F_{w_R^{-1}}\in \AC(T_\C)_\tau$. By \cite[Lemma 6.23]{hebert2018principal}, there exists $P\in \C(Y)_\tau$ such that $F_{w_R}*F_{w_R^{-1}}=P$ and $P(\tau)\neq 0$. Thus \[\psi_{w_R^{-1}}\big(\psi_{w_R}(\vb_\tau)\big)=\psi_{w_R^{-1}}\big(F_{w_R}(\tau)\vb_\tau\big)=F_{w_R}(\tau)*F_{w_R^{-1}}(\tau)\vb_\tau=P(\tau)\vb_\tau.\] As $I_\tau=\AC_\C.\vb_\tau$, we deduce that  $\psi_{w_R^{-1}}$ is surjective and  $\psi_{w_R}$ is injective. The lemma follows by symmetry.
\end{proof}

\subsection{Generalized weight spaces of $I_\tau$  for $\tau\in \UC_\C$}\label{subI_tau(tau,gen)}

 Let $\tau\in \UC_\C$. In this subsection, we describe $I_\tau(\tau,\mathrm{gen})$ (see Proposition~\ref{propDescription_generalized_weight_spaces}), using some elements of $\AC(T_\C)_\tau$.  Under some additional assumption, we deduce a description of $I_\tau(\tau)$ in terms of the $F_{w_R}(\tau)\vb_\tau$,  for $w_R\in R_\tau$. We conjecture (see Conjecture~\ref{conjItau}) that our assumption is satisfied for every $\tau\in \UC_\C$. As we shall see (\ref{lemConjItau_size2_case}), it is satisfied when $\AC_\C$ is associated with a size $2$ Kac-Moody matrix. This subsection extends the results of \cite[6.5]{hebert2018principal} (in which the case $\tau\in \UC_\C$ such that $R_\tau=\{1\}$ is treated) and is inspired by \cite{reeder1997nonstandard}. To generalize these results, we use the $\psi_{w_R}$, for $w_R\in R_\tau$.

For $r\in \RCC$, one sets $K_{r}=F_{r}-\zeta_{\alpha_r^\vee}\in \ATC$\index{$K_r$}. By Lemma~\ref{lemReeder 4.3} we have: \begin{equation}\label{eqRelation_commutation_K}
 \theta* K_r=K_r*\theta^r+(\theta^r-\theta)\zeta_r \text{ for all }\theta\in \C(Y).
 \end{equation} 
 
For each $w\in \Wta$ we fix a reduced writing $w=r_1\ldots r_k$, with $k=\ell(w)$ and $r_1,\ldots,r_k\in \SCC_\tau$ and we set $\underline{w}=(r_1,\ldots,r_k)$. Let $K_{\underline{w}}=K_{r_1}\ldots K_{r_k}\in \ATC$\index{$K_{\underline{w}}$}. In \cite[Lemma 6.25]{hebert2018principal}, generalizing results of Reeder (\cite[section 14]{reeder1997nonstandard}, we proved that  $K_{\underline{w}}\in \AC(T_\C)_\tau$, for every $w\in \Wta$.  We set $\KC_\tau(\tau)=\bigoplus_{w\in \Wta} \C K_{\underline{w}}(\tau)\subset \HCW$.\index{$\KC_\tau(\tau)$} 
 
 Recall that if $h=\sum_{v\in W^v} H_v\theta_v\in \AC(T_\C)_\tau$, $\ev_\tau(h)=\sum_{v\in W^v} \tau(\theta_v)H_v\in \HCW$.

\begin{lemma}\label{lemCompatibility_evalutation_weight_vectors}
Let $\theta\in \C(Y)_\tau$, $h\in \AC(T_\C)_\tau$ and $x\in I_\tau(\tau)$. Then $\theta*\ev_\tau(h).x=\ev_\tau(\theta*h).x$.
\end{lemma}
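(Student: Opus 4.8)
The plan is to reduce everything to the single observation that $\C(Y)_\tau$ acts on the weight space $I_\tau(\tau)$ by scalars. First I would record the following \emph{core identity}: for every $a=\sum_{v\in W^v}H_v\theta_v\in\ATC_\tau$ and every $x\in I_\tau(\tau)$ one has $a.x=\ev_\tau(a).x$. Indeed, each coefficient $\theta_v\in\C(Y)_\tau$ has a denominator that does not vanish at $\tau$, hence acts on the $\tau$-eigenvector $x$ by the scalar $\tau(\theta_v)$; therefore $a.x=\sum_v H_v.(\theta_v.x)=\sum_v\tau(\theta_v)H_v.x=\ev_\tau(a).x$. Applying this with $a=\theta*h$ rewrites the right-hand side as $\ev_\tau(\theta*h).x=(\theta*h).x$, and applying it with $a=\theta*\ev_\tau(h)$ rewrites the left-hand side as $(\theta*\ev_\tau(h)).x$, so the claim becomes the assertion that $\theta*h$ and $\theta*\ev_\tau(h)$ act in the same way on $x$.

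It then remains to show that $\theta*\big(h-\ev_\tau(h)\big)$ acts by $0$ on $x$. By $\C$-linearity in $h$ it suffices to treat a single term $h=H_v*\theta_v$, which streamlines the bookkeeping; in general one writes $h-\ev_\tau(h)=\sum_v H_v\eta_v$ with $\eta_v:=\theta_v-\tau(\theta_v)\in\C(Y)_\tau$ satisfying $\eta_v(\tau)=0$. Expanding $\theta*\big(h-\ev_\tau(h)\big)=\sum_v(\theta*H_v)*\eta_v$ and using that $\ev_\tau$ is a morphism of right $\C(Y)_\tau$-modules — so right multiplication by $\eta_v$ becomes multiplication by the scalar $\tau(\eta_v)=0$ — I obtain $\ev_\tau\big(\theta*(h-\ev_\tau(h))\big)=\sum_v\tau(\eta_v)\,\ev_\tau(\theta*H_v)=0$. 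Combined with the core identity this gives $\big(\theta*(h-\ev_\tau(h))\big).x=0$, whence the desired equality.

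The hard part will be a question of well-definedness rather than of computation: left multiplication by $\C(Y)_\tau$ does \emph{not} preserve $\ATC_\tau$ (this is exactly the phenomenon illustrated in Subsection~\ref{subPrincipal series representations} by $\tfrac{1}{Z^\lambda-1}*H_s=H_s*\tfrac{1}{Z^{s.\lambda}-1}+\cdots$), so before applying either $\ev_\tau$ or the scalar action I must check that the products $\theta*H_v$ (and hence $\theta*h$ and $\theta*\ev_\tau(h)$) land in $\ATC_\tau$ for the relevant $v$; this membership is precisely what licenses the right-module property of $\ev_\tau$ used above. I would establish it from the commutation relation $H_s*\phi={}^s\phi*H_s+Q_s(\phi-{}^s\phi)$, together with the polynomiality of $Q_s(\phi-{}^s\phi)$ on $\C[Y]$ recorded in Remark~\ref{remIH algebre dans le cas KM deploye}, by pushing $\theta$ past a reduced expression of $v$ one simple reflection at a time and tracking that no denominator acquires a zero at $\tau$. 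Once this is secured, the genuine content of the lemma is just this compatibility of $\ev_\tau$ with the weight-vector action, and no further estimate is needed.
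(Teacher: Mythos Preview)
Your first two paragraphs are essentially the paper's own computation, just repackaged. The paper writes $\theta*H_w=\sum_{v\le w}H_vP_{v,w,\theta}$ via Lemma~\ref{lemCommutation relation} and then, for a generic term $h=H_wQ$ with $Q\in\C(Y)_\tau$, checks directly that both sides of the asserted identity equal $\sum_{v\le w}\tau(P_{v,w,\theta}Q)\,H_v.x$, using only that $x$ is a $\tau$-eigenvector for $\C[Y]$. Your ``core identity'' is exactly this last step, and your use of the right $\C(Y)_\tau$-linearity of $\ev_\tau$ to annihilate $h-\ev_\tau(h)$ is the same observation that the right-hand factor $Q$ contributes only its scalar $\tau(Q)$.

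The resolution you propose in your third paragraph, however, cannot succeed. For general $\theta\in\C(Y)_\tau$ the product $\theta*H_v$ genuinely need \emph{not} lie in $\ATC_\tau$: in the very example you quote, $\tfrac{1}{Z^\lambda-1}*H_s=H_s\cdot\tfrac{1}{Z^{s.\lambda}-1}+\cdots$, the new denominator is controlled by $s.\tau$ rather than $\tau$, so no amount of ``tracking that no denominator acquires a zero at $\tau$'' will work --- the commutation relation moves you out of $\C(Y)_\tau$ one reflection at a time. The paper's proof shares this lacuna (it cites Lemma~\ref{lemCommutation relation}, which is stated for $\theta\in\C[Y]$ and yields $P_{v,w,\theta}\in\C[Y]$ only then). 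In practice the lemma is applied either with $\theta\in\C[Y]$, where $\theta*H_v\in\AC_\C\subset\ATC_\tau$ trivially and both your argument and the paper's go through verbatim, or to specific $h$ (such as $K_{\underline w}$) for which $\theta*h\in\ATC_\tau$ is established by other means. The honest fix is therefore to take $\theta\in\C[Y]$ (or add $\theta*h\in\ATC_\tau$ to the hypotheses), not to attempt membership of the individual $\theta*H_v$.
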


\begin{proof}
Let $w\in W^v$. Then by Lemma~\ref{lemCommutation relation}, one can write $\theta*H_w=\sum_{v\leq w} H_vP_{v,w,\theta}$, for some $P_{v,w,\theta}\in \C[Y]$. Let $Q\in \C(Y)_\tau$. One has $\ev_\tau(\theta*H_w Q)=\sum_{v\leq w} \tau(P_{v,w,\theta} Q)H_v$. One also has $\theta*\ev_\tau(H_w*Q)=\tau(Q)\sum_{v\leq w} H_v P_{v,w,\theta}$. Hence $\theta*\ev_\tau(H_w*Q).x=\sum_{v\leq w}\tau(P_{v,w,\theta}Q)H_v.x=\ev_\tau(\theta*H_w).x$, and the lemma follows by linearity.
\end{proof} 

\begin{notation}\label{notItau(tau,Wta)}
Let $I_\tau(\tau,\mathrm{gen},\Wta)=\KC_\tau(\tau).\vb_\tau=\bigoplus_{w\in \Wta} \C K_{\underline{w}}(\tau)\vb_\tau$ and  $I_\tau(\tau,\Wta)=I_\tau(\tau,\mathrm{gen},\Wta)\cap I_\tau(\tau)$. 
\end{notation}

We set $\mathfrak{m}_\tau=\{\theta\in \C[Y]|\tau(\theta)=0\}$\index{$\mathfrak{m}_\tau$}.

\begin{lemma}\label{lemItau_gen_Wtas_subset_I_tau_gen}
\begin{enumerate}
\item The space $I_\tau(\tau,\mathrm{gen},\Wta)$ is a $\C[Y]$-submodule of $I_\tau(\tau,\mathrm{gen})$.

\item Let $x\in I_\tau(\tau,\mathrm{gen},\Wta)\setminus\{0\}$. Write $x=\sum_{w\in \Wta} a_wK_{\underline{w}}(\tau)\vb_\tau$, where $(a_w)\in \C^{(\Wta)}$. Let $\ell_\tau(x)=\max \{\ell_\tau(w)|w\in \Wta\mathrm{\ and\ }a_w\neq 0\}$. Then for all $\theta_1,\ldots,\theta_{\ell_\tau(x)+1}\in \mathfrak{m}_\tau$, one has $\theta_1\ldots \theta_{\ell_\tau(x)+1}.x=0$.

\end{enumerate}
\end{lemma}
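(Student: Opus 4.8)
The plan is to deduce both statements from a single assertion describing the $\C[Y]$-action on the fixed spanning family $(K_{\underline w}(\tau)\vb_\tau)_{w\in\Wta}$. For $j\in\N$ set $\Lambda_j=\bigoplus_{w\in\Wta,\ \ell_\tau(w)\le j}\C K_{\underline w}(\tau)\vb_\tau$, so that $I_\tau(\tau,\mathrm{gen},\Wta)=\bigcup_{j}\Lambda_j$. The key claim I would prove is that for every $\theta\in\C[Y]$ and every $w\in\Wta$ with $m:=\ell_\tau(w)$,
\[\theta.K_{\underline w}(\tau)\vb_\tau\equiv\tau(\theta)\,K_{\underline w}(\tau)\vb_\tau\pmod{\Lambda_{m-1}}.\qquad(\star)\]
Granting $(\star)$, part (1) is immediate: it gives $\theta.K_{\underline w}(\tau)\vb_\tau\in\Lambda_m\subset I_\tau(\tau,\mathrm{gen},\Wta)$, so the space is $\C[Y]$-stable, and replacing $\theta$ by $\theta-\tau(\theta)$ shows that $\theta-\tau(\theta)$ maps $\Lambda_m$ into $\Lambda_{m-1}$, whence $(\theta-\tau(\theta))^{m+1}$ annihilates $K_{\underline w}(\tau)\vb_\tau$; thus $I_\tau(\tau,\mathrm{gen},\Wta)\subset I_\tau(\tau,\mathrm{gen})$. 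Part (2) is the same filtration bookkeeping: for $\theta_1,\dots,\theta_{\ell_\tau(x)+1}\in\mathfrak m_\tau$ each factor sends $\Lambda_j$ into $\Lambda_{j-1}$ by $(\star)$ (here $\tau(\theta_i)=0$), and since $x\in\Lambda_{\ell_\tau(x)}$ the product of $\ell_\tau(x)+1$ such factors kills $x$, the last step using $\Lambda_{-1}=0$ and $\theta.\vb_\tau=\tau(\theta)\vb_\tau=0$.

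I would prove $(\star)$ by induction on $m$, in the sharper $\ATC$-form $(\star\star)$: $\theta*K_{\underline w}=K_{\underline w}*{}^{w^{-1}}\theta+R_{w,\theta}$, where $R_{w,\theta}$ is a combination of terms $K_{\underline v}*\C(Y)_\tau$ with $\ell_\tau(v)<m$. Indeed, applying $\ev_\tau(\,\cdot\,)\vb_\tau$ to $(\star\star)$ and using that $\ev_\tau$ is right $\C(Y)_\tau$-linear, together with $({}^{w^{-1}}\theta)(\tau)=(w.\tau)(\theta)=\tau(\theta)$ (valid since $\Wta\subset W_\tau$), yields $(\star)$. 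For $m=0$ we have $w=1$, $K_{\underline 1}=1$, and the statement is trivial. For the step write $w=r_1w'$ with $r_1\in\SCC_\tau$, $\ell_\tau(w')=m-1$ and $K_{\underline w}=K_{r_1}*K_{\underline{w'}}$, and push $\theta$ through $K_{r_1}$ by the commutation relation (\ref{eqRelation_commutation_K}):
\[\theta*K_{\underline w}=K_{r_1}*\bigl({}^{r_1}\theta*K_{\underline{w'}}\bigr)+g_1*K_{\underline{w'}},\qquad g_1:=({}^{r_1}\theta-\theta)\,\zeta_{r_1}.\]
Here $g_1\in\C[Y]$: the numerator ${}^{r_1}\theta-\theta$ is divisible by the denominator of $\zeta_{r_1}$, which is the Bernstein--Lusztig polynomiality of Remark~\ref{remIH algebre dans le cas KM deploye}~(\ref{itPolynomiality_Bernstein_Lusztig}) transported to the reflection $r_1$. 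The induction hypothesis applied to $w'$ rewrites ${}^{r_1}\theta*K_{\underline{w'}}=K_{\underline{w'}}*{}^{w^{-1}}\theta+R_{w',{}^{r_1}\theta}$ (using ${}^{w'^{-1}}({}^{r_1}\theta)={}^{w'^{-1}r_1}\theta={}^{w^{-1}}\theta$), while $g_1*K_{\underline{w'}}$ is harmless since $\ev_\tau(g_1*K_{\underline{w'}})\vb_\tau=g_1.\bigl(K_{\underline{w'}}(\tau)\vb_\tau\bigr)\in\Lambda_{m-1}$ by the induction hypothesis and Lemma~\ref{lemCompatibility_evalutation_weight_vectors}.

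The only remaining point — and the technical heart of the proof — is that left multiplication by $K_{r_1}$ sends $K_{\underline{w'}}*{}^{w^{-1}}\theta+R_{w',{}^{r_1}\theta}$ into the shape required by $(\star\star)$: one needs $K_{r_1}*K_{\underline{w'}}=K_{\underline w}$ up to $\ell_\tau$-degree $<m$ (independence of the reduced word up to lower order), which provides the leading term $K_{\underline w}*{}^{w^{-1}}\theta$, while for $\ell_\tau(v)<m-1$ the product $K_{r_1}*K_{\underline v}$ must be straightened into $\sum_{u}K_{\underline u}*\C(Y)_\tau$ with $\ell_\tau(u)\le m-1$. This triangular multiplication rule, governed by the Bruhat order $\le_\tau$ of the Coxeter system $(\Wta,\SCC_\tau)$, is what keeps $R_{w,\theta}$ supported in $\ell_\tau$-degree $<m$; it follows from (\ref{eqRelation_commutation_K}) together with the braid and quadratic relations among the $K_r$, in the spirit of \cite[section~14]{reeder1997nonstandard}, and rests throughout on $K_{\underline u}\in\ATC_\tau$ (\cite[Lemma~6.25]{hebert2018principal}) so that every evaluation $\ev_\tau$ is legitimate. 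Controlling this straightening through the non-reduced products $(r_1,\underline v)$ is where the main effort lies.
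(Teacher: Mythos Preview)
Your approach is essentially the same as the paper's. The key identity you call $(\star\star)$ is exactly \cite[Lemma~6.25]{hebert2018principal}, which the paper cites directly: for $\theta\in\C(Y)$ and $w\in\Wta$ there exists $k_{w,\theta}\in\bigoplus_{v<_\tau w}K_{\underline v}\C(Y)_\tau$ with $\theta*K_{\underline w}=K_{\underline w}*{}^{w^{-1}}\theta+k_{w,\theta}$. Both you and the paper then evaluate at $\tau$ via Lemma~\ref{lemCompatibility_evalutation_weight_vectors}, use $w\in W_\tau$ to rewrite $\tau({}^{w^{-1}}\theta)=\tau(\theta)$, and deduce (1) and (2) from the resulting filtration statement $(\star)$.

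The only substantive difference is that you try to \emph{reprove} $(\star\star)$ by induction on $\ell_\tau(w)$ rather than citing it. Your inductive scheme is correct in outline, and you correctly locate the crux: straightening $K_{r_1}*K_{\underline v}$ back into the fixed spanning family $(K_{\underline u})$ when $(r_1,\underline v)$ is not the chosen reduced expression, with the error controlled by $\le_\tau$. But you do not carry this out---you only gesture at ``braid and quadratic relations among the $K_r$, in the spirit of \cite[section~14]{reeder1997nonstandard}''---and this straightening is precisely the content of the lemma the paper cites. So your proof, as written, defers the same work to the same source, just less explicitly.

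A minor cosmetic difference: the paper filters by Bruhat intervals $\bigoplus_{v\le_\tau w}\C K_{\underline v}(\tau)\vb_\tau$ (and then invokes the directed-poset property of $(\Wta,\le_\tau)$ to assemble the full space), whereas your $\Lambda_j$ is the coarser filtration by $\ell_\tau$-degree. Since $v<_\tau w$ implies $\ell_\tau(v)<\ell_\tau(w)$, both filtrations work equally well here.
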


\begin{proof}
Let $\theta\in \C(Y)$ and $w\in \Wta$. Then by \cite[Lemma 6.25]{hebert2018principal}, there exists $k_{w,\theta}\in \bigoplus_{v<_\tau w} K_{\underline{v}}\C(Y)_\tau$ such that 
$\theta*K_{\underline{w}}=K_{\underline{w}}*{^{w^{-1}}\theta}+k_{w,\theta}$. Suppose $\theta\in \C(Y)_\tau$. Then by Lemma~\ref{lemCompatibility_evalutation_weight_vectors},
 \[\theta.(K_{\underline{w}}(\tau).\vb_\tau)=\big(\theta*\ev_\tau(K_{\underline{w}})\big).\vb_\tau=\ev_\tau(K_{\underline{w}}*{^{w^{-1}}\theta}+k_{w,\theta}).\vb_\tau=\big(\tau(\theta)K_{\underline{w}}(\tau)+k_{w,\theta}(\tau)\big).\vb_\tau,\] and $k_{w,\theta}(\theta)\in \bigoplus_{v<_\tau w} \C K_{\underline{v}}(\tau)$. Thus if $\theta\in \mathfrak{m}_\tau$, $\theta.K_{\underline{w}}(\tau)\in \bigoplus_{v<_\tau w} \C K_{\underline{v}}(\tau)$. By induction on $\ell_\tau(w)$, we deduce that $\bigoplus_{v\leq _\tau w} \C K_{\underline{v}}(\tau).\vb_\tau$ is a $\C[Y]$-submodule of $I_\tau$ and that  if $\theta_1,\ldots,\theta_{\ell_\tau(w)+1}\in \mathfrak{m}_\tau$, then $\theta_1\ldots \theta_{\ell_\tau(w)+1}.K_{\underline{w}}(\tau).\vb_\tau=0$. Therefore, $\KC_\tau(\tau).\vb_\tau\subset I_\tau(\tau,\Wta)$. By \cite[Proposition 2.2.9]{bjorner2005combinatorics}, for every finite subset $F$ of $\Wta$, there exists $w\in \Wta$ such that $v\leq_\tau w$, for every $v\in F$. Thus $I_\tau(\tau,\mathrm{gen},\Wta)$ is a $\C[Y]$-submodule of $I_\tau$. 

\end{proof}

\begin{lemma}\label{lemPropertiesKF}
Let $w\in \Wta$ and $w_R\in R_\tau$. Then: \begin{enumerate}
\item $K_{\underline{w}}*F_{w_R}\in \AC(T_\C)_{\tau}$,

\item $K_{\underline{w}}(\tau).\psi_{w_R}(\vb_\tau)= \ev_\tau(K_{\underline{w}}*F_{w_R}).\vb_\tau$,

\item\label{itSupport} $\max\bigg( \supp\big(K_{\underline{w}}.\psi_{w_R}(\vb_\tau)\big)\bigg)=\{ww_R\}$,

\end{enumerate} 
\end{lemma}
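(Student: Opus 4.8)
The plan is to exploit throughout that $K_{\underline w}$ and $F_{w_R}$ both lie in $\AC(T_\C)_\tau$ (by \cite[Lemma 6.25]{hebert2018principal} and Lemma~\ref{lemFwr_well_defined_at_tau}), and that $\AC(T_\C)_\tau$, although not a subalgebra, is an $\HCW$--$\C(Y)_\tau$-bimodule. For item (1), I would write $K_{\underline w}=\sum_{v\in W^v}H_v\theta_v$ with $\theta_v\in\C(Y)_\tau$, and move each scalar past $F_{w_R}$ via Lemma~\ref{lemReeder 4.3}~(\ref{itCommutation_relation}): $\theta_v*F_{w_R}=F_{w_R}*{}^{w_R^{-1}}\theta_v$. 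Since $w_R\in W_\tau$ fixes $\tau$, the automorphism $\theta\mapsto{}^{w_R^{-1}}\theta$ of $\C(Y)$ preserves $\C(Y)_\tau$, so ${}^{w_R^{-1}}\theta_v\in\C(Y)_\tau$. Hence $H_v*(\theta_v*F_{w_R})=H_v*F_{w_R}*{}^{w_R^{-1}}\theta_v$ is obtained from $F_{w_R}\in\AC(T_\C)_\tau$ by left multiplication by $H_v\in\HCW$ and right multiplication by an element of $\C(Y)_\tau$, hence stays in $\AC(T_\C)_\tau$; summing over the finitely many $v$ gives (1).

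For item (2), the same decomposition together with the $\HCW$-linearity of $\ev_\tau$ gives $\ev_\tau(K_{\underline w}*F_{w_R})=\sum_v H_v*\ev_\tau(\theta_v*F_{w_R})$. Evaluating at $\vb_\tau$ and applying Lemma~\ref{lemCompatibility_evalutation_weight_vectors} with $\theta=\theta_v$, $h=F_{w_R}$, $x=\vb_\tau\in I_\tau(\tau)$ rewrites each term as $H_v.\big(\theta_v*\ev_\tau(F_{w_R}).\vb_\tau\big)$. Now $\ev_\tau(F_{w_R}).\vb_\tau=\psi_{w_R}(\vb_\tau)\in I_\tau(\tau)$, so $\theta_v$ acts on it by the scalar $\theta_v(\tau)$; re-collecting the sum yields $\sum_v\theta_v(\tau)H_v.\psi_{w_R}(\vb_\tau)=\ev_\tau(K_{\underline w}).\psi_{w_R}(\vb_\tau)=K_{\underline w}(\tau).\psi_{w_R}(\vb_\tau)$, which is the asserted identity.

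For item (3), item (2) reduces the statement to locating the Bruhat-maximal indices in the $W^v$-support of $\ev_\tau(K_{\underline w}*F_{w_R}).\vb_\tau$; equivalently, since $\psi_{w_R}$ is $\AC_\C$-linear, of $y:=\psi_{w_R}\big(K_{\underline w}(\tau).\vb_\tau\big)$. A clean preliminary reduction is that $y\in I_\tau(\tau)$, and that for any $\tau$-weight vector $x$ every maximal element $g$ of $\supp(x)$ lies in $W_\tau$: by Lemma~\ref{lemCommutation relation}, applying $\theta\in\C[Y]$ scales the $H_g$-coefficient by $(g.\tau)(\theta)$, and comparing with $\theta.x=\tau(\theta)x$ forces $g.\tau=\tau$. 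I would then induct on $\ell_\tau(w)$ along the reduced word $\underline w=(r_1,\dots,r_k)$, starting from $\max\supp(\psi_{w_R}(\vb_\tau))=\{w_R\}$ (the leading term $H_{w_R}$ of $F_{w_R}$, Lemma~\ref{lemReeder 4.3}~(\ref{itLeading_coefficient_Fw})) and peeling off one $K_{r_j}$ at a time; the upper bound on the support is controlled by Lemma~\ref{lemBruhat_order_in_W_tau}, which gives $vw_R\leq ww_R$ for $v\leq_\tau w$, so that $ww_R$ is the expected top index.

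The \textbf{main obstacle} is this last point. Because the $r_j$ are reflections rather than simple reflections, multiplying by $H_{r_j}$ in $\HCW$ produces a leading term sitting at a Demazure product that may strictly exceed $r_j\cdots r_k\,w_R$; the sole purpose of the correction term $\zeta_{\alpha_r^\vee}$ in $K_r=F_r-\zeta_{\alpha_r^\vee}$ is to make these ``overshoot'' contributions above $ww_R$ cancel, leaving $ww_R$ as the unique maximal surviving index. Proving this cancellation is the heart of the matter: I would combine the commutation relation~(\ref{eqRelation_commutation_K}) with the $<_\tau$-triangularity of the family $(K_{\underline v})_{v\in\Wta}$ from \cite[Lemma 6.25]{hebert2018principal} (generalizing Reeder \cite[\S14]{reeder1997nonstandard}), so that at each step the partial product is, up to $\C(Y)_\tau$-coefficients that are nonzero at $\tau$, supported in $\{u\leq r_j\cdots r_k\,w_R\}$ with nonzero top coefficient. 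Finally the nonvanishing of the coefficient at $ww_R$ can be pinned down using the invertibility of $\psi_{w_R}$ (Lemma~\ref{lemInvertibility_Psi_wr}) together with the constraint, established above, that maximal support elements of a $\tau$-weight vector lie in $W_\tau$.
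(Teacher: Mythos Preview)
Your arguments for (1) and (2) are essentially the paper's own proof, with slightly more explicit bookkeeping (you invoke Lemma~\ref{lemCompatibility_evalutation_weight_vectors} where the paper just computes); these parts are correct.

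For (3) there is a genuine gap. First a minor slip: $y=K_{\underline w}(\tau).\psi_{w_R}(\vb_\tau)$ lies in $I_\tau(\tau,\mathrm{gen})$, not in $I_\tau(\tau)$ (see Lemma~\ref{lemItau_gen_Wtas_subset_I_tau_gen}); the constraint that maximal support elements lie in $W_\tau$ still holds for generalized weight vectors, but this only tells you the maximum is \emph{some} element of $W_\tau$, not that it is $\leq ww_R$. More seriously, your inductive peeling of the $K_{r_j}$'s in the $H$-basis runs into exactly the obstacle you name and do not resolve: for a non-simple reflection $r$, the product $H_r*H_g$ in $\HCW$ has top term at the Demazure product $r*g$, which can strictly exceed $rg$, and there is no transparent mechanism at the level of $\HCW$ that forces the overshoot terms to cancel when you pass from $H_r$ to $K_r(\tau)$. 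Your proposed rescue via invertibility of $\psi_{w_R}$ and the $W_\tau$-constraint gives neither an upper bound on the support nor a nonvanishing at $ww_R$.

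The paper avoids all of this by never working in the $H$-basis for this step. It uses the decomposition $K_{\underline w}=\sum_{v\leq_\tau w}F_v*\theta_v$ in the \emph{$F$-basis} (\cite[Lemma~6.22]{hebert2018principal}), and then the key multiplicativity relation $F_v*F_{w_R}\in F_{vw_R}\cdot\C(Y)$ from \cite[Lemma~6.23]{hebert2018principal}, yielding $K_{\underline w}*F_{w_R}=\sum_{v\leq_\tau w}F_{vw_R}\tilde\theta_v$. Now Lemma~\ref{lemBruhat_order_in_W_tau} gives $vw_R<ww_R$ for $v<_\tau w$, and Lemma~\ref{lemReeder 4.3}~(\ref{itLeading_coefficient_Fw}) says each $F_{vw_R}$ has unique top $H$-term at $vw_R$; hence the $H_{ww_R}$-coefficient of $K_{\underline w}*F_{w_R}$ is (a nonzero scalar times) $\tilde\theta_w$, automatically in $\C(Y)_\tau$ by (1). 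Nonvanishing at $\tau$ comes from $\theta_w(\tau)\neq 0$ (\cite[Lemma~6.26]{hebert2018principal}) together with the explicit form of the factor relating $\theta_w$ to $\tilde\theta_w$ in \cite[Lemma~6.23]{hebert2018principal}, which is a product of $\zeta^{\mathrm{num}}$-values not vanishing on $\UC_\C$. The essential idea you are missing is that the $F$-basis is \emph{multiplicative up to $\C(Y)$-scalars}, which is precisely what circumvents the Demazure-product behaviour of the $H$-basis.
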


\begin{proof}
By \cite[Lemma 6.26]{hebert2018principal} $K_{\underline{w}}\in \AC(T_\C)_\tau$. Thus $K_{\underline{w}}=\sum_{v\in W^v} H_v*P_v$, where $(P_v)\in \big(\C(Y)_\tau\big)^{(W^v)}$. Then by Lemma~\ref{lemReeder 4.3}, $K_{\underline{w}}*F_{w_R}=\sum_{v\in W^v} H_v *F_{w_R}* {^{w_R}}P_v$. Moreover ${^{w_R}}P_v\in \C(Y)_{w_R^{-1}.\tau}=\C(Y)_\tau$, for $v\in W^v$. Thus by Lemma~\ref{lemFwr_well_defined_at_tau} and as $\AC(T_\C)_\tau$ is a right $\C(Y)_\tau$-submodule of $\AC(T_\C)$, we have $K_{\underline{w}}*F_{w_R}\in \AC(T_\C)_{\tau}$. Moreover, \[\ev_\tau(K_{\underline{w}}*F_{w_R}).\vb_\tau=\sum_{v\in W^v} \tau(P_v) H_v*F_{w_R}(\tau).\vb_\tau=K_{\underline{w}}(\tau).\psi_{w_R}(\vb_\tau).\] 

Write $\underline{w}=(r_1,\ldots, r_k)$, with $r_1,\ldots,r_k\in \SCC$. Then by definition of $K_{\underline{w}}$ and by \cite[Lemma 6.22]{hebert2018principal}, there exist $(\theta_v)\in \C(Y)^{[1,w]_\tau}$ such that $K_{\underline{w}}=\sum_{v\leq_\tau w} F_v*\theta_v$. By \cite[Lemma 6.23]{hebert2018principal}, we deduce that there exist $(\tilde{\theta}_v)\in \C(Y)^{[1,w]_\tau}$ such that $K_{\underline{w}}*F_{w_R}=\sum_{v\in [1,w]_\tau} F_{vw_R}\tilde{\theta}_v$.

 By Lemma~\ref{lemBruhat_order_in_W_tau}, for all $w'\in [1,w)_{\tau}$, $w'w_R<ww_R$, thus by Lemma~\ref{lemReeder 4.3}~(\ref{itLeading_coefficient_Fw}), the coordinate in  $H_{w_R}$ of $K_{\underline{w}}*F_{w_R}$ in the basis $(H_v)_{v\in W^v}$ is $\tilde{\theta}_w$   and hence $\tilde{\theta}_w\in \C(Y)_\tau$. By \cite[Lemma 6.26]{hebert2018principal}, $\theta_w(\tau)\neq 0$. Using \cite[Lemma 6.23]{hebert2018principal} we deduce that  $\tilde{\theta}_w(\tau)\neq 0$, which proves (\ref{itSupport}).
\end{proof}

\begin{proposition}\label{propDescription_generalized_weight_spaces}
\begin{enumerate}
\item\label{itGeneralized_weight_space} The family $\big(K_{\underline{w}}(\tau).\psi_{w_R}(\vb_\tau)\big)_{w\in \Wta,w_R\in R_\tau}$ is a $\C$-basis of  $I_\tau(\tau,\mathrm{gen})$ and one has the following decomposition of $\C[Y]$-modules: \[I_\tau(\tau,\mathrm{gen})=\bigoplus_{w_R\in R_\tau} \psi_{w_R}\big(I_\tau(\tau,\mathrm{gen},\Wta)\big)=\bigoplus_{w_R\in R_\tau} \psi_{w_R}(\KC_\tau(\tau).\vb_\tau).\]
\item\label{itWeight_vectors}  One has $I_\tau(\tau)=\bigoplus_{w_R\in R_\tau}\psi_{w_R}\big(I_\tau(\tau,\Wta)\big)$. In particular if $I_\tau(\tau,\Wta)=\C \vb_\tau$, then $I_\tau(\tau)=\bigoplus_{w_R\in R_\tau} \C F_{w_R}(\tau)\vb_\tau$. 

\end{enumerate}
\end{proposition}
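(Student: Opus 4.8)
The plan is to prove that the family $\mathcal{B}=\big(K_{\underline{w}}(\tau).\psi_{w_R}(\vb_\tau)\big)_{w\in \Wta,\,w_R\in R_\tau}$ is a $\C$-basis of $I_\tau(\tau,\mathrm{gen})$; granting this, both decompositions of (\ref{itGeneralized_weight_space}) and the equality of (\ref{itWeight_vectors}) follow formally. I would check in turn that each member of $\mathcal{B}$ lies in $I_\tau(\tau,\mathrm{gen})$, that $\mathcal{B}$ is linearly independent, and that $|\mathcal{B}|=\dim_\C I_\tau(\tau,\mathrm{gen})$. For membership, since $\psi_{w_R}$ is an $\AC_\C$-endomorphism and $K_{\underline{w}}(\tau)\in \HCW\subset \AC_\C$, one has $K_{\underline{w}}(\tau).\psi_{w_R}(\vb_\tau)=\psi_{w_R}\big(K_{\underline{w}}(\tau).\vb_\tau\big)$. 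By Lemma~\ref{lemItau_gen_Wtas_subset_I_tau_gen}, $K_{\underline{w}}(\tau).\vb_\tau\in I_\tau(\tau,\mathrm{gen},\Wta)\subset I_\tau(\tau,\mathrm{gen})$, and any $\AC_\C$-endomorphism is in particular $\C[Y]$-linear, hence stabilises the generalized weight space $I_\tau(\tau,\mathrm{gen})$; so the member lies in $I_\tau(\tau,\mathrm{gen})$.

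The linear independence is the crux, and I would obtain it from a leading-term argument fed by the semidirect decomposition $W_\tau=R_\tau\ltimes\Wta$ of Lemma~\ref{lemDecomposition_Wtau_Rgroup}. By Lemma~\ref{lemPropertiesKF}~(\ref{itSupport}) each member has $\max\big(\supp(K_{\underline{w}}(\tau).\psi_{w_R}(\vb_\tau))\big)=\{ww_R\}$, so in the basis $(H_v\vb_\tau)_{v\in W^v}$ of $I_\tau$ it reads $c_{w,w_R}H_{ww_R}\vb_\tau$ plus a combination of $H_u\vb_\tau$ with $u<ww_R$, for some $c_{w,w_R}\in\C^*$. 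The decomposition $W_\tau=R_\tau\ltimes\Wta$ (with $R_\tau\cap\Wta=\{1\}$) makes $(w,w_R)\mapsto ww_R$ a bijection $\Wta\times R_\tau\to W_\tau$, so the leading indices are pairwise distinct. Given a relation $\sum a_{w,w_R}K_{\underline{w}}(\tau).\psi_{w_R}(\vb_\tau)=0$, pick $(w_0,w_{R,0})$ with $a_{w_0,w_{R,0}}\neq 0$ and $w_0w_{R,0}$ maximal for the Bruhat order among the indices carrying a nonzero coefficient; a term indexed by $(w,w_R)$ can contribute to $H_{w_0w_{R,0}}\vb_\tau$ only when $w_0w_{R,0}\leq ww_R$, which together with maximality forces $ww_R=w_0w_{R,0}$, hence $(w,w_R)=(w_0,w_{R,0})$. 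Thus the coefficient of $H_{w_0w_{R,0}}\vb_\tau$ equals $a_{w_0,w_{R,0}}c_{w_0,w_{R,0}}\neq 0$, a contradiction.

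It remains to match cardinalities, for which I would prove $\dim_\C I_\tau(\tau,\mathrm{gen})=|W_\tau|=|\Wta|\,|R_\tau|=|\mathcal{B}|$. Here $I_\tau$ is weighted, and by Lemma~\ref{lemCommutation relation} each $\theta\in\C[Y]$ stabilises the finite-dimensional flag $I_\tau^{\leq w}=\bigoplus_{u\leq w}\C H_u\vb_\tau$ and acts on it triangularly with diagonal coefficients $(u.\tau)(\theta)$; hence $\dim I_\tau^{\leq w}(\tau,\mathrm{gen})=|\{u\leq w\mid u.\tau=\tau\}|$, and passing to the directed union over $w\in W^v$ gives $\dim I_\tau(\tau,\mathrm{gen})=|W_\tau|$. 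This shows $\mathcal{B}$ is a basis.

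Decomposition (\ref{itGeneralized_weight_space}) then follows: $\psi_{w_R}$ is injective and $\AC_\C$-linear, so $\psi_{w_R}\big(I_\tau(\tau,\mathrm{gen},\Wta)\big)=\bigoplus_{w\in\Wta}\C\,K_{\underline{w}}(\tau).\psi_{w_R}(\vb_\tau)$ is a $\C[Y]$-submodule, and summing over $w_R$ recovers the basis. For (\ref{itWeight_vectors}), writing $x\in I_\tau(\tau)$ as $x=\sum_{w_R}\psi_{w_R}(y_{w_R})$ with $y_{w_R}\in I_\tau(\tau,\mathrm{gen},\Wta)$, applying $\theta\in\C[Y]$ and comparing components via the directness of (\ref{itGeneralized_weight_space}) and the injectivity of $\psi_{w_R}$ (Lemma~\ref{lemInvertibility_Psi_wr}) yields $\theta.y_{w_R}=\tau(\theta)y_{w_R}$, i.e. $y_{w_R}\in I_\tau(\tau,\Wta)$; the reverse inclusion holds since each $\psi_{w_R}$ is $\C[Y]$-linear and so maps $I_\tau(\tau)$ into $I_\tau(\tau)$. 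Taking $I_\tau(\tau,\Wta)=\C\vb_\tau$ and recalling $\psi_{w_R}(\vb_\tau)=F_{w_R}(\tau)\vb_\tau$ gives the final formula. The main obstacle is exactly the independence step: the whole argument hinges on the triangularity of Lemma~\ref{lemPropertiesKF}~(\ref{itSupport}) and on the fact that $W_\tau=R_\tau\ltimes\Wta$ makes the leading indices $ww_R$ distinct.
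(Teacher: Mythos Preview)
Your treatment of membership, linear independence, and part~(\ref{itWeight_vectors}) is correct and essentially coincides with the paper's. The difference lies in the \emph{spanning} step: the paper argues directly by descending induction on the support of $x\in I_\tau(\tau,\mathrm{gen})$ (using \cite[Lemma~3.3]{hebert2018principal} to ensure $\max\supp(x)\subset W_\tau$, then killing the top terms with suitable $K_{\underline{w}}(\tau).\psi_{w_R}(\vb_\tau)$), whereas you attempt a dimension count.

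Your count has a genuine gap. In this paper $W_\tau$ can be infinite (for instance $W_\tau\simeq\Z$ or $W_\tau\simeq D_\infty$, see Lemma~\ref{lemList_possibilities_Wtau_Rtau_Wta}), and in infinite dimension the implication ``$\mathcal{B}$ is linearly independent and $|\mathcal{B}|=\dim I_\tau(\tau,\mathrm{gen})$, hence $\mathcal{B}$ is a basis'' is false. The sentence ``passing to the directed union \dots\ gives $\dim I_\tau(\tau,\mathrm{gen})=|W_\tau|$; this shows $\mathcal{B}$ is a basis'' is therefore not a valid inference.

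The fix is already latent in your own argument, so you should make it explicit rather than collapse it to a global cardinality. Your triangularity gives, for each fixed $w\in W^v$, the \emph{finite} equality $\dim I_\tau^{\leq w}(\tau,\mathrm{gen})=|\{u\leq w:u\in W_\tau\}|$. On the other hand, Lemma~\ref{lemPropertiesKF}~(\ref{itSupport}) says $\supp\big(K_{\underline{w'}}(\tau).\psi_{w_R}(\vb_\tau)\big)\subset[1,w'w_R]$, so the members of $\mathcal{B}$ lying in $I_\tau^{\leq w}$ are exactly those with $w'w_R\leq w$, and there are $|\{u\leq w:u\in W_\tau\}|$ of them. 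Being linearly independent in a finite-dimensional space of the same dimension, they form a basis of $I_\tau^{\leq w}(\tau,\mathrm{gen})$. Since every $x\in I_\tau(\tau,\mathrm{gen})$ lies in some $I_\tau^{\leq w}$, spanning follows. With this repair your route is a clean alternative to the paper's induction; without it, the argument fails precisely in the cases of interest here.
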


\begin{proof}
By Lemma~\ref{lemPropertiesKF} (3), $\big(K_{\underline{w}}(\tau).\psi_{w_R}(\vb_\tau)\big)_{w\in \Wta,w_R\in R_\tau}$ is a free family. Moreover, if $w_R\in R_\tau$, $\psi_{w_R}$ is a $\AC_\C$-module morphism and thus it is a $\C[Y]$-module morphism. Therefore, by Lemma~\ref{lemItau_gen_Wtas_subset_I_tau_gen}, $K_{\underline{w}}(\tau).\psi_{w_R}(\vb_\tau)\in I_\tau(\tau,\mathrm{gen})$ for all $w\in \Wta$ and $w_R\in R_\tau$. Let $x \in I_\tau(\tau,\mathrm{gen})$ and $W_M=\max\big(\supp(x)\big)$. Then by \cite[Lemma 3.3]{hebert2018principal}, $M\subset W_\tau$. Write $W_M=\{w_1,\ldots,w_k\}$, with $k=|W_M|$. For $i\in \llbracket 1,k\rrbracket$, write $w_i=w^iw_R^i$, with $w^i\in \Wta$ and $w_R^i\in R_\tau$, which is possible by Lemma~\ref{lemDecomposition_Wtau_Rgroup}.  Then by Lemma~\ref{lemPropertiesKF} (\ref{itSupport}) there exist $\lambda_1,\ldots,\lambda_k\in \C^*$ such that if $y=x-\sum_{i=1}^k \lambda_i K_{\underline{w^i}}(\tau).\psi_{w_R^i}(\vb_\tau)$, then  for all $v\in \supp(y)$, there exists $w\in W^v$ such that $v<w$. Moreover, $y\in I_\tau(\tau,\mathrm{gen})$. Thus by decreasing induction on $\max \{\ell(w)|w\in W_M\}$ we deduce that $I_\tau(\tau,\mathrm{gen})\subset \bigoplus_{w\in \Wta,w_R\in R_\tau} K_{\underline{w}}(\tau).\psi_{w_R}(\vb_\tau)$. By Lemma~\ref{lemItau_gen_Wtas_subset_I_tau_gen}, $\psi_{w_R}\big(I_\tau(\tau,\mathrm{gen},\Wta)\big)$ is a $\C[Y]$-module for all $w_R\in R_\tau$,  which proves (\ref{itGeneralized_weight_space}).

Let $x\in I_\tau(\tau)\subset I_\tau(\tau,\mathrm{gen})$. 
Write $x=\sum_{w\in \Wta,w_R\in R_\tau} x_{w_R}$ with $x_{w_R}\in \psi_{w_R}\big(I_\tau(\tau,\Wta)\big)$, for $w_R\in R_\tau$. By (1), if $w_R\in R_\tau$, then $x_{w_R}\in I_\tau(\tau)\cap \psi_{w_R}\big(I_\tau(\tau,\mathrm{gen},\Wta)\big)=\psi_{w_R}\big(I_\tau(\tau,\Wta)\big)$, which  concludes the proof of the proposition.
\end{proof}

\begin{corollary}\label{corEnd(Itau)_spanned_invertible}
Suppose that $I_\tau(\tau,\Wta)=\C\vb_\tau$. Then for all $\phi\in \End(I_\tau)$, there exist $k\in \N$ and $\phi_1,\ldots,\phi_k\in \End(I_\tau)^\times$ such that $\phi=\sum_{i=1}^k \phi_i$.
\end{corollary}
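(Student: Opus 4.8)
The plan is to transport, via Frobenius reciprocity, the decomposition of $I_\tau(\tau)$ established in Proposition~\ref{propDescription_generalized_weight_spaces} into a basis of $\End(I_\tau)$ made up of units, and then read off the desired decomposition coefficient by coefficient.

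First I would recall that by Lemma~\ref{lemFrobenius_reciprocity} the map $\Upsilon:I_\tau(\tau)\to \End(I_\tau)$, sending $x$ to $\Upsilon_x$, is a vector space isomorphism. Under the standing hypothesis $I_\tau(\tau,\Wta)=\C\vb_\tau$, Proposition~\ref{propDescription_generalized_weight_spaces}~(\ref{itWeight_vectors}) gives $I_\tau(\tau)=\bigoplus_{w_R\in R_\tau}\C F_{w_R}(\tau)\vb_\tau$. Applying the linear isomorphism $\Upsilon$ and recalling that $\psi_{w_R}=\Upsilon_{F_{w_R}(\tau)\vb_\tau}$ by definition, I obtain that $(\psi_{w_R})_{w_R\in R_\tau}$ is a $\C$-basis of $\End(I_\tau)$.

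Next, by Lemma~\ref{lemInvertibility_Psi_wr} each $\psi_{w_R}$ lies in $\End(I_\tau)^\times$; and since $\C$ is a field, every nonzero scalar multiple $a\psi_{w_R}$ (with $a\in\C^*$) is again a unit, its inverse being a scalar multiple of $\psi_{w_R^{-1}}$. Given $\phi\in\End(I_\tau)$, I would expand it in this basis as a finite sum $\phi=\sum_{w_R\in R_\tau}a_{w_R}\psi_{w_R}$ (only finitely many $a_{w_R}$ being nonzero), discard the vanishing coefficients, and thereby exhibit $\phi$ as a sum of units $\phi=\sum_{i=1}^k \phi_i$ with $\phi_i\in\End(I_\tau)^\times$. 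For the edge case $\phi=0$ one takes the empty sum, or, if a nonempty decomposition is preferred, writes $0=\psi_1+(-\psi_1)$, noting that $\psi_1=\Upsilon_{\vb_\tau}=\Id$ is a unit.

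I do not expect a genuine obstacle here: the corollary follows directly from the already-established basis $(\psi_{w_R})_{w_R\in R_\tau}$ of $\End(I_\tau)$ together with the invertibility of each $\psi_{w_R}$. The only points meriting a word of care are the finiteness of the expansion of $\phi$ (immediate, since a single element of $I_\tau(\tau)$ is a finite combination of the vectors $F_{w_R}(\tau)\vb_\tau$) and the stability of $\End(I_\tau)^\times$ under multiplication by nonzero scalars.
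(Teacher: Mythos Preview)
Your argument is correct and essentially identical to the paper's: both expand $\phi(\vb_\tau)\in I_\tau(\tau)$ in the basis $(F_{w_R}(\tau)\vb_\tau)_{w_R\in R_\tau}$ given by Proposition~\ref{propDescription_generalized_weight_spaces}~(\ref{itWeight_vectors}), translate this via Frobenius reciprocity into $\phi=\sum a_{w_R}\psi_{w_R}$, and invoke Lemma~\ref{lemInvertibility_Psi_wr}. Your version is in fact slightly more careful, making explicit that nonzero scalar multiples of units are units and handling the trivial case $\phi=0$.
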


\begin{proof}
Let $\phi\in \End(I_\tau)$ and $x=\phi(\vb_\tau)\in I_\tau(\tau)$. By Proposition~\ref{propDescription_generalized_weight_spaces}, one can write $x=\sum_{w_R\in R_\tau} a_{w_R} F_{w_R}(\tau)\vb_\tau$, where $(a_{w_R})\in \C^{(R_\tau)}$. Then $x=\sum_{w_R\in R_\tau} a_{w_R} \psi_{w_R}(\vb_\tau)$ and thus $\phi=\sum_{w_R\in R_\tau} a_{w_R}\psi_{w_R}$. Using Lemma~\ref{lemInvertibility_Psi_wr}, we deduce the corollary.
\end{proof}

\begin{lemma}\label{lemConjItau_size2_case}
Suppose that the Kac-Moody matrix $A$ has size $2$ and that $W^v$ is infinite. Then $I_\tau(\tau,\Wta)=\C \vb_\tau$. 
\end{lemma}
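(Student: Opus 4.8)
The plan is to show that the joint kernel of $\mathfrak{m}_\tau$ acting on $I_\tau(\tau,\mathrm{gen},\Wta)=\bigoplus_{w\in\Wta}\C K_{\underline w}(\tau)\vb_\tau$ (Notation~\ref{notItau(tau,Wta)}) is exactly $\C\vb_\tau$. Since $I_\tau(\tau)=\{x:(\theta-\tau(\theta)).x=0\ \forall\theta\}=\{x:\mathfrak{m}_\tau.x=0\}$, this joint kernel is precisely $I_\tau(\tau,\mathrm{gen},\Wta)\cap I_\tau(\tau)=I_\tau(\tau,\Wta)$, and as $\vb_\tau=K_{\underline 1}(\tau)\vb_\tau$ is visibly annihilated by $\mathfrak{m}_\tau$, the claim $I_\tau(\tau,\Wta)=\C\vb_\tau$ follows. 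Under the hypotheses $W^v$ is the infinite dihedral group, so the reflection subgroup $\Wta$ is either $\{1\}$, a single $\langle r\rangle\cong\Z/2\Z$, or itself infinite dihedral with $\SCC_\tau=\{r,r'\}$; the first case is trivial.

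The engine of the argument is a rank-one computation. For $r=r_{\beta^\vee}\in\SCC_\tau$ and $\theta\in\mathfrak{m}_\tau$, relation~(\ref{eqRelation_commutation_K}) together with Lemma~\ref{lemCompatibility_evalutation_weight_vectors} gives $\theta.(K_r(\tau)\vb_\tau)=L_r(\theta)\vb_\tau$ with $L_r(\theta)=\big(({}^r\theta-\theta)\zeta_r\big)(\tau)$. Writing $\theta=Z^\lambda$ and using $r.\lambda=\lambda-\alpha_r(\lambda)\alpha_r^\vee$, the factor ${}^rZ^\lambda-Z^\lambda$ is divisible by $1-Z^{-\beta^\vee}$ and so cancels the pole of $\zeta_r=\zeta_{\beta^\vee}$ at $\tau$; a short residue computation (treating the three possible shapes of $\zeta_{\beta^\vee}^{\mathrm{den}}$, and invoking Remark~\ref{remIH algebre dans le cas KM deploye}(3) to know $\alpha_r(Y)=2\Z$ whenever $\sigma_r\neq\sigma_r'$) yields $L_r(Z^\lambda)=c_r\,\alpha_r(\lambda)\tau(\lambda)$ with $c_r=\pm\zeta^{\mathrm{num}}_{\beta^\vee}(\tau)$. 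Because $\tau\in\UC_\C$ forces $\zeta^{\mathrm{num}}_{\beta^\vee}(\tau)\neq0$, the functional $L_r$ is nonzero and is, up to the scalar $c_r$, the functional $\theta=\sum a_\lambda Z^\lambda\mapsto\sum a_\lambda\alpha_r(\lambda)\tau(\lambda)$. This already settles the case $\Wta\cong\Z/2\Z$: a weight vector $aK_r(\tau)\vb_\tau+c\vb_\tau$ must satisfy $aL_r(\theta)=0$ for all $\theta$, whence $a=0$.

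For the dihedral case I would run a leading-term analysis in the order $<_\tau$. From the proof of Lemma~\ref{lemItau_gen_Wtas_subset_I_tau_gen}, $\theta.K_{\underline w}(\tau)\in\bigoplus_{v<_\tau w}\C K_{\underline v}(\tau)$ for $\theta\in\mathfrak{m}_\tau$, so $\mathfrak{m}_\tau$ strictly lowers $\ell_\tau$; in $\Wta$ there are exactly two elements $u_n,u_n'$ of length $n\ge1$, each covering both length-$(n-1)$ elements. Expanding $\theta K_{\underline{u_n}}=K_{\underline{u_n}}{}^{u_n^{-1}}\theta+\sum_i K_{r_1}\cdots\big(({}^{r_i\cdots r_1}\theta-{}^{r_{i-1}\cdots r_1}\theta)\zeta_{r_i}\big)\cdots K_{r_n}$ and keeping only the terms surviving at level $n-1$ (the ``drop first'' and ``drop last'' summands; interior drops create a repeated letter and fall to level $n-2$), I would compute that in $\theta.K_{\underline{u_n}}(\tau)\vb_\tau$ the coefficient of $K_{\underline{u_{n-1}}}(\tau)\vb_\tau$ equals $\big(({}^{r_n}({}^{u_{n-1}^{-1}}\theta)-{}^{u_{n-1}^{-1}}\theta)\zeta_{r_n}\big)(\tau)=L_{r_n}({}^{u_{n-1}^{-1}}\theta)$, which by the rank-one formula is, up to a nonzero scalar, the functional attached to the positive root $\eta=u_{n-1}.\alpha_{r_n}$; the contribution of $u_n'$ to the same coefficient is the clean functional $L_{r'}$ attached to $\alpha_{r'}$.

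Putting this together, if $x=\sum_w a_wK_{\underline w}(\tau)\vb_\tau$ is a weight vector of top level $n\ge1$, then only $a_{u_n},a_{u_n'}$ feed the level-$(n-1)$ part of $\theta.x$, and vanishing of the $K_{\underline{u_{n-1}}}(\tau)\vb_\tau$-coefficient for all $\theta$ reads $a_{u_n}\cdot(\text{functional}\sim\eta)+a_{u_n'}\cdot(\text{functional}\sim\alpha_{r'})=0$. When $n\ge2$ the coroots $\eta^\vee=u_{n-1}.\alpha_{r_n}^\vee$ and $\alpha_{r'}^\vee$ are distinct: equality would force $u_{n-1}r_nu_{n-1}^{-1}=r'$ (Lemma~\ref{lemKumar1.3.11}), i.e.\ a conjugate of a simple reflection of $\Wta$ by an element of $\ell_\tau$-length $n-1\ge1$ to be simple, impossible in $D_\infty$. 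Distinct positive (co)roots are non-proportional by \cite[1.2.2 (2)]{kumar2002kac}, so the two functionals are linearly independent and $a_{u_n}=a_{u_n'}=0$; the case $n=1$ is handled directly by the independence of $L_r$ and $L_{r'}$ (equivalently of $\alpha_r,\alpha_{r'}$). This contradicts $n$ being the top level, so every weight vector lies in $\C\vb_\tau$, giving $I_\tau(\tau,\Wta)=\C\vb_\tau$. I expect the main obstacle to be the bookkeeping in the leading-term computation — pinning down exactly which summands survive at level $n-1$ and evaluating the resulting residues at $\tau$ across the three shapes of $\zeta_{\beta^\vee}^{\mathrm{den}}$ — after which the conclusion follows cleanly from the non-proportionality of distinct positive roots.
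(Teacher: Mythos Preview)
Your proposal is correct and follows essentially the same route as the paper for the cases $\Wta=\{1\}$ and $\Wta\cong\Z/2\Z$: both reduce to the rank-one computation $L_r(Z^\lambda)=c_r\,\alpha_r(\lambda)\tau(\lambda)$ with $c_r\neq 0$ (the paper invokes \cite[Lemma 6.32]{hebert2018principal} for this, you sketch the residue directly). For the infinite dihedral case the paper does not argue at all but simply cites the proof of \cite[Lemma 6.36]{hebert2018principal}; your leading-term analysis is an explicit, self-contained substitute, and the key steps check out: interior drops produce a repeated letter and hence collapse via $K_r^2=-K_r(\zeta_r+{}^r\zeta_r)$ to level $\le n-2$; only $a_{u_n}$ (drop-last) and $a_{u_n'}$ (drop-first) feed the $K_{\underline{u_{n-1}}}$-coefficient, since lower-length terms contribute nothing (their top part vanishes as $\tau(\theta)=0$) and $u_{n-1}'\not<_\tau u_{n-1}$ in $D_\infty$; and the two resulting functionals are non-proportional because $u_{n-1}r_nu_{n-1}^{-1}\neq r'$ in $D_\infty$ for $n\ge2$, forcing $u_{n-1}.\alpha_{r_n}^\vee\neq\pm\alpha_{r'}^\vee$ by Lemma~\ref{lemKumar1.3.11} and $\R\alpha^\vee\cap\Phi^\vee=\{\pm\alpha^\vee\}$. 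The bookkeeping you flag as the main obstacle is genuine but routine once the $K_r^2$ identity is in hand; your argument has the advantage of being reproducible without consulting the earlier paper.
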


\begin{proof}
If $\Wta=\{1\}$, then $I_\tau(\tau,\Wta)=\C \vb_\tau$. If $(\Wta,\SCC_\tau)$ is isomorphic to the infinite dihedral group, then the proof of \cite[Lemma 6.36]{hebert2018principal} actually proves that
 $I_\tau(\tau,\Wta)=\C \vb_\tau$. By \cite[Lemma 6.37]{hebert2018principal}, as $\Wta$ is generated by reflections, the only remaining case is the case where $\SCC_\tau=\{r\}$ and  $\Wta=\langle r\rangle$, for some reflection $r$. Then $I_\tau(\tau,\mathrm{gen},\Wta)=\C\vb_\tau\oplus K_{r}(\tau)\vb_\tau$. Thus it suffices to prove that $K_{r}(\tau)\vb_\tau\notin I_\tau(\tau)$. Let $\theta\in \C(Y)_\tau$. Then by Lemma~\ref{lemReeder 4.3} (\ref{itCommutation_relation}) we have \[\theta* K_r(\tau)\vb_\tau=K_r(\tau)*\theta^r\vb_\tau+(\theta^r-\theta)\zeta_r\vb_\tau.\] Let $\lambda\in Y$ and suppose that $\theta=Z^\lambda$. Then by \cite[Lemma 6.32]{hebert2018principal}, there exists $a\in \C^*$ such that if $r=r_{\beta^\vee}$, $(\theta^r-\theta)\zeta_r\vb_\tau =a\tau(\lambda)\beta(\lambda)\vb_\tau$. Thus as $\beta(\beta^\vee)=2\neq 0$ we deduce that $\theta*K_r(\tau)\vb_\tau \notin \C K_{r}(\tau)\vb_\tau$, which concludes the proof of the lemma.
\end{proof}

\begin{conjecture}\label{conjItau}
Let $\tau\in \UC_\C$. Then $I_\tau(\tau,\Wta)=\C \vb_\tau$. 
\end{conjecture}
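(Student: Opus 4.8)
The plan is to reformulate the conjecture as a statement about the socle of the nilpotent $\C[Y]$-module $I_\tau(\tau,\mathrm{gen},\Wta)$ and then to control that socle by passing to the associated graded for the $\ell_\tau$-filtration. Since $\vb_\tau=K_{\underline 1}(\tau)\vb_\tau$ one always has $\C\vb_\tau\subset I_\tau(\tau,\Wta)$, so it suffices to show that a weight vector of top $\ell_\tau$-level $n\geq 1$ cannot exist. First I would set $M_{\leq n}=\bigoplus_{w\in\Wta,\ \ell_\tau(w)\leq n}\C K_{\underline w}(\tau)\vb_\tau$; by the proof of Lemma~\ref{lemItau_gen_Wtas_subset_I_tau_gen} each $M_{\leq n}$ is a $\C[Y]$-submodule and, for $\theta\in\mathfrak{m}_\tau$,
\[
\theta.K_{\underline w}(\tau)\vb_\tau=\sum_{v<_\tau w}c_{v,w}(\theta)\,K_{\underline v}(\tau)\vb_\tau
\]
for linear forms $c_{v,w}$ on $\mathfrak{m}_\tau$. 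Thus $I_\tau(\tau,\Wta)$ is exactly the kernel of the $\mathfrak{m}_\tau$-action on $I_\tau(\tau,\mathrm{gen},\Wta)$, and the conjecture says this socle is one dimensional.

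Next I would pass to the associated graded $\mathrm{gr}\,M=\bigoplus_n M_{\leq n}/M_{\leq n-1}$, a graded module over $\mathrm{Sym}(V)$ where $V=\mathfrak{m}_\tau/\mathfrak{m}_\tau^2$ is the cotangent space of $T_\C$ at $\tau$, on which $\Wta$ acts through its action on $Y$. The socle vanishing in positive degree is then equivalent to injectivity, for every $n\geq1$, of the combined leading lowering map $\mathrm{gr}_n M\to V^*\otimes\mathrm{gr}_{n-1}M$ assembled from the covering coefficients $c_{v,w}$ with $v\lessdot_\tau w$. These covering coefficients I would compute by the same mechanism as in Lemma~\ref{lemConjItau_size2_case}, using the commutation relation (\ref{eqRelation_commutation_K}), the expansion $K_{\underline w}=\sum_{v\leq_\tau w}F_v\,\theta_v$ and \cite[Lemma 6.25]{hebert2018principal}: in the model covering $w=vr$ with $r=r_{\gamma^\vee}$ a reflection of $\Wta$, the coefficient $c_{v,w}(Z^\lambda-\tau(\lambda))$ should be proportional to $\tau(\lambda)\,\gamma(v^{-1}.\lambda)$, matching $c_{1,r}(Z^\lambda-\tau(\lambda))=a\tau(\lambda)\beta(\lambda)$ from the rank-one case.

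The plan would then be to identify $\mathrm{gr}\,M$ with the graded dual of the coinvariant algebra $\mathrm{Sym}(V)/\big(\mathrm{Sym}(V)^{\Wta}_+\big)$ of $\Wta$, in the spirit of Reeder~\cite{reeder1997nonstandard}. The point is that the Poincaré polynomial $\sum_{w\in\Wta}t^{\ell_\tau(w)}$ of $\mathrm{gr}\,M$ coincides with that of the coinvariant algebra $R$, and $R$ is standard graded (generated by $R_1=V$ over $R_0=\C$); hence the multiplication map $V\otimes R_{n-1}\to R_n$ is surjective for all $n\geq1$, so its transpose $R_n^*\to V^*\otimes R_{n-1}^*$ is injective. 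Translated back, this gives precisely the injectivity of the combined lowering map, and therefore a one-dimensional socle.

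The hard part, and the reason the statement is only a conjecture, is establishing this identification uniformly. What must be proved is the \emph{non-degeneracy} of the leading lowering data: that the root-theoretic coefficients $\gamma_{v,w}$ attached to all Bruhat coverings of $(\Wta,\SCC_\tau)$ assemble into the contraction action of the coinvariant algebra, for an \emph{arbitrary} and possibly infinite Coxeter group $\Wta$. In the size-$2$ case $\Wta$ is trivial, of rank one, or infinite dihedral, and Lemma~\ref{lemConjItau_size2_case} settles each by a direct computation (the rank-one obstruction $\beta(\beta^\vee)=2\neq0$ being the key); the general case requires controlling the covering combinatorics and the attendant linear algebra with no such shortcut, which is exactly the missing ingredient.
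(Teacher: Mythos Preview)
This statement is labelled as a \emph{Conjecture} in the paper and is not proved there; the paper only establishes the size-$2$ case in Lemma~\ref{lemConjItau_size2_case} by direct computation, and then proceeds \emph{assuming} the conjecture holds. So there is no ``paper's own proof'' to compare against, and you correctly recognise this: what you have written is a strategy sketch, not a proof, and you explicitly flag the missing ingredient at the end.

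That said, the strategy as stated has a structural gap beyond what you acknowledge. Your key step is to identify $\mathrm{gr}\,M$ with the graded dual of the coinvariant algebra $R=\mathrm{Sym}(V)/(\mathrm{Sym}(V)^{\Wta}_+)$ and then invoke surjectivity of $V\otimes R_{n-1}\to R_n$. The Poincar\'e-series matching $\sum_{w\in\Wta}t^{\ell_\tau(w)}=\mathrm{Hilb}(R,t)$ that you appeal to is a theorem for \emph{finite} reflection groups (Chevalley--Shephard--Todd); for infinite $\Wta$ it is not available, and indeed $R$ need not even have finite-dimensional graded pieces. Moreover $V=\mathfrak{m}_\tau/\mathfrak{m}_\tau^2\cong Y\otimes\C$ has rank possibly larger than the reflection rank of $\Wta$, so $\Wta$ acts trivially on a complement and $R$ picks up a full polynomial-algebra factor, spoiling the Hilbert-series match outright. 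Already in the infinite dihedral case handled by Lemma~\ref{lemConjItau_size2_case} (via \cite[Lemma~6.36]{hebert2018principal}), the paper proceeds by explicit computation rather than any coinvariant-algebra interpretation, precisely because the latter is not available.

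In short: your reformulation as a socle computation for the $\ell_\tau$-filtered module is correct and is essentially how the paper thinks about it, and your identification of the leading lowering coefficients with the root data is in the right spirit (it is what makes the rank-one and dihedral cases go through). But the coinvariant-algebra route you propose for the general step does not get off the ground when $\Wta$ is infinite, so even granting all your combinatorial claims about covering coefficients, the argument would not close. A genuine proof of the conjecture would need a replacement for the Chevalley theorem adapted to infinite reflection subgroups, or a different mechanism altogether.
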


\subsection{Generalized weight spaces of submodules and quotients for $\tau\in \UC_\C$ such that $I_\tau(\tau,\Wta)=\C \vb_\tau$ }\label{subGeneralized_weight_spaces_submodules}

Let $M\subset I_\tau$ be a submodule. We now study $M(\tau,\mathrm{gen})$ and $I_\tau/M(\tau,\mathrm{gen})$  and we deduce results on $I_\tau/M(\tau)$. The main results are Lemma~\ref{lemModules_K_generated_weight_subspaces}, which in particular asserts that $M$ is generated by $M(\tau)$ and Proposition~\ref{propK_description_generalized_weight_spaces_quotient}, which enables to describe $I_\tau/M(\tau)$ as the image of $I_\tau(\tau)$ by the canonical projection.

\subsubsection{Description of $M(\tau,\mathrm{gen})$, for $M\subset I_\tau$}

\begin{lemma}\label{lemProduct_free_families_Itau(tau)}
Suppose that $I_\tau(\tau,\Wta)=\C \vb_\tau$. Let $(x_j)_{j\in J}$ be a free family of $I_\tau(\tau)$. Then $(K_{\underline{w}}(\tau). x_j)_{j\in J,w\in \Wta}$ is a free family of $I_\tau(\tau,\mathrm{gen})$. 
\end{lemma}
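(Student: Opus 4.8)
The plan is to reduce the statement to the two parts of Proposition~\ref{propDescription_generalized_weight_spaces}, which under the hypothesis $I_\tau(\tau,\Wta)=\C\vb_\tau$ supply at once a basis of $I_\tau(\tau,\mathrm{gen})$ and a basis of $I_\tau(\tau)$.

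First I would invoke Proposition~\ref{propDescription_generalized_weight_spaces}~(\ref{itWeight_vectors}): since $I_\tau(\tau,\Wta)=\C\vb_\tau$, the family $\big(\psi_{w_R}(\vb_\tau)\big)_{w_R\in R_\tau}$ (equivalently $\big(F_{w_R}(\tau)\vb_\tau\big)_{w_R\in R_\tau}$) is a $\C$-basis of $I_\tau(\tau)$. Hence each $x_j$ expands uniquely as
\[x_j=\sum_{w_R\in R_\tau}a_{j,w_R}\,\psi_{w_R}(\vb_\tau),\]
with only finitely many $a_{j,w_R}\in\C$ nonzero. As the action of $K_{\underline{w}}(\tau)$ is $\C$-linear, this gives $K_{\underline{w}}(\tau).x_j=\sum_{w_R}a_{j,w_R}\,K_{\underline{w}}(\tau).\psi_{w_R}(\vb_\tau)$; in particular each $K_{\underline{w}}(\tau).x_j$ lies in $I_\tau(\tau,\mathrm{gen})$, so the asserted family does sit inside the claimed space.

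Next I would test linear independence. Suppose $\sum_{(j,w)}c_{j,w}\,K_{\underline{w}}(\tau).x_j=0$ is a finite vanishing combination. Substituting the expansions above and regrouping by the elements $K_{\underline{w}}(\tau).\psi_{w_R}(\vb_\tau)$ yields
\[\sum_{w\in\Wta,\,w_R\in R_\tau}\Big(\sum_j c_{j,w}a_{j,w_R}\Big)\,K_{\underline{w}}(\tau).\psi_{w_R}(\vb_\tau)=0.\]
By Proposition~\ref{propDescription_generalized_weight_spaces}~(\ref{itGeneralized_weight_space}), the family $\big(K_{\underline{w}}(\tau).\psi_{w_R}(\vb_\tau)\big)_{w\in\Wta,\,w_R\in R_\tau}$ is a basis of $I_\tau(\tau,\mathrm{gen})$, so every coefficient vanishes: $\sum_j c_{j,w}a_{j,w_R}=0$ for all $w$ and $w_R$.

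Finally I would decouple over $w$. Fixing $w$, the vanishing of $\sum_j c_{j,w}a_{j,w_R}$ for every $w_R$ means precisely that $\sum_j c_{j,w}x_j=\sum_{w_R}\big(\sum_j c_{j,w}a_{j,w_R}\big)\psi_{w_R}(\vb_\tau)=0$; since $(x_j)_{j\in J}$ is free, $c_{j,w}=0$ for all $j$. As $w$ was arbitrary, all $c_{j,w}$ vanish, establishing freeness. I do not expect a genuine obstacle here: the entire content is carried by Proposition~\ref{propDescription_generalized_weight_spaces}, and the only point requiring care is the bookkeeping with finite subfamilies, since $J$, $\Wta$ and $R_\tau$ may all be infinite.
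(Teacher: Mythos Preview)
Your proof is correct and follows essentially the same approach as the paper's: both arguments rest entirely on the two parts of Proposition~\ref{propDescription_generalized_weight_spaces}. The paper packages the extraction of the $w$-component via an explicit projection $\pi_w:I_\tau(\tau,\mathrm{gen})\to I_\tau(\tau)$ sending $\sum_{v,w_R}a_{v,w_R}K_{\underline{v}}(\tau).\psi_{w_R}(\vb_\tau)$ to $\sum_{w_R}a_{w,w_R}\psi_{w_R}(\vb_\tau)$, whereas you write out the double expansion and equate coefficients; these are the same computation.
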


\begin{proof}
Let $w\in \Wta$. Using Proposition~\ref{propDescription_generalized_weight_spaces} we define  $\pi_w:I_\tau(\tau,\mathrm{gen})\rightarrow I_\tau(\tau)$ as follows: \[\pi_w\big(\sum_{v\in\Wta, w_R\in R_\tau} a_{v,w_R} K_{\underline{v}}(\tau).\psi_{w_R}(\vb_\tau)\big)=\sum_{w_R\in \Wta} a_{w,w_R}\psi_{w_R}(\vb_\tau),\] for $(a_{w,w_R})\in \C^{(\Wta\times R_\tau)}$. Let $(a_{v,j})\in \C^{(\Wta\times J)}$ be such that $\sum_{(v,j)\in \Wta\times J} a_{v,j} K_{\underline{v}}(\tau).x_j=0$. Let $w\in \Wta$. Then $\pi_w( \sum_{(v,j)\in \Wta\times J} a_{v,j} K_{\underline{w}}(\tau).x_j)=\sum_{j\in J}a_{w,j} x_j=0$ and thus $(a_{w,j})_{w\in \Wta,j\in J} =0$. Therefore $(K_{\underline{w}}(\tau). x_j)$ is free.
\end{proof}

\medskip 

 Recall that $\KC_\tau(\tau)=\bigoplus_{w\in \Wta} K_{\underline{w}}(\tau)\subset \HCW$.

\begin{lemma}\label{lemModules_K_generated_weight_subspaces}
Let $M$ be a submodule of $I_\tau$. Then $M(\tau,\mathrm{gen})=\mathrm{vect}_\C\big(\KC_\tau(\tau).M(\tau)\big)$.
\end{lemma}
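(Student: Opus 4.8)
The plan is to prove the two inclusions separately; the inclusion $\supseteq$ is immediate, and $\subseteq$ is obtained by an induction on a degree coming from the Coxeter group $\Wta$.

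For $\supseteq$: each $K_{\underline w}(\tau)$ lies in $\HCW\subseteq\AC_\C$ and $M$ is an $\AC_\C$-submodule, so $K_{\underline w}(\tau).M(\tau)\subseteq M$; writing any $x\in I_\tau(\tau)$ in the basis $\big(\psi_{w_R}(\vb_\tau)\big)_{w_R\in R_\tau}$, Proposition~\ref{propDescription_generalized_weight_spaces} gives $K_{\underline w}(\tau).x\in I_\tau(\tau,\mathrm{gen})$. Hence $K_{\underline w}(\tau).M(\tau)\subseteq M\cap I_\tau(\tau,\mathrm{gen})=M(\tau,\mathrm{gen})$, and taking $\C$-spans over $w\in\Wta$ yields $\vect_\C\big(\KC_\tau(\tau).M(\tau)\big)\subseteq M(\tau,\mathrm{gen})$.

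For $\subseteq$: by Proposition~\ref{propDescription_generalized_weight_spaces} every $y\in I_\tau(\tau,\mathrm{gen})$ is uniquely $y=\sum_{w\in\Wta}K_{\underline w}(\tau).\pi_w(y)$ with $\pi_w(y)\in I_\tau(\tau)$, where the $\pi_w$ are the projections built in the proof of Lemma~\ref{lemProduct_free_families_Itau(tau)}. I set $d(y)=\max\{\ell_\tau(w)\mid \pi_w(y)\neq 0\}$ and induct on $d(y)$ to show $y\in M(\tau,\mathrm{gen})\Rightarrow y\in\vect_\C\big(\KC_\tau(\tau).M(\tau)\big)$. The base case $d(y)=0$ reads $y=\pi_1(y)\in M\cap I_\tau(\tau)=M(\tau)=K_{\underline 1}(\tau).M(\tau)$. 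For the step, the key fact is that $\mathfrak m_\tau$ strictly lowers the degree: by Lemma~\ref{lemItau_gen_Wtas_subset_I_tau_gen}, $\theta.K_{\underline w}(\tau)\vb_\tau\in\bigoplus_{v<_\tau w}\C K_{\underline v}(\tau)\vb_\tau$ for $\theta\in\mathfrak m_\tau$, and this is transported to every $\psi_{w_R}$-slice since $\psi_{w_R}\in\End(I_\tau)$ commutes with the whole $\AC_\C$-action. Thus, fixing $y\in M(\tau,\mathrm{gen})$ with $d:=d(y)\geq 1$ and any product $\Theta=\theta_1\cdots\theta_d$ of elements of $\mathfrak m_\tau$, the operator $\Theta$ kills every slice of degree $<d$ and sends each degree-$d$ generator into the degree-$0$ part $\C\vb_\tau$; writing $\Theta.K_{\underline w}(\tau)\vb_\tau=c_{w,\Theta}\vb_\tau$ and using $\psi_{w_R}$-equivariance gives
\[\Theta.y=\sum_{\ell_\tau(w)=d}c_{w,\Theta}\,\pi_w(y).\]
On the other hand $\Theta\in\C[Y]\subseteq\AC_\C$ and $M$ is a submodule, so $\Theta.y\in M$, and $\Theta.y\in I_\tau(\tau)$, whence $\Theta.y\in M(\tau)$. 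The crux is the separation claim: as $\Theta$ ranges over such products, the tuples $(c_{w,\Theta})_{\ell_\tau(w)=d}$ span $\C^{\{w\in\Wta\mid \ell_\tau(w)=d\}}$ (equivalently, no nonzero $z=\sum_{\ell_\tau(w)=d}b_w K_{\underline w}(\tau)\vb_\tau$ is annihilated by all such $\Theta$, since $\sum_w b_w c_{w,\Theta}\vb_\tau=\Theta.z$). Granting it, one solves for each top $\pi_w(y)$ as a $\C$-combination of the elements $\Theta.y\in M(\tau)$, so $\pi_w(y)\in M(\tau)$; subtracting the degree-$d$ part $\sum_{\ell_\tau(w)=d}K_{\underline w}(\tau).\pi_w(y)\in\vect_\C\big(\KC_\tau(\tau).M(\tau)\big)$ from $y$ produces $y'\in M(\tau,\mathrm{gen})$ with $d(y')<d$, and the induction hypothesis finishes the argument.

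The main obstacle is exactly this non-degeneracy of the "top pairing" on $\KC_\tau(\tau)\vb_\tau$. It is a Frobenius-type property encoding the standing hypothesis $I_\tau(\tau,\Wta)=\C\vb_\tau$, which says precisely that the $\C[Y]$-socle of $\KC_\tau(\tau)\vb_\tau$ is one-dimensional. I expect to establish it by descending induction on $d$: given a nonzero homogeneous $z$ of degree $d$, I would produce a single $\theta\in\mathfrak m_\tau$ with $\theta.z$ nonzero of degree $d-1$ and conclude by induction. Constructing such a $\theta$ is the delicate step and relies on the explicit commutation relation~\eqref{eqRelation_commutation_K} together with the peeling computation already used in the proof of Lemma~\ref{lemConjItau_size2_case} (in the spirit of \cite[section 14]{reeder1997nonstandard} and \cite[6.5]{hebert2018principal}); the point requiring care is to rule out cancellation when $z$ is a genuine mixture of several $K_{\underline w}(\tau)\vb_\tau$, which I would control by choosing $\theta$ adapted to a Bruhat-maximal $w$ in the support of $z$ and invoking $\max\big(\supp(K_{\underline w}(\tau)\vb_\tau)\big)=\{w\}$ from Lemma~\ref{lemPropertiesKF}.
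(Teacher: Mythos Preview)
Your inclusion $\supseteq$ is fine. The forward inclusion, however, rests entirely on the ``separation claim,'' which you explicitly leave open, and the sketch you give does not close the gap. The claim is that for every $d\geq 1$, the annihilator of all $d$-fold products from $\mathfrak m_\tau$ inside $I_\tau(\tau,\mathrm{gen},\Wta)$ is exactly the degree $\leq d-1$ part. The hypothesis $I_\tau(\tau,\Wta)=\C\vb_\tau$ only gives the case $d=1$, and it is not clear that the higher cases follow; your proposed inductive step asks for a $\theta\in\mathfrak m_\tau$ such that $\theta.z$ has degree \emph{exactly} $d-1$, but Lemma~\ref{lemItau_gen_Wtas_subset_I_tau_gen} only guarantees degree $\leq d-1$, and cancellation among several $K_{\underline w}(\tau)\vb_\tau$ of the same length $d$ can in principle drop the degree further. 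Your appeal to a ``Bruhat-maximal $w$ in the support of $z$'' does not help when the support consists of several elements of the same $\ell_\tau$-length that are pairwise incomparable (already the case in the infinite dihedral group), and Lemma~\ref{lemPropertiesKF}~(\ref{itSupport}) controls the $H_v$-support rather than the $\KC_\tau(\tau)$-degree.

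The paper avoids this difficulty by inducting on a different quantity: the number $n(x)$ of $w_R\in R_\tau$ for which the projection $x_{w_R}$ onto $\psi_{w_R}\big(I_\tau(\tau,\mathrm{gen},\Wta)\big)$ is nonzero. Since this $R_\tau$-decomposition is a decomposition of $\C[Y]$-modules (Proposition~\ref{propDescription_generalized_weight_spaces}), one can hit $x$ with a product $\theta_1\cdots\theta_k\in\mathfrak m_\tau^k$ of maximal length such that the result $y$ is nonzero; then $y\in M(\tau)$ and, by the standing hypothesis, $y=\sum_{v_R} a_{v_R}\psi_{v_R}(\vb_\tau)$ with some $a_{w_R}\neq 0$. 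Choosing $h\in\KC_\tau(\tau)$ with $h.\psi_{w_R}(\vb_\tau)=x_{w_R}$ (which exists because $x_{w_R}\in\psi_{w_R}(\KC_\tau(\tau)\vb_\tau)$), the element $\tilde x=x-\frac{1}{a_{w_R}}h.y$ lies in $M(\tau,\mathrm{gen})$, satisfies $\tilde x_{w_R}=0$, and hence $n(\tilde x)<n(x)$. No ``top pairing'' nondegeneracy is needed: one only uses that the $R_\tau$-decomposition is $\C[Y]$-stable and that each slice is cyclic over $\KC_\tau(\tau)$.
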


\begin{proof}
 For $x\in I(\tau,\mathrm{gen})$, we denote by $x_{w_R}$ the projection of $x$ on $\psi_{w_R}\big(I_\tau(\tau,\mathrm{gen},\Wta)$ with respect to the decomposition of Proposition~\ref{propDescription_generalized_weight_spaces}. Let $x\in  M(\tau,\mathrm{gen})$. Let \[n(x)=|\{w_R\in R_\tau|\ x_{w_R}\neq 0\}|.\] 
We prove the lemma by induction on $n(x)$. Let $m\in \N$ be  such that for all $x\in M(\tau,\mathrm{gen})$ such that $n(x)\leq m$, one has $x\in \mathrm{vect}_\C\big(\KC_\tau(\tau).M(\tau)\big)$.

Let $x\in I_\tau(\tau,\mathrm{gen})$ be such that $n(x)\leq m+1$. Let \[k=\min \{k'\in \Ne|\forall (\theta_1,\ldots,\theta_{k'})\in (\mathfrak{m}_\tau)^{k'}, \theta_1\ldots \theta_{k'}.x=0\}-1,\] which is well defined by Lemma~\ref{lemItau_gen_Wtas_subset_I_tau_gen}~(2) and Proposition~\ref{propDescription_generalized_weight_spaces}.  Let $\theta_1,\ldots,\theta_k\in \mathfrak{m}_\tau$ be such that $y:=\theta_1\ldots\theta_k.x\neq 0$. By definition of $k$, $y\in M(\tau)$. Write $y=\sum_{w_R\in R_\tau} \psi_{w_R}(a_{w_R}\vb_\tau)$, with $(a_{w_R})\in \C^{(R_\tau)}$, which is possible by Proposition~\ref{propDescription_generalized_weight_spaces}. 
Let $w_R\in R_\tau$ be such that $a_{w_R}\neq 0$. By Proposition~\ref{propDescription_generalized_weight_spaces}, there exists $h\in \KC_\tau(\tau)$ such that $h.\psi_{w_R}(\vb_\tau)=x_{w_R}$. Set $\tilde{x}=x-\frac{1}{a_{w_R}}h.y$. Then $\tilde{x}\in M$ and   
\[\tilde{x}=x-\frac{1}{a_{w_R}}\sum_{v_R\in R_\tau}\psi_{v_R}(a_{v_R}h.\vb_\tau).\] Thus $\tilde{x}\in M(\tau,\mathrm{gen})$ (by Proposition~\ref{propDescription_generalized_weight_spaces}) and $\{v_R\in R_\tau|\tilde{x}_{v_R}\neq 0\}\subset \{v_R\in R_\tau|x_{v_R}\neq 0\}\setminus\{w_R\}$. By the induction assumption we deduce that $\tilde{x}\in \KC_\tau(\tau).M(\tau)$.  Therefore $x\in \mathrm{vect}_\C\big(\KC_\tau(\tau).M(\tau)\big)$ and the lemma follows by induction.

\end{proof}

\subsubsection{Description of $I_\tau/M(\tau,\mathrm{gen})$ for $M\subset I_\tau$}

\begin{lemma}\label{lemQuotient_generalized_weight_spaces}
Let $M\subset M'\subset I\tau$ be two $\AC_\C$-modules. Let $\pi_M:I_\tau\twoheadrightarrow I_\tau/M$ be the canonical projection. Then the restriction $g:M'(\tau,\mathrm{gen})\rightarrow M'/M(\tau,\mathrm{gen})$ of $\pi_M$  is well defined and induces an isomorphism of $\C[Y]$-modules \[M'(\tau,\mathrm{gen})/M(\tau,\mathrm{gen})\overset{\sim}{\rightarrow} M'/M(\tau,\mathrm{gen}).\]
\end{lemma}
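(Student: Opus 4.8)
The plan is to show that $g$ is well defined with kernel exactly $M(\tau,\mathrm{gen})$, so that it induces an injective $\C[Y]$-morphism, and then to prove surjectivity using the decomposition of $M'$ into its own generalized weight spaces.

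First I would check well-definedness. Since $\pi_M$ is an $\AC_\C$-module morphism it is in particular a $\C[Y]$-module morphism, so for any $x$ with $(\theta-\tau(\theta))^k.x=0$ for all $\theta\in\C[Y]$ one has $(\theta-\tau(\theta))^k.\pi_M(x)=\pi_M\big((\theta-\tau(\theta))^k.x\big)=0$; hence $g\big(M'(\tau,\mathrm{gen})\big)\subset (M'/M)(\tau,\mathrm{gen})$, and $g$ is well defined. Its kernel is $M'(\tau,\mathrm{gen})\cap M$, and since $M\subset M'$ an element of this intersection is precisely a generalized $\tau$-weight vector lying in $M$, i.e. $\ker g=M(\tau,\mathrm{gen})$. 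Thus $g$ factors through an injective $\C[Y]$-morphism $\bar g:M'(\tau,\mathrm{gen})/M(\tau,\mathrm{gen})\hookrightarrow (M'/M)(\tau,\mathrm{gen})$.

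The heart of the argument is surjectivity of $g$. The key preliminary observation is that $I_\tau$ is weighted: by Lemma~\ref{lemCommutation relation}, for every $w\in W^v$ one has $\C[Y].(H_w\vb_\tau)\subset\bigoplus_{v\leq w}\C H_v\vb_\tau$, which is finite dimensional, so every element of $I_\tau$ generates a finite dimensional $\C[Y]$-module. Weightedness passes to the submodule $M'$ and to the quotient $M'/M$ (for a quotient, $\C[Y].\bar y$ is a quotient of the finite dimensional $\C[Y].y$), so by Lemma~\ref{lemDecomposition_weighted_modules} each of them is the direct sum of its generalized weight spaces. Given $\bar x\in (M'/M)(\tau,\mathrm{gen})$, I would lift it to some $x\in M'$ and write $x=\sum_{\tau'\in\Wt(M')}x_{\tau'}$ with $x_{\tau'}\in M'(\tau',\mathrm{gen})$. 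Applying $\pi_M$ and using the well-definedness computation above, $\bar x=\sum_{\tau'}\pi_M(x_{\tau'})$ with $\pi_M(x_{\tau'})\in (M'/M)(\tau',\mathrm{gen})$; this is precisely the decomposition of $\bar x$ along the direct sum $M'/M=\bigoplus_{\tau''}(M'/M)(\tau'',\mathrm{gen})$. Since $\bar x$ lies entirely in the $\tau$-summand, uniqueness of this decomposition forces $\bar x=\pi_M(x_\tau)=g(x_\tau)$, with $x_\tau\in M'(\tau,\mathrm{gen})$. Hence $g$, and therefore $\bar g$, is surjective, so $\bar g$ is the claimed isomorphism of $\C[Y]$-modules.

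The main obstacle is this surjectivity step: one must be able to choose a preimage of $\bar x$ that already lies in $M'(\tau,\mathrm{gen})$, and not merely in $M'$. This is exactly why the weightedness of $M'$ is needed — it provides the splitting of $M'$ into its own generalized weight spaces and lets me project an arbitrary lift onto its $\tau$-component \emph{inside} $M'$ before pushing it forward by $\pi_M$. Once that internal decomposition is available, the conclusion follows formally from the directness of the weight-space decomposition of $M'/M$ together with the fact that $\pi_M$ respects generalized weight spaces.
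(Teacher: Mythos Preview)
Your proof is correct and follows essentially the same route as the paper: well-definedness via $\pi_M$ being a $\C[Y]$-morphism, kernel equal to $M(\tau,\mathrm{gen})$, and surjectivity by lifting $\bar x$ to $M'$, decomposing the lift into generalized weight components inside $M'$, and using directness of the generalized weight space decomposition in $M'/M$ to isolate the $\tau$-component. The only cosmetic difference is that the paper invokes \cite[Lemma~3.3]{hebert2018principal} directly for the decomposition $M'=\bigoplus_{\overline{w}\in W^v/W_\tau}M'(w.\tau,\mathrm{gen})$, whereas you derive it by first observing that $I_\tau$ is weighted (via Lemma~\ref{lemCommutation relation}) and then applying Lemma~\ref{lemDecomposition_weighted_modules}.
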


\begin{proof}
 As $\pi_M$ is a $\AC_\C$-module morphism, it is in particular a $\C[Y]$-module morphism and thus $\pi_M\big(I_\tau(\tau,\mathrm{gen})\big)\subset M'/M(\tau,\mathrm{gen})$, which proves  that $g$ is well defined. Let $\overline{x}\in M'/M(\tau,\mathrm{gen})$. Let $x\in \pi_{M}^{-1}(\overline{x})\cap M'$. 
By \cite[Lemma 3.3]{hebert2018principal}, we can write $x=\sum_{\overline{w}\in W^v/W_\tau} x_{w.\tau}$, where $x_{w.\tau}\in M'(w.\tau,\mathrm{gen})$, for all $\overline{w}\in W^v/W_\tau$. Then for all $\overline{w}\in W^v/W_\tau\setminus \{\overline{1}\}$, $\pi_M(x_{w.\tau})\in M'(w.\tau,\mathrm{gen})$ and thus $\pi_M(x_{w.\tau})=0$. Therefore $\pi_M(x)=\pi_M(x_\tau)$ and thus ${\pi}_M\big(M'(\tau,\mathrm{gen})\big)=M'/M(\tau,\mathrm{gen})$. Moreover, $\ker(g)=M\cap M'(\tau,\mathrm{gen})=M(\tau,\mathrm{gen})$, which proves the lemma.
\end{proof}

\begin{proposition}\label{propK_description_generalized_weight_spaces_quotient}
Let $M$ be a submodule of $I_\tau$. Let $H$ be a complement of $M(\tau)$ in $I_\tau(\tau)$. \begin{enumerate}
\item Then we have the following decomposition of $\C[Y]$-submodules: \[I_\tau(\tau,\mathrm{gen})=\mathrm{vect}_\C\big(\KC_\tau(\tau).M(\tau)\big)\oplus \mathrm{vect}_\C\big(\KC_\tau(\tau). H\big).\]

\item Let $\pi_M:I_\tau\twoheadrightarrow I_\tau/M$ be the canonical projection. Then the restriction $f:\vect_\C\big(\KC_\tau(\tau).H\big)\rightarrow I_\tau/M(\tau,\mathrm{gen})$ of $\pi_M$ is well defined and is an isomorphism of $\C[Y]$-modules. 

\item One has $I_\tau/M(\tau)=\pi_M(H)=\pi_M\big(I_\tau(\tau)\big)$, $\dim H=\dim I_\tau/M(\tau)$  and $\dim I_\tau/M(\tau,\mathrm{gen})=|\Wta|\dim I_\tau/M (\tau)$.
\end{enumerate}
\end{proposition}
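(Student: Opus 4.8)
The plan is to do everything inside the generalized weight space $I_\tau(\tau,\mathrm{gen})$, where Proposition~\ref{propDescription_generalized_weight_spaces} gives an explicit basis and Lemma~\ref{lemProduct_free_families_Itau(tau)} supplies the freeness I need, and then to push the resulting splitting through the projection $\pi_M$. For (1) I would fix a basis $(m_i)_{i\in I}$ of $M(\tau)$ and a basis $(h_j)_{j\in J}$ of $H$, so that $(m_i)_i\cup(h_j)_j$ is a basis of $I_\tau(\tau)=M(\tau)\oplus H$. By Lemma~\ref{lemProduct_free_families_Itau(tau)} the family $\big(K_{\underline w}(\tau).m_i\big)_{w,i}\cup\big(K_{\underline w}(\tau).h_j\big)_{w,j}$ is free; its two halves span $\vect_\C\big(\KC_\tau(\tau).M(\tau)\big)$ and $\vect_\C\big(\KC_\tau(\tau).H\big)$, so the sum of these subspaces is direct, and it equals $I_\tau(\tau,\mathrm{gen})$ by Lemma~\ref{lemModules_K_generated_weight_subspaces} applied to $M=I_\tau$. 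The first summand is a $\C[Y]$-submodule (it is $M(\tau,\mathrm{gen})$ by Lemma~\ref{lemModules_K_generated_weight_subspaces}); for the second I would record, for $x\in I_\tau(\tau)$, the action formula $\theta.(K_{\underline w}(\tau).x)=\ev_\tau(\theta*K_{\underline w}).x=\tau(\theta)K_{\underline w}(\tau).x+\sum_{v<_\tau w}c_{v,w,\theta}K_{\underline v}(\tau).x$ coming from Lemma~\ref{lemCompatibility_evalutation_weight_vectors} and the commutation relation for $K_{\underline w}$ used in Lemma~\ref{lemItau_gen_Wtas_subset_I_tau_gen}; since the scalars $c_{v,w,\theta}$ depend only on $v,w,\theta$, the space $\vect_\C\big(\KC_\tau(\tau).V\big)$ is $\C[Y]$-stable for any $V\subset I_\tau(\tau)$.

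For (2), the map $f$ is well defined because $\pi_M$ is a $\C[Y]$-morphism carrying $I_\tau(\tau,\mathrm{gen})$ into $I_\tau/M(\tau,\mathrm{gen})$. Its kernel lies in $\vect_\C\big(\KC_\tau(\tau).H\big)\cap M$; since $\vect_\C\big(\KC_\tau(\tau).H\big)\subset I_\tau(\tau,\mathrm{gen})$ and $M\cap I_\tau(\tau,\mathrm{gen})=M(\tau,\mathrm{gen})=\vect_\C\big(\KC_\tau(\tau).M(\tau)\big)$, part (1) forces this intersection to be $\{0\}$, so $f$ is injective. For surjectivity I would invoke Lemma~\ref{lemQuotient_generalized_weight_spaces} with $M'=I_\tau$: the restriction of $\pi_M$ to $I_\tau(\tau,\mathrm{gen})$ is onto $I_\tau/M(\tau,\mathrm{gen})$, and since $\pi_M$ annihilates the summand $\vect_\C\big(\KC_\tau(\tau).M(\tau)\big)=M(\tau,\mathrm{gen})$, its image is already $f\big(\vect_\C(\KC_\tau(\tau).H)\big)$.

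For (3), the crux is the identity $\vect_\C\big(\KC_\tau(\tau).H\big)(\tau)=H$. I would prove it by writing $V:=\vect_\C\big(\KC_\tau(\tau).H\big)=\bigoplus_{j\in J}V_j$ with $V_j:=\vect_\C\big(\KC_\tau(\tau).h_j\big)$, the sum being direct and into $\C[Y]$-submodules by Lemma~\ref{lemProduct_free_families_Itau(tau)} and the action formula above. Because that formula has structure constants independent of the weight vector, the assignment $K_{\underline w}(\tau).\vb_\tau\mapsto K_{\underline w}(\tau).h_j$ is a $\C[Y]$-isomorphism $I_\tau(\tau,\mathrm{gen},\Wta)\overset{\sim}{\rightarrow}V_j$; it sends the weight-$\tau$ space $I_\tau(\tau,\Wta)=\C\vb_\tau$ onto $V_j(\tau)$, so $V_j(\tau)=\C h_j$ and hence $V(\tau)=\bigoplus_j\C h_j=H$. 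Applying the $\C[Y]$-isomorphism $f$ of (2), which carries weight spaces to weight spaces, yields $I_\tau/M(\tau)=f\big(V(\tau)\big)=f(H)=\pi_M(H)$, and $\pi_M(H)=\pi_M\big(I_\tau(\tau)\big)$ since $\pi_M$ kills $M(\tau)$. As $\pi_M|_H$ is injective (its kernel is contained in $H\cap M(\tau)=\{0\}$), we get $\dim H=\dim I_\tau/M(\tau)$; finally $\dim V=|\Wta|\,\dim H$ by Lemma~\ref{lemProduct_free_families_Itau(tau)} and $\dim I_\tau/M(\tau,\mathrm{gen})=\dim V$ by (2), which gives the formula $\dim I_\tau/M(\tau,\mathrm{gen})=|\Wta|\dim I_\tau/M(\tau)$.

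The hard part will be the identity $V(\tau)=H$: it is the only place where the standing hypothesis $I_\tau(\tau,\Wta)=\C\vb_\tau$ is genuinely used, and it rests on the fact that the $\C[Y]$-action on the vectors $K_{\underline w}(\tau).x$ has coefficients that do not depend on the weight vector $x$, so that $V$ splits as a direct sum of copies of $I_\tau(\tau,\mathrm{gen},\Wta)$. Once this structural observation is in place, everything else is bookkeeping with the basis of Proposition~\ref{propDescription_generalized_weight_spaces} together with the two quotient lemmas.
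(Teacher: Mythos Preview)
Your proposal is correct and, for parts (1) and (2), matches the paper's argument almost verbatim: the paper also uses Lemma~\ref{lemProduct_free_families_Itau(tau)} and Lemma~\ref{lemModules_K_generated_weight_subspaces} (with $M=I_\tau$) for the direct sum, the commutation relation $\theta*K_{\underline w}=K_{\underline w}*{^{w^{-1}}\theta}+k_{w,\theta}$ together with Lemma~\ref{lemCompatibility_evalutation_weight_vectors} for $\C[Y]$-stability, and Lemma~\ref{lemQuotient_generalized_weight_spaces} with $M'=I_\tau$ for (2).

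For part (3) you take a slightly different, more structural route to the key identity $V(\tau)=H$ (where $V=\vect_\C(\KC_\tau(\tau).H)$): you split $V=\bigoplus_j V_j$ and build a $\C[Y]$-isomorphism $I_\tau(\tau,\mathrm{gen},\Wta)\simeq V_j$ for each basis vector $h_j$, then read off $V_j(\tau)=\C h_j$ from the standing hypothesis $I_\tau(\tau,\Wta)=\C\vb_\tau$. The paper instead argues directly from (1): for $x\in I_\tau(\tau)\cap V$, write $x=h+m$ with $h\in H$, $m\in M(\tau)$; then $m=x-h\in V\cap\vect_\C(\KC_\tau(\tau).M(\tau))=\{0\}$, so $x=h\in H$. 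This is shorter and avoids invoking the hypothesis a second time (it is already baked into (1) via Lemma~\ref{lemProduct_free_families_Itau(tau)}). Your approach has the merit of explaining \emph{why} $V(\tau)=H$ in terms of the module structure, but the paper's intersection trick is the quicker path. One small remark: your closing comment that the hypothesis $I_\tau(\tau,\Wta)=\C\vb_\tau$ is ``genuinely used'' only in (3) is not quite right, since Lemma~\ref{lemProduct_free_families_Itau(tau)}, which you invoke already in (1), assumes it.
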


\begin{proof}
(1) By Lemma~\ref{lemModules_K_generated_weight_subspaces}, Lemma~\ref{lemProduct_free_families_Itau(tau)} and Proposition~\ref{propDescription_generalized_weight_spaces}, \[\begin{aligned} \mathrm{vect}_\C \big( \KC_\tau(\tau).H\big)+\mathrm{vect}_\C\big(\KC_\tau(\tau).M(\tau)\big)= &  \mathrm{vect}_\C \big( \KC_\tau(\tau).H\big)\oplus \mathrm{vect}_\C\big(\KC_\tau(\tau).M(\tau)\big)
\\ 
=& \mathrm{vect}_\C\bigg(\KC_\tau(\tau).\big(H\oplus M(\tau)\big)\bigg)\\
 = &\mathrm{vect}_\C\big(\KC_\tau(\tau).I_\tau( \tau)\big)=I_\tau(\tau,\mathrm{gen}).\end{aligned}.\]

Therefore  $\mathrm{vect}_\C\big(\KC_\tau(\tau).H\big)$ is a complement of $M(\tau,\mathrm{gen})=\mathrm{vect}_\C\big(\KC_\tau(\tau).M(\tau)\big)$. For $w\in \Wta$, set $\KC_\tau^{< w}=\bigoplus_{v\in [1,w)_\tau} K_{\underline{v}}\C(Y)_\tau$ and $\KC_\tau^{<w}= \bigoplus_{v\in [1,w)_\tau} \C K_{\underline{v}}(\tau)$. Let $\theta\in \C(Y)_\tau$ and $w\in W^v$. Then by \cite[Lemma 6.27]{hebert2018principal}, there exists $k_{w,\theta}\in \KC_\tau^{<w}$ such that $\theta*K_{\underline{w}}=K_{\underline{w}}*{^{w^{-1}}\theta}+k_{w,\theta}$. By Lemma~\ref{lemCompatibility_evalutation_weight_vectors}, we deduce that if $x\in I_\tau(\tau)$, then  $\theta*K_{\underline{w}}(\tau).x=\ev_\tau(\theta*K_{\underline{w}}).x=\tau(\theta)K_{\underline{w}}(\tau).x+\ev_\tau(k_{w,\theta}).x$. By induction on $\ell(w)$, we deduce that if $x\in I_\tau(\tau)$, then $\KC_\tau(\tau).x$ is a $\C[Y]$-submodule of $I_\tau$, which proves (1).

(2) is a consequence of (1), of Lemma~\ref{lemModules_K_generated_weight_subspaces} and  of Lemma~\ref{lemQuotient_generalized_weight_spaces} applied with $M'=I_\tau$. 

(3) By (2), $f^{-1}\big(I_\tau/M(\tau)\big)=\vect_\C(\KC_\tau(\tau).H)$  Let $x\in I_\tau(\tau)\cap  \vect_\C(\KC_\tau(\tau).H)$. Write $x=h+m$, with $h\in H$ and $m\in M(\tau)$. Then $x-h\in \vect_\C(\KC_\tau(\tau).H)\cap \KC_\tau(\tau).M(\tau)=0$ (by (1)) and thus $x=h\in H$. Therefore $f(H)=\pi_M(H)=\pi_M\big(I_\tau(\tau)\big)=I_\tau/M(\tau)$. Therefore, $\dim H=\dim I_\tau/M(\tau)$. By (2) and by Lemma~\ref{lemProduct_free_families_Itau(tau)}, $\dim I_\tau/M(\tau,\mathrm{gen})=\dim \vect_\C(\KC_\tau(\tau).H)=|\Wta|.|H|$, which concludes the proof of the lemma.
\end{proof}

\begin{corollary}\label{corProjection_endomorphisms}
Let $M$ be a submodule of $I_\tau$ and $\pi_M:I_\tau\twoheadrightarrow I_\tau/M$ be the canonical projection. Then the map $\End(I_\tau)\twoheadrightarrow \Hom(I_\tau,I_\tau/M)$ defined by $\phi\mapsto \pi_M\circ \phi$ is surjective.
\end{corollary}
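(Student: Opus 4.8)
The plan is to deduce this quickly from Frobenius reciprocity (Lemma~\ref{lemFrobenius_reciprocity}) together with the surjectivity already established in Proposition~\ref{propK_description_generalized_weight_spaces_quotient}~(3). The guiding observation is that both the source and the target of the map $\phi\mapsto \pi_M\circ\phi$ are controlled by $\tau$-weight spaces: an endomorphism of $I_\tau$ is determined by the image of the generator $\vb_\tau$, which lies in $I_\tau(\tau)$, and likewise a morphism $I_\tau\rightarrow I_\tau/M$ is determined by an element of $(I_\tau/M)(\tau)$. So the statement should reduce to the fact that $\pi_M$ maps $I_\tau(\tau)$ onto $(I_\tau/M)(\tau)$.

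First I would fix $g\in \Hom(I_\tau,I_\tau/M)$ and set $\overline{x}=g(\vb_\tau)$. Since $g$ is an $\AC_\C$-module morphism and $\vb_\tau\in I_\tau(\tau)$, we get $\overline{x}\in (I_\tau/M)(\tau)$. Next I would invoke Proposition~\ref{propK_description_generalized_weight_spaces_quotient}~(3), which gives $(I_\tau/M)(\tau)=\pi_M\big(I_\tau(\tau)\big)$; this produces $y\in I_\tau(\tau)$ with $\pi_M(y)=\overline{x}$. I would then use Frobenius reciprocity in the other direction: let $\phi=\Upsilon_y\in \End(I_\tau)$ be the unique endomorphism with $\phi(\vb_\tau)=y$. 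By construction $(\pi_M\circ\phi)(\vb_\tau)=\pi_M(y)=\overline{x}=g(\vb_\tau)$. Since $I_\tau=\AC_\C.\vb_\tau$ is generated by $\vb_\tau$, two module morphisms out of $I_\tau$ that agree on $\vb_\tau$ coincide, whence $\pi_M\circ\phi=g$. This proves surjectivity.

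I do not expect a genuine obstacle here: all the analytic content, namely that the $\tau$-weight space of the quotient is already hit by $\pi_M\big(I_\tau(\tau)\big)$, is exactly part~(3) of Proposition~\ref{propK_description_generalized_weight_spaces_quotient}, which itself rests on the decomposition of the generalized weight spaces via the $K_{\underline{w}}$ and the $\psi_{w_R}$ under the hypothesis $I_\tau(\tau,\Wta)=\C\vb_\tau$. The only points requiring a little care are to apply Frobenius reciprocity in the correct direction (lifting a weight vector to an endomorphism) and to use the generation of $I_\tau$ by $\vb_\tau$ to upgrade an equality of weight vectors to an equality of module morphisms.
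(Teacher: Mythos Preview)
Your proof is correct and is essentially identical to the paper's own argument: take $\overline{x}=g(\vb_\tau)\in (I_\tau/M)(\tau)$, lift it via Proposition~\ref{propK_description_generalized_weight_spaces_quotient}~(3) to some $y\in I_\tau(\tau)$, apply Frobenius reciprocity to get $\phi\in\End(I_\tau)$ with $\phi(\vb_\tau)=y$, and conclude using that $I_\tau$ is generated by $\vb_\tau$. The only difference is notational.
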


\begin{proof}
Let $\overline{\phi}\in \Hom(I_\tau,I_\tau/M)$. Let $\overline{x}=\overline{\phi}(\vb_\tau)\in I_\tau/M(\tau)$. Using Proposition~\ref{propK_description_generalized_weight_spaces_quotient}~(3), we choose $x\in I_\tau(\tau)$ such that $\pi_M(x)=\overline{x}$. Let $\phi\in \End(I_\tau)$ be such that $\phi(\vb_\tau)=x$, whose existence if provided by Lemma~\ref{lemFrobenius_reciprocity}. Then $\pi_M\circ\phi(\vb_\tau)=\overline{\phi}(\vb_\tau)$ and as $I_\tau=\AC_\C.\vb_\tau$, we deduce that $\overline{\phi}=\pi_M\circ\phi$.
\end{proof}

\subsubsection{Invariance of the dimensions of the weight spaces under the action of $W^v$}
\begin{lemma}\label{lemInvariance_dimension_weights_Weyl_group}
Let $M$ be a $\AC_\C$-module and $w\in W^v$. Then $\dim M(\tau)=\dim M (w.\tau)$.
\end{lemma}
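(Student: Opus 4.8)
The plan is to avoid any direct manipulation of weight vectors and instead convert both sides into dimensions of spaces of intertwiners, exploiting the standing hypothesis $\tau\in\UC_\C$. The single external fact I would invoke is the one recalled at the beginning of Section~\ref{secTau_in_UC}: since $\tau\in\UC_\C$, one has $I_{w.\tau}\simeq I_\tau$ for every $w\in W^v$. So I fix once and for all an $\AC_\C$-module isomorphism $\psi\colon I_\tau\to I_{w.\tau}$.

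First I would apply Frobenius reciprocity (Lemma~\ref{lemFrobenius_reciprocity}) at the two weights $\tau$ and $w.\tau$; note that this lemma is valid for an arbitrary weight, so no extra hypothesis on $w.\tau$ is needed. This yields vector space isomorphisms $M(\tau)\xrightarrow{\ \sim\ }\Hom(I_\tau,M)$ and $M(w.\tau)\xrightarrow{\ \sim\ }\Hom(I_{w.\tau},M)$, hence $\dim M(\tau)=\dim\Hom(I_\tau,M)$ and $\dim M(w.\tau)=\dim\Hom(I_{w.\tau},M)$. Then precomposition with $\psi$ gives a linear bijection $\Hom(I_{w.\tau},M)\to\Hom(I_\tau,M)$, $g\mapsto g\circ\psi$, with inverse $f\mapsto f\circ\psi^{-1}$. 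Combining these identifications gives $\dim M(\tau)=\dim M(w.\tau)$, understood as an equality of cardinals since the spaces may be infinite dimensional.

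The proof is therefore extremely short, and all of the content is concentrated in the input $I_\tau\simeq I_{w.\tau}$. I expect this to be the only delicate point: for a general $\tau$ the modules $I_{w.\tau}$ need not be isomorphic, and the analogous dimension equality then has to be proved by hand. The natural tool in that situation is the intertwiner $F_s$ of Lemma~\ref{lemReeder 4.3} (or a suitable polynomial multiple of it lying in $\AC_\C$, obtained by clearing the denominator $\zeta_s$), which satisfies $\theta*F_s=F_s*{}^s\theta$ and therefore sends $M(\mu)$ into $M(s.\mu)$ for every weight $\mu$. By the isomorphism criterion already stated in the text, this map is an isomorphism exactly when $\mu(\zeta_s)\,\mu({}^s\zeta_s)\neq 0$, equivalently $\mu(\alpha_s^\vee)\notin\{\sigma_s^2,\sigma_s^{-2}\}$ (that is, $\mu(\alpha_s^\vee)\notin\{q,q^{-1}\}$ in the split case), and it degenerates precisely on these ``walls''.

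This is where I would locate the genuine obstacle, and it also explains why the clean argument succeeds here: the definition of $\UC_\C$ forces $\tau(\zeta_{\alpha^\vee}^{\mathrm{num}})\neq 0$ for every coroot $\alpha^\vee\in\Phi^\vee$, which is exactly the condition ruling out the degeneration above for all $s\in\SCC$ (and, after reduction to simple reflections by writing $w=s_k\ldots s_1$ and iterating, for all the intermediate weights $s_j\ldots s_1.\tau$). Thus under $\tau\in\UC_\C$ the intertwiners are isomorphisms between consecutive weight spaces, which is merely the infinitesimal shadow of the global isomorphism $I_\tau\simeq I_{w.\tau}$ that the Frobenius-reciprocity argument uses directly; the rank-one analysis of $\langle H_s,\C[Y]\rangle$ can be bypassed entirely.
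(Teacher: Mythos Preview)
Your proof is correct and is essentially identical to the paper's: both use the isomorphism $I_\tau\simeq I_{w.\tau}$ (valid since $\tau\in\UC_\C$) together with Frobenius reciprocity to identify $\dim M(\tau)=\dim\Hom(I_\tau,M)=\dim\Hom(I_{w.\tau},M)=\dim M(w.\tau)$. The additional discussion you give about the $F_s$-intertwiners and the general-$\tau$ case is accurate commentary but not needed for the argument.
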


\begin{proof}
As $\tau\in \UC_\C$, there exists an isomorphism $\phi:I_{w.\tau}\rightarrow I_{\tau}$. Let $g:\Hom (I_\tau,M)\rightarrow  \Hom(I_{w.\tau},M)$ be defined by $g(f)=f\circ \phi$ for $f\in \Hom (I_\tau,M)$. Then $g$ is a vector space isomorphism. By Lemma~\ref{lemFrobenius_reciprocity} we deduce that   $\dim \Hom (I_\tau,M) =\dim M(\tau)=\dim M (w.\tau)=\dim \Hom(I_{w.\tau},M)$.
\end{proof}

\begin{lemma}\label{lemInvariance_dimension_generalized_weight_spaces}
Let $M$ be a submodule of $I_\tau$. Then for all $w\in W^v$, $\dim M(\tau,\mathrm{gen})=\dim M(w.\tau,\mathrm{gen})$ and $\dim I_\tau/ M(\tau,\mathrm{gen})=\dim I_\tau/M(w.\tau,\mathrm{gen})$.
\end{lemma}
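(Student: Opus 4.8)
The plan is to reduce the statement to the two structural identities already established in this subsection — the ``module formula'' $\dim N(\sigma,\mathrm{gen})=|W_{(\sigma)}|\,\dim N(\sigma)$, obtained by combining Lemma~\ref{lemModules_K_generated_weight_subspaces} with Lemma~\ref{lemProduct_free_families_Itau(tau)}, and its quotient analogue Proposition~\ref{propK_description_generalized_weight_spaces_quotient}~(3) — applied both at $\tau$ and at $w.\tau$, and then to transport everything between $I_\tau$ and $I_{w.\tau}$ using an isomorphism $\phi\colon I_{w.\tau}\xrightarrow{\sim} I_\tau$ (available since $\tau\in\UC_\C$). Throughout I use that $\dim N(\tau)=\dim N(w.\tau)$ for an arbitrary module $N$ (Lemma~\ref{lemInvariance_dimension_weights_Weyl_group}) and that $|W_{(w.\tau)}|=|\Wta|$, since $W_{(w.\tau)}=w\,\Wta\,w^{-1}$ by Lemma~\ref{lemConjugation_Decomposition_Wtau}; all equalities of dimensions are read as equalities of cardinals, the spaces being possibly infinite-dimensional.

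For the submodule, set $M'=\phi^{-1}(M)\subset I_{w.\tau}$. As $\phi$ is in particular a $\C[Y]$-module isomorphism it maps each generalized weight space of $M'$ isomorphically onto the corresponding one of $M$, so $\dim M(w.\tau,\mathrm{gen})=\dim M'(w.\tau,\mathrm{gen})$ and $\dim M(w.\tau)=\dim M'(w.\tau)$. Applying the module formula at $\tau$ to $M$ and at $w.\tau$ to $M'$ gives
\[
\dim M(\tau,\mathrm{gen})=|\Wta|\,\dim M(\tau),\qquad \dim M'(w.\tau,\mathrm{gen})=|W_{(w.\tau)}|\,\dim M'(w.\tau).
\]
Combining these with $|W_{(w.\tau)}|=|\Wta|$, with $\dim M'(w.\tau)=\dim M(w.\tau)$, and with $\dim M(w.\tau)=\dim M(\tau)$ (Lemma~\ref{lemInvariance_dimension_weights_Weyl_group}) yields $\dim M(w.\tau,\mathrm{gen})=\dim M(\tau,\mathrm{gen})$. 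The module formula itself is read off by taking a basis of $N(\sigma)$: by Lemma~\ref{lemProduct_free_families_Itau(tau)} the family obtained by applying the $K_{\underline v}(\sigma)$, $v\in W_{(\sigma)}$, to it is free, and by Lemma~\ref{lemModules_K_generated_weight_subspaces} it spans $N(\sigma,\mathrm{gen})=\vect_\C\bigl(\KC_\sigma(\sigma).N(\sigma)\bigr)$, hence is a basis of cardinality $|W_{(\sigma)}|\,\dim N(\sigma)$.

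The quotient assertion is entirely parallel, with Proposition~\ref{propK_description_generalized_weight_spaces_quotient}~(3) replacing the module formula. Writing $\overline{M}=I_\tau/M$, the map $\phi$ induces a $\C[Y]$-module isomorphism $I_{w.\tau}/M'\cong \overline{M}$, so $\dim\overline{M}(w.\tau,\mathrm{gen})=\dim(I_{w.\tau}/M')(w.\tau,\mathrm{gen})$ and $\dim\overline{M}(w.\tau)=\dim(I_{w.\tau}/M')(w.\tau)$. Proposition~\ref{propK_description_generalized_weight_spaces_quotient}~(3) at $\tau$ gives $\dim\overline{M}(\tau,\mathrm{gen})=|\Wta|\,\dim\overline{M}(\tau)$, and the same statement at $w.\tau$ gives $\dim(I_{w.\tau}/M')(w.\tau,\mathrm{gen})=|W_{(w.\tau)}|\,\dim(I_{w.\tau}/M')(w.\tau)$. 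Feeding in $|W_{(w.\tau)}|=|\Wta|$ and the invariance $\dim\overline{M}(\tau)=\dim\overline{M}(w.\tau)$ (Lemma~\ref{lemInvariance_dimension_weights_Weyl_group}) gives $\dim\overline{M}(\tau,\mathrm{gen})=\dim\overline{M}(w.\tau,\mathrm{gen})$, which is the second claim.

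The one delicate point — and the main obstacle — is that the module and quotient formulas were proved under the standing hypothesis $I_\sigma(\sigma,W_{(\sigma)})=\C\vb_\sigma$, imposed at $\sigma=\tau$, whereas the argument above also invokes them at $\sigma=w.\tau$. First, $\UC_\C$ is $W^v$-stable: the defining conditions $\tau(\zeta^{\mathrm{num}}_{\alpha^\vee})\neq0$ transform among themselves under the action on $\Phi^\vee$, so $w.\tau\in\UC_\C$. It then remains to propagate the genericity hypothesis along the orbit. Proposition~\ref{propDescription_generalized_weight_spaces}~(2) gives, for every $\sigma\in\UC_\C$ (with no genericity needed), $\dim I_\sigma(\sigma)=|R_\sigma|\,\dim I_\sigma(\sigma,W_{(\sigma)})$; moreover $\dim I_{w.\tau}(w.\tau)=\dim I_\tau(w.\tau)=\dim I_\tau(\tau)$ via $\phi$ and Lemma~\ref{lemInvariance_dimension_weights_Weyl_group}, while $|R_{w.\tau}|=|R_\tau|$ because the decompositions $W_\sigma=R_\sigma\ltimes W_{(\sigma)}$ (Lemma~\ref{lemDecomposition_Wtau_Rgroup}) together with the conjugation isomorphisms $W_\tau\cong W_{w.\tau}$ and $\Wta\cong W_{(w.\tau)}$ force equal cardinalities of the $R$-groups. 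When $R_\tau$ is finite this count cancels and forces $\dim I_{w.\tau}(w.\tau,W_{(w.\tau)})=1$, i.e.\ the hypothesis at $w.\tau$. For infinite $R_\tau$ (the cases $\Z$ or $D_\infty$) the cardinal count no longer cancels, and the clean remedy is to realise $\phi$ as a composition of the intertwiners $F_s$, all isomorphisms since $\tau\in\UC_\C$, so that $\phi$ conjugates the $K$-elements attached to $\RCC_{(w.\tau)}$ into those attached to $\RCC_{(\tau)}$ and hence sends $I_{w.\tau}(w.\tau,W_{(w.\tau)})$ onto $I_\tau(\tau,\Wta)=\C\vb_\tau$; proving this intertwining relation between the $K$-elements is the technical heart of the propagation. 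Alternatively, one may simply work under Conjecture~\ref{conjItau}, under which the hypothesis holds at every point of $\UC_\C$ and the propagation is automatic.
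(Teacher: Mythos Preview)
Your argument is essentially the paper's own: transport $M$ through an isomorphism $\phi$ between $I_\tau$ and $I_{w.\tau}$, apply the formula $\dim N(\sigma,\mathrm{gen})=|W_{(\sigma)}|\,\dim N(\sigma)$ (from Lemma~\ref{lemModules_K_generated_weight_subspaces} and Lemma~\ref{lemProduct_free_families_Itau(tau)}) at $\tau$ and at $w.\tau$, then use Lemma~\ref{lemConjugation_Decomposition_Wtau} and Lemma~\ref{lemInvariance_dimension_weights_Weyl_group} to match the two sides; the quotient case is identical with Proposition~\ref{propK_description_generalized_weight_spaces_quotient}~(3) in place of the module formula. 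The only cosmetic difference is the direction of $\phi$ (the paper takes $\phi\colon I_\tau\to I_{w.\tau}$ and sets $M'=\phi(M)$).

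Where you go beyond the paper is in flagging, and trying to close, the propagation issue: the standing hypothesis $I_\tau(\tau,\Wta)=\C\vb_\tau$ is imposed at $\tau$, but the dimension formulas are invoked at $w.\tau$ as well, which needs $I_{w.\tau}(w.\tau,W_{(w.\tau)})=\C\vb_{w.\tau}$. The paper's proof simply applies the lemmas at $w.\tau$ without comment, tacitly treating the hypothesis as $W^v$-orbit invariant. Your cardinal-count argument via Proposition~\ref{propDescription_generalized_weight_spaces}~(2) does settle the finite-$R_\tau$ case cleanly; for infinite $R_\tau$ your sketch via explicit intertwiners of the $K$-elements is plausible but, as you say, would need work to make precise. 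Since the only case where the lemma is actually used later (Theorem~\ref{thmIrreducible_representations_tau_UC}~(4)) sits under the same standing hypothesis, and since the paper verifies that hypothesis for all $\tau\in\UC_\C$ in the size~$2$ case (Lemma~\ref{lemConjItau_size2_case}) and conjectures it in general (Conjecture~\ref{conjItau}), falling back on the conjecture here is in keeping with the paper's overall strategy.
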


\begin{proof}
As $\tau\in \UC_\C$, there exists an isomorphism $\phi:I_\tau\rightarrow I_{w.\tau}$. Let $M'=\phi(M)$. Then by Lemma~\ref{lemModules_K_generated_weight_subspaces}, \[\dim M'(w.\tau,\mathrm{gen})=\dim M(w.\tau,\mathrm{gen})=|W_{(w.\tau)}|.\dim M'(w.\tau)=|W_{(w.\tau)}|\dim M(w.\tau).\] By Lemma~\ref{lemConjugation_Decomposition_Wtau}, $|W_{(w.\tau)}|=|W_{(\tau)}|$ and by Lemma~\ref{lemInvariance_dimension_weights_Weyl_group}, $\dim M(\tau)=\dim M(\tau)$, which proves that $\dim M(w.\tau,\mathrm{gen})=|W_{(\tau)}|\dim M(w.\tau)=\dim M(\tau,\mathrm{gen})$.

The map $\phi$ induces an isomorphism $\overline{\phi} :I_\tau/M\overset{\sim}{\rightarrow} I_{w.\tau}/M'$. By Proposition~\ref{propK_description_generalized_weight_spaces_quotient}, Lemma~\ref{lemInvariance_dimension_weights_Weyl_group} and Lemma~\ref{lemConjugation_Decomposition_Wtau}, \[\dim I_{w.\tau}/M' (w.\tau,\mathrm{gen})= |W_{(w.\tau)}|\dim I_{w.\tau}/M' (w.\tau)=|\Wta| \dim I_\tau/M(\tau)=\dim I_{\tau}/M(\tau,\mathrm{gen}).\]
\end{proof}

\begin{lemma}\label{lemCharacterization_equality_generalized_weight_spaces}
Let $M\subset M'\subset I_\tau$ be $\AC_\C$-modules. Then $M=M'$ if and only if $M(\tau,\mathrm{gen})=M'(\tau,\mathrm{gen})$ if and only if $M(\tau)=M'(\tau)$.
\end{lemma}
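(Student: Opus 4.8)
The plan is to establish the two stated equivalences by dispatching the routine implications first and then isolating the one substantial implication, namely that $M(\tau)=M'(\tau)$ forces $M=M'$. To begin, $M=M'$ trivially forces equality of the generalized weight spaces and of the weight spaces, and the equivalence $M(\tau,\mathrm{gen})=M'(\tau,\mathrm{gen})\Leftrightarrow M(\tau)=M'(\tau)$ is immediate from Lemma~\ref{lemModules_K_generated_weight_subspaces}: for any submodule $N\subseteq I_\tau$ one has $N(\tau,\mathrm{gen})=\vect_\C\big(\KC_\tau(\tau).N(\tau)\big)$, so $M(\tau)=M'(\tau)$ yields $M(\tau,\mathrm{gen})=M'(\tau,\mathrm{gen})$; conversely, since $N(\tau)=N(\tau,\mathrm{gen})\cap I_\tau(\tau)$, intersecting with $I_\tau(\tau)$ recovers $M(\tau)=M'(\tau)$. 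It therefore remains to prove that $M\subseteq M'$ together with $M(\tau,\mathrm{gen})=M'(\tau,\mathrm{gen})$ implies $M=M'$.

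For this core step I would set $N=M'/M$, a submodule of $I_\tau/M$. Since $I_\tau$ is weighted and $\Wt(I_\tau)\subseteq W^v.\tau$ (by Frobenius reciprocity, Lemma~\ref{lemFrobenius_reciprocity}, combined with Proposition~\ref{propIntertwining_nonzero}, which identify $I_\tau(\tau')$ with $\Hom(I_{\tau'},I_\tau)$), the subquotient $N$ is again weighted with $\Wt(N)\subseteq W^v.\tau$. By Lemma~\ref{lemDecomposition_weighted_modules} we have $N=\bigoplus_{\tau'\in\Wt(N)}N(\tau',\mathrm{gen})$, so it suffices to prove $N(w.\tau,\mathrm{gen})=0$ for every $w\in W^v$.

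The key mechanism is that the generalized weight space functor $P\mapsto P(w.\tau,\mathrm{gen})$ is exact on weighted modules: any $\C[Y]$-module morphism between weighted modules respects the canonical decomposition of Lemma~\ref{lemDecomposition_weighted_modules} and hence acts componentwise, so kernels and images are computed componentwise. This is precisely the argument used to prove Lemma~\ref{lemQuotient_generalized_weight_spaces}, now carried out at $w.\tau$ instead of $\tau$. Applying it to the short exact sequence $0\to M\to M'\to N\to 0$ gives, for each $w$,
\[\dim N(w.\tau,\mathrm{gen})=\dim M'(w.\tau,\mathrm{gen})-\dim M(w.\tau,\mathrm{gen}).\]
I would then invoke Lemma~\ref{lemInvariance_dimension_generalized_weight_spaces} for the two submodules $M$ and $M'$ of $I_\tau$, which yields $\dim M(w.\tau,\mathrm{gen})=\dim M(\tau,\mathrm{gen})$ and $\dim M'(w.\tau,\mathrm{gen})=\dim M'(\tau,\mathrm{gen})$ for all $w$. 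Substituting and using $M(\tau,\mathrm{gen})=M'(\tau,\mathrm{gen})$ gives
\[\dim N(w.\tau,\mathrm{gen})=\dim M'(\tau,\mathrm{gen})-\dim M(\tau,\mathrm{gen})=0\]
for every $w\in W^v$. Hence $N(w.\tau,\mathrm{gen})=0$ for all $w$, so $N=0$ and $M=M'$.

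The only genuinely delicate point is the exactness of the generalized weight space functor at the points $w.\tau$, i.e. extending Lemma~\ref{lemQuotient_generalized_weight_spaces} off the base point $\tau$; but this is harmless, since every module occurring is weighted with weights confined to the single orbit $W^v.\tau$, so Lemma~\ref{lemDecomposition_weighted_modules} applies uniformly and the componentwise computation of kernels and images is legitimate. Everything else reduces to the dimension bookkeeping furnished by Lemma~\ref{lemInvariance_dimension_generalized_weight_spaces} and Lemma~\ref{lemModules_K_generated_weight_subspaces}.
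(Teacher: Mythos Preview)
Your overall strategy matches the paper's and is sound, but the dimension subtraction step has a genuine gap. You write
\[
\dim N(w.\tau,\mathrm{gen})=\dim M'(w.\tau,\mathrm{gen})-\dim M(w.\tau,\mathrm{gen}),
\]
and then substitute the invariance from Lemma~\ref{lemInvariance_dimension_generalized_weight_spaces}. The problem is that in this section $R_\tau$ may be infinite (e.g.\ $R_\tau\simeq\Z$ or $R_\tau\simeq D_\infty$, see Lemma~\ref{lemList_possibilities_Wtau_Rtau_Wta}), in which case $\dim I_\tau(\tau)=|R_\tau|$ is infinite by Proposition~\ref{propDescription_generalized_weight_spaces}, and both $\dim M(w.\tau,\mathrm{gen})$ and $\dim M'(w.\tau,\mathrm{gen})$ can be infinite. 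Then the subtraction is meaningless: knowing only that two infinite-dimensional subspaces have equal dimension does not force their quotient to vanish. At the base point $\tau$ you are fine, since there you have the stronger hypothesis $M(\tau,\mathrm{gen})=M'(\tau,\mathrm{gen})$ as subspaces; but at a general $w.\tau$ you only transport the \emph{dimension}, not the subspace itself.

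The fix is exactly what the paper does: work with $N=M'/M$ directly rather than with $M$ and $M'$ separately. From Lemma~\ref{lemQuotient_generalized_weight_spaces} and the hypothesis you get $N(\tau,\mathrm{gen})=0$, hence $N(\tau)=0$. Now apply Lemma~\ref{lemInvariance_dimension_weights_Weyl_group} (which holds for \emph{any} $\AC_\C$-module, not only submodules of $I_\tau$) to $N$ itself: $\dim N(w.\tau)=\dim N(\tau)=0$ for every $w$. Since $N$ is weighted with weights in $W^v.\tau$, this forces $N=0$. No subtraction of possibly infinite cardinals is needed.
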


\begin{proof}
It is clear that $M=M'\implies M(\tau,\mathrm{gen})=M'(\tau,\mathrm{gen})$ and that $M(\tau,\mathrm{gen})=M'(\tau,\mathrm{gen})\implies M(\tau)=M'(\tau)$. By Lemma~\ref{lemModules_K_generated_weight_subspaces} , $M(\tau)=M'(\tau)$ if and only if $M(\tau,\mathrm{gen})=M'(\tau,\mathrm{gen})$. Suppose that $M(\tau,\mathrm{gen})=M'(\tau,\mathrm{gen})$.
Let $M'\supset M$ be a submodule of $I_\tau$ such that  $M'(\tau,\mathrm{gen})=M(\tau,\mathrm{gen})$. Then by Lemma~\ref{lemQuotient_generalized_weight_spaces},  $M'/M(\tau,\mathrm{gen})=\{0\}$ and in particular, $M'/M(\tau)=\{0\}$. Using  Lemma~\ref{lemInvariance_dimension_weights_Weyl_group} we deduce that $M'/M(w.\tau)=\{0\}$, for every $w\in W^v$. Therefore $\Wt(M'/M)\cap W^v.\tau=\emptyset$. Moreover by \cite[Lemma 3.3]{hebert2018principal}, $M'=\bigoplus_{\tau'\in \Wt(M')} M'(\tau',\mathrm{gen})$ and  $\Wt(M')\subset W^v.\tau$. Therefore, $M'/M=\bigoplus_{\tau'\in \Wt(M')} M'/M(\tau',\mathrm{gen})=\bigoplus_{\tau'\in W^v.\tau} M'/M(\tau',\mathrm{gen})=\{0\}$.  Thus $M'=M$. 
\end{proof}

\subsection{Study of $\End(I_\tau)$ for $\tau\in \UC_\C$ such that $I_\tau(\tau,\Wta)=\C\vb_\tau$}\label{subStudy_End(Itau)}
In this subsection, we study  the algebra $\End(I_\tau)$. We prove that  $\End(I_\tau)$ is isomorphic to $\C[R_\tau]$ when $\AC_\C$ is associated to a split Kac-Moody group or when the order of $st$ is infinite for every $s,t\in \SCC$ such that $s\neq t$, for $\tau\in \UC_\C$ satisfying Conjecture~\ref{conjItau} (see Proposition~\ref{propDescription_endomorphism_algebra}). Our proof relies on the fact that one has $\psi_{v_R}\circ \psi_{w_R}\in \C\psi_{v_Rw_R}$ for all $v_R,w_R\in R_\tau$. We normalize the $\psi_{w_R}$ suitably to obtain the desired isomorphism.

We then give criteria for an element of $\End(I_\tau)$ to be surjective or injective (see Lemma~\ref{lemCharacterization_injectivity_UC} and Lemma~\ref{lemCharacterization_surjectivity_UC}.

\subsubsection{Description of $\End(I_\tau)$ in the split case or in the right-angled case}

 For $s\in \SCC$, we set $F_s'=F_s*\frac{1}{\zeta_s}\in \AC(T_\C)$.

\begin{lemma}\label{lemFs'2}
Let $s\in \SCC$. Then $(F_s')^2=1$.
\end{lemma}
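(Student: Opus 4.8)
The plan is to compute $(F_s')^2$ directly using the definitions and the commutation relation from Lemma~\ref{lemReeder 4.3}. Recall that $F_s' = F_s * \frac{1}{\zeta_s}$ where $F_s = B_s + \zeta_s$ with $B_s = \sigma_s H_s - \sigma_s^2$, and $\zeta_s = -\sigma_s Q_s(Z) + \sigma_s^2 \in \FC(Y)$. First I would expand $(F_s')^2 = F_s * \frac{1}{\zeta_s} * F_s * \frac{1}{\zeta_s}$, and the key tool will be Lemma~\ref{lemReeder 4.3}~(\ref{itCommutation_relation}), which gives $\theta * F_s = F_s * {^s\theta}$ for $\theta \in \FC(Y)$; applied to $\theta = \frac{1}{\zeta_s}$ this yields $\frac{1}{\zeta_s} * F_s = F_s * \frac{1}{{^s\zeta_s}}$. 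Hence $(F_s')^2 = F_s * F_s * \frac{1}{{^s\zeta_s}} * \frac{1}{\zeta_s} = F_s^2 * \frac{1}{\zeta_s \cdot {^s\zeta_s}}$, reducing the problem to computing $F_s^2$ and the product $\zeta_s \cdot {^s\zeta_s}$.

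Next I would compute $F_s^2 = (B_s + \zeta_s)^2$. Expanding, $F_s^2 = B_s^2 + B_s \zeta_s + \zeta_s B_s + \zeta_s^2$. Using the stated relation $B_s^2 = -(1+\sigma_s^2)B_s$ and the commutation relation rewritten for $B_s = F_s - \zeta_s$ (or directly, since $H_s\theta - {^s\theta}H_s = Q_s(Z)(\theta - {^s\theta})$ from the defining relation of $\AC(T_\FC)$), I would move all $\FC(Y)$-coefficients to one side. The cross terms $B_s \zeta_s + \zeta_s B_s$ must be simplified via the $H_s$-commutation; the expected outcome is that $F_s^2$ collapses to a pure element of $\FC(Y)$, namely $F_s^2 = \zeta_s \cdot {^s\zeta_s}$. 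This is the natural guess because it would make $(F_s')^2 = \zeta_s \cdot {^s\zeta_s} * \frac{1}{\zeta_s \cdot {^s\zeta_s}} = 1$ immediately.

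The main obstacle will be the bookkeeping in the cross terms: carefully tracking how $\zeta_s$ and ${^s\zeta_s}$ interact with $B_s$ (equivalently with $H_s$) through the Bernstein-Lusztig relation, and verifying that the $H_s$-components genuinely cancel so that $F_s^2$ lands in $\FC(Y)$. I would substitute $\zeta_s = -\sigma_s Q_s(Z) + \sigma_s^2$ and use that $Q_s(Z)$ is precisely the coefficient appearing in the commutation relation; the identity $\zeta_s + {^s\zeta_s} = \sigma_s^2 + \sigma_s^{-2}$ type simplifications (coming from $-\sigma_s(Q_s + {^sQ_s}) = -\sigma_s(\sigma_s - \sigma_s^{-1})\cdot(\text{something})$) should produce the needed cancellation matching $B_s^2 = -(1+\sigma_s^2)B_s$. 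Once $F_s^2 = \zeta_s \, {^s\zeta_s}$ is established, the conclusion $(F_s')^2 = 1$ follows formally from the commutation step above, since $\zeta_s \, {^s\zeta_s} \in \FC(Y)$ is central enough to cancel with its inverse.
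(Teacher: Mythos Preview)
Your approach is correct and matches the paper's proof exactly: use Lemma~\ref{lemReeder 4.3}~(\ref{itCommutation_relation}) to rewrite $(F_s')^2 = F_s^2 * \frac{1}{\zeta_s \cdot {^s\zeta_s}}$, and then invoke $F_s^2 = \zeta_s \cdot {^s\zeta_s}$. The paper simply cites this last identity from \cite[Lemma 5.3]{hebert2018principal} rather than expanding it; your direct computation works, and the key cancellation is $\zeta_s + {^s\zeta_s} = 1 + \sigma_s^2$ (not $\sigma_s^2 + \sigma_s^{-2}$), which comes from $Q_s + {^sQ_s} = \sigma_s - \sigma_s^{-1}$ and precisely kills the $B_s^2 = -(1+\sigma_s^2)B_s$ term.
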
 

\begin{proof}
By Lemma~\ref{lemReeder 4.3} (\ref{itCommutation_relation}) and \cite[Lemma 5.3]{hebert2018principal}, one has: \[(F_s')^2=F_s*\frac{1}{\zeta}*F_s*\frac{1}{\zeta_s}=F_s^2*\frac{1}{\zeta_s*{^s}\zeta_s}=1.\] 
\end{proof}

For $G$ a group,  $a,b\in G$ and $m\in \N$, we denote by  $\Pi(a,b,m)$ the product $abab\ldots $ having $m$ factors.

\begin{lemma}\label{lemF'w0}
We assume that there exists $\sigma\in \C^*$ such that  $\sigma_s=\sigma_s'=\sigma$ for all $s\in \SCC$. Let $s_1,s_2\in \SCC$. We assume that the order $m(s_1,s_2)$ of $s_1s_2$ is finite. Then: \[\Pi\big(F_{s_1}',F_{s_2}',m(s_1,s_2)\big)=\Pi\big(F_{s_2}',F_{s_1}',m(s_1,s_2)\big).\]
\end{lemma}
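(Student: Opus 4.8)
The plan is to show that both alternating products equal the single element $F_{w_0}$, where $w_0=w_0(s_1,s_2)$ is the longest element of the dihedral group $\langle s_1,s_2\rangle$, multiplied on the right by a rational function that turns out to be symmetric in $s_1,s_2$. The two tools I would use throughout are the commutation relation of Lemma~\ref{lemReeder 4.3}(\ref{itCommutation_relation}), namely $\theta*F_v=F_v*{}^{v^{-1}}\theta$ for $\theta\in\FC(Y)$, to move all scalar factors to the right, and the well-definedness of $F_w$ from Lemma~\ref{lemReeder 4.3}(\ref{itWell_definedness_Fw}).

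Set $m=m(s_1,s_2)$ and write the reduced word $(u_1,\dots,u_m)=(s_1,s_2,s_1,\dots)$, so that $\Pi(F_{s_1}',F_{s_2}',m)=F_{u_1}\frac{1}{\zeta_{u_1}}F_{u_2}\frac{1}{\zeta_{u_2}}\cdots F_{u_m}\frac{1}{\zeta_{u_m}}$. First I would push each factor $\frac{1}{\zeta_{u_i}}$ to the far right past $F_{u_{i+1}},\dots,F_{u_m}$; since every $u_j$ is a simple reflection, Lemma~\ref{lemReeder 4.3}(\ref{itCommutation_relation}) converts $\frac{1}{\zeta_{u_i}}$ into $\frac{1}{{}^{g_i}\zeta_{u_i}}$ with $g_i:=u_mu_{m-1}\cdots u_{i+1}$ (and $g_m=1$), and these scalars, all lying in $\C(Y)$, commute among themselves. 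This gives
\[
\Pi(F_{s_1}',F_{s_2}',m)=\big(F_{u_1}F_{u_2}\cdots F_{u_m}\big)\,\prod_{i=1}^m\frac{1}{{}^{g_i}\zeta_{u_i}}.
\]

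Next I would treat the two factors separately. For the $F$-part: the longest element is an involution, so $w_0=w_0^{-1}=u_mu_{m-1}\cdots u_1$ is again a reduced expression, and Lemma~\ref{lemReeder 4.3}(\ref{itWell_definedness_Fw}) yields $F_{u_1}\cdots F_{u_m}=F_{w_0}$, independent of the chosen reduced word. For the scalar part: applying Lemma~\ref{lemKumar_1.3.14} to $w_0=u_1\cdots u_m$ shows that the coroots $\beta_i^\vee:=g_i.\alpha_{u_i}^\vee$ run exactly over the $m$ distinct elements of $N_{\Phi^\vee}(w_0)$. Here the hypothesis $\sigma_s=\sigma_s'=\sigma$ enters: it gives $\zeta_s=\frac{1-\sigma^2Z^{-\alpha_s^\vee}}{1-Z^{-\alpha_s^\vee}}$, whence ${}^{g_i}\zeta_{u_i}=\frac{1-\sigma^2Z^{-\beta_i^\vee}}{1-Z^{-\beta_i^\vee}}$ depends only on $\beta_i^\vee$. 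Consequently $\prod_{i=1}^m\frac{1}{{}^{g_i}\zeta_{u_i}}=\prod_{\beta^\vee\in N_{\Phi^\vee}(w_0)}\frac{1-Z^{-\beta^\vee}}{1-\sigma^2Z^{-\beta^\vee}}$ depends only on the set $N_{\Phi^\vee}(w_0)$, hence only on $w_0$.

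Since $w_0(s_1,s_2)=w_0(s_2,s_1)$ and $N_{\Phi^\vee}(w_0)$ is intrinsic to $w_0$, running the identical computation with $(u_1,\dots,u_m)=(s_2,s_1,\dots)$ produces the same right-hand side $F_{w_0}\prod_{\beta^\vee\in N_{\Phi^\vee}(w_0)}\frac{1}{\zeta_{\beta^\vee}}$, which establishes the claimed equality. I expect the main obstacle to be the bookkeeping in the first display: carefully tracking how each $\frac{1}{\zeta_{u_i}}$ transforms as it is commuted rightward and verifying that the resulting list of twisted coroots is precisely the enumeration of $N_{\Phi^\vee}(w_0)$ given by Lemma~\ref{lemKumar_1.3.14}. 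The hypothesis $\sigma_s=\sigma_s'$ is exactly what makes the twisted scalar ${}^{g_i}\zeta_{u_i}$ a function of the coroot $\beta_i^\vee$ alone, and this is indispensable for the concluding symmetry argument.
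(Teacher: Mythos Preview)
Your proof is correct and follows the same approach as the paper: push the scalars $\frac{1}{\zeta_{u_i}}$ to the right via the commutation relation, identify the resulting coroots with $N_{\Phi^\vee}(w_0)$ via Lemma~\ref{lemKumar_1.3.14}, and conclude by noting that both $F_{w_0}$ (Lemma~\ref{lemReeder 4.3}(\ref{itWell_definedness_Fw})) and the set $N_{\Phi^\vee}(w_0)$ depend only on $w_0$, not on the chosen reduced expression. Your version is in fact more explicit than the paper's about the bookkeeping (the $g_i$ and $\beta_i^\vee$) and about where the hypothesis $\sigma_s=\sigma_s'=\sigma$ is used.
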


\begin{proof}
By assumption on the $\sigma_s,\sigma_s'$, one has $\zeta_s=\frac{1-\sigma^2 Z^{-\alpha_s^\vee}}{1-Z^{-\alpha_s^\vee}}$ for $s\in \SCC$. Let $m=m(s_1,s_2)$ By Lemma~\ref{lemReeder 4.3} (\ref{itCommutation_relation}), \[ \Pi\big(F_{s_1}',F_{s_2}',m\big) = \Pi\big(F_{s_1},F_{s_2},m\big)*\prod_{\alpha^\vee\in N_{\Phi^\vee}\big(\prod(s_1,s_2,m)\big)}\frac{1-\sigma^2 Z^{-\alpha^\vee}}{1-Z^{-\alpha^\vee}}.\]

By Lemma~\ref{lemReeder 4.3} (\ref{itWell_definedness_Fw}), $\Pi\big(F_{s_1},F_{s_2},m\big)= \Pi\big(F_{s_2},F_{s_1},m\big)$. Moreover, $\prod(s_1,s_2,m)=\prod(s_2,s_1,m)$, which proves the lemma. 

\end{proof}

\begin{proposition}\label{propDescription_endomorphism_algebra}
Let $\tau\in  \UC_\C$. We make the following assumptions:\begin{enumerate}
\item  the order of $st$ is infinite for every $s,t\in \SCC$ such that $s\neq t$ or  there exists $\sigma\in \C^*$ such that $\sigma_s=\sigma_s'=\sigma$ for all $s\in \SCC$,

\item $I_\tau(\tau,\Wta)=\C\vb_\tau$.  
\end{enumerate} Then $\mathrm{End}(I_\tau)$ is isomorphic to $\C[R_\tau]$.
\end{proposition}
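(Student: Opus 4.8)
The plan is to exhibit an explicit $\C$-basis of $\End(I_\tau)$ indexed by $R_\tau$ whose multiplication matches that of $\C[R_\tau]$. By Frobenius reciprocity (Lemma~\ref{lemFrobenius_reciprocity}) the evaluation $\phi\mapsto\phi(\vb_\tau)$ identifies $\End(I_\tau)$ with $I_\tau(\tau)$, and assumption (2) together with Proposition~\ref{propDescription_generalized_weight_spaces}~(\ref{itWeight_vectors}) gives $I_\tau(\tau)=\bigoplus_{w_R\in R_\tau}\C\,F_{w_R}(\tau)\vb_\tau$. Hence $(\psi_{w_R})_{w_R\in R_\tau}$ is a $\C$-basis of $\End(I_\tau)$, and each $\psi_{w_R}$ is invertible (Lemma~\ref{lemInvertibility_Psi_wr}). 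It therefore suffices to show that, after a suitable rescaling $\psi_{w_R}\mapsto\psi'_{w_R}\in\C^*\psi_{w_R}$, the composition law takes the form $\psi'_{v_R}\circ\psi'_{w_R}=\psi'_{w_Rv_R}$. The map $\C[R_\tau]\to\End(I_\tau)$, $w_R\mapsto\psi'_{w_R^{-1}}$, is then an algebra isomorphism: it carries the group basis $R_\tau$ of $\C[R_\tau]$ bijectively onto the basis $(\psi'_{w_R})$, and the relation $\psi'_{v_R^{-1}}\circ\psi'_{w_R^{-1}}=\psi'_{w_R^{-1}v_R^{-1}}=\psi'_{(v_Rw_R)^{-1}}$ shows it respects products.

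The rescaling is provided by the normalized intertwiners $F_s'=F_s*\tfrac1{\zeta_s}$. By Lemma~\ref{lemFs'2} one has $(F_s')^2=1$, and under assumption (1) all remaining Coxeter relations hold: when every $st$ with $s\neq t$ has infinite order the presentation of $(W^v,\SCC)$ involves only the relations $s^2=1$, matched by $(F'_s)^2=1$, whereas in the equal-parameter case Lemma~\ref{lemF'w0} supplies the braid relations $\Pi(F'_s,F'_t,m(s,t))=\Pi(F'_t,F'_s,m(s,t))$. Since the $F'_s$ are units (being involutions) satisfying the defining relations of $(W^v,\SCC)$, the universal property of the Coxeter presentation yields a homomorphism $W^v\to\AC(T_\C)^\times$, $w\mapsto F'_w$, with $F'_v*F'_w=F'_{vw}$ for all $v,w$. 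This is precisely where assumption (1) enters, and it is what trivializes the $2$-cocycle underlying the a priori relation $\psi_{v_R}\circ\psi_{w_R}\in\C^*\psi_{w_Rv_R}$.

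Now restrict to $w_R\in R_\tau$. Commuting the factors $\zeta_s^{-1}$ to the right via Lemma~\ref{lemReeder 4.3}~(\ref{itCommutation_relation}) gives $F'_{w_R}=F_{w_R}*\Theta$, where $\Theta$ is a product of terms $\zeta_{\alpha^\vee}^{\pm1}$ with $\alpha^\vee\in N_{\Phi^\vee}(w_R)$. Because $w_R\in R_\tau$ stabilizes $\Phi^\vee_{(\tau),+}$, none of these $\alpha^\vee$ lies in $\Phi^\vee_{(\tau)}$, so $\zeta_{\alpha^\vee}^{\mathrm{den}}(\tau)\neq0$; together with $\tau\in\UC_\C$ (numerators nonzero) this yields $\Theta\in\C(Y)_\tau$ with $\Theta(\tau)\neq0$. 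With Lemma~\ref{lemFwr_well_defined_at_tau} we obtain $F'_{w_R}\in\AC(T_\C)_\tau$ and $F'_{w_R}(\tau)=\Theta(\tau)\,F_{w_R}(\tau)\in\C^*F_{w_R}(\tau)$. Thus $\psi'_{w_R}:=\Upsilon_{F'_{w_R}(\tau)\vb_\tau}\in\C^*\psi_{w_R}$ still forms a basis of $\End(I_\tau)$, and a direct computation using $\AC_\C$-linearity gives $(\psi'_{v_R}\circ\psi'_{w_R})(\vb_\tau)=\big(F'_{w_R}(\tau)*F'_{v_R}(\tau)\big).\vb_\tau$. Identifying this with $F'_{w_Rv_R}(\tau)\vb_\tau=\psi'_{w_Rv_R}(\vb_\tau)$, and using that both sides are $\AC_\C$-morphisms agreeing on the generator $\vb_\tau$, yields $\psi'_{v_R}\circ\psi'_{w_R}=\psi'_{w_Rv_R}$, as required.

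The main obstacle is the last identification, namely passing from $F'_{w_R}*F'_{v_R}=F'_{w_Rv_R}$ in $\AC(T_\C)$ to the equality $F'_{w_R}(\tau)*F'_{v_R}(\tau).\vb_\tau=F'_{w_Rv_R}(\tau).\vb_\tau$ in $I_\tau$. This is a multiplicativity statement for $\ev_\tau$ applied to a product of two elements of $\AC(T_\C)_\tau$ whose product again lies in $\AC(T_\C)_\tau$, and it is delicate precisely because $\AC(T_\C)_\tau$ is not a subalgebra, so the poles appearing in the $\C(Y)$-coefficients must be controlled. I would establish it exactly as in the proof of Lemma~\ref{lemInvertibility_Psi_wr} (which treats the case $v_R=w_R^{-1}$): expand using the $\HCW$–$\C(Y)_\tau$ bimodule structure of $\AC(T_\C)_\tau$ and the commutation relations of Lemma~\ref{lemReeder 4.3}, reducing to the fact that the relevant coefficients remain in $\C(Y)_\tau$, so that $\ev_\tau$ can be applied termwise. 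Everything else—the basis, invertibility, the Coxeter relations for the $F'_s$, and the regularity of $\Theta$ at $\tau$—is comparatively routine once the cited lemmas are in hand.
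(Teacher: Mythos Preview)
Your proposal is correct and follows essentially the same approach as the paper: normalize via $F'_s=F_s*\zeta_s^{-1}$, use Lemmas~\ref{lemFs'2} and \ref{lemF'w0} (together with assumption (1)) to obtain a group homomorphism $w\mapsto F'_w$, show $F'_{w_R}\in\AC(T_\C)_\tau$ with nonzero evaluation for $w_R\in R_\tau$, and then verify the multiplicativity $(F'_{v_R}*F'_{w_R})(\tau)=F'_{v_R}(\tau)*F'_{w_R}(\tau)$ via the bimodule structure and the commutation relation of Lemma~\ref{lemReeder 4.3}~(\ref{itCommutation_relation})—exactly the computation the paper carries out, using crucially that $w_R\in W_\tau$ so that $\tau({^{w_R}}\theta)=\tau(\theta)$. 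Your identification of this last step as the main obstacle and your plan to handle it along the lines of Lemma~\ref{lemInvertibility_Psi_wr} are both on target.
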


\begin{proof}
By \cite[1.1]{bjorner2005combinatorics} and Lemmas~\ref{lemFs'2} and \ref{lemF'w0}, there exists a unique morphism $F':W^v\rightarrow \AC(T_\C) $ such that $F'(s)=F'_s$ for all $s\in \SCC$. We denote $F'_w$ instead of $F'(w)$, for $w\in W^v$. Let $w_R\in R_\tau$. Let $w_R=s_1\ldots s_k$ be a reduced expression of $w_R$. Then by Lemma~\ref{lemReeder 4.3} (\ref{itCommutation_relation}), there exist $w_1,\ldots,w_k\in W^v$ such that \[F'_{w_R}=F_{s_1}*\ldots*F_{s_k}*\prod_{i=1}^k {^{w_i}\zeta_{\alpha_{s_i}^\vee}^{\mathrm{den}}}\prod_{i=1}^k {\frac{1}{^{w_i}\zeta_{\alpha_{s_i}^\vee}^{\mathrm{num}}}}=F_{w_R}\prod_{i=1}^k {^{w_i}\zeta_{\alpha_{s_i}^\vee}^{\mathrm{den}}}*\prod_{i=1}^k {\frac{1}{^{w_i}\zeta_{\alpha_{s_i}^\vee}^{\mathrm{num}}}}.\] Then $\prod_{i=1}^k {^{w_i}\zeta_{\alpha_{s_i}^\vee}^{\mathrm{den}}}\in \C[Y]\subset \C(Y)_\tau$ and by definition of $\UC_\C$, $\prod_{i=1}^k {\frac{1}{^{w_i}\zeta_{\alpha_{s_i}^\vee}^{\mathrm{num}}}}\in \C(Y)_\tau$. By Lemma~\ref{lemFwr_well_defined_at_tau} we deduce that $F'_{w_R}\in \AC(T_\C)_\tau$.

Let $v_R,w_R\in R_\tau$. Write $F'_{v_R}=\sum_{u\in W^v} H_u*\theta_{u,v_R}$ and $F'_{w_R}=\sum_{u\in W^v} H_u*\theta_{u,w_R}$, where $(\theta_{u,v_R}),(\theta_{u,w_R})\in (\C(Y)_\tau)^{(W^v)}$. By Lemma~\ref{lemReeder 4.3} (\ref{itCommutation_relation}), as $F_{w_R}'\in F_{W_R}\C(Y)$, one has \[F'_{v_R}*F'_{w_R}=\sum_{u,\in W^v} H_u\theta_{u,v_R}F_{w_R}'=\sum_{u\in W^v} H_u F_{w_R}'*({^{w_R}\theta_{u,v_R}})=\sum_{u,u'\in W^v} H_u*H_{u'}*\theta_{u,w_R}*{^{w_R}\theta_{u,v_R}}.\] Thus \[(F'_{v_R}*F'_{w_R})(\tau)=\sum_{u,u'\in W^v}\tau(\theta_{u',w_R})\tau({^{w_R}\theta_{u,v_R}}) H_{u}*H_{u'}=\sum_{u,u'\in W^v}\tau(\theta_{u',w_R})\tau(\theta_{u,v_R}) H_{u}*H_{u'}.\] Therefore $(F'_{v_R}*F'_{w_R})(\tau)=F_{v_R}'(\tau)*F_{w_R}'(\tau)\in \HC_{W^v,\C}$.

Write $\C[R_\tau]=\bigoplus_{w_R\in R_\tau}\C e^{w_R}$, where the $e^{w_R}$ are symbols such that $e^{v_R}e^{w_R}=e^{v_Rw_R}$ for all $v_R,w_R\in R_\tau$. For $w_R\in R_\tau$, set $\psi'_{w_R}=\Upsilon_{F'_{w_R}(\tau)\vb_\tau}\in \End(I_\tau)$, where $\Upsilon$ is defined in Lemma~\ref{lemFrobenius_reciprocity}.  Let $f:\C[R_\tau]\rightarrow \End(I_\tau)$ be the linear map such that $f(e^{w_R})=\psi'_{w_R^{-1}}$, for $w_R\in R_\tau$. Let $v_R,w_R\in R_\tau$. Then \[f(e^{v_R})\circ f(e^{w_R})(\vb_\tau)=\psi'_{v_R^{-1}}\big(\psi'_{w_R^{-1}}(\vb_\tau)\big)=F'_{w_R^{-1}}(\tau)\psi'_{v_R^{-1}}(\vb_\tau)=F'_{w_R^{-1}}(\tau)*F'_{v_R^{-1}}(\tau)\vb_\tau,\] 
  thus $f(e^{v_R})\circ f(e^{w_R})(\vb_\tau)=f(e^{v_Rw_R})(\vb_\tau)$, which proves that $f(e^{v_R})\circ f(e^{w_R})=f(e^{v_Rw_R})$. Therefore $f$ is an algebra morphism. By Proposition~\ref{propDescription_generalized_weight_spaces},  the map $\C[R_\tau]\rightarrow I_\tau(\tau)$ sending each $x\in \C[R_\tau]$ to $f(x)(\vb_\tau)$ is a bijection and by Lemma~\ref{lemFrobenius_reciprocity} we deduce that $f$ is bijective.
 \end{proof}

In \cite[Section 6]{keys1987indistinguishability}, Keys gives  an example where $\mathrm{End}(I_\tau)\not \simeq \C[R_\tau]$.

\subsubsection{Study of injectivity and surjectivity}

\begin{lemma}\label{lemCharacterization_injectivity_UC}
Let $\tau\in \UC_\C$ and let $f\in \End(I_\tau)$. Then $f$ is injective if and only if for every $g\in \End(I_\tau)$, $f\circ g\neq 0$.
\end{lemma}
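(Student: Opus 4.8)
The statement to prove is Lemma~\ref{lemCharacterization_injectivity_UC}: for $\tau\in\UC_\C$ and $f\in\End(I_\tau)$, $f$ is injective if and only if $f\circ g\neq 0$ for every $g\in\End(I_\tau)$. The plan is to prove the two implications separately, with the nontrivial direction being the sufficiency of the condition $\forall g,\ f\circ g\neq 0$ for injectivity.

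First I would dispatch the easy direction ($\Rightarrow$). Suppose $f$ is injective and let $g\in\End(I_\tau)$ be nonzero. Since $I_\tau=\AC_\C.\vb_\tau$ is generated by $\vb_\tau$, a nonzero endomorphism $g$ cannot kill $\vb_\tau$: if $g(\vb_\tau)=0$ then $g(h.\vb_\tau)=h.g(\vb_\tau)=0$ for all $h\in\AC_\C$, forcing $g=0$. Hence $g(\vb_\tau)\neq 0$, so $g$ is nonzero as a map, its image is a nonzero submodule, and injectivity of $f$ gives $f\circ g\neq 0$ (indeed $f\circ g(\vb_\tau)=f(g(\vb_\tau))\neq 0$ since $g(\vb_\tau)\neq 0$ lies in $I_\tau$ and $f$ is injective). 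Thus $f\circ g\neq 0$ for every nonzero $g$, which is the contrapositive of what is needed.

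For the converse ($\Leftarrow$), I would argue by contraposition: assume $f$ is \emph{not} injective and construct a nonzero $g\in\End(I_\tau)$ with $f\circ g=0$. Let $N=\ker(f)$, a nonzero submodule of $I_\tau$. The key point is that $N$ must contain a weight vector for some $w.\tau$, and since $\tau\in\UC_\C$ all the $I_{w.\tau}$ are isomorphic to $I_\tau$ (as recalled just before the statement, via \cite[Lemma 5.4]{hebert2018principal}). Concretely, by Lemma~\ref{lemCharacterization_equality_generalized_weight_spaces} applied to $\{0\}\subset N$, since $N\neq\{0\}$ we have $N(\tau)\neq\{0\}$ (the lemma shows a submodule is determined by, and nonzero iff nonzero on, the $\tau$-weight space). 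Choose a nonzero $x\in N(\tau)$. By Frobenius reciprocity (Lemma~\ref{lemFrobenius_reciprocity}), $x=\Upsilon_x(\vb_\tau)$ defines an endomorphism $g:=\Upsilon_x\in\End(I_\tau)$ with $g(\vb_\tau)=x$; this $g$ is nonzero since $x\neq 0$. Then $f\circ g(\vb_\tau)=f(x)=0$ because $x\in\ker(f)$, and as $I_\tau=\AC_\C.\vb_\tau$ this forces $f\circ g=0$. This produces the desired nonzero $g$ annihilated on the left by $f$, completing the contrapositive.

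The main obstacle is the step $N\neq\{0\}\Rightarrow N(\tau)\neq\{0\}$: I must be certain that every nonzero submodule of $I_\tau$ has a nonzero $\tau$-weight vector. This is exactly where $\tau\in\UC_\C$ is essential, since it guarantees $I_{w.\tau}\simeq I_\tau$ for all $w$, so that every weight $w.\tau$ of $N$ can be ``transported'' back to $\tau$; the clean way to invoke this is through Lemma~\ref{lemCharacterization_equality_generalized_weight_spaces} (or directly through Lemma~\ref{lemModules_K_generated_weight_subspaces}, which shows $M(\tau,\mathrm{gen})=\mathrm{vect}_\C(\KC_\tau(\tau).M(\tau))$, so $M(\tau)=\{0\}$ would force $M(\tau,\mathrm{gen})=\{0\}$ and, combined with Lemma~\ref{lemInvariance_dimension_weights_Weyl_group}, $\Wt(N)=\emptyset$, i.e.\ $N=\{0\}$). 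Everything else reduces to the fact that $I_\tau$ is cyclic on $\vb_\tau$ together with Frobenius reciprocity, both of which are routine.
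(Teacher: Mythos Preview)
Your argument is correct and follows the same overall route as the paper: for the nontrivial direction you take $N=\ker(f)\neq\{0\}$, produce a nonzero $x\in N(\tau)$, and use Frobenius reciprocity (Lemma~\ref{lemFrobenius_reciprocity}) to get $g\in\End(I_\tau)$ with $g(\vb_\tau)=x$ and hence $f\circ g=0$.

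One point worth flagging concerns the justification of $N(\tau)\neq\{0\}$. You invoke Lemma~\ref{lemCharacterization_equality_generalized_weight_spaces} (and, in the parenthetical alternative, Lemma~\ref{lemModules_K_generated_weight_subspaces}); as proved in the paper, both of these sit in the subsection carrying the standing hypothesis $I_\tau(\tau,\Wta)=\C\vb_\tau$, and their proofs genuinely use that hypothesis via Proposition~\ref{propDescription_generalized_weight_spaces}(2). The present lemma, however, is stated for \emph{any} $\tau\in\UC_\C$, and the paper's proof avoids this extra hypothesis: it uses only that some $\tau'\in W^v.\tau$ lies in $\Wt(N)$ (from \cite[Lemma~3.3]{hebert2018principal}), that $I_{\tau'}\simeq I_\tau$ because $\tau\in\UC_\C$, and then Lemma~\ref{lemStructure_Weights_module} to conclude $\tau\in\Wt(N)$. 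Your own parenthetical already contains the right lightweight tool: Lemma~\ref{lemInvariance_dimension_weights_Weyl_group} alone (together with $\Wt(N)\subset W^v.\tau$) gives $N(\tau)\neq\{0\}$ directly, without any appeal to Lemma~\ref{lemModules_K_generated_weight_subspaces}. So the detour through the $\KC_\tau(\tau)$ machinery is unnecessary here and, as you cite it, imports a hypothesis the statement does not assume.
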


\begin{proof}
Suppose that $f$ is not injective. Let $M=\ker(f)\subset I_\tau$. By \cite[Lemma 3.3]{hebert2018principal}, there exists $\tau'\in W^v.\tau\cap \Wt(M)$.  As $\tau\in \UC_\C$, $I_{\tau'}\simeq I_\tau$ and thus by  Lemma~\ref{lemStructure_Weights_module}, $\tau\in \Wt(M)$. Let $x\in M(\tau)\setminus\{0\}$. By Lemma~\ref{lemFrobenius_reciprocity}, there exists $g\in \End(I_\tau)$ such that $g(\vb_\tau)=x$. Then $f\circ g(\vb_\tau)=0$. As $I_\tau=\AC_\C.\vb_\tau$, we deduce that $f\circ g=0$. 
\end{proof}

\begin{remark}
As we shall see in~\ref{subsubWtau=Z}, there can exist $f\in \End(I_\tau)$ injective such that for all $g\in \End(I_\tau)$, $g\circ f\neq \Id$. 
\end{remark}

\begin{lemma}\label{lemCharacterization_surjectivity_UC}
Let $\tau\in \UC_\C$ be such that $I_\tau(\tau,\Wta)=\C \vb_\tau$. Let $f\in \End(I_\tau)$. Then $f$ is surjective if and only if there exists $g\in \End(I_\tau)$ such that $f\circ g= \Id$. In particular if $\End(I_\tau)$ is commutative, then $f$ is surjective if and only if $f$ is invertible. 
\end{lemma}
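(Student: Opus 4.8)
The plan is to deduce the equivalence from a single structural observation: under the hypothesis $I_\tau(\tau,\Wta)=\C\vb_\tau$, the generalized weight space $I_\tau(\tau,\mathrm{gen})$ is a \emph{free} left module over $B:=\End(I_\tau)$, with $I_\tau(\tau)$ appearing as one rank-one free summand, and $f$ acting by left multiplication. Surjectivity of $f$ on the whole module is then detected on that one summand, which is exactly the statement that $f$ has a right inverse.

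The trivial direction is immediate: if $f\circ g=\Id$, then every $y=f(g(y))$ lies in $\mathrm{Im}(f)$, so $f$ is surjective. For the converse, I would first reduce to $W:=I_\tau(\tau,\mathrm{gen})$. Since $f$ is $\AC_\C$-linear it is $\C[Y]$-linear, hence preserves each generalized weight space; because $I_\tau=\bigoplus_{\tau'}I_\tau(\tau',\mathrm{gen})$ by \cite[Lemma 3.3]{hebert2018principal}, surjectivity of $f$ on $I_\tau$ restricts to surjectivity of $f$ on $W$ (decompose a preimage of $y\in W$ along the weight spaces and keep the $\tau$-component). Next I would establish the freeness. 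By Proposition~\ref{propDescription_generalized_weight_spaces}(1) the family $\big(K_{\underline w}(\tau).\psi_{w_R}(\vb_\tau)\big)_{w\in\Wta,\,w_R\in R_\tau}$ is a $\C$-basis of $W$; grouping by $w\in\Wta$ and using $I_\tau(\tau)=\bigoplus_{w_R}\C\,\psi_{w_R}(\vb_\tau)$ from Proposition~\ref{propDescription_generalized_weight_spaces}(2) (this is where the hypothesis $I_\tau(\tau,\Wta)=\C\vb_\tau$ is used), each block $W_w:=\bigoplus_{w_R}\C\,K_{\underline w}(\tau).\psi_{w_R}(\vb_\tau)$ is the image of $B$ under $\phi\mapsto\phi\big(K_{\underline w}(\tau).\vb_\tau\big)=K_{\underline w}(\tau).\phi(\vb_\tau)$, using that endomorphisms commute with the left $\HCW$-action. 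By linear independence and Frobenius reciprocity (Lemma~\ref{lemFrobenius_reciprocity}) this is an injective morphism of left $B$-modules, so $W=\bigoplus_{w\in\Wta}W_w$ with $W_w\cong B$, and the summand $w=1$ is precisely $I_\tau(\tau)$.

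Under this identification $f\in B$ acts on $W$ as the $B$-module element $f$, i.e. diagonally by left multiplication: writing $x=\sum_w b_w\big(K_{\underline w}(\tau).\vb_\tau\big)$ with $b_w\in B$, one has $f(x)=\sum_w (f\circ b_w)\big(K_{\underline w}(\tau).\vb_\tau\big)$. Hence $f$ is surjective on $W$ if and only if left multiplication by $f$ is surjective on $B$, equivalently $\Id\in f\circ B$, equivalently there exists $g\in B$ with $f\circ g=\Id$; and this can equally be read off the single summand $I_\tau(\tau)$. For the final assertion, if $B=\End(I_\tau)$ is commutative then a right inverse $g$ satisfies $g\circ f=f\circ g=\Id$, so $f$ is invertible, while invertibility trivially gives surjectivity. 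I expect the main obstacle to be the careful verification of the freeness—in particular that $f$ really acts as diagonal left multiplication and that $I_\tau(\tau,\Wta)=\C\vb_\tau$ is exactly what makes $I_\tau(\tau)$ a rank-one free $B$-summand; the invertibility of the $\psi_{w_R}$ (Lemma~\ref{lemInvertibility_Psi_wr}), which guarantees that $B$ is spanned by units, is what ensures each block is genuinely isomorphic to $B$ rather than merely a quotient.
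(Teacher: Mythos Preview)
Your argument is correct. The paper's proof is considerably shorter but relies on prior machinery: it sets $M=\ker f$, notes that $f$ induces an isomorphism $\overline{f}:I_\tau/M\to I_\tau$, and then invokes Corollary~\ref{corProjection_endomorphisms} (which in turn rests on Proposition~\ref{propK_description_generalized_weight_spaces_quotient}(3)) to lift $\overline{f}^{-1}\in\Hom(I_\tau,I_\tau/M)$ to some $\phi\in\End(I_\tau)$; then $f\circ\phi=\overline f\circ\pi_M\circ\phi=\Id$.

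Your route bypasses the quotient machinery entirely and works directly with Proposition~\ref{propDescription_generalized_weight_spaces}: the decomposition $I_\tau(\tau,\mathrm{gen})=\bigoplus_{w\in\Wta}W_w$ with each $W_w\cong B$ as a left $B$-module, together with the observation that $f$ preserves each block, reduces surjectivity of $f$ to surjectivity on the single block $W_1=I_\tau(\tau)\cong B$, i.e.\ to the existence of a right inverse. Both approaches ultimately pivot on producing an $x\in I_\tau(\tau)$ with $f(x)=\vb_\tau$; the paper extracts this from $I_\tau/M(\tau)=\pi_M(I_\tau(\tau))$, you from the block decomposition of $I_\tau(\tau,\mathrm{gen})$. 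Your version is more self-contained and makes the free $B$-module structure of $I_\tau(\tau,\mathrm{gen})$ explicit, which is a nice structural observation in its own right; the paper's version is terser because the relevant lifting lemma has already been packaged. One minor remark: your closing appeal to Lemma~\ref{lemInvertibility_Psi_wr} is not actually needed for the isomorphism $W_w\cong B$---surjectivity of $\phi\mapsto K_{\underline w}(\tau).\phi(\vb_\tau)$ only uses that each $\psi_{w_R}$ lies in $B$, not that it is invertible.
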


\begin{proof}
Suppose that $f$ is surjective. Let $M=\ker(f)$. Then $f$ induces an isomorphism $\overline{f}:I_\tau/M\overset{\simeq}{\rightarrow} I_\tau$. By Corollary~\ref{corProjection_endomorphisms}, we can write $\overline{f}^{-1}=\pi_M\circ \phi$, where $\phi\in \End(I_\tau)$. Then $f\circ\phi=\overline{f}\circ \pi_M\circ\phi=\Id$. 
\end{proof}

\subsection{Submodules of $I_\tau$ when $I_\tau(\tau,\Wta)=\C \vb_\tau$}\label{subSubmodules_Itau}

In this subsection, we describe the submodules of $I_\tau$ by using right ideal of $\End(I_\tau)$ (see Theorem~\ref{thmBijection_modules_ideals}).

A \textbf{right ideal $J$ of }$\mathrm{End}(I_\tau)$ (resp. left ideal) is a vector subspace $J$ of $\mathrm{End}(I_\tau)$ such that $f\circ g\in J$ (resp. $g\circ f\in J$), for all $f\in J$ and $g\in \mathrm{End}(I_\tau)$. A \textbf{two-sided ideal of }$\End(I_\tau)$ is a right ideal of $\End(I_\tau)$ which is also a left ideal.

\begin{notation}\label{not_ideal_modules}
For a right ideal $J\subset \End(I_\tau)$, we set $J(I_\tau)=\sum_{\phi\in J} \phi(I_\tau)$. For $M\subset I_\tau$ a submodule, we set $J_M=\{\phi\in \End(I_\tau)|\phi(\vb_\tau)\in M\}$.
\end{notation}

If $M$ is a submodule of $I_\tau$, then $J_M$ is a right ideal of $\End(I_\tau)$. Indeed, let $\phi\in J_M$ and $\phi'\in \End(I_\tau)$. Then $\phi'(\vb_\tau)\in I_\tau$ and thus there exists $h\in \AC_\C$ such that $\phi'(\vb_\tau)=h.\vb_\tau$. Then $\phi\circ\phi'(\vb_\tau)=h.\phi(\vb_\tau)$ and as $\phi(\vb_\tau)\in M$, $h.\phi(\vb_\tau)\in M$.

\begin{lemma}\label{lemElements_generalized_weight_spaces_image_intertwiners}
Let $M$ be a submodule of $I_\tau$ and $x\in M$. Then there exists a right ideal $J_x$ of $\End(I_\tau)$ such that $x\in J_x(I_\tau)\subset M$.
\end{lemma}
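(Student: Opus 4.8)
The plan is to reduce the statement to the single assertion that $M$ is generated, as an $\AC_\C$-module, by its $\tau$-weight space, i.e. that $M = \AC_\C.M(\tau)$. Once this is known, I would simply take $J_x := J_M$ for every $x \in M$. Indeed, $J_M$ is a right ideal of $\End(I_\tau)$ (as noted just after Notation~\ref{not_ideal_modules}), and by Frobenius reciprocity (Lemma~\ref{lemFrobenius_reciprocity}) the map $\phi \mapsto \phi(\vb_\tau)$ carries $J_M$ onto $M(\tau)$; since $\phi(I_\tau) = \AC_\C.\phi(\vb_\tau)$ for every $\phi$ (because $I_\tau = \AC_\C.\vb_\tau$), this gives
\[ J_M(I_\tau) = \sum_{\phi \in J_M} \AC_\C.\phi(\vb_\tau) = \AC_\C.M(\tau) = M. \]
In particular $x \in M = J_M(I_\tau) \subset M$, which is exactly what is wanted (with the constant choice $J_x = J_M$).

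So the heart of the matter is the equality $M = \AC_\C.M(\tau)$. First I would set $N := \AC_\C.M(\tau)$, the submodule of $I_\tau$ generated by $M(\tau)$; clearly $N \subset M$ since $M$ is a submodule containing $M(\tau)$. Then I would compare $\tau$-weight spaces: on one hand $M(\tau) \subset N$ and $M(\tau) \subset I_\tau(\tau)$, so $M(\tau) \subset N \cap I_\tau(\tau) = N(\tau)$; on the other hand $N \subset M$ gives $N(\tau) = N \cap I_\tau(\tau) \subset M \cap I_\tau(\tau) = M(\tau)$. Hence $N(\tau) = M(\tau)$, and Lemma~\ref{lemCharacterization_equality_generalized_weight_spaces}, applied to the chain $N \subset M \subset I_\tau$, yields $N = M$.

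The main obstacle is precisely this last step, namely passing from the equality of the \emph{$\tau$-weight spaces} $N(\tau) = M(\tau)$ to the equality of the \emph{whole modules} $N = M$. This is exactly the content of Lemma~\ref{lemCharacterization_equality_generalized_weight_spaces}, whose proof in turn rests on Lemma~\ref{lemModules_K_generated_weight_subspaces} (the description $M(\tau,\mathrm{gen}) = \vect_\C(\KC_\tau(\tau).M(\tau))$) together with the $W^v$-invariance of the dimensions of the (generalized) weight spaces (Lemma~\ref{lemInvariance_dimension_weights_Weyl_group} and Lemma~\ref{lemInvariance_dimension_generalized_weight_spaces}) and the isomorphisms $I_{w.\tau} \simeq I_\tau$ available since $\tau \in \UC_\C$; these ensure that no generalized weight component at some $w.\tau$ survives in the quotient $M/N$.

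Alternatively, if one prefers a right ideal genuinely built from $x$, I would decompose $x = \sum_{\tau'} x_{\tau'}$ into generalized weight components (by \cite[Lemma 3.3]{hebert2018principal}, with all $\tau' \in W^v.\tau$) and treat each component separately. For the component at $\tau$ one uses Lemma~\ref{lemModules_K_generated_weight_subspaces} to write $x_\tau = \sum_i h_i.y_i$ with $h_i \in \KC_\tau(\tau)$ and $y_i \in M(\tau)$, chooses $\phi_i \in \End(I_\tau)$ with $\phi_i(\vb_\tau) = y_i$ via Lemma~\ref{lemFrobenius_reciprocity} (so that $\phi_i(I_\tau) = \AC_\C.y_i \subset M$), and then the right ideal generated by the $\phi_i$ contains $x_\tau$ in its image while staying inside $M$; summing over components, after transporting the remaining weights to $\tau$ through the isomorphisms $I_{w.\tau}\simeq I_\tau$, produces $J_x$. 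This route makes the role of $\KC_\tau(\tau)$ explicit, but it requires the same invariance inputs to handle the weights other than $\tau$, so it is not really shorter than the direct argument above.
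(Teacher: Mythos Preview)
Your first approach is correct, and it is genuinely different from the paper's proof. The paper proceeds exactly along the lines of your ``alternative'': it decomposes $x$ into generalized weight components $x_{\tau'}$ with $\tau'\in W^v.\tau$, transports each component through an isomorphism $f:I_\tau\to I_{\tau'}$, applies Lemma~\ref{lemModules_K_generated_weight_subspaces} at $\tau'$ to write $f(x_{\tau'})$ as a sum $\sum_i h_i.x_i$ with $x_i\in f(M)(\tau')$, and then pulls back the resulting endomorphisms $f^{-1}\circ\phi_i\circ f$ to build $J_x$. Your main route instead proves once and for all that $M=\AC_\C.M(\tau)$ (via Lemma~\ref{lemCharacterization_equality_generalized_weight_spaces}, which is already available at this point and does not depend on the present lemma), and then takes the constant choice $J_x=J_M$. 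This is shorter and in fact establishes directly the identity $J_M(I_\tau)=M$ that the paper only obtains later, inside the proof of Theorem~\ref{thmBijection_modules_ideals}. The paper's version, on the other hand, produces a right ideal genuinely tailored to $x$ and makes the role of the transport isomorphisms $I_\tau\simeq I_{w.\tau}$ explicit; both approaches ultimately rest on Lemma~\ref{lemModules_K_generated_weight_subspaces} and the standing hypothesis $I_\tau(\tau,\Wta)=\C\vb_\tau$.
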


\begin{proof}
We first assume that $x\in M(\tau',\mathrm{gen})$, for some $\tau'\in W^v.\tau$. Let  $f:I_\tau\rightarrow I_{\tau'}$  be an isomorphism. Let $M'=f(M)$ and $x'=f(x)$. Then by Lemma~\ref{lemModules_K_generated_weight_subspaces},  there exist $n\in \N$ and $(h_i)\in (\AC_\C)^n$, $(x_i)\in M'(\tau')^n$  such that $x'=\sum_{i=1}^n h_i.x_i$. For $i\in \llbracket 1,n\rrbracket$, let $\phi_i\in \mathrm{End}(I_{\tau'})$ be such that $\phi_i(\vb_{\tau'})=x_i$, which exists by Lemma~\ref{lemFrobenius_reciprocity}. Then $x'=\sum_{i=1}^n \phi_i(h_i.\vb_{\tau'})\in \sum_{i=1}^n \phi_i(I_{\tau'})$ and thus $x\in \sum_{i=1}^n f^{-1}\circ \phi_i\circ f(I_\tau)$. Moreover for $i\in \llbracket 1,n\rrbracket$, $\phi_i(\vb_\tau)\in M'$, thus $\sum_{i=1}^n  \phi_i (I_{\tau'})\subset M'$ and hence $\sum_{i=1}^n f^{-1}\circ \phi_i\circ f(I_\tau)\subset M$. Set $J_x=\sum_{i=1}^n \big(f^{-1}\circ \phi_i\circ f\big)\circ \mathrm{End}(I_\tau)$. Then $x\in J_x(I_\tau)\subset M$. 

We no longer assume that $x\in M(\tau',\mathrm{gen})$, for some $\tau'\in W^v.\tau$. By \cite[Lemma 3.3]{hebert2018principal}, one has $M=\sum_{\tau'\in \Wt(M)} M(\tau',\mathrm{gen})$. For $\tau'\in \Wt(M)$ and $x_{\tau'}\in M(\tau',\mathrm{gen})$, choose a right ideal $J_{x_{\tau'}}\subset \mathrm{End}(I_\tau)$ such that $x_{\tau'}\subset J_{x_{\tau'}}(I_\tau)\subset M$. Then $x\in (\sum_{\tau'\in \Wt(M)} J_{x_{\tau'}})(I_\tau)\subset M$ and thus one can choose $J_x=\sum_{\tau'\in \Wt(M)} J_{x_{\tau'}}$.
\end{proof}

\begin{lemma}\label{lemProjection_on_Itau(tau)_and_intertwiners}
Let $\pi^{R_\tau}:I_\tau(\tau,\mathrm{gen})\rightarrow I_\tau(\tau)$ be the linear map  defined by $\pi^{R_\tau}\big(K_{\underline{w}}.\psi_{w_R}(\vb_\tau)\big)=0$  and $\pi^{R_\tau}\big(\psi_{w_R}(\vb_\tau)\big)=\psi_{w_R}(\vb_\tau)$, for $w\in \Wta\setminus\{1\}$ and $w_R\in \Wta$. Then for all $\phi\in \mathrm{End}(I_\tau)$, one has $(\phi\circ \pi^{R_\tau})|I_\tau(\tau,\mathrm{gen})=(\pi^{R_\tau}\circ \phi )|I_\tau(\tau,\mathrm{gen})$.
\end{lemma}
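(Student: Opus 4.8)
The plan is to verify the identity on the basis $\big(K_{\underline{w}}(\tau).\psi_{w_R}(\vb_\tau)\big)_{w\in\Wta,\, w_R\in R_\tau}$ of $I_\tau(\tau,\mathrm{gen})$ furnished by Proposition~\ref{propDescription_generalized_weight_spaces}, since both $\phi\circ\pi^{R_\tau}$ and $\pi^{R_\tau}\circ\phi$ are $\C$-linear. First I would record two preliminary observations. Since $\phi\in\End(I_\tau)$ is in particular a $\C[Y]$-module morphism, it commutes with the operators $\theta-\tau(\theta)$ for $\theta\in\C[Y]$, and hence carries a generalized $\tau$-weight vector to a generalized $\tau$-weight vector, and a genuine $\tau$-weight vector to a $\tau$-weight vector; thus $\phi$ preserves both $I_\tau(\tau,\mathrm{gen})$ and $I_\tau(\tau)$. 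Consequently $\pi^{R_\tau}\circ\phi$ and $\phi\circ\pi^{R_\tau}$ are both well-defined endomorphisms of $I_\tau(\tau,\mathrm{gen})$.

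The crux of the argument is the remark that $K_{\underline{w}}(\tau)=\ev_\tau(K_{\underline{w}})$ lies in $\HCW$, which is a subalgebra of $\AC_\C$. As $\phi$ is an $\AC_\C$-module endomorphism, it commutes with the action of this element:
\[\phi\big(K_{\underline{w}}(\tau).x\big)=K_{\underline{w}}(\tau).\phi(x)\qquad\text{for all }x\in I_\tau.\]
I would apply this with $x=\psi_{w_R}(\vb_\tau)\in I_\tau(\tau)$. Using the hypothesis $I_\tau(\tau,\Wta)=\C\vb_\tau$ together with Proposition~\ref{propDescription_generalized_weight_spaces}, one has $I_\tau(\tau)=\bigoplus_{v_R\in R_\tau}\C\,\psi_{v_R}(\vb_\tau)$, so I may write $\phi\big(\psi_{w_R}(\vb_\tau)\big)=\sum_{v_R\in R_\tau}c_{v_R}\,\psi_{v_R}(\vb_\tau)$, whence
\[\phi\big(K_{\underline{w}}(\tau).\psi_{w_R}(\vb_\tau)\big)=\sum_{v_R\in R_\tau}c_{v_R}\,K_{\underline{w}}(\tau).\psi_{v_R}(\vb_\tau),\]
each summand being again one of the basis vectors indexed by $(w,v_R)$.

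It then remains a short case distinction on whether $w=1$. If $w\neq 1$, the right-hand display lies in the span of basis vectors whose first index is $\neq 1$, so $\pi^{R_\tau}$ annihilates it; and since $\pi^{R_\tau}\big(K_{\underline{w}}(\tau).\psi_{w_R}(\vb_\tau)\big)=0$ by definition, $\phi$ also sends that to $0$. Hence both compositions vanish on such a basis vector. If $w=1$, then $K_{\underline{1}}(\tau)=1$ (empty product), so the basis vector equals $\psi_{w_R}(\vb_\tau)\in I_\tau(\tau)$, which $\pi^{R_\tau}$ fixes; both $\pi^{R_\tau}\circ\phi$ and $\phi\circ\pi^{R_\tau}$ then return $\phi\big(\psi_{w_R}(\vb_\tau)\big)$, the first because $\phi\big(\psi_{w_R}(\vb_\tau)\big)\in I_\tau(\tau)$ is fixed by $\pi^{R_\tau}$. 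This shows the two maps agree on every basis vector, hence on all of $I_\tau(\tau,\mathrm{gen})$. There is no real obstacle here; the only point demanding care is bookkeeping—expanding $\phi\big(\psi_{w_R}(\vb_\tau)\big)$ inside the weight space $I_\tau(\tau)$ (which uses that $\phi$ preserves $I_\tau(\tau)$, not merely $I_\tau(\tau,\mathrm{gen})$) before applying $K_{\underline{w}}(\tau)$, so that the outcome is genuinely supported on the $(w,\cdot)$ slice of the basis.
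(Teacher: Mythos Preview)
Your proof is correct and follows essentially the same approach as the paper: check the identity on the basis $\big(K_{\underline{w}}(\tau).\psi_{w_R}(\vb_\tau)\big)$, use that $\phi$ preserves $I_\tau(\tau)$ to expand $\phi(\psi_{w_R}(\vb_\tau))=\sum_{v_R}a_{v_R}\psi_{v_R}(\vb_\tau)$, and then use that $K_{\underline{w}}(\tau)\in\HCW\subset\AC_\C$ commutes with $\phi$ to handle the $w\neq 1$ case. The paper's version is slightly terser but the logic is the same.
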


\begin{proof}
The map $\pi^{R_\tau}$ is well defined by Proposition~\ref{propDescription_generalized_weight_spaces}. Let $\phi\in \mathrm{End}(I_\tau)$ and $w_R\in R_\tau$. Then by Lemma~\ref{lemReeder 4.3}, $\phi(F_{w_R}(\tau)\vb_\tau)\in I_\tau(\tau)$. By Proposition~\ref{propDescription_generalized_weight_spaces}, as we assumed $I_\tau(\tau,\Wta)=\C \vb_\tau$, we have: \[\pi^{R_\tau}\circ\phi\big(\psi_{w_R}(\vb_\tau)\big)=\phi\big(\psi_{w_R}(\vb_\tau)\big)=\phi\circ \pi^{R_\tau}\big(\psi_{w_R}(\vb_\tau)\big).\]

Write $\phi\big(\psi_{w_R}(\vb_\tau)\big)=\sum_{v_R\in R_\tau} a_{v_R} \psi_{v_R}(\vb_\tau)$, where $(a_{v_R})\in \C^{(R_\tau)}$. Let $w\in \Wta\setminus\{1\}$. Then $\phi\big(K_{\underline{w}}(\tau).\psi_{w_R}(\vb_\tau)\big)=\sum_{v_R\in R_\tau}a_{v_R} K_{\underline{w}}(\tau).\psi_{v_R}(\vb_\tau)$. Therefore $\pi^{R_\tau}\circ \phi\big(K_{\underline{w}} (\tau)\psi_{w_R}(\vb_\tau)\big)=0=\phi\circ \pi^{R_\tau}\big(K_{\underline{w}}(\tau)\psi_{w_R}(\vb_\tau)\big)$, which proves the lemma. 
\end{proof}

\begin{theorem}\label{thmBijection_modules_ideals}
Let $\tau\in \UC_\C$ be such that $I_\tau(\tau,\Wta)=\C  \vb_\tau$. We use Notation~\ref{not_ideal_modules}. Then the assignment $M\mapsto J_M$ defines a bijection between the set of submodules of $I_\tau$ and the set of right ideals of $\End(I_\tau)$. Its inverse is the map $J\mapsto J(I_\tau)$.
\end{theorem}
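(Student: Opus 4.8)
The plan is to show that the two assignments $M\mapsto J_M$ and $J\mapsto J(I_\tau)$ are mutually inverse. First I would record that both are well defined: the paragraph preceding the statement already shows $J_M$ is a right ideal, and $J(I_\tau)=\sum_{\phi\in J}\phi(I_\tau)$ is a submodule since each $\phi(I_\tau)$ is the image of an $\AC_\C$-module morphism and a sum of submodules is a submodule. It then suffices to prove the two identities $J_M(I_\tau)=M$ for every submodule $M$ and $J_{J(I_\tau)}=J$ for every right ideal $J$.

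For the first identity, the inclusion $J_M(I_\tau)\subset M$ is immediate: if $\phi\in J_M$ then $\phi(\vb_\tau)\in M$, and since $I_\tau=\AC_\C.\vb_\tau$ we get $\phi(I_\tau)=\AC_\C.\phi(\vb_\tau)\subset M$. For the reverse inclusion I would invoke Lemma~\ref{lemElements_generalized_weight_spaces_image_intertwiners}: given $x\in M$ it provides a right ideal $J_x$ with $x\in J_x(I_\tau)\subset M$. Every $\phi\in J_x$ satisfies $\phi(\vb_\tau)\in\phi(I_\tau)\subset J_x(I_\tau)\subset M$, hence $\phi\in J_M$; thus $J_x\subset J_M$ and $x\in J_x(I_\tau)\subset J_M(I_\tau)$. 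This gives $M\subset J_M(I_\tau)$, so $J_M(I_\tau)=M$.

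For the second identity I would use the Frobenius reciprocity isomorphism of Lemma~\ref{lemFrobenius_reciprocity} applied with $M=I_\tau$, namely $\Upsilon\colon I_\tau(\tau)\xrightarrow{\sim}\End(I_\tau)$ with $\Upsilon^{-1}(\phi)=\phi(\vb_\tau)$. Since $\phi(\vb_\tau)\in I_\tau(\tau)$ for every $\phi\in\End(I_\tau)$, one has $J_{J(I_\tau)}=\{\phi\mid\phi(\vb_\tau)\in (J(I_\tau))(\tau)\}=\Upsilon\big((J(I_\tau))(\tau)\big)$, so the identity $J_{J(I_\tau)}=J$ is equivalent to $(J(I_\tau))(\tau)=\{g(\vb_\tau)\mid g\in J\}$. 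The inclusion $\supset$ is clear, since for $g\in J$ the element $g(\vb_\tau)$ lies both in $J(I_\tau)$ and in $I_\tau(\tau)$.

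The hard part, and the main obstacle, is the inclusion $(J(I_\tau))(\tau)\subset\{g(\vb_\tau)\mid g\in J\}$, because a generic element of $J(I_\tau)$ is a combination $\sum_i\psi_i(y_i)$ with $\psi_i\in J$ and $y_i\in I_\tau$ arbitrary, so one must correct the $y_i$ back into the weight space $\tau$ without leaving $J$. Here I would exploit the two projections available. Fix $x\in (J(I_\tau))(\tau)$ and write $x=\sum_i\psi_i(y_i)$. Let $p\colon I_\tau\to I_\tau(\tau,\mathrm{gen})$ be the projection coming from the generalized weight decomposition $I_\tau=\bigoplus_{\tau'\in W^v.\tau}I_\tau(\tau',\mathrm{gen})$; it is an $\FC[Y]$-morphism and every $\AC_\C$-endomorphism preserves generalized weight spaces and hence commutes with it, so applying $p$ and using $p(x)=x$ gives $x=\sum_i\psi_i(p(y_i))$ with $p(y_i)\in I_\tau(\tau,\mathrm{gen})$. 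Then I would apply the projection $\pi^{R_\tau}\colon I_\tau(\tau,\mathrm{gen})\to I_\tau(\tau)$ of Lemma~\ref{lemProjection_on_Itau(tau)_and_intertwiners}, which restricts to the identity on $I_\tau(\tau)$ (using $I_\tau(\tau,\Wta)=\C\vb_\tau$ and Proposition~\ref{propDescription_generalized_weight_spaces}) and commutes with each $\psi_i$; this yields $x=\pi^{R_\tau}(x)=\sum_i\psi_i(z_i)$ with $z_i:=\pi^{R_\tau}(p(y_i))\in I_\tau(\tau)$. By Frobenius reciprocity choose $g_i\in\End(I_\tau)$ with $g_i(\vb_\tau)=z_i$, so $\psi_i(z_i)=(\psi_i\circ g_i)(\vb_\tau)$; since $J$ is a right ideal, $g:=\sum_i\psi_i\circ g_i\in J$ and $g(\vb_\tau)=x$. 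This establishes the remaining inclusion, hence $J_{J(I_\tau)}=J$, completing the proof that the two maps are mutually inverse bijections.
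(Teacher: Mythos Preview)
Your proof is correct and follows essentially the same route as the paper's: both halves of the bijection are established in the same way, the key step being to reduce an expression $\sum_i\psi_i(y_i)$ to one with $y_i\in I_\tau(\tau)$ by first projecting onto $I_\tau(\tau,\mathrm{gen})$ and then applying $\pi^{R_\tau}$ (Lemma~\ref{lemProjection_on_Itau(tau)_and_intertwiners}), before lifting via Frobenius reciprocity. Your rephrasing of the second identity through the isomorphism $\Upsilon$ is a cosmetic but helpful clarification of the same argument.
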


\begin{proof}
Let $M\subset I_\tau$ be  a submodule. Then \[J_M(I_\tau)=\sum_{\phi\in J_M} \phi(I_\tau)=\sum_{\phi\in J_M}\phi(\AC_\C. \vb_\tau)=\sum_{\phi\in J_M} \AC_\C .\phi(\vb_\tau)\subset M,\] by definition of $J_M$. By \cite[Lemma 3.3]{hebert2018principal}, one has $M=\sum_{\tau'\in \Wt(M)} M(\tau',\mathrm{gen})$. For  $x\in M$, choose a right ideal $J_x\subset \mathrm{End}(I_\tau)$ such that $x\in J_x(I_\tau)\subset M$, whose existence is provided by Lemma~\ref{lemElements_generalized_weight_spaces_image_intertwiners}. Then $M\subset \sum_{x\in M} J_x(I_\tau)\subset M$. Moreover $J_x\subset J_M$ for all $x\in M$ and hence $M\subset J_M(I_\tau)\subset M$. 

Let $J$ be a right ideal of $\mathrm{End}(I_\tau)$. Let $\phi\in J$. Then $\phi(\vb_\tau)\in J(I_\tau)$ and thus $\phi\in J_{J(I_\tau)}$. Hence $J\subset J_{J(I_\tau)}$. Let $\phi\in J_{J(I_\tau)}$. Then $\phi(\vb_\tau)\in J(I_\tau)$ and thus there exist $k\in \Ne$, $\phi_1,\ldots,\phi_k\in J$ and $x_1,\ldots,x_k\in I_\tau$ such that $\phi(\vb_\tau)=\sum_{i=1}^k \phi_i(x_i)$. By \cite[Lemma 3.3]{hebert2018principal}, we may assume that $x_i\in I_\tau(\tau,\mathrm{gen})$ for all $i\in \llbracket 1,k\rrbracket$. By Lemma~\ref{lemProjection_on_Itau(tau)_and_intertwiners}, one has $\phi(\vb_\tau)=\pi^{R_\tau}\circ\phi (\vb_\tau)=\sum_{i=1}^n\phi_i(y_i)$, where $y_i=\pi^{R_\tau}(x_i)\in I_\tau(\tau)$, for $i\in \llbracket 1,n\rrbracket$.  For $i\in \llbracket 1,n\rrbracket$, let $\phi_i'\in \mathrm{End}(I_\tau)$ be  such that $\phi_i'(\vb_\tau)=y_i$, which exists by Lemma~\ref{lemFrobenius_reciprocity}.  Then $\phi(\vb_\tau)=(\sum_{i=1}^n \phi_i\circ\phi_i)'(\vb_\tau)$. As $\sum_{i=1}^n\phi_i\circ\phi_i'\in \mathrm{End}(I_\tau)$, we deduce that $\phi=\sum_{i=1}^n \phi_i\circ \phi_i'$ and hence $\phi\in J$. Therefore  $J= J_{J(I_\tau)}$, which proves the theorem.
\end{proof}

\begin{remark}
From the definition of $M\mapsto J_M$ and $J\mapsto J(I_\tau)$, it is clear that these maps are (strictly) increasing. If $M,M'$ are submodules of $I_\tau$, then $J_{M\cap M'}=J_M\cap J_{M'}$ and if $J,J'$ are right ideals of $\End(I_\tau)$, then $(J+J')(I_\tau)=J(I_\tau)+J'(I_\tau)$. 
\end{remark}

\begin{corollary}\label{corTwo-sided_ideals}
Let $M\subset I_\tau$ be a submodule. Then the following are equivalent: \begin{enumerate}
\item for every $\phi\in \End(I_\tau)$, $\phi(M)\subset M$,

\item $J_M$ is a two-sided ideal.
\end{enumerate}

If these conditions  hold, then we have a natural map $\End(I_\tau)\rightarrow \End(I_\tau/M)$. 
\end{corollary}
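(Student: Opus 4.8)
The plan is to leverage the bijection of Theorem~\ref{thmBijection_modules_ideals}, which gives the identity $M=J_M(I_\tau)=\sum_{\psi\in J_M}\psi(I_\tau)$, together with the observation (recorded immediately after Notation~\ref{not_ideal_modules}) that $J_M$ is \emph{always} a right ideal of $\End(I_\tau)$. Consequently, proving that $J_M$ is two-sided amounts to proving it is a left ideal, i.e.\ that $\phi\circ\psi\in J_M$ for all $\phi\in\End(I_\tau)$ and $\psi\in J_M$, and by the definition of $J_M$ this is precisely the condition $(\phi\circ\psi)(\vb_\tau)\in M$.

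First I would establish the implication $(1)\Rightarrow(2)$. Assuming $\phi(M)\subset M$ for every $\phi\in\End(I_\tau)$, I fix $\psi\in J_M$ and $\phi\in\End(I_\tau)$; since $\psi(\vb_\tau)\in M$ by definition of $J_M$, I get $(\phi\circ\psi)(\vb_\tau)=\phi\big(\psi(\vb_\tau)\big)\in\phi(M)\subset M$, so $\phi\circ\psi\in J_M$. Hence $J_M$ is also a left ideal, therefore two-sided. For the converse $(2)\Rightarrow(1)$, I would assume $J_M$ is two-sided, take $\phi\in\End(I_\tau)$, and use that $\phi$ is linear and an $\AC_\C$-module morphism to compute, from $M=J_M(I_\tau)$,
\[
\phi(M)=\sum_{\psi\in J_M}\phi\big(\psi(I_\tau)\big)=\sum_{\psi\in J_M}(\phi\circ\psi)(I_\tau).
\]
Because $J_M$ is a left ideal, each $\phi\circ\psi$ lies in $J_M$, so $(\phi\circ\psi)(I_\tau)\subset J_M(I_\tau)=M$, giving $\phi(M)\subset M$.

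For the final assertion, supposing these equivalent conditions hold, I would note that $\phi(M)\subset M$ lets each $\phi\in\End(I_\tau)$ descend to the quotient: one sets $\overline{\phi}(x+M)=\phi(x)+M$, which is well defined and is an $\AC_\C$-module endomorphism of $I_\tau/M$. The assignment $\phi\mapsto\overline{\phi}$ is then the natural (algebra) morphism $\End(I_\tau)\rightarrow\End(I_\tau/M)$, since it is clearly linear and $\overline{\phi\circ\phi'}=\overline{\phi}\circ\overline{\phi'}$.

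The whole argument is formal once $M=J_M(I_\tau)$ is available, so I do not anticipate any genuine obstacle; the sole substantive ingredient is the module-ideal correspondence of Theorem~\ref{thmBijection_modules_ideals}, and everything else reduces to unwinding the definitions of left/right ideal and of $J_M$.
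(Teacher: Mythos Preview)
Your proof is correct and follows essentially the same approach as the paper's: both directions hinge on the identity $M=J_M(I_\tau)$ from Theorem~\ref{thmBijection_modules_ideals}, with $(1)\Rightarrow(2)$ being the immediate observation that $\psi(\vb_\tau)\in M$ forces $\phi\circ\psi(\vb_\tau)\in M$, and $(2)\Rightarrow(1)$ obtained by writing any $x\in M$ as a finite sum $\sum\psi_i(x_i)$ with $\psi_i\in J_M$ and using that $\phi\circ\psi_i\in J_M$ implies $(\phi\circ\psi_i)(I_\tau)\subset M$. The only cosmetic difference is that the paper argues element-by-element while you phrase it at the level of subspaces, but the content is identical.
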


\begin{proof}
Suppose that $J_M$ is a two-sided ideal. Let $x\in M$ and $\phi\in \End(I_\tau)$. Then by Theorem~\ref{thmBijection_modules_ideals}, there exist $k\in \N$, $\phi_1,\ldots,\phi_k\in J_M$ and $x_1,\ldots,x_k\in I_\tau$ such that $x=\sum_{i=1}^k \phi_i(x_i)$. Then $\phi(x)=\sum_{i=1}^k \phi\circ \phi_i(x_i)$. By assumption, $\phi\circ\phi_i(x_i)\in M$ for all $i\in \llbracket 1,k\rrbracket$ and thus $\phi(x)\in M$, which proves that $\phi(M)\subset M$. Reciprocally suppose (1). Let $\phi\in J_M$ and $\phi'\in \End(I_\tau)$. Then $\phi(\vb_\tau)\in M$, thus $\phi'\circ\phi(\vb_\tau)\in M$ and hence $\phi'\circ\phi\in J_M$, which proves (2).
\end{proof}

\subsection{Irreducible representations admitting $\tau$ as a weight when $I_\tau(\tau,\Wta)=\C\vb_\tau$}\label{subIrreducible_representations}

We now study $I_\tau/M$ for $M$ a maximal submodule of $I_\tau$ and we give a criterion for $M\mapsto I_\tau/M$ to be a bijection between the maximal submodules of $I_\tau$ and the irreducible representations admitting $\tau$ as a weight (see Theorem~\ref{thmIrreducible_representations_tau_UC}).

\begin{lemma}\label{lemSufficient_condition_uniqueness_submodule_irreducible_component}
Let $M\subset I_\tau$ be a submodule. Then the following properties are equivalent:

\begin{enumerate}
\item $\dim I_\tau/M(\tau) =1$, 

\item $I_\tau/M$ is irreducible and $M$ is the unique submodule $M'$ of $I_\tau$ such that $I_\tau/M'$ is isomorphic to $I_\tau/M$.

\item $J_M$ is a maximal right ideal of $\End(I_\tau)$ and is two sided.
\end{enumerate}

In particular if $N$  is a $\AC_\C$-module such that $\dim N(\tau)=1$, then $\AC_\C.N(\tau)$ is an irreducible representation of $\AC_\C$. 
\end{lemma}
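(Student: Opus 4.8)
The plan is to translate everything into the dictionary between submodules of $I_\tau$ and right ideals of $E:=\End(I_\tau)$ furnished by Theorem~\ref{thmBijection_modules_ideals}. Write $V=I_\tau/M$ and let $\pi_M\colon I_\tau\twoheadrightarrow V$ be the canonical projection. First I would record the numerical identity
\[\dim (I_\tau/M)(\tau)=\dim_\C E/J_M.\]
Indeed, by Frobenius reciprocity (Lemma~\ref{lemFrobenius_reciprocity}) the left-hand side equals $\dim\Hom(I_\tau,V)$, and by Corollary~\ref{corProjection_endomorphisms} the map $\phi\mapsto\pi_M\circ\phi$ is a surjection $E\twoheadrightarrow\Hom(I_\tau,V)$ whose kernel is $\{\phi\mid\phi(I_\tau)\subset M\}=\{\phi\mid\phi(\vb_\tau)\in M\}=J_M$ (using $I_\tau=\AC_\C.\vb_\tau$). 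Next, since $M'\mapsto J_{M'}$ is an order-preserving bijection and $J_{I_\tau}=E$, the submodules containing $M$ correspond to the right ideals containing $J_M$; hence $M$ is a maximal submodule (equivalently $V$ is irreducible) if and only if $J_M$ is a maximal right ideal. Finally, since $\AC_\C$ is countable-dimensional over the uncountable algebraically closed field $\C$ and $V$ is a quotient of $I_\tau$, every irreducible such $V$ satisfies $\End(V)=\C$ by Schur's lemma in Dixmier's form.

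With this in hand I would prove (1)$\Leftrightarrow$(3). If $\dim (I_\tau/M)(\tau)=1$ then $J_M$ has codimension $1$, so it is a maximal right ideal; moreover the right $E$-action on the line $E/J_M$ defines a character $\chi\colon E\to\C$ with $J_M=\ker\chi$, which is two-sided. Conversely, if $J_M$ is maximal and two-sided then $V$ is irreducible, and Corollary~\ref{corTwo-sided_ideals} gives a natural algebra map $E\to\End(V)$ with kernel exactly $J_M$; it embeds $E/J_M$ into $\End(V)=\C$, forcing $\dim_\C E/J_M=1$, i.e. $\dim (I_\tau/M)(\tau)=1$.

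For (1)$\Leftrightarrow$(2): if $\dim (I_\tau/M)(\tau)=1$ then $V$ is irreducible by the ``in particular'' claim applied to $N=V$ (note $V=\AC_\C.V(\tau)$, being generated by the image of $\vb_\tau$), and any submodule $M'$ with $I_\tau/M'\cong V$ yields a nonzero, hence surjective, element of the one-dimensional space $\Hom(I_\tau,V)$, proportional to $\pi_M$, so $M'=\ker\pi_M=M$, giving uniqueness. Conversely, if $V$ is irreducible but $\dim\Hom(I_\tau,V)\ge 2$, I would choose $\phi\in\Hom(I_\tau,V)$ independent from $\pi_M$; then $\phi$ is surjective with $\ker\phi\ne M$ (otherwise $\phi\in\C\pi_M$ by $\End(V)=\C$), yet $I_\tau/\ker\phi\cong V$, contradicting uniqueness. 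For the ``in particular'' statement, given $\dim N(\tau)=1$ set $N'=\AC_\C.N(\tau)$; it is a quotient of $I_\tau$, so $\Wt(N')\subset W^v.\tau$, and any nonzero submodule $P\subset N'$ has a weight in $W^v.\tau$, whence $\tau\in\Wt(P)$ by Lemma~\ref{lemStructure_Weights_module} (all $I_{\tau'}$, $\tau'\in W^v.\tau$, being isomorphic since $\tau\in\UC_\C$); then $P(\tau)=N(\tau)$ forces $P\supset N'$, so $N'$ is irreducible.

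The main obstacle is the appeal to Schur's lemma in the form $\End_{\AC_\C}(V)=\C$ for a possibly infinite-dimensional irreducible $V$: this is precisely what converts the formal conditions ``$J_M$ two-sided'' and ``$M$ unique'' into the sharp count $\dim (I_\tau/M)(\tau)=1$, and it depends on $\AC_\C$ being countable-dimensional over the uncountable field $\C$. The remaining verifications — the codimension-one character argument and the bookkeeping with the ideal–module correspondence — are routine once the identity $\dim (I_\tau/M)(\tau)=\dim_\C E/J_M$ is in place.
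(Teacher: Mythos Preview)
Your proof is correct and is organized somewhat differently from the paper's. The main structural difference is that you first isolate the identity $\dim(I_\tau/M)(\tau)=\dim_\C E/J_M$ (via Frobenius reciprocity and Corollary~\ref{corProjection_endomorphisms}) and then run everything through it, whereas the paper argues each implication more directly at the level of modules. Two specific points of divergence are worth noting. First, for $(1)\Rightarrow(3)$ you use the clean observation that a codimension-one right ideal is the kernel of the character afforded by the one-dimensional right module $E/J_M$, hence automatically two-sided; the paper instead proves $(2)\Rightarrow(3)$ by invoking Corollary~\ref{corEnd(Itau)_spanned_invertible} (that $\End(I_\tau)$ is spanned by invertibles, a consequence of the description of $I_\tau(\tau)$ via the $\psi_{w_R}$) to produce an invertible $\phi$ with $\phi(M)\neq M$ when $J_M$ fails to be two-sided. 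Your route therefore avoids appealing to that structural result. Second, for the ``in particular'' clause you argue directly with weights and Lemma~\ref{lemStructure_Weights_module}, and then reuse this to get irreducibility in $(1)\Rightarrow(2)$; the paper does the opposite, deducing irreducibility in $(1)\Rightarrow(2)$ from Lemma~\ref{lemCharacterization_equality_generalized_weight_spaces} and then obtaining the ``in particular'' as a corollary. Both approaches ultimately rest on the same Schur-type input ($\End_{\AC_\C}(V)=\C$ for irreducible $V$, valid since $\AC_\C$ is countable-dimensional over $\C$), which the paper cites from \cite{renard2010representations}.
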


\begin{proof}
Suppose that $\dim I_\tau/M(\tau)=1$. Let $M'$ be a submodule of $I_\tau$ such that there exists an isomorphism $f:I_\tau/M'\rightarrow I_\tau/M$. Let $\pi_M:I_\tau\twoheadrightarrow I_\tau/M$ and  $\pi_{M'}:I_\tau\twoheadrightarrow I_\tau/{M'}$ be the canonical projections. By assumption, one has $I_\tau/M(\tau)=\C\pi_M(\vb_\tau)$ and $I_\tau/M'(\tau)=\C \pi_{M'}(\vb_\tau)$. Thus maybe considering $af$ for some $a\in \C^*$, we may assume that $f(\pi_{M'}(\vb_\tau)\big)=\pi_M(\vb_\tau)$. Let $m\in M'$. Write $m=h.\vb_\tau$, for some $h\in \AC_\C$. Then $f\big(\pi_{M'}(m)\big)=0=h.f\big(\pi_{M'}(\vb_\tau)\big)=h.\pi_{M}(\vb_\tau)=\pi_M(m)$ and thus $m\in M$. Consequently, $M'\subset M$ and by symmetry, $M'=M$.

Let $M'\supsetneq M$ be a submodule of $I_\tau$. By Lemma~\ref{lemCharacterization_equality_generalized_weight_spaces} $M'(\tau)\supsetneq M(\tau)$. By Proposition~\ref{propK_description_generalized_weight_spaces_quotient} (3), $M(\tau)$ is a one-codimensional  subspace of $I_\tau(\tau)$, thus $M'(\tau)=I_\tau(\tau)$ and hence by Lemma~\ref{lemCharacterization_equality_generalized_weight_spaces}, $M'=I_\tau$. Therefore, $M$ is a maximal submodule of $I_\tau$ and thus $I_\tau/M$ is irreducible. 

Suppose (2). Suppose that $I_\tau/M(\tau)\neq \C \pi_M(\vb_\tau)$. Let $\overline{x}\in I_\tau/M(\tau)\setminus \C\pi_M(\vb_\tau)$. Let $f\in \Hom(I_\tau,I_\tau/M)$ be such that $f(\vb_\tau)=\overline{x}$, which exists by Lemma~\ref{lemFrobenius_reciprocity}. As $I_\tau/M$ is irreducible, $f(I_\tau)=I_\tau/M$ and thus $f$ induces an isomorphism $\overline{f}:I_\tau/\ker(f)\overset{\sim}{\rightarrow} I_\tau/M$. Thus $\ker(f)=M$. Moreover $\overline{f}\big(\pi_M(\vb_\tau)\big)=\overline{x}$. Thus $\End(I_\tau/M)\neq \C \Id$ and by Schur's Lemma (\cite[B.II Théorème]{renard2010representations}), $I_\tau/M$ is reducible: a contradiction. Therefore $\dim I_\tau/M(\tau)=1$.

Suppose (3). Then by Theorem~\ref{thmBijection_modules_ideals}, $I_\tau/M$ is irreducible. Let $\overline{x}\in I_\tau/M(\tau)$. By Proposition~\ref{propK_description_generalized_weight_spaces_quotient}~(3), there exists $x\in I_\tau(\tau)$ such that $\overline{x}=\pi_M(x)$. By Lemma~\ref{lemFrobenius_reciprocity}, there exists $\phi\in \End(I_\tau)$ such that $\phi(\vb_\tau)=x$. By Corollary~\ref{corTwo-sided_ideals}, $\phi(M)\subset M$. Therefore, $\phi$ induces a map $\overline{\phi}:I_\tau/M\rightarrow I_\tau/M$ such that $\overline{\phi}(\vb_\tau)=\overline{x}$. Therefore $\overline{\phi}$ is an isomorphism. By Schur's Lemma (\cite[B.II Théorème]{renard2010representations}), $\overline{\phi}\big(\pi_M(\vb_\tau)\big)=\overline{x}\in \C^* \pi_M(\vb_\tau)$ and thus $\dim I_\tau/M(\tau)=1$, which proves that (3) implies (1).

Suppose now that $J_M$ is not two sided. There exists $\phi\in \End(I_\tau),\psi\in J_M$ such that $\phi\circ \psi\notin J_M$. Therefore $\psi(\vb_\tau)\in M$ and $\phi\circ\psi(\vb_\tau)\notin M$: $\phi(M)\not \subset M$. Using Corollary~\ref{corEnd(Itau)_spanned_invertible}, we may assume that $\phi$ is invertible. Then $\phi(M)$ is a maximal submodule of $I_\tau$. Then $\phi$ induces an nonzero map $\overline{\phi}:I_\tau/M\rightarrow I_\tau/\phi(M)$, and $\overline{\phi}$ is an isomorphism, which contradicts (2). Thus (2) implies (3).

Let now $N$ be a  $\AC_\C$-module such that $\dim N(\tau)=1$. Let $x\in N(\tau)\setminus\{0\}$. By Lemma~\ref{lemFrobenius_reciprocity}, there exists $f:I_\tau\rightarrow \AC_\C.x=\AC_\C.N(\tau)$ such that $f(\vb_\tau)=x$. Then $f$ induces an isomorphism $\overline{f}:I_\tau/\ker(f)\overset{\sim}{\rightarrow} \AC_\C.N(\tau)$. Then $\dim I_\tau/\ker(f)(\tau)=1$, and hence $I_\tau/\ker(f)$ is irreducible.
\end{proof}

\begin{theorem}\label{thmIrreducible_representations_tau_UC}
Let $\tau\in \UC_\C$.

\begin{enumerate}
\item For every irreducible  $\AC_\C$-module $N$, $\tau\in \Wt(N)$ if and only if $\Wt(N)=W^v.\tau$.

\item The assignment $\Xi:M\mapsto I_\tau/M$ is a surjective map from the set of maximal submodules of $I_\tau$ to the set isomorphism classes of irreducible representations of $\AC_\C$ admitting the weight $\tau$. 

\item Suppose that $I_\tau(\tau,\Wta)=\C\vb_\tau$. Let $[N]$ be the isomorphism class of an irreducible representation of $\AC_\C$ admitting the weight $\tau$. Then $|\Xi^{-1}([N])|=1$ if and only if $\dim N(\tau)=1$ if and only if $J_M$ is a two-sided ideal, for any $M\in \Xi^{-1}([N])$. In particular, $\Xi$ is a bijection if and only if every maximal right ideal of $\End(I_\tau)$ is two-sided.

\item Suppose that $I_\tau(\tau,\Wta)=\C\vb_\tau$ and that $\Xi$ is a bijection, then for every irreducible representation $N$ admitting $\tau$ as a weight, one has $\dim N(\tau)=1=\dim N(w.\tau)$, $\dim N(\tau,\mathrm{gen})=|\Wta|=\dim N(w.\tau,\mathrm{gen})$, for every $w\in W^v$ and $\dim N=|\Wta||W^v/W_\tau|$.
\end{enumerate}
\end{theorem}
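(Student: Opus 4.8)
The plan is to read off the four assertions from the results already assembled, treating them in order.

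For (1), the hypothesis $\tau\in\UC_\C$ gives $I_{w.\tau}\simeq I_\tau$ for every $w\in W^v$. Hence if $\tau\in\Wt(N)$, then applying Lemma~\ref{lemStructure_Weights_module} to each $w\in W^v$ yields $w.\tau\in\Wt(N)$, so $W^v.\tau\subset\Wt(N)$. Conversely, since $N$ is irreducible with $\tau\in\Wt(N)$, it is a quotient of $I_\tau$ by Proposition~\ref{propMaximal_proper_submodules}, and by Proposition~\ref{propIntertwining_nonzero} together with Lemma~\ref{lemFrobenius_reciprocity} one has $\Wt(I_\tau)=W^v.\tau$; as the weights of a quotient lie among those of $I_\tau$ (Lemma~\ref{lemQuotient_generalized_weight_spaces} with $M'=I_\tau$), this forces $\Wt(N)\subset W^v.\tau$, and equality holds. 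The reverse implication is immediate since $\tau\in W^v.\tau$. For (2), each $I_\tau/M$ with $M$ maximal is irreducible and the image of $\vb_\tau$ is a nonzero $\tau$-weight vector, so $\Xi$ is well defined; its surjectivity is exactly Proposition~\ref{propMaximal_proper_submodules}(3).

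For (3) I assume $I_\tau(\tau,\Wta)=\C\vb_\tau$ and fix $M\in\Xi^{-1}([N])$, so that $N=I_\tau/M$ and $\dim N(\tau)=\dim I_\tau/M(\tau)$. The key input is Lemma~\ref{lemSufficient_condition_uniqueness_submodule_irreducible_component}, whose three equivalent conditions I reinterpret in the present setting. Because $M$ is a maximal submodule, the quotient $I_\tau/M$ is automatically irreducible, so condition (2) of that lemma says precisely that $M$ is the unique preimage, i.e.\ $|\Xi^{-1}([N])|=1$. Moreover, by the order-preserving bijection $M\mapsto J_M$ of Theorem~\ref{thmBijection_modules_ideals}, a maximal submodule corresponds to a maximal right ideal, so condition (3) of the lemma reduces here to ``$J_M$ is two-sided''. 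Chaining the equivalences of Lemma~\ref{lemSufficient_condition_uniqueness_submodule_irreducible_component} then gives $|\Xi^{-1}([N])|=1\iff\dim N(\tau)=1\iff J_M$ is two-sided. For the final clause, $\Xi$ is a bijection exactly when every fibre is a singleton, i.e.\ when $J_M$ is two-sided for every maximal submodule $M$; since $M\mapsto J_M$ matches maximal submodules with maximal right ideals bijectively, this is equivalent to every maximal right ideal of $\End(I_\tau)$ being two-sided.

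For (4) I keep the hypothesis $I_\tau(\tau,\Wta)=\C\vb_\tau$ and assume $\Xi$ bijective. Then part (3) gives $\dim N(\tau)=1$ for every irreducible $N$ admitting $\tau$ as a weight, and Lemma~\ref{lemInvariance_dimension_weights_Weyl_group} upgrades this to $\dim N(w.\tau)=1$ for all $w\in W^v$. Applying Proposition~\ref{propK_description_generalized_weight_spaces_quotient}(3) to $N=I_\tau/M$ yields $\dim N(\tau,\mathrm{gen})=|\Wta|\dim N(\tau)=|\Wta|$, and Lemma~\ref{lemInvariance_dimension_generalized_weight_spaces} gives $\dim N(w.\tau,\mathrm{gen})=|\Wta|$ for all $w$. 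Finally, writing $N=\bigoplus_{\tau'\in\Wt(N)}N(\tau',\mathrm{gen})$ with $\Wt(N)=W^v.\tau$ by (1), each summand has dimension $|\Wta|$ and the number of summands is $|W^v.\tau|=|W^v/W_\tau|$, whence $\dim N=|\Wta|\,|W^v/W_\tau|$ (possibly infinite). The only delicate point is the bookkeeping in (3): one must invoke the order-isomorphism of Theorem~\ref{thmBijection_modules_ideals} to see that the maximality of $J_M$ is automatic, so that two-sidedness alone governs the size of the fibre, and one must remember throughout that all these dimensions are to be read as cardinals.
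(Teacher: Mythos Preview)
Your proof is correct and follows essentially the same route as the paper's: part (1) via Lemma~\ref{lemStructure_Weights_module}, part (2) via Proposition~\ref{propMaximal_proper_submodules}(3), part (3) via Lemma~\ref{lemSufficient_condition_uniqueness_submodule_irreducible_component} together with the bijection of Theorem~\ref{thmBijection_modules_ideals}, and part (4) via Proposition~\ref{propK_description_generalized_weight_spaces_quotient} and the invariance lemmas. Your write-up is in fact more explicit than the paper's (which is terse), in particular your observation that the maximality of $J_M$ is automatic so that condition (3) of Lemma~\ref{lemSufficient_condition_uniqueness_submodule_irreducible_component} collapses to ``$J_M$ is two-sided'' is a useful clarification.

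One small caveat: in part (1) you invoke Lemma~\ref{lemQuotient_generalized_weight_spaces}, which sits in a subsection whose title carries the standing hypothesis $I_\tau(\tau,\Wta)=\C\vb_\tau$, whereas part (1) does not assume this. The lemma's proof does not in fact use that hypothesis (it only needs the generalized weight decomposition from \cite[Lemma 3.3]{hebert2018principal}), so your argument is sound; but to be safe you could instead argue directly that any weight of a quotient of $I_\tau$ is already a weight of $I_\tau$, which follows immediately from the decomposition $I_\tau=\bigoplus_{\tau'\in W^v.\tau}I_\tau(\tau',\mathrm{gen})$.
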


\begin{proof}
(1) is a a consequence of Lemma~\ref{lemStructure_Weights_module}. 

(2) Let $N$ be an irreducible representation of $\AC_\C$ admitting the weight $\tau$.  By \cite[Proposition 3.7]{hebert2018principal}, there exists  a surjective morphism $\phi:I_\tau \twoheadrightarrow N$. Then $\ker(f)$ is a maximal submodule of $I_\tau$, which proves (2).

(3) is a consequence of Lemma~\ref{lemSufficient_condition_uniqueness_submodule_irreducible_component}.

(4) By \cite[Lemma 3.3]{hebert2018principal}, $N=\bigoplus_{\overline{w}\in W^v/W_\tau} N(w.\tau,\mathrm{gen})$.  By Lemma~\ref{lemInvariance_dimension_weights_Weyl_group}, Lemma~\ref{lemInvariance_dimension_generalized_weight_spaces} and Proposition~\ref{propK_description_generalized_weight_spaces_quotient}, we deduce that $\dim N=|\Wta||W^v/W_\tau|$.

\end{proof}

\subsection{Case where the Kac-Moody matrix \texorpdfstring{$A$}{A} has size $2$}\label{subExamples}

In this section, we study the case where the Kac-Moody matrix defining the generating root system is not a Cartan matrix and has size $2$. We begin by studying all the possibilities for the triple $W_\tau$, $\Wta$, $R_\tau$ and then we study examples of $I_\tau$, for $\tau\in \UC_\C$.

We assume that there exists $\sigma\in \C$ such that  $\sigma_s=\sigma_s'=\sigma$ for all $s\in \SCC$. In particular, $\zeta_s=\frac{1-\sigma^2 Z^{-\alpha_s^\vee}}{1-Z^{-\alpha_s^\vee}}$, for all $s\in \SCC$.

\subsubsection{Possibilities for $W_\tau$, $\Wta$, $R_\tau$}

We write $\SCC=\{s_1,s_2\}$. Recall that $Q^\vee_\Z=\Z\alpha_{s_1}^\vee\oplus \Z \alpha_{s_2}^\vee$.

\begin{lemma}\label{lemEigenvalues_st}
Let $A=\begin{pmatrix}
2 & a_{1,2}\\ a_{2,1} & 2
\end{pmatrix}$ be a Kac-Moody matrix which is not a Cartan matrix. Let $k\in \Z\setminus\{0\}$ and $w=(s_1s_2)^k$. Then $\vect_\Q\big((w-\Id)(Y)\big)=\vect_\Q(Q^\vee_\Z)$. 
\end{lemma}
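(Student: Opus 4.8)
The plan is to prove the two inclusions separately: the inclusion $\vect_\Q\big((w-\Id)(Y)\big)\subseteq \vect_\Q(Q^\vee_\Z)$ is elementary, and the reverse inclusion is the heart of the matter.

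For the easy inclusion I would use that $Q^\vee_\Z=\Z\alpha_{s_1}^\vee\oplus\Z\alpha_{s_2}^\vee$ is stable under $W^v$ (each $r_i$ sends a coroot to an integral combination of coroots) and that, for $s\in\SCC$ and $v\in Y$, one has $(s-\Id)(v)=-\alpha_s(v)\alpha_s^\vee\in Q^\vee_\Z$. Writing $w$ as a word in $s_1,s_2$ and telescoping $w-\Id$, every term lands in $Q^\vee_\Z$, so $(w-\Id)(Y)\subseteq Q^\vee_\Z$ and the first inclusion follows. Setting $V:=\vect_\Q(\alpha_{s_1}^\vee,\alpha_{s_2}^\vee)$, which is two-dimensional since $(\alpha_{s_i}^\vee)$ is free and which equals $\vect_\Q(Q^\vee_\Z)$, the problem reduces to showing that the endomorphism $w-\Id$ of $Y\otimes\Q$ has rank $2$, i.e. that $\mathrm{Im}(w-\Id)=V$.

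The key computation is the matrix $M$ of $s_1s_2$ in the basis $(\alpha_{s_1}^\vee,\alpha_{s_2}^\vee)$ of the $w$-stable plane $V$; one finds $M=\left(\begin{smallmatrix} a_{1,2}a_{2,1}-1 & a_{2,1}\\ -a_{1,2} & -1\end{smallmatrix}\right)$, with $\det M=1$ and characteristic polynomial $t^2-(c-2)t+1$, where $c:=a_{1,2}a_{2,1}$. Since $A$ has size $2$ and is not a Cartan matrix, $s_1s_2$ has infinite order, so $c\geq 4$ by \cite[1.3.21 Proposition]{kumar2002kac}. I would then split into two cases. If $c>4$, the two eigenvalues $\lambda,\lambda^{-1}$ of $M$ are real, positive and distinct, hence $\neq 1$; therefore $M^k$ has eigenvalues $\lambda^{\pm k}\neq 1$ for $k\neq 0$, so $M^k-\Id$ is an invertible endomorphism of $V$ and $(w-\Id)(V)=(M^k-\Id)(V)=V$, giving $\mathrm{Im}(w-\Id)=V$ at once.

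The remaining (and main) obstacle is the affine case $c=4$, where $M$ is unipotent and $M^k-\Id$ has rank only $1$ on $V$; the missing direction must then come from $Y$ outside $V$, so one cannot argue on $V$ alone. Here I would first treat $k=\pm1$ via the explicit formula $(s_1s_2-\Id)(v)=\big(-\alpha_{s_1}(v)+a_{2,1}\alpha_{s_2}(v)\big)\alpha_{s_1}^\vee-\alpha_{s_2}(v)\alpha_{s_2}^\vee$: the two coefficient functionals $-\alpha_{s_1}+a_{2,1}\alpha_{s_2}$ and $-\alpha_{s_2}$ are linearly independent precisely because $(\alpha_{s_1},\alpha_{s_2})$ is a free family, so $s_1s_2-\Id$ already has rank $2$. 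To pass to general $k$, I would note that $c=4$ forces every eigenvalue of $s_1s_2$ on $Y\otimes\Q$ to equal $1$ (the two on $V$, and those on $(Y\otimes\Q)/V$, on which $w$ acts trivially by the first inclusion), so $N:=s_1s_2-\Id$ is nilpotent; a short check, using $\mathrm{Im}(N)=V$ and that $N|_V=M-\Id\neq 0$ has image equal to its kernel, gives $N^2\neq 0$ and $N^3=0$. Then $(s_1s_2)^k-\Id=kN+\binom{k}{2}N^2=N\big(k\,\Id+\tfrac{k(k-1)}{2}N\big)$ with the right-hand factor invertible for $k\neq 0$, whence $\mathrm{rank}\big((s_1s_2)^k-\Id\big)=\mathrm{rank}(N)=2$. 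This completes the reverse inclusion and hence the lemma; the delicate point to get right is exactly this affine case, where the non-Cartan hypothesis (through $c\geq 4$) together with the freeness of the simple roots is what makes the argument go through.
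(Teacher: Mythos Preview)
Your argument is correct. For the case $c=a_{1,2}a_{2,1}>4$ you follow exactly the paper's approach: compute the matrix of $s_1s_2$ on $V=\vect_\Q(\alpha_{s_1}^\vee,\alpha_{s_2}^\vee)$, observe that neither eigenvalue equals $1$, and conclude that $(s_1s_2)^k-\Id$ is invertible on $V$.

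Where you differ is in the affine case $c=4$. The paper's printed proof records the characteristic polynomial as $T^2-cT+1$ and concludes that the eigenvalues $\tfrac{c\pm\sqrt{c^2-4}}{2}$ are never $\pm1$; but the trace of $s_1s_2$ on $V$ is actually $c-2$, so the correct polynomial is $T^2-(c-2)T+1$, as you wrote. With the correct polynomial, $c=4$ gives the double eigenvalue $1$, so $M^k-\Id$ has rank only $1$ on $V$ and the paper's argument does not close. Your treatment of this case---showing directly that $s_1s_2-\Id$ has rank $2$ on $Y\otimes\Q$ via the freeness of $(\alpha_{s_1},\alpha_{s_2})$, then using nilpotency of $N=s_1s_2-\Id$ (with $N^3=0$) and the factorisation $(s_1s_2)^k-\Id=N\bigl(k\,\Id+\tbinom{k}{2}N\bigr)$ to propagate to all $k\neq 0$---is clean and genuinely fills this gap. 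So your proof is not merely an alternative: it repairs an oversight in the paper's own argument.
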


\begin{proof}
For all $\lambda\in Y$, $w.\lambda-\lambda\in Q^\vee_\Z$ and thus $\vect_\Q \big((w-\Id)(Y))\subset \vect_\Q(Q^\vee_\Z)$. 

In the basis $\alpha_{s_1}^\vee$, $\alpha_{s_2}^\vee$ of $Q^\vee_\Z$, the matrix of $s_1$, $s_2$ and $s_1s_2$ are $\begin{pmatrix}
-1 & -a_{2,1}\\ 0 & 1
\end{pmatrix}$ and $\begin{pmatrix}
1&0\\-a{1,2} & -1
\end{pmatrix}$ and $\begin{pmatrix}
-1 & -a_{2,1} \\a_{1,2} & a_{1,2}a_{2,1}+1
\end{pmatrix}$. The characteristic polynomial of $s_1s_2$ is $T^2-aT+1$, where $a=a_{1,2}a_{2,1}$ and $T$ is an indeterminate. Thus the eigenvalues of $s_1s_2$ are $\frac{a\pm \sqrt{a^2-4}}{2}\neq \pm 1$. 
\end{proof}

We denote by  $D_\infty=\langle s,t|s^2=t^2=1\rangle$\index{$D_\infty$}  the infinite dihedral group.

\begin{lemma}\label{lemList_possibilities_Wtau_Rtau_Wta}
The possibilities for the triple $R_\tau$, $\Wta$, $W_\tau$ are exactly:\begin{enumerate}
\item $W_\tau=\Wta=R_\tau=\{1\}$,

\item $W_\tau\simeq \Z/2\Z$, $\Wta=W_\tau$ and $R_\tau=\{1\}$,

\item $W_\tau\simeq \Z/2\Z$, $\Wta=\{1\}$ and $R_\tau=W_\tau$,

\item $W_\tau\simeq \Z$, $\Wta=\{1\}$ and $R_\tau=W_\tau$,

\item  $W_\tau\simeq D_\infty$, $\Wta=W_\tau$ and $R_\tau=\{1\}$,

\item $W_\tau\simeq D_\infty$, $\Wta=\{1\}$ and $R_\tau=W_\tau$, 

\item $W_\tau\simeq D_\infty$, $\Wta\simeq D_\infty$ and $R_\tau\simeq \Z/2\Z$.

Moreover, if $\tau\in T_\C\setminus \UC_\C$, then $W_\tau=\{1\}$ or $W_\tau\simeq \Z/2\Z$.
\end{enumerate}

\end{lemma}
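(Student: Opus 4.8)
The plan is to reduce everything to the subgroup combinatorics of the infinite dihedral group, combined with the semidirect decomposition $W_\tau=R_\tau\ltimes \Wta$ of Lemma~\ref{lemDecomposition_Wtau_Rgroup}. Since $A$ has size $2$ and is not a Cartan matrix, the order $m(s_1,s_2)$ of $s_1s_2$ is infinite, so $(W^v,\SCC)\cong D_\infty$ with $\SCC=\{s_1,s_2\}$. First I would recall the subgroup structure of $D_\infty$: writing $\rho=s_1s_2$ for a generator of the rotation subgroup, the reflections are exactly the elements $\rho^k s_1$ ($k\in\Z$), every nontrivial torsion element has order $2$, and every subgroup is isomorphic to one of $\{1\}$, $\Z/2\Z$ (generated by a single reflection), $\Z$ (a rotation subgroup $\langle\rho^n\rangle$, which is torsion-free and hence contains no reflection), or $D_\infty$ (generated by two distinct reflections). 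Applying this to $W_\tau\subseteq W^v$ gives the four possible isomorphism types of $W_\tau$.

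Next I would pin down $\Wta$ inside each $W_\tau$. By definition $\Wta=\langle\RCC_{(\tau)}\rangle$ is generated by reflections and lies in $W_\tau$, and by Lemma~\ref{lemConjugation_Decomposition_Wtau} it is normal in $W_\tau$; moreover $R_\tau\cong W_\tau/\Wta$ by the semidirect decomposition. The key combinatorial input is the list of normal reflection subgroups of $D_\infty$: a normal subgroup either contains no reflection (so is $\{1\}$ or a rotation subgroup $\langle\rho^n\rangle$) or, as soon as it contains one reflection, conjugation by $\rho$ forces it to contain $\rho^2$ and hence to be one of the two index-$2$ copies of $D_\infty$ or all of $D_\infty$. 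In particular a $\Z/2\Z$ generated by a single reflection is never normal in $D_\infty$, and a dihedral subgroup of index $>2$ is never normal. I would then run the case analysis: if $W_\tau=\{1\}$ we get (1); if $W_\tau\cong\Z/2\Z=\{1,r\}$ then $\Wta$ is $W_\tau$ or $\{1\}$ according as $r\in\RCC_{(\tau)}$ or not, giving (2) and (3); if $W_\tau\cong\Z$ then it has no reflections, so $\Wta=\{1\}$ and $R_\tau=W_\tau$, giving (4); and if $W_\tau\cong D_\infty$ then $\Wta$, being a normal reflection subgroup, is $\{1\}$, an index-$2$ copy of $D_\infty$, or $W_\tau$, yielding $R_\tau\cong D_\infty$, $\Z/2\Z$ or $\{1\}$, i.e.\ (6), (7), (5). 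This shows the seven triples are the only possibilities; that each is realized I would check by choosing $\tau$ (and, where needed, the root datum) with prescribed values on $Q^\vee_\Z$ — roots of unity of suitable order for the rotation cases, and $\tau(\alpha_r^\vee)\in\{1,-1\}$ to separate (2) from (3) — in the spirit of the examples treated in the rest of this subsection.

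Finally, for the ``Moreover'' I would argue that any $W_\tau$ containing a nontrivial rotation forces $\tau\in\UC_\C$, which by the subgroup list is exactly the desired statement. Indeed, if $w=\rho^k\in W_\tau$ with $k\neq 0$, then $w.\tau=\tau$ amounts to $\tau\big((w^{-1}-\Id)(Y)\big)=\{1\}$, since $\tau$ is a homomorphism to $\C^*$. As $w^{-1}$ is again a nonzero power of $s_1s_2$, one has $(w^{-1}-\Id)(Y)\subseteq Q^\vee_\Z$, and by Lemma~\ref{lemEigenvalues_st} applied to $w^{-1}$ it spans the same $\Q$-subspace as $Q^\vee_\Z$; hence it is a finite-index sublattice of $Q^\vee_\Z$ and $\tau|_{Q^\vee_\Z}$ has finite image. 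Therefore $\tau(\alpha^\vee)$ is a root of unity for every $\alpha^\vee\in\Phi^\vee\subset Q^\vee_\Z$. Since in this subsection $\sigma_s=\sigma_s'=\sigma$ with $|\sigma|>1$, the scalar $\sigma^2$ has modulus $>1$ and is not a root of unity, so $\tau(\alpha^\vee)\neq\sigma^2$ for all $\alpha^\vee$, i.e.\ $\tau\in\UC_\C$. Contrapositively, $\tau\notin\UC_\C$ gives a $W_\tau$ with no nontrivial rotation, hence $W_\tau\in\{\{1\},\Z/2\Z\}$. The main obstacle I anticipate is the $W_\tau\cong D_\infty$ step: there one must use the normality of $\Wta$ decisively to exclude the non-normal reflection subgroups (a lone $\Z/2\Z$, or dihedral subgroups of index $>2$) and thereby confine $R_\tau$ to $\{1\}$, $\Z/2\Z$, $D_\infty$.
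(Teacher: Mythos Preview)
Your proposal is correct and follows essentially the same route as the paper: the classification is obtained from the subgroup and normal-subgroup structure of $D_\infty$ combined with the decomposition $W_\tau=R_\tau\ltimes\Wta$, and the ``Moreover'' is deduced via Lemma~\ref{lemEigenvalues_st} by showing that a nontrivial rotation in $W_\tau$ forces $\tau|_{Q^\vee_\Z}$ to take root-of-unity values. The only notable difference is emphasis: the paper devotes most of its proof to explicit constructions realizing each of the seven cases (some, notably (6), requiring parity constraints on the entries of $A$), which you only sketch, whereas your treatment of the normal-reflection-subgroup dichotomy in $D_\infty$ is more self-contained than the paper's, which cites external results for the list of possible $W_\tau$ and for the finiteness of $R_\tau$ in case (7).
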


\begin{proof}
We begin by proving the existence of size $2$ Kac-Moody matrices $A$, of root generating system  $\mathcal{S}=(A,X,Y,(\alpha_i)_{i\in I},(\alpha_i^\vee)_{i\in I})$ and of $\tau\in T_\C$ for (1) to (7). We assume that $\alpha_{s_1}(Y)=\alpha_{s_2}(Y)=2\Z$, which is possible by  taking the ``donnée radicielle simplement connexe'' of \cite[7.1.2]{remy2002groupes}. By  \cite[Lemma 6.2]{hebert2018principal}, for all $\gamma_1,\gamma_2\in \C^*$, there exists $\tau_\gamma\in T_\C$ such that $\tau_\gamma(\alpha_{s_i})=\gamma_i$, for $i\in \{1,2\}$.

(1) This is a consequence of  \cite[Lemma  6.5]{hebert2018principal}.

(2) Set $\gamma_1=1$, choose $\gamma_2\in \C^*$ a transcendental number. Then $s_1\in W_\tau$ and by \cite[Lemma 6.18]{hebert2018principal}, $W_\tau\subset \{1,s_1\}$. Then $s_1\in \Wta$ and thus $W_\tau=\Wta$.

(3) Set $\gamma_1=-1$, choose $\gamma_2\in \C^*$ a transcendental number. A similar proof as in (2) proves that $W_\tau=\langle s_1\rangle$ and $R_\tau=W_\tau$.

(4) By \cite[Lemma B1]{hebert2018principal}, we can have $W_\tau\simeq \Z$. As $\Wta$ is generated by reflections, we have $\Wta=\{1\}$ and thus $R_\tau=W_\tau$.

(5) Set $\gamma_1=1$, $\gamma_2=2$. Then $\tau_\gamma$ satisfies (5).

(6) Suppose that the Kac-Moody matrix  $A=\begin{pmatrix}
2 & a_{1,2}\\ a_{2,1} & 2
\end{pmatrix}$ is such that $a_{1,2},a_{2,1}\in \Z_{\leq -2}$ are even. Let $\gamma_1=\gamma_2=-1$ and $\tau=\tau_\gamma$. 
Let $\mathrm{ht}:\Z \alpha_{s_1}^\vee\oplus \Z\alpha_{s_2}^\vee\rightarrow \Z$ be defined by $\mathrm{ht}( n_1\alpha_{s_1}^\vee+n_{2}\alpha_{s_2}^\vee)=n_1+n_2$, for $n_1,n_2\in \Z$. Let $\lambda\in \Z\alpha_{s_1}\oplus \Z\alpha_{s_2}^\vee$ be such that $\mathrm{ht}(\lambda)$ is odd. Let $i\in \{1,2\}$. Then $\mathrm{ht}(s_i.\lambda)=\mathrm{\lambda}-\mathrm{ht}(\alpha_i(\lambda)\alpha_i^\vee)=\mathrm{ht}(\lambda)-\alpha_i(\lambda)$. Write $\lambda=n_1\alpha_{s_1}^\vee+n_2\alpha_{s_2}^\vee$, with $n_1,n_2\in \Z$. Let $j\in \{1,2\}\setminus\{i\}$. Then $\alpha_{s_i}(\lambda)=2n_i+n_ja_{j,i} $ is even and thus $\mathrm{ht}(s_i.\lambda)$ is odd. By induction we deduce that for all $\alpha^\vee\in \Phi^\vee$, $\mathrm{ht}(\alpha^\vee)$ is odd. Therefore $\tau(\Phi^\vee)=\{-1\}$ and hence $\Wta=\{1\}$.

(7) Let $A=\begin{pmatrix}
2 & a_{1,2}\\ a_{2,1} & 2
\end{pmatrix}$ be a Kac-Moody matrix such that $a_{1,2}$ is even. Let $\gamma_1=1$ and $\gamma_2=-1$ and $\tau=\tau_\gamma$. Then $s_1,s_2\in W_\tau$ and thus $W_\tau=W^v$. Then $s_2.\tau(\alpha_{s_1}^\vee)=1$ and thus $s_2s_1s_2\in \Wta$. Therefore $\langle s_1,s_2s_1s_2\rangle\subset \Wta\subsetneq W_\tau$ ($s_2\notin \Wta$). Moreover, $\langle s_1,s_2s_1s_2\rangle$ is a normal subgroup of $W^v$ and $W^v/\langle s_1,s_2s_1s_2\rangle\simeq \Z/2\Z$  and hence $\Wta=\langle s_1,s_2s_1s_2\rangle$. Moreover by Lemma~\ref{lemDecomposition_Wtau_Rgroup}, $R_\tau\simeq W_\tau/\Wta\simeq \Z/2\Z$. 

Let us prove that there are no other possibilities. By \cite[Lemma 6.36]{hebert2018principal}, we made the list of all the possible $W_\tau$.  As $\Wta$ is generated by reflections, if $W_\tau\simeq \Z$, then $\Wta=\{1\}$ and $R_\tau=W_\tau$. Suppose that $W_\tau\simeq D_\infty$. By Lemma~\ref{lemDecomposition_Wtau_Rgroup}, $\Wta$ is normal in $W_\tau$. If $w\in W^v$ and $s\in \SCC$, then $\langle wsw^{-1}\rangle$ is not normal in $W_\tau$ (if $i\in \{1,2\}$ is such that the reduced writing of $w$ does not begin with $s_i$, then $s_iwsw^{-1}s_i\notin \langle wsw^{-1}\rangle$). By \cite[Lemma 6.36]{hebert2018principal} we deduce that if $\Wta\neq \{1\}$, one has $\Wta\simeq \Z$ or $\Wta\simeq D_\infty$ and thus $\Wta\simeq D_\infty$. In particular, $R_\tau=W_\tau/\Wta$ is finite. By  \cite[Theorem 4.5.3]{bjorner2005combinatorics}, we deduce that $R_\tau\simeq \Z/2\Z$. 

Let now $\tau\in T_\C$ be such that $W_\tau\neq \{1\}$ is not isomorphic to $\Z/2\Z$. Then there exists $w\in W_\tau\setminus\{1\}$ such that $w$ is not a reflection. Then $w=(s_1s_2)^n$, for some $n\in \Z\setminus\{0\}$. By Lemma~\ref{lemEigenvalues_st}, there exists $(y_1,y_2)\in (w^{-1}-\Id)(Y)$  such that $(y_1,y_2)$ is a $\Q$-basis of  $Q^\vee\otimes \Q$. For $i\in \{1,2\}$, write $y_i=w^{-1}.x_i-x_i$, with $x_i\in Y$. Then $w.\tau(y_i)= w.\tau(x_i)\tau^{-1}(x_i)=1$. Let $k\in \N^*$ be such that $ky_1,ky_2\in Y$. Then $\tau(\alpha_{s_1}^\vee)^k=\tau(\alpha_{s_2}^\vee)^k=1$ and thus $|\tau(\lambda)|=1$ for all $\lambda\in Q^\vee$. Therefore $\tau\in \UC_\C$. 
\end{proof}

\subsubsection{The case $R_\tau\simeq \Z/2\Z$}

Let $\tau\in \UC_\C$ be such that $R_\tau\simeq \Z/2\Z$. Let $r\in W^v$ be such that $R_\tau=\langle r\rangle$. Then by Proposition~\ref{propDescription_endomorphism_algebra}, $\End(I_\tau)\simeq \C[T]/(T^2-1)\simeq \C\times \C$, where $T$ is an indeterminate. Let $\psi'=\psi_{r}'$, with the notation of the proof of Proposition~\ref{propDescription_endomorphism_algebra}. Then the following map $\End(I_\tau)\rightarrow \C\times \C$ is an isomorphism: $a\psi'+b\mapsto (a+b,a-b)$, for $a,b\in \C$. The ideals of $\C\times \C$ are $\{0\}$, $\C\times \{0\}$, $\{0\}\times \C$ and $\C\times \C$. Therefore the nontrivial submodules of $I_\tau$ are $M_{(1,0)}:=(\psi'+\Id)(I_\tau)$ and $M_{(0,1)}:=(\psi'-\Id)(I_\tau)$. These submodules are irreducible. If $x\in I_\tau$, then $x=\frac{1}{2}\big((\psi'(x)+\Id(x))-(\psi'(x)-\Id(x)\big)$ and thus $M_{(1,0)}+ M_{(0,1)}=I_\tau$. Moreover, $M_{(1,0)}\cap M_{(0,1)}$ is a submodule of $M_{(1,0)}$ and as $M_{(1,0)}\not\subset M_{(0,1)}$, one has $M_{(1,0)}\cap M_{(0,1)}=\{0\}$. Therefore $M_{(1,0)}\oplus  M_{(0,1)}=I_\tau$.  By Theorem~\ref{thmIrreducible_representations_tau_UC}, $M_{(1,0)}\simeq I_\tau/M_{(0,1)}$ and $M_{(0,1)}\simeq I_\tau/M_{(1,0)}$ are not isomorphic, $M_{(1,0)}(\tau)=\C (\psi'(\vb_\tau)+\vb_\tau)$ and $M_{(0,1)}(\tau)=\C (\psi'(\vb_\tau)-\vb_\tau)$. 

\subsubsection{The case $W_\tau=R_\tau\simeq \Z$}\label{subsubWtau=Z}

Let $\tau\in \UC_\C$ be such that $W_\tau=R_\tau\simeq\Z$. Then by Proposition~\ref{propDescription_endomorphism_algebra}, $\End(I_\tau)\simeq \C[\Z]=\C[T,T^{-1}]$, where $T$ is an indeterminate and thus $\End(I_\tau)$ is commutative. The ideals of $\End(I_\tau)$ are the $P\C[T,T^{-1}]$ such that $P\in \C[T,T^{-1}]$ and the maximal ideals are the $(T+a)\C[T,T^{-1}]$ such that $a\in \C^*$. Write $R_\tau=\langle (s_1s_2)^k\rangle$, where $k\in \Ne$. Let $\psi=\psi_{(s_1s_2)^k}$. The maximal submodules of $I_\tau$ are the $(\psi+a)(I_\tau)$, for $a\in \C^*$. The group $W^v/W_\tau$ has $2k$ elements, $\Wta=\{1\}$ and thus the irreducible representations $M$ having the weight $\tau$ decompose as $M=\bigoplus_{\overline{w}\in W^v/W_\tau} M(w.\tau)=\bigoplus_{w\in W^v|\ell(w)<k}M(w.\tau)\oplus M((s_1s_2)^k.\tau)$. In particular, they have dimension $2k$. 

By Lemma~\ref{lemCharacterization_injectivity_UC}, every nonzero element of $\End(I_\tau)$ is injective and by Lemma~\ref{lemCharacterization_surjectivity_UC}, the only surjective elements of $\End(I_\tau)$ are the invertible ones. Let $M\subset I_\tau$ be a nonzero submodule. As $\C[T,T^{-1}]=\End(I_\tau)$ is principal, $J_M$ is principal and there exists $\phi\in J_M$ such that $M=\phi(I_\tau)$. Then $\phi:I_\tau\rightarrow M$ is an isomorphism: every nonzero submodule of $I_\tau$ is isomorphic to $I_\tau$. Thus we can construct an infinite strictly decreasing sequence $(M_i)_{i\in \N}$ of submodules of $I_\tau$ and no submodule of $I_\tau$ is irreducible. 
 
 \subsubsection{The case $R_\tau=D_\infty$}

Recall that $D_\infty=\langle s,t|s^2=t^2=1\rangle$ is the infinite dihedral group. We now study $\C[D_\infty]$. We determine its maximal right ideals which are two-sided (see Lemma~\ref{lemMaximal_two_sided_ideals}) and we prove the existence of maximal right ideals which are not two sided (see Lemma~\ref{lemExistence_maximal_right_ideal_not_two-sided}) . Let $S=e^s,T{}=e^t\in \C[D_\infty]$. If $(a,b)\in \{-1,1\}^2$, we denote by $\ev_{(a,b)}:\C[D_\infty]\rightarrow \C$ the $\C$-algebra morphism such that $\ev_{(a,b)}(S)=a$ and 
$\ev_{(a,b)}(T{})=b$. Recall that if $a,b\in \C[D_\infty]$ and $m\in \N$, we denote by  $\Pi(a,b,m)$ the product $abab\ldots $ having $m$ factors.
 For $Q\in \C[D_\infty]\setminus\{0\}$, $Q=\sum_{k\in \N} a_k \Pi(S,T{},k)+b_k\Pi(T{},S,k)$ where $(a_k),(b_k)\in \C^{(\N)}$, we set $\deg(Q)=\max \{k\in \N||a_k|+|b_k|\neq 0\}$.

\begin{lemma}\label{lemQuotient_CW_finite_dimensional}
Let $P\in \C[D_\infty]\setminus\C$ and $J$ be the two-sided ideal $\C[D_\infty]P\C[D_\infty]$. Then $\C[D_\infty]/J$ is a finite dimensional vector space over $\C$.
\end{lemma}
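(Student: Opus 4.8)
The plan is to exploit that $\C[D_\infty]$ is a finitely generated module over a central polynomial subring and that inverting that subring turns $\C[D_\infty]$ into a simple algebra. Simplicity will force the two-sided ideal $J$ to contain a nonzero central element, and module-finiteness will convert this into finite codimension.

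\textbf{Step 1 (module-finiteness over a central $\C[x]$).} I set $U=ST$ and $x=U+U^{-1}$. Since $SUS=U^{-1}$, the element $x$ is central, and $U$ satisfies $U^2=xU-1$. Hence $\C[U,U^{-1}]$ is free of rank $2$ over $Z:=\C[x]$ with basis $\{1,U\}$ (note $U^{-1}=x-U$), and as $\C[D_\infty]=\C[U,U^{-1}]\oplus\C[U,U^{-1}]\,S$, the algebra $\C[D_\infty]$ is free of rank $4$ over the central subring $Z$, with basis $\{1,U,S,US\}$. In particular $\C[D_\infty]$ is $Z$-torsion-free.

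\textbf{Step 2 (the generic algebra is simple).} Put $K=\operatorname{Frac}(Z)=\C(x)$ and $A_K=\C[D_\infty]\otimes_Z K$, a $4$-dimensional $K$-algebra into which $\C[D_\infty]$ embeds by torsion-freeness. I claim $A_K\cong M_2(K)$, hence is simple. Consider the orthogonal idempotents $e=\tfrac12(1+S)$ and $e'=\tfrac12(1-S)$ with $e+e'=1$. Writing $\bar\lambda$ for the image of $\lambda\in\C[U,U^{-1}]$ under $U\mapsto U^{-1}$, one has $S\lambda=\bar\lambda S$, and a direct computation gives $Se=e$, $e\lambda e=\tfrac12(\lambda+\bar\lambda)e$, so $eA_Ke=Ke\cong K$, and likewise $e'A_Ke'\cong K$. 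Moreover $eUe'=\tfrac12(U-U^{-1})e'\neq 0$ and $U-U^{-1}$ is invertible in $A_K$ because $(U-U^{-1})^2=x^2-4\in K^{\times}$; a symmetric computation gives a nonzero $e'Ue\in e'A_Ke$. A dimension count ($4=1+1+\dim_K eA_Ke'+\dim_K e'A_Ke$) forces both off-diagonal corners to be one-dimensional, and the nonvanishing of the product $eA_Ke'\cdot e'A_Ke\subseteq eA_Ke=Ke$ yields matrix units identifying $A_K$ with $M_2(K)$. This is the main obstacle, and the only genuinely nonformal point of the argument.

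\textbf{Step 3 (central element of $J$ and conclusion).} Since $P\notin\C$ its image in $A_K$ is nonzero, so by simplicity $A_KP A_K=A_K$; I write $1=\sum_i\alpha_iP\beta_i$ with $\alpha_i,\beta_i\in A_K$. Expressing $\alpha_i=a_i/g_i$, $\beta_i=b_i/h_i$ with $a_i,b_i\in\C[D_\infty]$ and $g_i,h_i\in Z\setminus\{0\}$, and clearing denominators by $f:=\prod_i g_ih_i\in Z\setminus\{0\}$, each coefficient $f/(g_ih_i)$ lies in the central subring $Z\subset\C[D_\infty]$, whence $f=\sum_i\big(f/(g_ih_i)\big)\,a_iP b_i\in \C[D_\infty]\,P\,\C[D_\infty]=J$. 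Thus $J$ contains a nonzero $f\in Z=\C[x]$. If $f\in\C^{\times}$ then $J=\C[D_\infty]$ and the quotient is $0$. Otherwise $f$ is nonconstant, $f\,\C[D_\infty]\subseteq J$, and by Step 1 the ring $\C[D_\infty]/f\C[D_\infty]$ is generated over $Z/(f)=\C[x]/(f)$ by the images of $\{1,U,S,US\}$; since $\C[x]/(f)$ has $\C$-dimension $\deg f<\infty$, this quotient is finite-dimensional over $\C$, and $\C[D_\infty]/J$, being one of its quotients, is finite-dimensional. (If one prefers to avoid the structural language of Step 2, the explicit basis $\big(\Pi(S,T,k),\Pi(T,S,k)\big)$ and the degree function make the idempotent computation and matrix-unit identification fully explicit.)
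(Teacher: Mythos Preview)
Your proof is correct and takes a genuinely different route from the paper's.

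The paper argues combinatorially with the length filtration: writing $n=\deg P$ and $\mathcal{A}_k=\sum_{j\leq k}\big(\C\,\overline{\Pi(S,T,j)}+\C\,\overline{\Pi(T,S,j)}\big)$ in the quotient, it shows by direct manipulation of the relation $\overline P=0$ (multiplied by $S$ or $T$ on either side) that $\mathcal{A}_{n+1}\subset\mathcal{A}_n$, and then inductively $\mathcal{A}_m\subset\mathcal{A}_n$ for all $m$, so $\C[D_\infty]/J=\mathcal{A}_n$ has dimension at most $2n+1$. This is entirely elementary and gives an explicit bound.

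Your argument is structural: you exhibit the center $Z=\C[x]$ with $x=ST+(ST)^{-1}$, prove $\C[D_\infty]$ is free of rank~$4$ over $Z$, and show that the generic fibre $A_K=\C[D_\infty]\otimes_Z\C(x)$ is $M_2(\C(x))$, hence simple; simplicity forces $J\cap Z\neq\{0\}$, and module-finiteness over $Z/(f)$ finishes. This explains \emph{why} the result holds (every two-sided ideal of an order in a central simple algebra contracts to a nonzero ideal of the center) and transports immediately to related settings, e.g.\ the Iwahori--Hecke algebra of $D_\infty$ or other group algebras that are finite over a central polynomial subring with generically simple fibre. The trade-off is that you invoke Peirce decompositions and a small amount of structure theory, whereas the paper stays at the level of word combinatorics. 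One cosmetic remark: in Step~3 you write ``since $P\notin\C$ its image in $A_K$ is nonzero'', but what you actually use is $P\neq 0$ together with the torsion-freeness from Step~1; the hypothesis $P\notin\C$ is not needed for your argument (nor, in fact, for the statement, since $P\in\C^\times$ gives $J=\C[D_\infty]$).
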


\begin{proof}
If $Q\in \C[D_\infty]$, we denote by $\overline{Q}$ its image in $\C[D_\infty]/J$. For $k\in \N$, set $\mathcal{A}_k=\sum_{j=0}^k \big(\C \Pi(\overline{S},\overline{T{}},j)+ \C \Pi(\overline{T{}},\overline{S},j)\big)$. 
Let $n=\deg(P)$. Write $P=\sum_{k\leq n} a_k \Pi(S,T{},k)+b_k\Pi(T{},S,k)$, where $(a_k),(b_k)\in \C^{n+1}$. Maybe considering $aP$, for some $a\in\C^*$ and exchanging the roles of $S$ and $T{}$ we may assume that $a_n=1$. 

 First assume that $b_n\neq 0$. Then \[\begin{aligned}&  \Pi(\overline{S},\overline{T{}},n+1)\\
  &=\overline{S}\Pi(\overline{T{}},\overline{S},n)\\ 
&=-\frac{1}{b_n}\bigg(\overline{S}\Pi(\overline{S},\overline{T{}},n)+\sum_{k=0}^{n-1} \big((a_k\Pi(\overline{S},\overline{T{}},k)+b_k\Pi(\overline{T{}},\overline{S},k)\big)\bigg)\\ 
&=-\frac{1}{b_n}\big(\Pi(\overline{T{}},\overline{S},n-1)+\sum_{k=1}^{n-1} (a_k\Pi(\overline{T{}},\overline{S},k-1)+b_k\Pi(\overline{S},\overline{T{}},k+1)+a_0\overline{S}+b_0\overline{S}\big),  \end{aligned}\] thus $\Pi(\overline{S},\overline{T{}},n+1)\in \mathcal{A}_n$. Symmetrically, $\Pi(\overline{T{}},\overline{S},n+1)\in \mathcal{A}_n$.

Now assume that $b_n=0$. Then $\Pi(\overline{S},\overline{T{}},n+1)=\Pi(\overline{S},\overline{T{}},n)A$, for some $A\in \{\overline{S},\overline{T{}}\}$. As $\Pi(\overline{S},\overline{T{}},n)\in \mathcal{A}_{n-1}$, we deduce that $\Pi(\overline{S},\overline{T{}},n+1)\in \mathcal{A}_n$. 

One has $\Pi(\overline{T{}},\overline{S},n+1)=\overline{T{}}\Pi(\overline{S},\overline{T{}},n)$. As $\Pi(\overline{S},\overline{T{}},n)\in \mathcal{A}_{n-1}$, we deduce that $\Pi(\overline{T{}},\overline{S},n+1)\in \sum_{k=0}^{n} \mathcal{A}_n$. 

In both cases ($b_n\neq 0$ and $b_n=0$), $\C \Pi(\overline{S},\overline{T{}},n+1)\subset \mathcal{A}_n$ and thus $\mathcal{A}_{n+1}\subset \mathcal{A}_n$. 
Let $m\in \llbracket n+1,+\infty\llbracket$ be such that $\mathcal{A}_m\subset \mathcal{A}_n$. 
Then $\Pi(\overline{S},\overline{T{}},m+1)=\overline{S}\Pi(\overline{T{}},\overline{S},m)\in \mathcal{A}_{n+1}\subset \mathcal{A}_n$. 
Symmetrically, $\Pi(\overline{T{}},\overline{S},m+1)\in \mathcal{A}_n$
 and thus $\mathcal{A}_{m+1}\subset \mathcal{A}_n$. Therefore $\C[D_\infty]/J=\bigcup_{m\in \N} \mathcal{A}_m=\mathcal{A}_n$ is finite dimensional.
\end{proof}

\begin{lemma}\label{lemMaximal_two_sided_ideals}
The maximal right ideals of $\C[D_\infty]$ which are two-sided ideals are exactly the $\ev_{(a,b)}^{-1}(\{0\})$ such that $(a,b)\in \{-1,1\}$. 
\end{lemma}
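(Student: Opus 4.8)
The plan is to prove both inclusions separately. For the easy direction I first observe that each $\ev_{(a,b)}$ with $(a,b)\in\{-1,1\}^2$ is a genuine $\C$-algebra morphism $\C[D_\infty]\to\C$: since $s^2=t^2=1$ forces $S^2=T^2=1$, the values $\ev(S),\ev(T)$ must lie in $\{-1,1\}$, and any such choice does extend to an algebra morphism because $\{-1,1\}^2$ is exactly the character group of the abelianization of $D_\infty$. The kernel $\ev_{(a,b)}^{-1}(\{0\})$ is then a two-sided ideal of codimension $1$; as the quotient is the field $\C$, it has no nonzero proper right ideal, so $\ev_{(a,b)}^{-1}(\{0\})$ is automatically a maximal right ideal which is two-sided. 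This yields the inclusion ``$\supseteq$''.

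For the reverse inclusion, I take a maximal right ideal $J$ of $\C[D_\infty]$ that is two-sided. Since $J$ is proper it meets $\C\cdot 1$ trivially, so every nonzero $P\in J$ satisfies $P\in\C[D_\infty]\setminus\C$. As $J$ is two-sided, $\C[D_\infty]P\C[D_\infty]\subset J$, and Lemma~\ref{lemQuotient_CW_finite_dimensional} shows that $\C[D_\infty]/(\C[D_\infty]P\C[D_\infty])$ is finite-dimensional over $\C$; hence the further quotient $D:=\C[D_\infty]/J$ is finite-dimensional as well. I then use that $D$ is a unital ring (because $J$ is two-sided) whose only right ideals are $\{0\}$ and $D$, since right ideals of $D$ correspond to right ideals of $\C[D_\infty]$ containing the maximal right ideal $J$. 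A unital ring with no nontrivial right ideals is a division ring, and a finite-dimensional division algebra over the algebraically closed field $\C$ is necessarily $\C$ itself.

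Consequently $J$ is the kernel of a surjective $\C$-algebra morphism $\chi:\C[D_\infty]\to\C$. Such a $\chi$ restricts to a group homomorphism $D_\infty\to\C^*$, and the relations $s^2=t^2=1$ force $\chi(s),\chi(t)\in\{-1,1\}$, so $\chi=\ev_{(a,b)}$ for a unique $(a,b)\in\{-1,1\}^2$ and $J=\ev_{(a,b)}^{-1}(\{0\})$. This establishes ``$\subseteq$'' and completes the argument.

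The main obstacle is the passage to finite dimension: because $\C[D_\infty]$ is infinite-dimensional, one cannot directly invoke the classification of finite-dimensional division algebras over $\C$. Lemma~\ref{lemQuotient_CW_finite_dimensional} is exactly what reduces the two-sided maximal ideal to a finite-dimensional quotient, after which the division-ring argument together with the algebraic closedness of $\C$ finishes the proof. The remaining verifications—that codimension-one kernels are maximal and that the one-dimensional quotients correspond precisely to the four characters $\ev_{(a,b)}$—are routine.
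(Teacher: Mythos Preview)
Your proof is correct and follows essentially the same route as the paper's: both invoke Lemma~\ref{lemQuotient_CW_finite_dimensional} to force the quotient $\C[D_\infty]/J$ to be finite-dimensional, identify it with $\C$, and then read off the four characters $\ev_{(a,b)}$. Your argument is in fact slightly cleaner: you use directly that $J$ is a \emph{maximal right} ideal to conclude the quotient is a division ring, and then that any finite-dimensional division algebra over the algebraically closed field $\C$ equals $\C$; the paper instead phrases things via maximal two-sided ideals and appeals to the Frobenius theorem (getting $\C$ or $\mathbb{H}$) before ruling out $\mathbb{H}$ by hand.
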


\begin{proof}
Let $J$ be a maximal two-sided ideal of $\C[D_\infty]$. Then $\C[D_\infty]/J$ is a field and by Lemma~\ref{lemQuotient_CW_finite_dimensional}, it is a finite dimensional $\C$-algebra. By Frobenius theorem, we deduce that $\C[D_\infty]/J$ is either isomorphic to $\C$ or isomorphic to the division algebra $\mathbb{H}$ of quaternions. Let $f:\C[D_\infty]\rightarrow \mathbb{H}$ be an algebra morphism. Then $f(S^2)=f(T{}^2)=1$ and thus $f(S),f(T{})\in \{-1,1\}$ and $f(\C[D_\infty])=\C$. Therefore the algebra morphisms from $\C[D_\infty]$ to $\C$ are exactly the $\ev_{(a,b)}$ such that $(a,b)\in \{-1,1\}$. Consequently the maximal two-sided ideals of $\C[D_\infty]$ are exactly the $\ev_{(a,b)}^{-1}(\{0\})$ such that $(a,b)\in \{-1,1\}$. Let $(a,b)\in \{-1,1\}$ and $J=\ev_{(a,b)}^{-1}(\{0\})$.  We regard $J$ as a right ideal. As $J$ has codimension $1$, it is maximal as a right ideal  which proves the lemma.
\end{proof}

For example if $\tau\in \UC_\C$ is such that  $W^v=W_\tau=R_\tau$, the lemma above prove that there are exactly four one dimensional representations admitting $\tau$ as a weight.

\begin{lemma}\label{lemExistence_maximal_right_ideal_not_two-sided}
Let $a\in \C^*$ and $P=1-a(ST{}-T{}S)$. Then $P\C[D_\infty]$ is  a proper right ideal of $\C[D_\infty]$ which is not contained in any proper two-sided ideal. Therefore, any maximal right ideal containing $P\C[D_\infty]$ is not two-sided.
\end{lemma}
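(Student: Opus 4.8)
The statement has three assertions: that $P\C[D_\infty]$ is a proper right ideal, that it lies in no proper two-sided ideal, and the conclusion about maximal right ideals. The last is immediate from the first two: since $P\C[D_\infty]$ is proper it is contained (a union of a chain of proper right ideals containing it is again proper, not containing $1$, so Zorn applies) in some maximal right ideal, and were a maximal right ideal $J\supseteq P\C[D_\infty]$ two-sided, it would be a proper two-sided ideal containing $P\C[D_\infty]$, against the second assertion. So I focus on the first two.

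For properness, the plan is to exhibit one finite-dimensional representation on which $P$ acts non-invertibly. For $z\in\C^*$ let $\rho_z:\C[D_\infty]\to M_2(\C)$ be the algebra morphism with
\[\rho_z(S)=\begin{pmatrix} 0 & 1 \\ 1 & 0\end{pmatrix},\qquad \rho_z(T)=\begin{pmatrix} 0 & z \\ z^{-1} & 0\end{pmatrix};\]
this is well defined since both matrices square to $I$, which are the only relations in $D_\infty=\langle s,t\mid s^2=t^2=1\rangle$. A short computation gives $\rho_z(ST-TS)=\mathrm{diag}(z^{-1}-z,\,z-z^{-1})$, hence $\rho_z(P)=\mathrm{diag}\big(1-a(z^{-1}-z),\,1+a(z^{-1}-z)\big)$, with determinant $1-a^2(z^{-1}-z)^2$. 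Choosing $z\in\C^*$ with $a(z^{-1}-z)=1$ (the equation $z^2+a^{-1}z-1=0$ has two roots, both nonzero since their product is $-1$) makes $\rho_z(P)$ singular. If we had $1=PQ$ for some $Q$, then $I=\rho_z(P)\rho_z(Q)$ would force $\rho_z(P)$ invertible, a contradiction; thus $P\C[D_\infty]\neq\C[D_\infty]$.

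For the second assertion, I plan to prove that the two-sided ideal $(P):=\C[D_\infty]\,P\,\C[D_\infty]$ equals $\C[D_\infty]$. Any proper two-sided ideal $J\supseteq P\C[D_\infty]$ contains $P$, hence contains $(P)$ by two-sidedness, so $(P)=\C[D_\infty]$ rules out such $J$. First, $P\notin\C$: as $st$ has infinite order we have $st\neq ts$, so $ST,TS$ are distinct basis vectors and $a(ST-TS)\neq 0$. Hence Lemma~\ref{lemQuotient_CW_finite_dimensional} applies and $A:=\C[D_\infty]/(P)$ is finite-dimensional. Suppose for contradiction $(P)\neq\C[D_\infty]$, so $A\neq 0$; then $A/\mathrm{rad}(A)$ is a nonzero semisimple finite-dimensional $\C$-algebra, and projecting onto a factor (a matrix algebra over $\C$, by Artin--Wedderburn and $\C$ algebraically closed) yields a surjective algebra morphism $\pi:\C[D_\infty]\twoheadrightarrow M_n(\C)$ with $n\ge 1$ and $\pi(P)=0$.

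The main obstacle is to rule out $\pi(P)=0$ without classifying the irreducible representations of $D_\infty$, and this is settled by a trace identity. Writing $X=\pi(S)$, $Y=\pi(T)$, unitality gives $X^2=Y^2=I$ and $\pi(P)=I-a(XY-YX)$. If $\pi(P)=0$ then $XY-YX=a^{-1}I$; but a commutator has trace $0$, whereas $a^{-1}I$ has trace $n a^{-1}\neq 0$ (using $\mathrm{char}\,\C=0$ and $n\ge 1$), a contradiction. Hence $\pi(P)\neq 0$, contradicting $\pi(P)=0$, so $(P)=\C[D_\infty]$ and the second assertion follows. The only delicate move is to pass from the right ideal $P\C[D_\infty]$ to the two-sided ideal generated by $P$ and invoke the finiteness Lemma~\ref{lemQuotient_CW_finite_dimensional}; after that the obstruction $\mathrm{tr}(XY-YX)=0$ disposes of every matrix quotient at once.
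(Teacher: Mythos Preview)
Your proof is correct, and both parts take a different route from the paper's.

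For properness, the paper argues by degree: writing $Q=b\,\Pi(S,T,d)+c\,\Pi(T,S,d)+(\text{lower})$ with $d\ge 1$, one has $PQ=-ab\,ST\,\Pi(S,T,d)+ac\,TS\,\Pi(T,S,d)+(\text{lower})$, so $\deg(PQ)=d+2$ and $PQ\neq 1$. Your representation argument is cleaner in that it handles all $Q$ uniformly and avoids any bookkeeping with the $\Pi(S,T,k)$ basis; the cost is that one must produce the specific $2$-dimensional representation, whereas the paper's argument is purely combinatorial in the group algebra.

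For the two-sided part, the paper invokes the preceding Lemma~\ref{lemMaximal_two_sided_ideals}, which classifies the maximal two-sided ideals of $\C[D_\infty]$ as the four kernels $\ker\ev_{(\pm1,\pm1)}$; since $\ev_{(a,b)}(ST-TS)=0$ for all four, each such kernel contains $ST-TS$ and hence cannot contain $P$. Your argument bypasses this classification: you go directly from Lemma~\ref{lemQuotient_CW_finite_dimensional} to a simple matrix quotient via Artin--Wedderburn, and kill it with the trace identity $\mathrm{tr}(XY-YX)=0$. This is more self-contained (it does not need Lemma~\ref{lemMaximal_two_sided_ideals} or the Frobenius theorem used there) and would work verbatim for any element of the form $1-a[u,v]$ in a group algebra whose two-sided quotients by nonconstant elements are finite-dimensional. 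The paper's route, on the other hand, yields the classification of maximal two-sided ideals as a byproduct, which is of independent interest in the surrounding discussion.
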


\begin{proof}
Let us prove that $1\notin \C[D_\infty]$. Let $Q\in \C[D_\infty]\setminus \C$. Let $d=\deg(Q)$. Write $Q=b\Pi(S,T{},d)+c\Pi(T{},S,d)+\tilde{P}$, with $\tilde{P}\in \C[D_\infty]$ such that $\deg (\tilde{P})\leq d-1$ and $b,c\in \C$. 
Then $PQ=-abST{}\Pi(S,T{},d)+acT{}S\Pi(T{},S,d)+\tilde{Q}$, where $Q\in \C[D_\infty]$ is such that $\deg(\tilde{Q})\leq d+1$. Thus $\deg (QP)=d+2$ and hence $QP\neq 1$. Therefore $P\C[D_\infty]$ is a proper right ideal. Let $J$ be a two-sided ideal containing $P$. Suppose that $J$ is proper. Let $J'$ be a maximal two-sided ideal containing $J$. Then by Lemma~\ref{lemMaximal_two_sided_ideals}, $J'\ni ST{}-T{}S$ and thus $1\in J'$: a contradiction. Lemma follows. 
\end{proof}

\printindex

\bibliography{/home/auguste/Documents/Projets/bibliographie.bib}
\bibliographystyle{plain}
\end{document}